\theoremstyle{plain}
\newtheorem{thm}{Theorem}[section]
\newtheorem{coroll}[thm]{Corollary}
\newtheorem{defn}[thm]{Definition}
\newtheorem{lemma}[thm]{Lemma}
\newtheorem{example}[thm]{Example}
\newtheorem{context}[thm]{Context}
    \newtheorem{notn}[thm]{Notation}
\newtheorem{prop}[thm]{Proposition}
\newtheorem{remark}[thm]{Remark}
\newtheorem{thm*}{Theorem}
\theoremstyle{definition}
\newtheorem{construction}[thm]{Construction}
\newcommand{\bseries}[1]{ [\hspace{-0,5mm}[ {#1} ]\hspace{-0,5mm}] }
\newcommand{\dash}{{\operatorname{-}}}
\tikzset{
  symbol/.style={
    draw=none,
    every to/.append style={
      edge node={node [sloped, allow upside down, auto=false]{$#1$}}}
  }
}
\DeclareMathOperator{\Sym}{Sym}
\DeclareMathOperator{\Filt}{Filt}
\DeclareMathOperator{\Ad}{Ad}
\DeclareMathOperator{\GL}{GL}
\DeclareMathOperator{\wt}{wt}
\DeclareMathOperator{\QCoh}{QCoh}
\DeclareMathOperator{\DF}{\mathbf{DF}}
\newcommand{\cY}{\mathcal{Y}}
\newcommand{\cO}{\mathcal{O}}
\newcommand{\cB}{\mathcal{B}}
\newcommand{\cD}{\mathcal{D}}
\newcommand{\cE}{\mathcal{E}}
\newcommand{\cF}{\mathcal{F}}
\newcommand{\cI}{\mathcal{I}}
\newcommand{\cM}{\mathcal{M}}
\newcommand{\cX}{\mathcal{X}}
\newcommand{\cL}{\mathcal{L}}
\newcommand{\cH}{\mathcal{H}}
\newcommand{\bA}{\mathbb{A}}
\newcommand{\bC}{\mathbb{C}}
\newcommand{\bG}{\mathbb{G}}
\newcommand{\bR}{\mathbb{R}}
\DeclareMathOperator{\Map}{Map}
\newcommand{\hodge}{\on{Hodge}^D}
\newcommand{\Higgs}{\on{Higgs}^D}
\newcommand{\on}{\operatorname}
\newcommand{\Hom}{ \on{Hom}}
\newcommand{\Spec}{\on{Spec}}
\newcommand{\uSpec}{\underline{\on{Spec}}}
\newcommand{\Bun}{ \on{Bun} } 
\newcommand{\Pic}{ \on{Pic} }
\newcommand{\Mhodge}{M_{{\mathrm{Hod}},G,d}^D}
\newcommand{\git}{\mathbin{
  \mathchoice{/\mkern-6mu/}
    {/\mkern-6mu/}
    {/\mkern-5mu/}
    {/\mkern-5mu/}}}
\newcommand{\Fc}{\mathfrak{c}}
\newcommand{\Fg}{\mathfrak{g}}
\newcommand{\Fgl}{\mathfrak{g}\mathfrak{l}}
\newcommand{\Ft}{\mathfrak{t}}
\begin{document}
\title{\textbf{Meromorphic Hodge moduli spaces for reductive groups in arbitrary characteristic}}
\author{Andres Fernandez Herrero and Siqing Zhang}
\date{}
\maketitle

\begin{abstract}
Fix a smooth projective family of curves $C \to S$ and a split reductive group scheme $G$ over a Noetherian base scheme $S$. For any (possibly nonreduced) fixed relative Cartier divisor $D$, we provide a treatment of the moduli of $G$-bundles on the fibers of $C$ equipped with $t$-connections with pole orders bounded by $D$. Under mild assumptions on the characteristics of all the residue fields of $S$, we construct a Hodge moduli space $M_{Hod, G} \to \mathbb{A}^1_S$ for the semistable locus, construct a Harder-Narasimhan stratification, and thus obtain a semistable reduction theorem. If all the fibers of the divisor of poles $D$ are nonempty, then we show that the stack of semistable objects is smooth over $\mathbb{A}^1_{S}$. We also define a Hodge-Hitchin morphism in positive characteristic and prove that it is proper.
\end{abstract}

MSC classification:
		14D20,	14D22, 14D23 
\tableofcontents

\begin{section}{Introduction}

Vector bundles equipped with Higgs fields or flat connections are important objects arising in Non Abelian Hodge Theory, the Geometric Langlands Program, and Mirror Symmetry \cite{simpson1990nonabelian, beilinson1991quantization, donagi2008geometric, hausel2003mirror}.
Their semistable moduli spaces were first studied in gauge theory \cite{donaldson1987twisted, hitchin1987self, corlette1993non}, and then 
in complex algebraic geometry \cite{nitsure-semistable-pairs-curve,nitsure-moduli-logarithmic, Simpson-repnI}.
It was observed by Deligne and explained by Simpson \cite{simpson-hodge-filtration} that Higgs bundles and flat connections are subsumed by the more general notion of $t$-connections, whose semistable moduli space is called the Hodge moduli space.
This notion turns out to be crucial in the study of twistor structures \cite{simpson1997mixed, hertling2003tt, wei2021cohomology}.
There are natural generalizations of $t$-connections on vector bundles in three directions, each proved to be very fruitful: (1) from $\bC$ to a more general base \cite{laszlo2001hitchin, esnault2020rigid, decaltaldo-zhang-cnah, bogdanova2022canonical, ogus2022crystalline}; 
(2) from $t$-connections to meromorphic $t$-connections \cite{mochizuki2007asymptotic, decataldo-herrero-nahpoles, sabbah2009wild};
(3) from vector bundles to $G$-bundles \cite{simpson2010iterated, donagi2012langlands, hitchin2013hyperkahler}.

This paper concerns the existence and basic properties of the semistable moduli space of meromorphic $t$-connections on $G$-bundles on a projective smooth curve over a Noetherian base, thus encompassing all three generalizations above in the curve case. 
For the general linear group $\GL_n$, the existence of the corresponding Hodge moduli space over an arbitrary base was established by Langer \cite{langer2014semistable,langer-moduli-lie-algebroids}. Thus, the generalization (3) from $\GL_n$ to an arbitrary split reductive group $G$ is an essential aspect of this paper.


We work with a smooth projective family of curves $C$ over a Noetherian scheme $S$. Fix a reductive group scheme $G \to S$ and a relative Cartier divisor $D \subset C$ (possibly empty, and possibly
nonreduced). We consider the moduli stack $\hodge_{G}$ of principal $G$-bundles on the $S$-fibers of $C$ equipped with meromorphic $t$-connections with poles bounded by $D$ (see Subsection \ref{subsection: stack of connections}).

\begin{thm*}[= \Cref{thm: moduli space for hodge general G} + \Cref{prop: theta stratification}] \label{main thm}
    Suppose that the group $G$ satisfies the low height property (LH) (see \Cref{defn: low height property}). Then,
    \begin{enumerate}[(a)]
        \item For any fixed $d \in \pi_1(G)$, the substack $(\hodge_{G, d})^{\mathrm{ss}} \subset \hodge_{G}$ of semistable meromorphic $t$-connections of degree $d$ is an open substack and admits an adequate moduli space $\Mhodge$ (in the sense of \cite{alper_adequate}) which is quasi-projective over $\mathbb{A}^1_S$.
        \item If $G$ is split, then the moduli stack $\hodge_{G}$ admits a $\Theta$-stratification (in the sense of \cite{halpernleistner2018structure}) whose open stratum consists of the semistable meromorphic $t$-connections. Each stratum parametrizes objects in $\hodge_{G}$ such that the canonical Harder-Narasimhan parabolic reduction have a fixed numerical type.
    \end{enumerate}
\end{thm*}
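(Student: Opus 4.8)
The plan is to treat both parts simultaneously through the intrinsic, ``beyond GIT'' formalism: the existence criteria of Alper--Halpern-Leistner--Heinloth for adequate moduli spaces, and Halpern-Leistner's theory of $\Theta$-stratifications. Both machines are governed by a single numerical invariant on $\hodge_G$ measuring instability, so I would first set up this invariant and then feed it into each. Throughout I assume, as arranged in the construction of the moduli stack, that $\hodge_G$ is algebraic, locally of finite type over $\bA^1_S$, with affine diagonal.

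\textbf{Numerical invariant and part (b).} I would define the numerical invariant using a Weyl-invariant, positive-definite quadratic norm on the cocharacter lattice of $G$. A filtration of a meromorphic $t$-connection is encoded by a map $\Theta_R = [\bA^1_R/\bG_{m,R}] \to \hodge_G$, that is, a parabolic reduction compatible with the $t$-connection up to the allowed poles along $D$, and the invariant is the associated normalized slope. The structural core is existence and uniqueness of a canonical Harder--Narasimhan reduction: every unstable object admits a unique optimal destabilizing parabolic reduction (a Behrend-type canonical reduction adapted to the connection). Existence follows from boundedness of the potential destabilizers together with the numerical semistability criterion, and uniqueness from a convexity argument for the norm. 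Checking that these optimal reductions vary well in families, cut out locally closed substacks, and satisfy Halpern-Leistner's axioms then yields the $\Theta$-stratification of part (b), with strata indexed by the numerical type of the canonical reduction.

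\textbf{Part (a): the moduli space.} Openness of the semistable locus is the assertion that it is the $d=0$ (``unfiltered'') stratum of the stratification in (b). The first substantial input is boundedness of $(\hodge_{G,d})^{\mathrm{ss}}$: I would fix a faithful representation $G \hookrightarrow \GL(V)$ and use the low height property (LH) to guarantee that the $\GL(V)$-bundle with induced $t$-connection associated to a semistable $G$-object is again semistable, thereby reducing boundedness to Langer's theorem for $\GL_n$. With boundedness in hand, $(\hodge_{G,d})^{\mathrm{ss}}$ is of finite type, and to invoke the existence theorem I would verify $\Theta$-reductivity and S-completeness. Both are valuative conditions: $\Theta$-reductivity is a semistable-reduction statement extending a filtered family over a punctured trait, and S-completeness is a Langton-type separatedness statement; I would establish both for $G$-bundles with meromorphic $t$-connection by transporting to $\GL(V)$ via LH and adapting Langer. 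This produces a separated adequate moduli space as an algebraic space, and quasi-projectivity over $\bA^1_S$ would then follow by exhibiting a relatively ample line bundle, e.g.\ a determinant-of-cohomology/Hitchin-type linearization whose positivity on the semistable locus is once more controlled by LH.

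\textbf{Main obstacle.} The crux throughout is the positive- and mixed-characteristic behavior: in characteristic zero one freely uses that associated-bundle constructions preserve semistability and that canonical reductions exist, but over an arbitrary Noetherian base neither is automatic. The low height hypothesis is precisely the device that restores these facts, and the delicate point is to propagate it correctly through (i) preservation of semistability under the chosen faithful representation, and (ii) compatibility of the canonical Harder--Narasimhan reduction with the $t$-connection, i.e.\ that the optimal destabilizing parabolic is preserved by the connection up to the pole divisor $D$. Establishing (ii) uniformly in the Hodge parameter $t \in \bA^1_S$, so that the same reduction governs Higgs fields at $t=0$, genuine connections, and all the degenerations in between, is where I expect the real work to lie.
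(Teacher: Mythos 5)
Your overall architecture (a single numerical invariant feeding both the existence machinery and the stratification) matches the paper's philosophy, but there are concrete gaps in both parts, and your route for part (a) is genuinely different from --- and harder than --- what the paper does.

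For part (a), your plan to fix a faithful representation $G\hookrightarrow \GL(V)$ and use (LH) to transport semistability does not work as stated: the low height property concerns the \emph{adjoint} representation $\mathfrak{g}$, which is not faithful, and an arbitrary faithful $V$ will not be of low height. The paper instead uses $\Ad\times q\colon G\to \GL(\mathfrak{g})\times G_{ab}$, whose kernel $Z_{\cD(G)}$ is handled separately: (LH) implies the kernel is \'etale of multiplicative type (property (Z)), the induced map $\hodge_G\to\hodge_{\overline G}$ is quasifinite and proper with linearly reductive relative stabilizers, and semistability is shown to be preserved under this isogeny (\Cref{prop: semistability under etale isogeny}). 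Moreover, the paper does not verify $\Theta$-reductivity and S-completeness at all: it factors $(\hodge_{G,d})^{ss}$ as a good moduli space morphism (relative Keel--Mori for the quasifinite proper $i_*$) followed by an affine morphism to $(\hodge_{\GL(\mathfrak{g})\times G_{ab},d'})^{ss}$, and then invokes Alper's results to inherit an adequate moduli space that is affine over the $\GL_N$ moduli space already constructed by Simpson and Langer --- quasi-projectivity comes for free. Your route would additionally have to contend with the fact that in positive characteristic the existence theorem for \emph{adequate} moduli spaces requires hypotheses beyond $\Theta$-reductivity and S-completeness, and that relative ampleness of a determinant line bundle on the resulting algebraic space is itself a substantial GIT-type argument; none of this is needed in the paper's reduction to $\GL_N$.

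For part (b), uniqueness of the optimal destabilizing reduction at geometric points is not the bottleneck. The criteria of Halpern-Leistner yield only a \emph{weak} $\Theta$-stratification from strict $\Theta$- and $S$-monotonicity (which the paper verifies via an ind-projective affine Grassmannian of meromorphic $t$-connections) together with HN boundedness. Upgrading to a genuine $\Theta$-stratification requires \emph{rigidity} of the canonical filtration: the set of first-order deformations of the canonical $\nabla$-compatible parabolic reduction must be a singleton. This is the Behrend-conjecture statement (\Cref{prop: behrends conjecture}), and it is where (LH) does its hardest work --- via the deformation theory of compatible parabolic reductions, semisimplicity of low-height subquotients of $\mathfrak{g}/\mathfrak{p}$ (Serre), and negativity of the degrees of the graded pieces. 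Your phrase ``checking that these optimal reductions vary well in families'' points in this direction but does not identify the deformation-theoretic statement that must be proved, nor the mechanism by which (LH) proves it; as written, the argument would only deliver a weak $\Theta$-stratification.
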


Our approach to this moduli problem is from the point of view of stacks, which we believe shortens and streamlines some of the arguments. A key step in our proof is to show the compatibility of semistability with change of the group $G$ (\Cref{prop: semistability of low height representatons}), for which we generalize an argument by Balaji-Parameswaran \cite{balaji-paramewaran-tensors}. In addition, our treatment clarifies some points in their arguments.

As a consequence of the Harder-Narasimhan theory in \Cref{main thm}(b), the general machinery developed by Alper--Halpern-Leistner--Heinloth \cite{alper2019existence} yields a powerful semistable reduction theorem, analogous to the one proved by Langer \cite{langer-moduli-lie-algebroids} when the group is $\GL_n$. 
We use semistable reduction and the deformation-theoretic results in \S\ref{section: deformation theory} to prove smoothness of the natural morphism $\hodge_{G} \to \mathbb{A}^1_S$ when the relative Cartier divisor has nonempty fibers (\Cref{thm: main theorem smoothness}), generalizing the result for $\GL_n$ proven in \cite{decataldo-herrero-nahpoles}.

In \S\ref{section: hh morphism}, we restrict to the characteristic $p$ case and establish the existence and properness of the so-called Hodge-Hitchin morphism as in \cite{laszlo2001hitchin}, generalizing the analogous results of Langer \cite{langer-moduli-lie-algebroids} when the group is $\GL_n$.
\begin{thm*}[= \Cref{prop: hodge-hitchin} + \Cref{thm: properness of the hodge-hitchin morphism}] \label{second main thm}
    Suppose that the base scheme $S$ is of equi-characteristic $p>0$,  and that $G$ satisfies the low height property (LH). 
    For each fixed degree $d \in \pi_1(G)$, the formation of $p$-curvature induces a proper Hodge-Hitchin morphism $h_{\mathrm{Hod}}: \Mhodge \to A(C',\Omega_{C'/S}^1(D'))\times_S\mathbb{A}^1_S$.
\end{thm*}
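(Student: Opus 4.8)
The plan is to verify the valuative criterion for properness, treating separatedness and finite presentation as essentially automatic and concentrating all the work on the existence of limits. Since the adequate moduli space $\Mhodge$ is quasi-projective over $\mathbb{A}^1_S$ by \Cref{main thm}(a), it is separated and of finite type over $S$; hence the morphism $h_{\mathrm{Hod}}$ of \Cref{prop: hodge-hitchin} is separated and of finite type, the target $A(C',\Omega^1_{C'/S}(D'))\times_S\mathbb{A}^1_S$ being separated over $S$ as well. I would thus reduce the theorem to the following assertion: for every discrete valuation ring $R$ over $S$ with fraction field $K$, every $K$-point of $\Mhodge$ whose image in the Hitchin base extends to an $R$-point $a_R$ admits, after a finite extension of $R$, a lift to an $R$-point of $\Mhodge$ mapping to $a_R$.

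The conceptual heart of the argument, and the step I expect to be the main obstacle, is a boundedness statement: the family of semistable meromorphic $t$-connections with a fixed characteristic polynomial of $p$-curvature is bounded. This is exactly the input that distinguishes properness over the Hitchin base from the (false) properness of $\Mhodge$ over $\mathbb{A}^1_S$ alone. The mechanism is special to characteristic $p$: for a $t$-connection the $p$-curvature is an $\cO$-linear, Frobenius-twisted Higgs field on the twist $C'$, and fixing its characteristic polynomial pins down the associated spectral data on a fixed spectral (equivalently, cameral) cover of $C'$. Concretely, I would first treat $G=\GL_n$, where the $p$-curvature of $\nabla/t$ for $t\neq 0$, or the Higgs field for $t=0$, defines a torsion-free sheaf on the spectral curve cut out by $a_R$ inside the total space of $\Omega^1_{C'/S}(D')$; properness of the classical Hitchin morphism on $C'$ then bounds the underlying bundle and hence the whole family. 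This is precisely Langer's theorem for $\GL_n$, which we are entitled to invoke.

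To pass from $\GL_n$ to an arbitrary low-height group $G$, I would use a faithful representation $G\hookrightarrow\GL(V)$ of low height, which exists by the hypothesis (LH). The induced morphism $\hodge_{G}\to\hodge_{\GL(V)}$ carries semistable objects to semistable objects by \Cref{prop: semistability of low height representatons}, and it is compatible with the two Hodge-Hitchin morphisms through the finite map of Hitchin bases induced on invariant polynomials; here the hypothesis $p>|W|$ guarantees that Chevalley restriction and the Kostant section behave as in characteristic zero, so that the $G$-Hitchin base and its map to the $\GL(V)$-Hitchin base are well-behaved. Boundedness of semistable $G$-$t$-connections with fixed $G$-characteristic polynomial then follows from the $\GL(V)$-case, since the finite map on Hitchin bases constrains the $\GL(V)$-characteristic polynomial of the associated bundle with connection.

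Finally, with boundedness in hand I would produce the limit using the semistable reduction theorem that \Cref{main thm}(b) furnishes through the machinery of Alper--Halpern-Leistner--Heinloth. Boundedness confines the degenerating family to a quasi-compact open substack of $(\hodge_{G,d})^{\mathrm{ss}}$, so the $\Theta$-reductivity and S-completeness underlying that machinery apply, and after a finite extension of $R$ the $K$-family extends to a family of semistable $t$-connections over $R$, that is, to an $R$-point $x_R$ of $\Mhodge$. Its image $h_{\mathrm{Hod}}(x_R)$ agrees with $a_R$ over $K$, and since the target is separated over $S$ the two $R$-points coincide. This yields the required lift, completing the verification of the valuative criterion and hence the proof that $h_{\mathrm{Hod}}$ is proper.
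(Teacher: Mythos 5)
Your outline has a genuine gap at the final step, which is exactly where the content of properness lies. You invoke boundedness together with the semistable reduction machinery of Alper--Halpern-Leistner--Heinloth to ``extend the $K$-family to an $R$-point of $\Mhodge$.'' But semistable reduction (\Cref{prop: semistable reduction}, i.e.\ \cite[Thm.~6.5]{alper2019existence}) only \emph{modifies an already existing family over $\Spec(R)$} whose generic fiber is semistable; it does not produce an extension of a family defined only over the generic point. Likewise, $\Theta$-reductivity and S-completeness yield existence and separatedness of the adequate moduli space, and boundedness yields finite type --- none of these supply universal closedness over the Hitchin base. The missing ingredient is the existence part of the valuative criterion for $\hodge_{G}$ relative to $A(C',\Omega^1_{C'/S}(D'))\times_S\mathbb{A}^1_S$: one must actually construct a $G$-bundle with meromorphic $t$-connection on $C_R$ extending the given one on $C_K$ with the prescribed spectral data. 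For $\GL_n$ this is done by elementary modifications on the spectral curve (Langton/Langer); for general $G$ it is not automatic, and your proposal never produces it. The paper sidesteps this entirely: it proves a $\mathbb{G}_m$-equivariance criterion (\Cref{prop: criterion for properness}) --- zero limits on the source, contracting action on the target, properness over the $\mathbb{G}_m$-fixed locus imply properness --- and uses \Cref{prop: g_m action and limits}, the contracting weights on the Hitchin base, and \Cref{lemma: 0-fiber of hh and frobenius} to reduce everything to the properness of the \emph{classical} Hitchin morphism for Higgs bundles at $t=0$, which is already known from \cite[Cor.~6.21]{alper2019existence}.

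Two secondary points. First, property (LH) concerns the adjoint representation, which is not faithful; a ``faithful low-height representation $G\hookrightarrow\GL(V)$'' does not exist in general, and the paper instead works with $\Ad\times q\colon G\to\GL(\mathfrak{g})\times G_{ab}$ together with properties (Z) and (Em). Second, your appeal to ``the finite map of Hitchin bases induced on invariant polynomials'' is fine for the boundedness direction (functoriality of $\mathfrak{g}\git G\to\mathfrak{gl}(V)\git\GL(V)$ suffices to pin down the $\GL(V)$-characteristic polynomial), but if you intended to transport properness itself along this map you would need it to be finite or a closed immersion, which is the nontrivial content of \Cref{lemma: immersion of steinberg bases} (proved via sums of highest-weight representations and the polarization identity, using $p>|W|$) --- and even then you would still need properness of the induced morphism on adequate moduli spaces $M^D_{\mathrm{Hod},G,d}\to M^D_{\mathrm{Hod},\GL(V),d'}$, which does not follow from the affineness of $\hodge_G\to\hodge_{\GL(V)}$.
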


The proof of the existence of the Hodge-Hitchin morphism in \Cref{second main thm} relies on a construction (\Cref{lemma: immersion of steinberg bases}) of a closed immersion from the Steinberg-Hitchin base of $G$ to that of a product of $GL_N$'s, which appears to be new.
The proof of properness of the morphism in \Cref{second main thm} relies on a criterion for properness of $\mathbb{G}_m$-equivariant morphisms (\Cref{prop: criterion for properness}) which we believe to be of independent interest.


The results in this article are used by the authors in \cite{herrero2023semistable, herrero2024topology}, and by Mark de Cataldo and the second author in \cite{de-zhang-log}, as the moduli-theoretic foundation for new development on the semistable, cohomological, and logarithmic aspects of the Non Abelian Hodge correspondence for reductive groups $G$ in characteristic $p$. In addition, we hope that our treatment of these moduli spaces will be useful for the community working on these moduli problems.

\noindent \textbf{Acknowledgements.} We would like to thank Mark Andrea de Cataldo, Roberto Fringuelli, Mirko Mauri, and Hao Sun for helpful discussions. This material is based upon work supported by the National Science Foundation under Grant No. DMS-1926686.

    \begin{subsection}{Notation}
        We work over a fixed connected Noetherian scheme $S$. Unless otherwise stated, all schemes and stacks are equipped with a morphism to $S$. For any two schemes $X, Y$ over $S$, we use the notation $X_Y$ to denote the fiber product $X \times_S Y \to Y$ thought of as a $Y$-scheme.
        
        We fix a smooth proper morphism $\pi: C \to S$ with geometrically integral fibers of dimension 1. We also fix the choice of a relative Cartier divisor $D \hookrightarrow C$ (possibly empty). 
        
        Our moduli functor of interest will live over the base $\bA^1_S = \underline{\rm{Spec}}_{S}(\mathcal{O}_{S}[t])$, which is equipped with the contracting linear action of the multiplicative group scheme $\mathbb{G}_{m,S}$ that assigns the coordinate function $t$ weight $-1$ (this means under our convention that it acts with contracting action on $\mathbb{A}^1_S$). For each $\bA^1_S$ scheme $Y$, we denote by $t_Y \in H^0(\cO_{Y})$ the pullback of the coordinate $t$ of $\mathbb{A}^1_{S}$.

        We use $G$ to denote an affine smooth group scheme over $S$. We denote by $\mathfrak{g}= \text{Lie}(G)$ the Lie algebra of $G$ over $S$, which is a vector bundle on $S$. There is a homomorphism $\Ad: G \to \text{GL}(\mathfrak{g})$ called the adjoint representation. We denote by $BG$ the classifying stack of $G$ over $S$. If $X$ is a scheme equipped with an action of $G$ and $E$ is a $G$-bundle on a scheme $Y$, then we denote by $E(X) = E \times^G X \to Y$ the associated fiber bundle with fiber $X$. In general, $E(X)$ is an algebraic space. By descent for affine morphisms, if $X \to S$ is affine, then $E(X)$ is a relatively affine scheme over $Y$. If $X \to S$ is the total space of a vector bundle and $G$ acts linearly, then $E(X) \to Y$ is the total space of a vector bundle.
    \end{subsection}
\end{section}

\begin{section}{The meromorphic Hodge moduli stack}

In this section, we lay the moduli-theoretic foundation for the meromorphic Hodge moduli stack. 
In \Cref{subsection: stack of connections}, we define meromorphic $t$-connections on $G$-bundles, and introduce their moduli stacks, by using the corresponding Atiyah bundles.
In \Cref{subsec: semistability}, we define a notion of semistability.
In \Cref{subsection: change of groups}, we prove some results regarding the morphisms between the moduli stacks induced by change of structural groups.
In \Cref{subsec: moduli spaces}, we show the existence of the corresponding adequate moduli spaces in the case where the characteristics of the base scheme $S$ are bounded below by some properties.
In \Cref{subsection: stability under change of group}, we show that the lower bound is entailed by a more familiar low height condition.

\begin{subsection}{The stack of meromorphic \texorpdfstring{$t$}{t}-connections} \label{subsection: stack of connections}
Let us recall some setup and notation from \cite{herrero2020quasicompactness}. 

\begin{notn}[Maurer-Cartan form]
 Left translation induces an isomorphism $T_{G} \cong \mathfrak{g} \otimes_{\cO_{S}} \mathcal{O}_{G}$ for the tangent sheaf $T_G$ of $G$. This yields a chain of isomorphisms  $\Omega^{1}_{G / S} \otimes_{\cO_{S}} \mathfrak{g} = \text{Hom}_{\mathcal{O}_{G}} ( T_{G}, \mathcal{O}_{G}) \otimes_{\cO_{S}} \mathfrak{g} \cong \text{Hom}_{k}(\mathfrak{g}, \mathfrak{g}) \otimes_{\cO_{S}} \mathcal{O}_{G}$. The invariant $\mathfrak{g}$-valued 1-form on $G$ that corresponds to $\text{id}_{\mathfrak{g}} \otimes 1$ under this isomorphism is called the Maurer-Cartan form. We denote it by $\omega \in   H^0\left(G, \, \Omega_{G / S}^{1} \otimes_{\cO_{S}} \mathfrak{g}\right)$. 
\end{notn}

For any $\mathbb{A}^1_S$-scheme $Y$, we denote by $\omega_Y \in H^0\left(G_Y, \,  \Omega^1_{G_Y/ Y}\otimes_{\cO_{S}} \mathfrak{g} \right)$ the Maurer-Cartan form of the base change $G_{Y} \to Y$. There exists a group scheme $\text{Aff}\left(\Omega^1_{C_Y/ Y}\otimes_{\cO_{S}} \mathfrak{g} \right)$ over $C_Y$ that classifies affine linear transformations of the locally free sheaf $\Omega^1_{C_Y/ S}\otimes_{\cO_{S}} \mathfrak{g}$, see \cite[top of p.7]{herrero2020quasicompactness}. We consider its restriction to the small \'etale site of the scheme $C_Y$. 

\begin{notn}
   We denote by $\phi: G_{C_Y} \longrightarrow \text{Aff}\left(\Omega^1_{C_Y/ Y}\otimes_{\cO_{S}} \mathfrak{g} \right)$ the homomorphism of sheaves of groups in the small \'etale site of $C_Y$ defined as follows. Given an \'etale $C_Y$-scheme $U$ and an element $g \in G_Y(U)$, we define $\phi(U) \, (g) \vcentcolon = Id_{\Omega^1_{U/Y}} \underset{\mathcal{O}_{U}}{\otimes} Ad(g) \, + \, t_Y \cdot (g^{-1})^{*} \omega_Y$, where $(g^{-1})^{*}\omega_Y \in  H^0\left(U,  \, \Omega_{U / Y}^{1}  \otimes_{\cO_{S}}  \mathfrak{g}  \right)$ denotes the pullback of the relative Maurer-Cartan under the inverse $g^{-1}: U \to G_Y$.
\end{notn}
	
Let $E$ be a $G$-bundle on $C_Y$. We denote by $\Ad(E)$ the adjoint vector bundle, associated to the representation $G \to \text{GL}(\mathfrak{g})$. The homomorphism $\phi$ induces an action of $G_{C_Y}$ on the total space of $\Omega^1_{C_Y /Y}\otimes \mathfrak{g}$ by affine linear transformations (for points in the small \'etale site of $C_Y$). We use $\phi$ to form the associated affine bundle $A_{t\dash\text{Conn}, E}:= E \times^{G_Y} \left( \Omega^1_{C_Y/ Y}\otimes_{\cO_{S}} \mathfrak{g} \right)$. Since $G$ is smooth, $E$ can be trivialized over an \'etale cover of $C_Y$, and therefore we only need to know $\phi$ on the small \'etale site of $C_Y$ in order to describe the descent data. 

By construction $A_{t\dash\text{Conn},E}$ is an \'etale torsor for the abelian sheaf $Ad(E) \otimes_{\cO_{C_Y}} \Omega^1_{C_Y/ Y}$ on $C_Y$. This torsor represents a cohomology class $\gamma_{E} \in H^1\left(Ad(E) \otimes_{\cO_{C_Y}} \Omega^1_{C_Y/ Y}\right)$, corresponding to an extension of $\cO_{C_{Y}}$-modules
\[ 0 \to \Ad(E) \otimes \Omega^1_{C \times Y/Y} \to At(E) \to \mathcal{O}_{C_{Y}} \to 0\]
The relative effective Cartier divisor $D$ induces a monomorphism of abelian sheaves $\cO_{C_Y}(-D)\hookrightarrow\cO_{C_Y}$. We denote by $\text{At}^D(E)(-D)$ the pullback extension
\[
\begin{tikzcd}
    0  \ar[r]& \Ad(E) \otimes \Omega^1_{C_Y/Y} \ar[d, symbol = \xlongequal{}] \ar[r] & At^D(E)(-D) \ar[r] \ar[d, symbol = \hookrightarrow]  & \mathcal{O}_{C_{Y}}(-D) \ar[r] \ar[d, symbol = \hookrightarrow] & 0\\
    0 \ar[r] & \Ad(E) \otimes \Omega^1_{C_Y/Y} \ar[r] & At(E) \ar[r] & \mathcal{O}_{C_Y} \ar[r] & 0
\end{tikzcd}
\]
We call the twist $\text{At}^D(E)$ the meromorphic $t$-Atiyah bundle with poles along $D$. The short exact sequence 
\[  0  \to \Ad(E) \otimes \Omega^1_{C_Y/Y}(D) \to  At^D(E) \to  \mathcal{O}_{C_{Y}} \to 0\]
is the meromorphic $t$-Atiyah sequence for the $G$-bundle $E$. It corresponds to an $\Ad(E)\otimes_{\cO_{C_Y}}\Omega^1_{C_Y}(D)$-torsor $A^D_{t\dash\text{Conn},E} \to C_Y$, which  we call the affine bundle of meromorphic $t$-connections.
\begin{defn}
\label{defn: meromorphic t-conn}
A meromorphic $t$-connection on $E$ (with poles bounded by $D$) is a section of the affine bundle $A^D_{t\dash\text{Conn},E} \to C_Y$. Equivalently, it is a splitting of the meromorphic $t$-Atiyah sequence for $E$.
\end{defn}
Let $(E,\nabla)$ be a $G$-bundle on $C_Y$ equipped with a meromorphic $t$-connection $\nabla$. For any other pair $(E', \nabla')$ on $C_Y$, an isomorphism $(E, \nabla) \to (E', \nabla')$ is defined to be an isomorphism $\psi: E \to E'$ that is compatible with the sections $\nabla, \nabla'$ of the corresponding affine bundles.

\begin{example} \label{example: meromorphic connection vector bundles}
    Suppose that $G=\GL_n$ for some positive integer $n$. Then $\GL_n$-bundles are in natural correspondence with vector bundles of rank $n$. Let $Y \to \mathbb{A}^1_S$ be a morphism of $S$-schemes corresponding to a section $t: \cO_Y \to \cO_Y$. Let $\cE$ be a rank $n$-vector bundle on $C_Y$. A meromorphic $t$-connection $\cE$ can be translated into the data of a $\cO_Y$-linear morphism of sheaves $\nabla: \cE(-D) \to \cE \otimes_{\cO_{C_Y}} \Omega^1_{C_Y/Y}$ satisfying the following: 
    
    \noindent \underline{$t$-Leibniz rule:} For any local sections $f \in H^0(U, \cO_U)$, $s \in H^0(U, \cE(-D))$ over an open $U \subset C_Y$, we have $\nabla(f \cdot s) = f \cdot \nabla s + t \cdot df \otimes \iota_{\cE}(s)$, where $d: \cO_{C_Y} \to \Omega^1_{C_Y/Y}$ denotes the exterior differential and $\iota: \cE(-D) \hookrightarrow \cE$ is the natural inclusion.
\end{example}

Similarly as in \cite[\S3.2]{herrero2020quasicompactness}, the bundle of connections $A^D_{t\dash\text{Conn},E}$ behaves well with respect to base change on $Y$. In other words, for all morphisms $T \to Y$, the torsor of meromorphic $t$-connections $A^D_{t\dash \text{Conn},E_T} \to C_T$ for the base change $E_T = E \times_{C_Y} C_T$ is canonically isomorphic to the base change $A^D_{t\dash \text{Conn},E} \times_{C_Y} C_T$. This allows to pull back meromorphic $t$-connections.
\begin{defn}
We denote by $\hodge_{G} \to \mathbb{A}^1_S$ the pseudofunctor from $(\text{Sch}/\bA^1_{S})^{op}$ into groupoids that sends an $\bA^1_S$-scheme $Y$ into the groupoid of pairs $(E, \nabla)$, where $E$ is a $G$-bundle on $C_Y$ and $\nabla$ is a meromorphic $t$-connection on $E$.
\end{defn}

Let $\Bun_{G}(C) \to S$ denote the relative moduli stack of $G$-bundles for the family of curves $C\to S$. The stack $\Bun_{G}(C)$ is a smooth algebraic stack over $S$ with affine diagonal, by \cite[Thm. 1.2]{hall-rydh-tannaka} and standard deformation theory of $G$-bundles for the smoothness (see the proof of \cite[Prop. 1]{heinloth-uniformization}). There is a forgetful morphism of pseudofunctors $F:  \hodge_{G} \to \Bun_{G}(C) \times_{S} \bA^1_S$ given by $(E, \nabla) \mapsto (E,t)$.
\begin{prop}
\label{prop: forget is affine and finite type}
The morphism $F:  \hodge_{G} \to \Bun_{G}(C) \times_{S} \bA^1_S$ is affine and of finite type. In particular, $ \hodge_{G}$ is an algebraic stack with affine diagonal and locally of finite type over $\bA^1_S$.
\end{prop}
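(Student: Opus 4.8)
The plan is to prove the two assertions in sequence: first that $F$ is affine and of finite type, and then deduce the structural statements about $\hodge_G$ as formal consequences via the known properties of $\Bun_G(C)$. The key observation is that $F$ exhibits $\hodge_G$ as a relative space of sections of the affine bundle $A^D_{t\text{-}\mathrm{Conn},E} \to C_Y$, and affineness should be inherited from the affineness of this bundle of connections over the curve together with the properness of $\pi \colon C \to S$.

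**First I would** make precise what the fiber of $F$ over a point $(E,t) \in (\Bun_G(C)\times_S \bA^1_S)(Y)$ looks like. By \Cref{defn: meromorphic t-conn}, a meromorphic $t$-connection on $E$ is a section of the affine bundle $A^D_{t\text{-}\mathrm{Conn},E} \to C_Y$, which is a torsor under the vector bundle $\Ad(E)\otimes \Omega^1_{C_Y/Y}(D)$. Thus for any $Y \to \Bun_G(C)\times_S\bA^1_S$ the fiber product $\hodge_G \times_{\Bun_G(C)\times_S\bA^1_S} Y$ represents the functor sending $T \to Y$ to the set of sections of the pulled-back affine bundle over $C_T$, which is well-behaved under base change by the compatibility statement recorded just before \Cref{defn: meromorphic t-conn}. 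I would then argue that this relative section functor is represented by an affine scheme of finite type over $Y$: sections of a vector bundle $\mathcal{V}$ (or a torsor under it) along the fibers of a proper flat morphism $\pi\colon C_Y \to Y$ are governed by the relative Hom-space $\pi_*\mathcal{V}$, and one reduces to the analysis of the Weil restriction / linear space $\mathbb{V}(\pi)$ attached to a coherent sheaf. Concretely, cohomology and base change (or the general formalism that the pushforward of a quasi-coherent sheaf along a proper morphism is corepresented by a complex of finite-rank bundles) shows that the functor of sections is representable by a closed subscheme of a finite-dimensional affine space over $Y$, hence affine and of finite type. Because this holds after any base change $Y \to \Bun_G(C)\times_S\bA^1_S$, the morphism $F$ is affine and of finite type.

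**The main obstacle** is handling the affineness carefully in the relative, possibly non-flat-cohomology setting: the sheaf $R^1\pi_*$ of the relevant vector bundle need not vanish, so the space of sections is not simply a vector bundle but a closed subscheme cut out inside one. I would address this by invoking the standard representability of the Weil restriction of an affine finite-type scheme along a proper flat morphism (or equivalently the fact that the relative linear space $\underline{\Spec}_Y\Sym(\pi_*\mathcal{V}^{\vee})$-type construction, refined via the complex computing $R\pi_*$, represents sections); the torsor (affine bundle) case follows from the vector bundle case by noting that an affine-space bundle over $C_Y$ is itself affine over $C_Y$, so its relative sections functor is a torsor under the sections of the linearization and inherits affineness and finite type. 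Once this is settled, the key point is simply that affine morphisms compose well and descend, matching the cited behavior of $A^D_{t\text{-}\mathrm{Conn}}$ under base change.

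**Finally**, for the ``in particular'' clause I would deduce algebraicity and the remaining properties formally. Since $\Bun_G(C)$ is an algebraic stack with affine diagonal, smooth and locally of finite type over $S$, the product $\Bun_G(C)\times_S \bA^1_S$ is algebraic, locally of finite type over $\bA^1_S$, and has affine diagonal. An affine finite-type morphism $F$ from $\hodge_G$ to such a stack is in particular representable, so $\hodge_G$ is an algebraic stack; it is locally of finite type over $\bA^1_S$ because $F$ is of finite type and the target is locally of finite type over $\bA^1_S$; and its diagonal is affine because both the diagonal of the target is affine and $F$ is affine (hence separated with affine diagonal), and these combine to show the diagonal of $\hodge_G$ is affine. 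This completes the proof.
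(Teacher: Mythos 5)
Your proposal is correct and follows essentially the same route as the paper: both identify the fiber of $F$ over a $T$-point $(E,t)$ with the relative sections functor of the affine bundle $A^D_{t\dash\text{Conn},E} \to C_T$ and then invoke representability of that sections functor by an affine finite-type scheme, deducing the ``in particular'' clause formally from the affine diagonal and local finite type of $\Bun_G(C)\times_S\bA^1_S$. The only difference is one of justification: the paper cites general representability theorems (Hall--Rydh, Olsson, Rydh) for section spaces of affine finite-type morphisms over a proper curve, whereas you give the standard hands-on argument via cohomology and base change, realizing the section space as a closed subscheme of the total space of a finite-rank bundle; both are valid.
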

\begin{proof}
Let $T$ be a scheme, and choose a morphism $T \to \Bun_{G}(C) \times_{S} \mathbb{A}^1_{S}$ corresponding to a morphism $T \to \mathbb{A}^1_{S}$ and a $G$-bundle $E$ on $C_{T}$. The fiber product $\hodge_{G} \times_{\Bun_{G}(C) \times_{S} \mathbb{A}^1_{S}} T$ is the functor from $T$-schemes into sets that sends a $T$-scheme $Y$ into the set of sections of the morphism $A_{t-\text{Conn}, E}^D \times_{T} Y \to C_{Y}$. Since $A_{t-\text{Conn}, E}^D \to C_{T}$ is affine and of finite type, the same holds for $\hodge_{G} \times_{\Bun_{G}(C) \times_{S} \mathbb{A}^1_{S}} T$ (by \cite[Thm. 2.3 (i)]{hall-rydh-hilbert-quot} and \cite[Thm. 1.1]{olsson-homstacks}+\cite[Thm. B]{rydh-noetherian-approximation}). Since $\Bun_{G}(C) \times_{S} \mathbb{A}^1_{S}$ has affine diagonal and is locally of finite type over $\mathbb{A}^1_{S}$, the same holds for $\hodge_{G}$.
\end{proof}

\begin{defn}
The stack of meromorphic $G$-Higgs bundles $\Higgs_{G}$ is the fiber product $0_{S} \times_{\mathbb{A}^1_{S}} \hodge_{G}$. For any $S$-scheme $T$, a $T$-point of $\Higgs_{G}$ is a pair $(E, \varphi)$ consisting of a $G$-bundle $E$ on $C_{T}$ and a section $\varphi \in H^0(\text{Ad}(E) \otimes \Omega^1_{C_{T}/T}(D))$.
\end{defn}

The naive formula ``$(t,\nabla)\mapsto t\nabla$" should define a $\bG_m$-action on $\hodge_{G}$ with zero-limits.
The following proposition gives a precise proof of this fact.
Recall that $\mathbb{G}_{m,S}$ acts linearly on $\mathbb{A}^1_{S}$ by the standard contracting action.
\begin{prop} \label{prop: g_m action and limits} \quad
\begin{enumerate}[(a)]
    \item There is an action $\mathbb{G}_{m,S} \times_{S} \hodge_{G} \to \hodge_{G}$ such that the morphism $\hodge_{G} \to \mathbb{A}^1_{S}$ is equivariant.

    \item Furthermore, for any $S$-scheme $T$ and any point $(E, \nabla) \in \hodge_{G}(T)$, the orbit morphism $\mathbb{G}_{m,T}  \to \hodge_{G}$ extends to a morphism $\mathbb{A}^1_{T} \to \hodge_{G}$. 
\end{enumerate}
 
\end{prop}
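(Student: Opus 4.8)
The plan is to build everything from a single rescaling automorphism of the fibre $\Omega^1_{C_Y/Y}\otimes\mathfrak{g}$ of the affine bundle, exploiting that the Hodge parameter $t$ enters the definition of $\phi$ only through the linear (in $t$) translation term $t_Y\cdot(g^{-1})^\ast\omega_Y$. For an $\mathbb{A}^1_S$-scheme $Y$ and a function $\lambda\in\mathcal{O}(Y)$, let $m_\lambda$ be fibrewise multiplication by $\lambda$ on the total space of $\Omega^1_{C_Y/Y}\otimes\mathfrak{g}$. Writing $\phi_t$ for the homomorphism of the second Notation with Hodge parameter $t$, the one essential computation is the intertwining identity
\[
m_\lambda\circ\phi_t(g)\;=\;\phi_{\lambda t}(g)\circ m_\lambda ,
\]
which follows directly from the defining formula $\phi_t(g)=\mathrm{Id}\otimes\Ad(g)+t\,(g^{-1})^\ast\omega$ and holds for $\lambda,t$ treated as arbitrary functions, in particular for $\lambda=0$, where $\phi_0(g)=\mathrm{Id}\otimes\Ad(g)$ is purely linear with associated bundle $\Ad(E)\otimes\Omega^1_{C_Y/Y}(D)$. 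By the definition of associated bundles via descent, this identity says that $m_\lambda$ is $G_{C_Y}$-equivariant from $(\Omega^1\otimes\mathfrak{g},\phi_t)$ to $(\Omega^1\otimes\mathfrak{g},\phi_{\lambda t})$, hence descends to a morphism of affine bundles $A^D_{t\dash\text{Conn},E}\to A^D_{\lambda t\dash\text{Conn},E}$ over $C_Y$, compatibly with the monomorphism defining the $D$-twist; it is an isomorphism exactly when $\lambda$ is invertible. In particular $m_\lambda$ carries a $t$-connection on $E$ to a $\lambda t$-connection on $E$.

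For part (a) I would define the action on a $T$-point $(\lambda,(E,\nabla))$ of $\mathbb{G}_{m,S}\times_S\hodge_{G}$, where $(E,\nabla)$ lies over $t_T$, by $\lambda\cdot(E,\nabla):=(E,m_\lambda\nabla)$, now a point of $\hodge_{G}(T)$ lying over $\lambda t_T$. Functoriality in $T$ is immediate from the base-change compatibility of the bundle of connections already recorded in the excerpt, so this is a genuine morphism of pseudofunctors. The action axioms reduce to $m_1=\mathrm{id}$ and $m_\lambda m_\mu=m_{\lambda\mu}$, and equivariance of $\hodge_{G}\to\mathbb{A}^1_S$ is precisely the statement that the target Hodge parameter $\lambda t_T$ is the image of $(\lambda,t_T)$ under the contracting action on $\mathbb{A}^1_S$.

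For part (b) I would extend the orbit map by letting $\lambda$ degenerate to $0$. Let $\operatorname{pr}\colon\mathbb{A}^1_T\to T$ be the projection and $\lambda\in\mathcal{O}(\mathbb{A}^1_T)$ the tautological coordinate. On $C_{\mathbb{A}^1_T}$ take the $G$-bundle $\operatorname{pr}^\ast E$ together with the pulled-back $(\operatorname{pr}^\ast t_T)$-connection $\operatorname{pr}^\ast\nabla$, and apply $m_\lambda$ to obtain a $(\lambda\operatorname{pr}^\ast t_T)$-connection on $\operatorname{pr}^\ast E$; by the boxed identity this section is defined over all of $\mathbb{A}^1_T$, including the locus $\lambda=0$. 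This is the desired morphism $\mathbb{A}^1_T\to\hodge_{G}$: its restriction to $\mathbb{G}_{m,T}$ is the orbit map from (a), and at $\lambda=0$ the map $m_0$ sends every local section to the zero section, so the limit point is $(E,0)\in\Higgs_{G}$, the bundle $E$ with the zero Higgs field.

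The only step requiring real care — and the main obstacle — is verifying that $m_\lambda$, defined on the fibre, genuinely glues to a morphism of the associated affine bundles and of their $D$-twists simultaneously, and that it does so in families with $\lambda$ allowed to be non-invertible. This is exactly the intertwining identity together with the observation that scalar multiplication commutes with the pullback monomorphism defining $A^D$. The degeneration at $\lambda=0$ is then automatic: it encodes the fact that the affine torsor of $t$-connections specializes to the vector bundle $\Ad(E)\otimes\Omega^1_{C_Y/Y}(D)$ of Higgs fields, so no separate limit computation is needed beyond substituting $\lambda=0$ into the formula for $m_\lambda$.
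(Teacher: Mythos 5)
Your proposal is correct, and the underlying idea is the same as the paper's: the action rescales the connection while fixing the underlying bundle, so everything happens in the fibers of $\hodge_{G} \to \Bun_{G}(C)$ and the $2$-categorical coherence issues disappear. The key identity $m_\lambda\circ\phi_t(g)=\phi_{\lambda t}(g)\circ m_\lambda$ does hold (the linear part $\mathrm{Id}\otimes\Ad(g)$ commutes with scaling and the translation term $t\,(g^{-1})^*\omega$ picks up exactly the factor $\lambda$), and it is compatible with the $D$-twist since the translation term lands in $\Omega^1\otimes\mathfrak{g}\subset\Omega^1(D)\otimes\mathfrak{g}$. The difference is in the packaging. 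For (a) the paper works with the dual description: $\nabla$ is a splitting of the meromorphic $t$-Atiyah sequence, and the action replaces the quotient map $q$ by $\tfrac{1}{a}q$, which is the same rescaling seen on extensions rather than on torsor cocycles. For (b) the routes genuinely diverge: the paper forms the $\mathbb{G}_m$-equivariant Atiyah sequence over $\mathbb{A}^1_S$, checks that its (ungraded) obstruction class $\gamma_E^D\otimes a$ vanishes, invokes linear reductivity of $\mathbb{G}_m$ to produce an equivariant splitting, and only then writes the family as $\nabla\otimes a$; you instead observe that $m_\lambda$ is defined for arbitrary, possibly non-invertible, functions $\lambda$, so applying it to the pulled-back connection over $C_{\mathbb{A}^1_T}$ gives the extension across $\lambda=0$ in one stroke, with limit the Higgs pair $(E,0)$. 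Your version of (b) is more elementary and avoids the equivariant-splitting argument; the paper's version has the mild advantage of exhibiting the whole family inside the fixed vector space $H^0(C,\Ad(E)(D)\otimes\Omega^1_{C/S})\otimes\cO_S[a]$, which is convenient for later weight computations. The only point you should make fully explicit is that $m_\lambda$ descends not just on the associated bundle $E\times^{G}(\Omega^1\otimes\mathfrak{g})$ but on its pushout to an $\Ad(E)\otimes\Omega^1(D)$-torsor, which is immediate from the same intertwining identity read in $\Omega^1(D)\otimes\mathfrak{g}$.
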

\begin{proof}
\noindent (a). We shall build the action on the fibers of the representable morphism $\hodge_{G} \to \Bun_{G}(C)\times_{S} \mathbb{A}^1_S \to \Bun_{G}(C)$ so that the composition is equivariant for the trivial action on $\Bun_{G}(C)$. Hence, we don't have to get into the subtleties of defining the coherence isomorphisms for group actions on stacks. Let $T$ be an $S$-scheme and choose a morphism $T \to \Bun_{G}(C)$ corresponding to a $G$-bundle $E$ on $C_T$. We want to define an action of $\mathbb{G}_{m}(T)$ on the $T$-points of the base change $\hodge_{G} \times_{\Bun_{G}(C)} T$, which correspond to pairs $(f, \nabla)$ of an $S$-morphism $f: T \to \mathbb{A}^1_{S}$ and an $f^*(t)$-connection $\nabla$ on $E$. Let $a\in \mathbb{G}_{m}(T)$. We define $a\cdot f$ via the fixed action of $\mathbb{G}_m(T)$ on $\mathbb{A}^1_{S}(T)$. Consider the $f^*(t)$-meromorphic Atiyah sequence for $E$
\[ 0 \to \text{Ad}(E)(D) \otimes \Omega^1_{C_T/T} \xrightarrow{i} \text{At}^{D}(E) \xrightarrow{q} \cO_{C_{T}} \to 0\]
By construction, the $(a\cdot f)^*(t)$-meromorphic Atiyah sequence for $E$ is given by
\[ 0 \to \text{Ad}(E)(D) \otimes \Omega^1_{C_T/T} \xrightarrow{i} \text{At}^{D}(E) \xrightarrow{\frac{1}{a}q} \cO_{C_{T}} \to 0\]
If we view $\nabla$ as a splitting of the $f^*(t)$-meromorphic Atiyah sequence, then $a \cdot \nabla$ is a splitting of the $(a\cdot f)^*(t)$-meromorphic Atiyah sequence. We then set $a \cdot (f, \nabla) := (a \cdot f, a\cdot \nabla)$. This yields a well-defined action of $\mathbb{G}_m(T)$ on $(\hodge_{G} \times_{\Bun_{G}(C)} T)(T)$ compatible with base change.

\noindent (b). We replace $S$ by $T$ and assume that $S=T$. The morphism $S \to \hodge_{G}$ corresponds to a section $f: S \to \mathbb{A}^1_{S}$ and an $f^*(t)$-meromorphic connection $(E, \nabla)$ on a $G$-bundle $E$.  Let $a$ denote the coordinate for the multiplicative group, so that $\mathbb{G}_{m,S} = \uSpec_{S}(\cO_{S}[a,a^{-1}])$,. The action of $\mathbb{G}_{m,S}$ on $\mathbb{A}^1_{S}$ yields an orbit morphism $o: \mathbb{G}_{m,S} \to \mathbb{A}^1_{S}$.
This morphism extends uniquely to a $\mathbb{G}_{m,S}$-equivariant morphism $\overline{o}: \mathbb{A}^1_{S} \to \mathbb{A}^1_{S}$ given by $a \mapsto a\cdot f$, where we use $a$ to denote the coordinate of the left-most copy of $\mathbb{A}^1_{S}$. Let $E|_{C_{\mathbb{A}^1_{S}}}$ denote the pullback of the $G$-bundle $E$ under the projection morphism $C_{\mathbb{A}^1_{S}} \to C$. Consider the $\overline{o}^*(t)$-meromorphic Atiyah sequence for $E_{C_{\mathbb{A}^1_{S}}}$
\[ 0 \to \text{Ad}(E)(D)_{\mathbb{A}^1_{S}} \otimes \Omega^1_{C_{\mathbb{A}^1_{S}}/\mathbb{A}^1_{S}} \to \text{At}^{D}(E_{C_{\mathbb{A}^1_S}}) \to \cO_{C_{\mathbb{A}^1_{S}}} \to 0\]
By construction, this is naturally a $\mathbb{G}_{m,S}$-equivariant extension (equivalently $\mathbb{Z}$-graded extension of graded $\cO_{C_{\mathbb{A}^1_{S}}}$-modules).
If we denote by $\gamma_{E}^{D} \in H^1(C, \text{Ad}(E)(D) \otimes \Omega^1_{C/S})$ the obstruction to the splitting of the $f^*(t)$-meromorphic Atiyah sequence for $E$, then $\gamma_{E}^{D} \otimes a \in H^1(C, \text{Ad}(E)(D) \otimes \Omega^1_{C/S}) \otimes \cO_{S}[a] \cong H^1(C_{\mathbb{A}^1_{S}}, \text{Ad}(E)(D)_{\mathbb{A}^1_{S}} \otimes \Omega^1_{C_{\mathbb{A}^1_{S}}/\mathbb{A}^1_{S}})$ is the obstruction for the $\overline{o}^*(t)$-meromorphic Atiyah sequence for $E_{C_{\mathbb{A}^1_{S}}}$ (as an ungraded extension). Since $\nabla$ yields a splitting of the $f^*(t)$-meromorphic Atiyah sequence, we have $\gamma_{E}^{D} =0$, and hence $\gamma_{E}^{D} \otimes a =0$. Since $\mathbb{G}_{m,S}$ is linearly reductive, it follows that there is an equivariant splitting of the $\overline{o}^*(t)$-meromorphic Atiyah sequence for $E_{C_{\mathbb{A}^1_{S}}}$. 
\footnote{Indeed, fix a non-equivariant splitting corresponding to a section $\cO_{C_{\mathbb{A}^1_{S}}} \to \text{At}^{D}(E_{C_{\mathbb{A}^1_{S}}})$, which is not necessarily a morphism of $\cO_{C_{\mathbb{A}^1_{S}}}$-graded modules. We can always make this splitting graded by projecting the image of the unit section $1$ to the $0$-graded piece in $\text{At}^{D}(E_{C_{\mathbb{A}^1_S}})$.} This equivariant splitting allows us to view $\overline{o}^*(t)$-meromorphic connections of $E_{C_{\mathbb{A}^1_{S}}}$ as sections of $H^0(C_{\mathbb{A}^1_{S}}, \text{Ad}(E)(D)_{\mathbb{A}^1_{S}} \otimes \Omega^1_{C_{\mathbb{A}^1_{S}}/\mathbb{A}^1_{S}}) \cong H^0(C,\text{Ad}(E)(D) \otimes \Omega^1_{C/S}) \otimes \cO_{S}[a]$. If we view $\nabla \in H^0(C, \text{Ad}(E)(D) \otimes \Omega^1_{C/S})$, then away from $0$ the $\mathbb{G}_m$-action sends $a$ to the section $a \cdot \nabla$. Hence, the $\overline{o}^*(t)$-meromorphic connection corresponding to $\nabla \otimes a \in H^0(C_{\mathbb{A}^1_{S}}, \text{Ad}(E)(D)_{\mathbb{A}^1_{S}} \otimes \Omega^1_{C_{\mathbb{A}^1_{S}}/\mathbb{A}^1_{S}})$ yields the desired extension of the orbit morphism to $\mathbb{A}^1_{S} \to \hodge_{G}$.
\end{proof}

\begin{notn}[Connected components]
    Suppose that $G$ is a split group scheme over $S$. Let $\pi_1(G)$ denote the algebraic fundamental group of $G$, which is defined as the coweight lattice of a maximal split torus $T \subset G$ quotiented by the corresponding coroot lattice. Then any geometric point $\Spec(k) \to \Bun_{G}(C)$ has an associated degree in $\pi_1(G_{k}) = \pi_1(G)$ \cite{hoffmann-connected-components}. By the definition of $d$ in \cite[Thm. 5.8]{hoffmann-connected-components} in terms of degrees of line bundles associated to Borel reductions, and by the main result in \cite{drinfeld-simpson} it follows that there is an open and closed substack $\Bun_{G}(C)_d \subset \Bun_{G}(C)$ parametrizing $G$-bundles of degree $d$. Since all the fibers of the smooth morphism $\Bun_{G}(C)_d \to S$ are geometrically connected \cite{hoffmann-connected-components} and the base $S$ is connected, it follows that $\Bun_{G}(C)_d$ is connected, and so $\Bun_{G}(C) = \bigsqcup_{d \in \pi_1(G)} \Bun_{G}(C)_d$ is the decomposition of $\Bun_{G}(C)$ into its connected components.
\end{notn}

\begin{defn}
   For any $d \in \pi_1(G)$, we set $\hodge_{G,d}$ to be the open and closed preimage of $\Bun_{G}(C)_d \times_S \mathbb{A}^1_S$ in $\hodge_{G}$ under the forgetful morphism. 
\end{defn}
\end{subsection}

\begin{subsection}{Semistability}\label{subsec: semistability}
\begin{context}
For this subsection we assume that $G$ is a reductive group scheme over $S$. 
\end{context}
Let $k$ be an algebraically closed field over $\mathbb{A}^1_{S}$. Choose a point $Spec(k) \to \hodge_{G}$, corresponding to a pair $(E, \nabla)$ of a $G$-bundle on $C_{k}$ and a meromorphic $t$-connection $\nabla$. Let $P \subset G_k$ be a parabolic subgroup, and let $E_{P}$ be a reduction of structure group of $E$ to $P$. We denote by $\Ad: P \to \GL(\mathfrak{p})$ the adjoint representation of $P$ acting on its Lie algebra $\mathfrak{p}$, and denote by $\Ad(E_{P})$ the associated adjoint bundle. By definitions there is a morphism of meromorphic $t$-Atiyah sequences
\[
\begin{tikzcd}
    0  \ar[r]& \Ad(E_P) \otimes \Omega^1_{C_k/k}(D) \ar[d, symbol = \hookrightarrow] \ar[r] & At^D(E_P) \ar[r] \ar[d, symbol = \hookrightarrow]  & \mathcal{O}_{C_{k}} \ar[r] \ar[d, symbol = \xlongequal{}] & 0\\
    0 \ar[r] & \Ad(E)(D) \otimes \Omega^1_{C_k/k} \ar[r] & At^D(E) \ar[r] & \mathcal{O}_{C_k} \ar[r] & 0
\end{tikzcd}
\]
The meromorphic $t$-connection $\nabla$ corresponds to a splitting of the bottom row.
\begin{defn} \label{defn: compatible parabolic reduction}
We say that the parabolic reduction $E_{P}$ is compatible with $\nabla$ if the splitting $\cO_{C_k} \to At^D(E)$ factors through $At^D(E_P)$.
\end{defn}
The following lemma will be useful in the proof of \Cref{prop: hn boundedness}.
\begin{lemma} \label{lemma: extension of admissible parabolic reduction is admissible}
Let $E_P$ be a parabolic reduction of $E$ compatible with $\nabla$. For any larger parabolic subgroup $P \subset P' \subset G_k$, the extension of structure group $E_{P'}$ to $P'$ is compatible with $\nabla$.
\end{lemma}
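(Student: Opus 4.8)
We are given a parabolic reduction $E_P$ of $E$ to $P \subset G_k$ that is compatible with $\nabla$, and for $P \subset P' \subset G_k$ we write $E_{P'} := E_P \times^P P'$ for the extension of structure group. The plan is to exploit the functoriality of the meromorphic $t$-Atiyah construction along the chain of inclusions $P \subset P' \subset G_k$, reducing compatibility of $E_{P'}$ to a formal diagram chase once this functoriality is established.

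First I would record the inclusions of adjoint bundles. Since $\Ad(E_{P'}) = E_{P'} \times^{P'} \mathfrak{p}' = E_P \times^P \mathfrak{p}'$, the $P$-equivariant inclusions of Lie algebras $\mathfrak{p} \subset \mathfrak{p}' \subset \mathfrak{g}$ induce a chain of inclusions of associated bundles $\Ad(E_P) \hookrightarrow \Ad(E_{P'}) \hookrightarrow \Ad(E)$, compatible with the inclusion $\Ad(E_P) \hookrightarrow \Ad(E)$ appearing in the diagram preceding \Cref{defn: compatible parabolic reduction}.

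The heart of the argument is the second step: promoting these inclusions to a commuting chain of monomorphisms of meromorphic $t$-Atiyah sequences $\text{At}^D(E_P) \hookrightarrow \text{At}^D(E_{P'}) \hookrightarrow \text{At}^D(E)$, each compatible with the projection onto $\cO_{C_k}$. The point is that the descent data defining $\text{At}^D$ through the homomorphism $\phi$ is compatible with restriction along $P \hookrightarrow P' \hookrightarrow G_k$: for a local section $g$ of $P$, both $\Ad(g)$ and $(g^{-1})^*\omega$ are computed identically in $P$, $P'$, and $G_k$ once we identify $\mathfrak{p} \subset \mathfrak{p}' \subset \mathfrak{g}$, so $\phi_P$ is the restriction of $\phi_{G_k}$ through these embeddings. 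Equivalently, at the level of total spaces the bundles of meromorphic $t$-connections satisfy $A^D_{t\dash\text{Conn},E_P} \subset A^D_{t\dash\text{Conn},E_{P'}} \subset A^D_{t\dash\text{Conn},E}$ as sub-affine-bundles, so that the vertical inclusion $\text{At}^D(E_P) \hookrightarrow \text{At}^D(E)$ of the diagram factors through $\text{At}^D(E_{P'})$.

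Finally I would conclude by a diagram chase. By \Cref{defn: compatible parabolic reduction}, compatibility of $E_P$ means the splitting $s \colon \cO_{C_k} \to \text{At}^D(E)$ given by $\nabla$ factors as $s = \iota_P \circ s_P$ for some $s_P \colon \cO_{C_k} \to \text{At}^D(E_P)$. Composing with $\text{At}^D(E_P) \hookrightarrow \text{At}^D(E_{P'})$ yields $s_{P'} \colon \cO_{C_k} \to \text{At}^D(E_{P'})$ with $\iota_{P'} \circ s_{P'} = s$; this is exactly the factorization of $s$ through $\text{At}^D(E_{P'})$ that defines compatibility of $E_{P'}$ with $\nabla$, and $s_{P'}$ is automatically a splitting of the $E_{P'}$-sequence since all the quotient maps to $\cO_{C_k}$ are identified. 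The main obstacle is the second step: carefully unwinding the $\phi$-descent data and checking its compatibility with $\mathfrak{p} \subset \mathfrak{p}'$. Granting this functoriality, the remainder is purely formal.
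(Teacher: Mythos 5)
Your proposal is correct and follows essentially the same route as the paper: the paper's proof simply asserts the chain of inclusions $\text{At}^{D}(E_{P}) \subset \text{At}^{D}(E_{P'}) \subset \text{At}^{D}(E)$ and observes that a splitting factoring through the smallest bundle factors through the intermediate one. Your additional unwinding of the $\phi$-descent data is a more careful justification of those inclusions, but the argument is the same.
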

\begin{proof}
There are inclusions of meromorphic $t$-Atiyah bundles $\text{At}^{D}(E_{P}) \subset \text{At}^{D}(E_{P'}) \subset \text{At}^{D}(E)$. If the splitting $\nabla: \cO_{C_k} \to \text{At}^{D}(E)$ factors through $\text{At}^{D}(E_{P})$, then it also factors through $\text{At}^{D}(E_{P'})$.
\end{proof}

All parabolic subgroups $P \subset G_k$ admit Levi decompositions $P = L \ltimes U$, where $L$ is a split Levi subgroup, and $U$ is the unipotent radical of $P$. The conjugation action of $L$ induces a representation of $L$ on the Lie algebra $\mathfrak{u} = \text{Lie}(U)$. When restricted to the maximal central torus $Z_{L}^{0} \subset L$, the representation $\mathfrak{u}$ breaks into weight spaces. 
\begin{defn}
A cocharacter $\lambda: \mathbb{G}_m \to Z_{L}^{\circ}$ is $P$-dominant if all $\lambda$-weights of $\mathfrak{u}$ are positive.
\end{defn}

The adjoint action of $Z_{L}^{\circ}$ on $\mathfrak{g}_k$ induces an integral bilinear trace-pairing $\text{tr}_{\mathfrak{g}}$ on cocharacters of $Z_{L}^{\circ}$ (cf. \cite[Defn. 4.7]{gauged_theta_stratifications}). This yields a morphism $\text{tr}_{\mathfrak{g}}$ from cocharacters of $Z_{L}^{\circ}$ into characters of $Z_{L}^{\circ}$.
\begin{defn}
     We say that a character $\chi: P \to \mathbb{G}_m$ is $P$-dominant if it is of the form $\text{tr}_{\mathfrak{g}}(\lambda)$ for some $P$-dominant cocharacter $\lambda$.
\end{defn}

\begin{remark}
There is an alternative description of the $P$-dominant characters as follows. Choose a Borel subgroup $T \subset B \subset P$, yielding a set of simple roots $(\alpha_i)_{i \in I}$. Let $I_P$ denote the set of $i \in I$ such that the corresponding root subgroup $U_{\alpha_i}$ is not contained in the Levi subgroup $L$. Then, under the natural inclusion $X^*(Z_{L}^{\circ}) \hookrightarrow X^*(T)_{\mathbb{Q}}$, a character is $P$-dominant if it is a nonnegative linear combination $\sum_{i \in I_P} c_i \omega_i$ with $c_i\geq0$ and $\omega_i$ the fundamental weight dual to the coroot $\alpha_i^{\vee}$.
\end{remark}

\begin{defn} \label{defn: classical semistability}
The pair $(E, \nabla)$ is called semistable if for all parabolic subgroups $P \subset G_k$, all parabolic reductions $E_{P}$ compatible with $\nabla$, and all $P$-dominant characters $\chi: P \to \mathbb{G}_m$, the associated line bundle $E_P(\chi)$ has nonpositive degree on $C_k$.
\end{defn}

\begin{example} \label{example: semistability of meromorphic conenctions on vector bundles}
Suppose that $G = \GL_n$. For any geometric point $t: \Spec(k) \to \mathbb{A}^1_{S}$, a point in $\hodge_{\GL_n}(k)$ consists of a pair $(\cE, \nabla)$, where $\cE$ is a rank $n$-vector bundle on $C_{k}$ and $\nabla$ is a $k$-linear morphism $\nabla: \cE(-D) \to \cE \otimes \Omega^1_{C_{k}/k}$ satisfying the $t$-Leibniz rule (see \Cref{example: meromorphic connection vector bundles}).
Parabolic reductions of $\cE$ correspond to filtrations by subbundles
\[ 0 = \cE_0 \subset \cE_1 \subset \cE_2 \subset \ldots \subset \cE_l = \cE\]
Such a filtration is $\nabla$-compatible if and only if $\nabla(\cE_i(-D)) \subset \cE_i \otimes \Omega^1_{C_k/k}$ for all $i$. In view of this, we define a sub-$t$-connection $\cF$ of $(\cE, \nabla)$ to be a vector subbundle $\cF \subset \cE$ such that $\nabla(\cF(-D)) \subset \cF\otimes \Omega^1_{C_k/k}$. 
Then, a meromorphic $t$-connections $(\cE, \nabla) \in \hodge_{\GL_n}(k)$ is semistable if and only if for all nonzero sub-$t$-connection $(\cF, \nabla') \subset (\cE, \nabla)$, we have $\frac{\text{deg}(\cF)}{\text{rank}(\cF)} \leq \frac{\text{deg}(\cE)}{\text{rank}(\cE)}$. 
This characterization of semistability in the case of vector bundles without $t$-connection is well-known, for example see \cite[Cor. 1]{hyeon2004note} (whose definition of semistability is equivalent to ours by \cite[Lem. 2.1]{ramanathan-stable}). The analogous characterization of semistability in the context of $t$-connections can be proven by the same argument with minor modifications.  
\end{example}
\end{subsection}

\begin{subsection}{Change of group} \label{subsection: change of groups}
Let $\rho: G \to H$ be a homomorphism of smooth group schemes over $S$. The associated bundle construction induces a homomorphism of stacks
\[  \Bun_{G}(C) \times_{S} \mathbb{A}^1_{S} \to \Bun_{H}(C) \times_{S} \mathbb{A}^1_S, \; \quad (E, f) \mapsto (\rho_*(E), f)\]
Let $Y$ be a scheme over $\mathbb{A}^1_S$. For any $G$-bundle $E$ on $C_Y$, there is an induced morphism of adjoint bundles $\text{Ad}(\rho): \text{Ad}(E) \to \text{Ad}(\rho_*(E))$ which fits into a morphism of extensions
\[
\begin{tikzcd}
    0  \ar[r]& \Ad(E) \otimes \Omega^1_{C_Y/Y}(D) \ar[d, "\text{Ad}(\rho) \otimes \text{id}", labels= left] \ar[r] & At^D(E) \ar[r] \ar[d]  & \mathcal{O}_{C_{Y}} \ar[r] \ar[d, symbol = \xlongequal{}] & 0\\
    0 \ar[r] & \Ad(\rho_*(E)) \otimes \Omega^1_{C_Y/Y}(D) \ar[r] & At^D(\rho_*(E)) \ar[r] & \mathcal{O}_{C_Y} \ar[r] & 0
\end{tikzcd}
\]
For every meromorphic $t$-connection $\nabla$ on $E$ corresponding to a splitting of the top row, composition with the middle vertical morphism yields a meromorphic $t$-connection $\rho_*(\nabla)$ on $\rho_*(E)$. This induces a morphism of stacks
\[ \rho_*: \hodge_{G} \to \hodge_{H}, \; \; (E, \nabla) \mapsto (\rho_*(E), \rho_*(\nabla)) \]
\begin{lemma} \label{lemma: change of group under closed immersion is affine}
If $\rho: G \to H$ is a closed immersion of reductive group schemes over $S$, then $\rho_*: \hodge_{G} \to \hodge_{H}$ is affine and of finite type.
\end{lemma}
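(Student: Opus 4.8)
The plan is to route the problem through the change-of-group morphism on the stack of bundles and then invoke a cancellation property for affine morphisms. Recall from \Cref{prop: forget is affine and finite type} the forgetful morphisms $F_G\colon \hodge_{G} \to \Bun_{G}(C)\times_S \bA^1_S$ and $F_H\colon \hodge_{H}\to \Bun_{H}(C)\times_S\bA^1_S$, which are both affine and of finite type. The explicit construction of $\rho_*$ recalled in this subsection (via the morphism of meromorphic $t$-Atiyah sequences) is by design compatible with forgetting the connection, so there is a commutative square
\[
\begin{tikzcd}
\hodge_{G} \ar[r, "\rho_*"] \ar[d, "F_G"'] & \hodge_{H} \ar[d, "F_H"] \\
\Bun_{G}(C)\times_S\bA^1_S \ar[r, "\rho_*^{\Bun}\times \mathrm{id}"'] & \Bun_{H}(C)\times_S\bA^1_S
\end{tikzcd}
\]
where $\rho_*^{\Bun}\colon \Bun_{G}(C)\to\Bun_{H}(C)$ is the associated-bundle morphism. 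Equating the two composites to $\Bun_{H}(C)\times_S\bA^1_S$ gives $F_H\circ \rho_* = (\rho_*^{\Bun}\times\mathrm{id})\circ F_G$, which is the identity I would exploit.

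First I would show that $\rho_*^{\Bun}$ is affine and of finite type. Since $\rho$ is a closed immersion of reductive group schemes, the fppf quotient $H/G$ is representable by a scheme that is affine over $S$, by (a relative version of) Matsushima's criterion in arbitrary characteristic (Haboush, Richardson). Base-changing the atlas $\Spec\cO_S \to BH$, one identifies $BG\times_{BH}\Spec\cO_S$ with $H/G$, so $\rho_*^{\Bun}$ is representable, and for an $H$-bundle $E_H$ on $C_Y$ its fiber is the functor of sections of the $C_Y$-affine morphism $E_H(H/G)\to C_Y$ (reductions of structure group to $G$). By exactly the same argument as in \Cref{prop: forget is affine and finite type} — sections of an affine, finite type morphism over the proper flat family $C_Y\to Y$ form an affine scheme of finite type over $Y$, via \cite[Thm. 2.3(i)]{hall-rydh-hilbert-quot} together with \cite[Thm. 1.1]{olsson-homstacks} and \cite[Thm. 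B]{rydh-noetherian-approximation} — it follows that $\rho_*^{\Bun}$, and hence $\rho_*^{\Bun}\times\mathrm{id}$, is affine and of finite type.

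With this in hand I would conclude by cancellation. Since $F_G$ and $\rho_*^{\Bun}\times\mathrm{id}$ are both affine of finite type, their composite $F_H\circ\rho_*$ is affine of finite type. As $F_H$ is affine it is in particular separated, so its diagonal is a closed immersion; factoring $\rho_*$ as its graph $\Gamma_{\rho_*}$ (a base change of $\Delta_{F_H}$, hence a closed immersion) followed by the projection $\hodge_{G}\times_{\Bun_{H}(C)\times_S\bA^1_S}\hodge_{H}\to\hodge_{H}$ (a base change of the affine morphism $F_H\circ\rho_*$) exhibits $\rho_*$ as a composite of affine morphisms, hence affine. Being affine, $\rho_*$ is quasi-compact; and since both $\hodge_{G}$ and $\hodge_{H}$ are locally of finite type over $\bA^1_S$ (again \Cref{prop: forget is affine and finite type}), $\rho_*$ is locally of finite type. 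Quasi-compact plus locally of finite type gives finite type, completing the proof.

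The only genuinely nonformal input is the affineness of $H/G$ over $S$: the passage from the fiberwise Matsushima–Haboush–Richardson statement to representability of $H/G$ by an $S$-affine scheme is where the reductivity of both groups and a mild base-change/flatness discussion are needed, and this is the step I expect to require the most care to state cleanly. Everything else is the commutative square, the section-space computation (already available from \Cref{prop: forget is affine and finite type}), and standard cancellation properties of affine and finite-type morphisms of algebraic stacks.
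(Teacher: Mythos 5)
Your proposal is correct, but it takes a genuinely different route from the paper. The paper factors $\rho_*$ through the fiber product $Z:= (\Bun_{G}(C) \times_{S} \mathbb{A}^1_{S})\times_{\Bun_{H}(C) \times_{S} \mathbb{A}^1_{S}} \hodge_{H}$ and then proves the sharper statement that $\hodge_{G} \to Z$ is a \emph{closed immersion}: the key computation is that, since $d\rho$ realizes $\Ad(E)$ as a subbundle of $\Ad(\rho_*(E))$ with locally free quotient $E(\mathfrak{h}/\mathfrak{g})$, a $t$-connection on $\rho_*(E)$ lifts to $E$ exactly when the induced section of $At^D(\rho_*(E))/At^D(E)$ vanishes, which is a closed condition on the base. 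You instead avoid any contact with the Atiyah sequences and run a purely formal cancellation argument: the square commutes, $F_H\circ\rho_*=(\rho_*^{\Bun}\times\mathrm{id})\circ F_G$ is affine of finite type, and $F_H$ is separated with closed-immersion diagonal, so the graph factorization forces $\rho_*$ to be affine of finite type. Both arguments ultimately rest on the same nonformal input, namely that $\rho_*^{\Bun}$ is affine of finite type (the paper cites this; you sketch it via Matsushima--Haboush--Richardson plus the section-space theorem, which is essentially the content of the cited reference). What your softer argument loses is the closed-immersion statement itself: the paper explicitly reuses "the proof of" this lemma in \Cref{lemma: affine grassmannian is ind-projective} to deduce that a certain morphism of affine Grassmannians is a closed immersion, and your proof would not supply that. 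As a proof of the lemma as literally stated, however, your argument is complete, modulo the base-change discussion for the affineness of $H/G$ over $S$ that you correctly flag as the delicate point.
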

\begin{proof}
Consider the following diagram of $S$-stacks, where $Z$ is defined so that the square is Cartesian:
\[
\begin{tikzcd}
    \hodge_{G}  \ar[r, "i"]& Z \ar[d] \ar[r] & \hodge_{H} \ar[d] \\
    & \Bun_{G}(C)\times_{S}\mathbb{A}^1_S \ar[r, "\rho_*\times \mathrm{id}"] & \Bun_{H}(C)\times_S\mathbb{A}^1_S.
\end{tikzcd}
\]
The morphism $\rho_*: \Bun_{G}(C) \to \Bun_{H}(C)$ is affine of finite type (cf. the proof of \cite[Prop. 2.3]{herrero2020quasicompactness}). Hence, the base change $Z \to \hodge_{H}$ is affine and of finite type. We shall conclude by showing that $i: \hodge_{G} \to Z$ is a closed immersion. 
Let $T$ be an $\mathbb{A}^1_{S}$-scheme, and choose a morphism $T \to Z$ represented by a $G$-bundle $E$ on $C_{T}$ and a $t$-connection $\nabla$ on $\rho_*(E)$. 
The fiber product $T \times_{Z} \hodge_{G}$ is the $T$-functor that sends a $T$-scheme $Y$ to the set of $t$-connections $\widetilde{\nabla}$ on the base change $E|_{C_{Y}}$ such that $\rho_*(\widetilde{\nabla}) = \nabla|_{C_{Y}}$.
Since $\rho$ is a closed immersion, the vertical arrows in the following diagram are injective
\[
\begin{tikzcd}
    0  \ar[r]& \Ad(E) \otimes \Omega^1_{C_T/T}(D) \ar[d, "\text{Ad}(\rho) \otimes \text{id}", labels= left] \ar[r] & At^D(E) \ar[r] \ar[d]  & \mathcal{O}_{C_{T}} \ar[r] \ar[d, symbol = \xlongequal{}] & 0\\
    0 \ar[r] & \Ad(\rho_*(E)) \otimes \Omega^1_{C_T/T}(D) \ar[r] & At^D(\rho_*(E)) \ar[r] & \mathcal{O}_{C_T} \ar[r] 
 & 0.
\end{tikzcd}
\]
The quotient of $\text{Ad}(E) \hookrightarrow \text{Ad}(\rho_*(E))$ is the associated vector bundle $E(\mathfrak{h}/\mathfrak{g})$, and so it follows that $\text{At}^{D}(E) \subset \text{At}^{D}(\rho_*(E))$ is a subbundle. The $t$-connection $\nabla$ corresponds to a splitting $\nabla: \mathcal{O}_{C_{T}} \to \text{At}^{D}(\rho_*(E))$ of the bottom row. For any $Y \to T$, the base change $\nabla_{C_{Y}}: \mathcal{O}_{C_{Y}} \to \text{At}^{D}(\rho_*(E))|_{C_{Y}} \cong \text{At}^{D}(\rho_*(E|_{C_{Y}}))$ admits a lift $\widetilde{\nabla}$ on $E_{C_{Y}}$ if and only if $\nabla_{C_{Y}}$ factors through the subbundle $\text{At}^{D}(E)|_{C_{Y}} \subset \text{At}^{D}(\rho_*(E))|_{C_{Y}}$. If we denote by $s$ the composition $s: \cO_{C_{T}} \xrightarrow{\nabla} \text{At}^{D}(\rho_*(E)) \to \text{At}^{D}(\rho_*(E))/\text{At}^{D}(E)$, then the base change $T \times_{Z} \hodge_{G}$ is the subfunctor of $T$ consisting of those $Y \to T$ such that the base change $s|_{C_{Y}}$ is identically $0$. This is represented by a closed subscheme of $T$, as desired.
\end{proof}

\begin{lemma} \label{lemma: change of group for etale isogenies}
Suppose that $\rho: G \to H$ is an isogeny of reductive group schemes whose kernel $K$ is a finite \'etale group scheme of multiplicative type over $S$. Then the morphism of stacks $\rho_*: \hodge_{G} \to \hodge_{H}$ is quasifinite and proper with relative stabilizers that are \'etale and of mutiplicative type.
\end{lemma}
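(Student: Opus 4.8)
The plan is to realize $\rho_*\colon \hodge_G \to \hodge_H$ as a base change of the induced morphism $\rho_*\colon \Bun_G(C) \to \Bun_H(C)$ on bundle stacks, and then to analyze the latter using that $K$ is central and finite.

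First I would record the commutative square coming from the forgetful morphisms, exactly as in the proof of \Cref{lemma: change of group under closed immersion is affine}:
\[
\begin{tikzcd}
\hodge_G \ar[r, "\rho_*"] \ar[d, "F_G"] & \hodge_H \ar[d, "F_H"] \\
\Bun_G(C) \times_S \mathbb{A}^1_S \ar[r, "\rho_*"] & \Bun_H(C) \times_S \mathbb{A}^1_S.
\end{tikzcd}
\]
The crucial point, and the only place where the hypothesis that $K$ is étale is used, is that this square is \emph{Cartesian}. Indeed, for a $G$-bundle $E$ on $C_Y$ the map $\Ad(\rho)\colon \Ad(E) \to \Ad(\rho_*E)$ is the one induced by the differential $d\rho\colon \mathfrak{g}\to\mathfrak{h}$; since $\rho$ is an isogeny with étale kernel it is étale, so $d\rho$ is an isomorphism and hence $\Ad(\rho)$ is an isomorphism of vector bundles. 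Feeding this into the morphism of meromorphic $t$-Atiyah sequences from Subsection \ref{subsection: change of groups}, the five lemma shows that $\At^D(E)\to\At^D(\rho_*E)$ is an isomorphism, compatibly with base change (the cokernel $E(\mathfrak{h}/\mathfrak{g})$ that appeared in \Cref{lemma: change of group under closed immersion is affine} now vanishes). Thus splittings of the two Atiyah sequences correspond bijectively and functorially in $Y$, which is precisely the assertion that the square is Cartesian.

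Granting this, the three properties in question — quasifiniteness, properness, and having relative stabilizers that are étale of multiplicative type — are all stable under base change (the last because relative inertia base-changes: $I_{\mathcal{X}'/\mathcal{Y}'}=I_{\mathcal{X}/\mathcal{Y}}\times_{\mathcal{X}}\mathcal{X}'$). So it suffices to establish them for $\rho_*\colon\Bun_G(C)\to\Bun_H(C)$, the extra factor $\mathbb{A}^1_S$ being irrelevant. Here I would first note that $K$ is central: its conjugation action $G\to\underline{\on{Aut}}(K)$ factors through the trivial map, because $G$ is connected with geometrically connected fibers while $\underline{\on{Aut}}(K)$ is étale. Centrality lets $\Bun_K(C)$ act on $\Bun_G(C)$ over $\Bun_H(C)$ by twisting, and the nonempty geometric fibers of $\rho_*$ become torsors under $\Bun_K(C)$, since two lifts of a fixed $H$-bundle differ by a unique $K$-torsor. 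Moreover the relative inertia of $\rho_*$ at a bundle $E$ is $\ker\big(\underline{\on{Aut}}(E)\to\underline{\on{Aut}}(\rho_*E)\big)=\pi_*(E\times^G K)=\pi_*(K_{C})=K$, using centrality (so $E\times^G K=K_{C}$) and that $\pi\colon C\to S$ is proper with geometrically connected fibers; this is finite étale of multiplicative type, as required.

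The main obstacle I expect is the properness and quasifiniteness of $\Bun_K(C)\to S$ for $K$ finite étale of multiplicative type, together with the reduction of properness of $\rho_*$ to it. To handle this I would use that $R^0\pi_*K_{C}=K$ and that $R^1\pi_*K_{C}$ is representable by a finite étale $S$-scheme (the relevant torsion of the relative Picard scheme, finite étale because the orders occurring in $K$ are invertible on $S$, as $K$ is étale). This presents $\Bun_K(C)$ as a finite étale $S$-scheme worth of copies of $BK$, with $BK\to S$ proper and quasifinite since $K$ is finite of multiplicative type. Quasifiniteness and the valuative criterion for properness of $\rho_*$ then follow by transferring these properties through the torsor description of the fibers; pulling the conclusions back along the Cartesian square yields the stated properties of $\rho_*\colon\hodge_G\to\hodge_H$.
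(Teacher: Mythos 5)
Your proof is correct and its core reduction is exactly the paper's: both arguments hinge on showing that the square over $\Bun_{G}(C)\times_S\mathbb{A}^1_S \to \Bun_{H}(C)\times_S\mathbb{A}^1_S$ is Cartesian, which both you and the paper deduce from the fact that $d\rho\colon\mathfrak{g}\to\mathfrak{h}$ is an isomorphism (étale kernel gives injectivity, flatness/smoothness gives surjectivity), so that $\text{At}^{D}(E)\to\text{At}^{D}(\rho_*E)$ is an isomorphism and $t$-connections correspond bijectively.

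Where you diverge is in how the $\Bun$-level input is obtained. The paper simply cites \cite[Lem.~A.4]{gauged_theta_stratifications} for the statement that $\rho_*\colon\Bun_G(C)\to\Bun_H(C)$ is quasifinite and proper with relative stabilizers inside $\prod_i\mu_{n_i}$, whereas you re-derive it: centrality of $K$, identification of the relative inertia with $\pi_*(K_C)=K$, and the torsor structure of the nonempty fibers under $\Bun_K(C)$, which you control via $R^1\pi_*K_C$ being finite étale. This is essentially the standard proof of the cited lemma, and your computation of the relative stabilizer as $K$ itself is in fact slightly sharper than the paper's ``closed subgroup of $\prod_i\mu_{n_i}$.'' The one place your sketch is thinner than a complete argument is the sentence transferring properness: the torsor description you invoke is fiberwise over geometric points of $\Bun_H(C)$, while the valuative criterion requires working over a DVR, where one must argue that the stack of $G$-lifts of an $H$-bundle on $C_R$ (a pseudo-torsor under the proper Picard stack $\Bun_K(C_R)$) has an $R$-point once it has one over the fraction field. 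That step is exactly the content of the lemma the paper cites, so nothing is wrong, but if you intend your argument to be self-contained rather than a gloss on that citation, this is the step to write out.
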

\begin{proof}
By \cite[Lem. A.4]{gauged_theta_stratifications}, the morphism $\rho_*: \Bun_{G}(C) \times_{S} \mathbb{A}^1_{S} \to \Bun_{H}(C) \times _{S} \mathbb{A}^1_{S}$ is quasifinite and proper. By working \'etale locally on $S$, we can assume that the kernel $K$ of $\rho$ is of the form $\prod_{i} \mu_{n_i}$ for some positive integers $n_i$ that are coprime to all characteristics of residue fields of $S$. The proof of \cite[Lem. A.4]{gauged_theta_stratifications} identifies the relative stabilizers of $\rho_*: \Bun_{G}(C) \to \Bun_{H}(C)$ with a closed subgroup scheme of $\prod_i \mu_i$, so it is multiplicative an \'etale. Therefore, the base change $(\Bun_{G}(C) \times \mathbb{A}^1_{S}) \times_{\Bun_{H}(C) \times_{S} \mathbb{A}^1_{S}} \hodge_{H} \to \hodge_{H}$ is quasifinite and proper with \'etale relative stabilizers of multiplicative type. Let $Z$ be the source. 
We shall conclude the proof by showing that the induced morphism $\hodge_{G} \to Z$ is an isomorphism. Choose an $\mathbb{A}^1_{S}$-scheme $T$ and a $G$-bundle $T$ on $C_{T}$. Set $\mathfrak{g}:= \text{Lie}(G)$ and $\mathfrak{h}:= \text{Lie}(H)$. The morphism $d\rho: \mathfrak{g} \to \mathfrak{h}$ is surjective because $\rho$ is faithfully flat.
Moreover, the fact that $K$ is \'etale over $S$ implies that $d\rho$ is injective. It follows that the induced morphism of meromorphic $t$-Atiyah sequences
\[
\begin{tikzcd}
    0  \ar[r]& \Ad(E) \otimes \Omega^1_{C_Y/Y}(D) \ar[d, "\text{Ad}(\rho) \otimes \text{id}", labels= left] \ar[r] & At^D(E) \ar[r] \ar[d]  & \mathcal{O}_{C_{Y}} \ar[r] \ar[d, symbol = \xlongequal{}] & 0\\
    0 \ar[r] & \Ad(\rho_*(E)) \otimes \Omega^1_{C_Y/Y}(D) \ar[r] & At^D(\rho_*(E)) \ar[r] & \mathcal{O}_{C_Y} \ar[r] & 0
\end{tikzcd}
\]
is an isomorphism. Hence, the assignment $(E, \nabla) \mapsto (\rho_*(E), \rho_*(\nabla))$ induces a bijection between meromorphic $t$-connections on $E$ and meromorphic $t$-connections on $\rho_*(E)$.
\end{proof}

\begin{notn}[Derived subgroup, abelianization, and center] \label{notn: derived subgroup, abelianization and center}
    Let $G$ be a reductive group scheme over $S$. We denote by $\cD(G)$ the derived subgroup scheme of $G$, as in \cite[Thm. 5.3.1]{conrad_reductive}. Recall that $\cD(G) \subset G$ is a semisimple normal subgroup scheme over $S$, and the quotient $G_{ab} := G/\cD(G)$ is a torus over $S$. We denote by $Z_G \subset G$ the center of $G$. This is a closed subgroup scheme of $G$, which is a group scheme of multiplicative type over $S$.
\end{notn} 

\begin{notn}[The isogeny $i$] \label{defn: isogeny}
    Let $G$ be a reductive group scheme over $S$. We shall consider the homomorphism $\text{Ad}\times q: G \to \GL(\mathfrak{g}) \times G_{ab}$, where $q: G \twoheadrightarrow G_{ab}$ is the quotient morphism. By construction, the kernel $K = Z_{\cD(G)}$ of $\text{Ad}\times q$ is a finite group scheme of multiplicative type over $S$. We use the notation $\overline{G} := G/Z_{\cD(G)}$ for simplicity. We denote by $i$ the central isogeny $i: G \to  \overline{G}$. 
\end{notn}

\begin{prop} \label{prop: semistability under etale isogeny}
Let $G$ be a reductive group scheme. Then the followings hold:
\begin{enumerate}[(a)]
    \item The isogeny $i: G \to \overline{G}$ preserves and reflects semistability. More precisely, for any geometric point $p$ of $\hodge_{G}$, we have that $p$ is semistable if and only $i_*(p)$ is a semistable geometric point of $\hodge_{\overline{{G}}}$.

    \item The morphism $j_*: \hodge_{G} \to \hodge_{G/Z_{G}}$ induced by the quotient $j: G \to G/Z_{G}$ preserves and reflects semistability as well.
\end{enumerate}
\end{prop}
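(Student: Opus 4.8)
The plan is to treat both statements uniformly: in part (a) the map $i \colon G \to \overline{G}$ is the quotient by the central finite subgroup $N := Z_{\cD(G)}$, and in part (b) the map $j \colon G \to G/Z_G$ is the quotient by the (possibly positive-dimensional) center $N := Z_G$. In either case $\rho \colon G \to H := G/N$ is a quotient by a central subgroup scheme, and I would establish the dictionary between the semistability data of $(E,\nabla)$ and of $\rho_*(E,\nabla) = (\rho_* E, \rho_* \nabla)$ term by term: parabolic subgroups, compatible reductions, dominant characters, and degrees of associated line bundles all match up. Since semistability (\Cref{defn: classical semistability}) asserts that $\deg E_P(\chi) \le 0$ for every compatible reduction $E_P$ and every $P$-dominant $\chi$, matching each ingredient yields the equivalence.

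First I would record the correspondence of parabolics and reductions. Because $N \subset Z_G$ is contained in every parabolic $P \subset G_k$, the quotient map induces an isomorphism of flag varieties $G_k/P \xrightarrow{\sim} H_k/\rho(P)$; writing $\bar P := \rho(P)$, this gives a bijection between reductions $E_P$ of $E$ to $P$ and reductions $(\rho_* E)_{\bar P}$ of $\rho_* E$ to $\bar P$, compatible with the associated-bundle construction $\rho_* E_P = (\rho_* E)_{\bar P}$. Next I would check that this bijection matches $\nabla$-compatible reductions (\Cref{defn: compatible parabolic reduction}) with $\rho_*\nabla$-compatible ones. One direction is immediate from the morphism of meromorphic $t$-Atiyah sequences constructed in \Cref{subsection: change of groups}: if $\nabla \colon \cO_{C_k} \to \mathrm{At}^D(E)$ factors through $\mathrm{At}^D(E_P)$, then $\rho_*\nabla$ factors through $\mathrm{At}^D((\rho_*E)_{\bar P})$. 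For the converse, the obstruction to $\nabla$-compatibility is the induced section of $E_P(\mathfrak{g}/\mathfrak{p})(D) \otimes \Omega^1_{C_k/k}$, which maps to the corresponding obstruction in $(\rho_*E)_{\bar P}(\mathfrak{h}/\bar{\mathfrak{p}})(D)\otimes\Omega^1_{C_k/k}$ via the map induced by $d\rho \colon \mathfrak{g}/\mathfrak{p} \to \mathfrak{h}/\bar{\mathfrak{p}}$. The key point is that this last map is an isomorphism: both quotients are direct sums of root spaces $\mathfrak{g}_\alpha$ for roots $\alpha$ outside the Levi, and a central quotient restricts to an isomorphism on each root space (the kernel $\mathrm{Lie}(N)$ being central, i.e. contained in the Cartan). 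Hence the two obstructions vanish simultaneously and compatibility transfers in both directions.

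Then I would compare characters and degrees. The fundamental identity is that for $\bar\chi \colon \bar P \to \mathbb{G}_{m}$ with pullback $\chi := \bar\chi \circ \rho|_P$, the associated line bundles coincide, $E_P(\chi) = (\rho_*E)_{\bar P}(\bar\chi)$, so their degrees agree. Moreover $P$-dominant and $\bar P$-dominant characters correspond: since $N$ acts trivially on $\mathfrak{g}$, the trace pairing annihilates cocharacters into $N$, so every $P$-dominant character $\mathrm{tr}_{\mathfrak{g}}(\lambda)$ is trivial on $Z_G^\circ$ and, after passing to a positive multiple, on all of $N$, hence descends to a $\bar P$-dominant character of $\bar P$; conversely the pullback of any $\bar P$-dominant character is $P$-dominant, as $\rho$ preserves roots. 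Using that $\deg E_P(\chi)$ is linear in $\chi$ and that the $P$-dominant characters form a rational cone, I would conclude: if $(E,\nabla)$ is semistable then each $\chi = \bar\chi\circ\rho$ gives $\deg (\rho_*E)_{\bar P}(\bar\chi) = \deg E_P(\chi) \le 0$, so $\rho_*(E,\nabla)$ is semistable; and if $\rho_*(E,\nabla)$ is semistable, then for any $P$-dominant $\chi$ a positive multiple $n\chi$ descends to a $\bar P$-dominant $\bar\psi$ with $n\deg E_P(\chi) = \deg(\rho_*E)_{\bar P}(\bar\psi) \le 0$, giving $\deg E_P(\chi) \le 0$. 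This proves both directions for (a) (with $N = Z_{\cD(G)}$) and for (b) (with $N = Z_G$).

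The main obstacle is the reverse direction of the compatibility step: a priori a reduction of $\rho_*E$ compatible with $\rho_*\nabla$ need not descend to a $\nabla$-compatible reduction of $E$, and this would genuinely fail if $\mathfrak{g}/\mathfrak{p} \to \mathfrak{h}/\bar{\mathfrak{p}}$ were degenerate. The resolution — that this map only involves root spaces, on which a central quotient is an isomorphism even when $\rho$ is inseparable — is exactly what lets part (a) hold for an arbitrary split reductive $G$ without assuming that $Z_{\cD(G)}$ is \'etale, and what lets part (b) hold although $Z_G$ may be positive-dimensional. The remaining bookkeeping, namely the finite-index discrepancy between the character lattices of $Z_L^\circ$ and $Z_{\overline{L}}^\circ$, is handled by the linearity-and-scaling argument above and is routine.
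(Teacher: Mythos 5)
Your proposal is correct and follows essentially the same route as the paper: both match parabolics and reductions across the central quotient, transfer $\nabla$-compatibility using that the kernel of $d\rho$ lies in the Cartan (the paper phrases this as $\operatorname{Lie}(i)^{-1}(\operatorname{Lie}(\overline{P})) = \operatorname{Lie}(P)$, equivalently your isomorphism $\mathfrak{g}/\mathfrak{p} \xrightarrow{\sim} \mathfrak{h}/\bar{\mathfrak{p}}$ on root spaces), and use that $P$-dominant characters kill the center and hence descend with the same degree. Your root-space justification of the reverse compatibility direction is in fact slightly more explicit than the paper's one-line claim, but the argument is the same.
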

\begin{proof}
\indent \textbf{(a).} Let $k$ be an algebraically closed field, and choose a point $p = (E, \nabla) \in \hodge_{G}(k)$ with image $i_*(p)= (i_*(E), i_*\nabla) \in \hodge_{\overline{G}}(k)$. Let $\overline{P}$ be a parabolic subgroup of $\overline{G}_k$. It is associated to a cocharacter $\overline{\lambda}: \mathbb{G}_m \to \overline{G}_k$, which can be lifted (up to a power) to a cocharacter $\lambda: \mathbb{G}_m \to G_k$. Then the preimage $P:= i^{-1}(\overline{P})$ is the smooth parabolic subgroup of $G_k$ corresponding to $\lambda$ (cf. the first paragraph of the proof of \cite[Prop. 3.1]{biswas-holla-hnreduction}).
Let $i_P: P \to \overline{P}$ denote the restriction. 
For any parabolic reduction $E_P$ of $E$, the associated bundle $(i_P)_*(E_P)$ is a reduction of $i_*(E)$ to the parabolic $\overline{P}$. In fact, the assignment $(i_P)_*(-)$ induces a correspondence between $P$-reductions of $E$ and $\overline{P}$-reductions of $i_*(E)$  \cite[Lem. 2.1]{biswas-holla-hnreduction}. By definition, $P$-dominant characters vanish on the center $Z_{G_k} \supset Z_{\cD(G)_k}$, and therefore any $P$-dominant character of $P$ factors uniquely through $\overline{P}$. By the definition of semistability, in order to conclude the proof of (a) it suffices to show that a $P$-reduction $E_P$ is $\nabla$-compatible if and only if $(i_P)_*(E_P)$ is $i_*(\nabla)$-compatible. For this we note that the kernel $\text{Lie}(Z_{\cD(G)_k})$ of $\text{Lie}(i): \text{Lie}(G) \to \text{Lie}(\overline{G})$ is contained in the Lie algebra of the parabolic subgroup $\text{Lie}(P)$. This implies that $\text{Lie}(i)^{-1}(\text{Lie}(\overline{P})) = \text{Lie}(P)$, and so it follows that $At^D(E_P)$ is the preimage of $At^D(i_*(E_P))$ under the morphism $At^D(E) \to At^D(i_*(E))$. We conclude that $E_P$ is $\nabla$-compatible if and only if $(i_P)_*(E_P)$ is $i_*(\nabla)$-compatible.

\noindent \textbf{(b).} The same argument as in part (a) applies. We just need to replace $Z_{\cD(G)}$ with the center $Z_{G}$.
\end{proof}

For simplicity, in this paper we often restrict our attention to group schemes such that $\text{Ad}\times q$ is well-behaved in the following sense.

\begin{defn}[Property (Z)] \label{defn: central property}
    We say that $G$ satisfies property (Z) if the kernel $Z_{\cD(G)}$ of the isogeny $i: G\to \overline{G}$ is \'etale over $S$.
\end{defn}

\begin{defn}[Property (Em)] \label{defn: embedding property}
We say that a reductive group scheme $G$ satisfies the adjoint embedding property (Em) if for all geometric points $p \in \hodge_{G}$, the image $\text{Ad}_*(p) \in \hodge_{\GL(\mathfrak{g})}$ is semistable if and only if $p$ is semistable.
\end{defn}

We will show in Subsection \ref{subsection: stability under change of group} that both of these properties are always satisfied if the characteristic of every residue field of $S$ is large enough, so that $G$ satisfies the low height property as in \Cref{defn: low height property}.

\end{subsection}

\begin{subsection}{Moduli space}\label{subsec: moduli spaces}

In the $G=\GL_N$ case, we can think of meromorphic $t$-connections as $\Lambda^{D,t}$-modules, where $\Lambda^{D,t}$ is a sheaf of rings of differential operators on  $C\times_{S} \mathbb{A}^1_S$ over $\mathbb{A}^1_{S}$ in the sense of \cite[p. 77]{Simpson-repnI}. Namely, let $\Lambda^{D}$ be the universal enveloping algebra of the sheaf of Lie algebras $T_{C/S}(D).$ It is a split almost polynomial sheaf of rings of differential operators as in \cite[p. 81]{Simpson-repnI}. 
Apply the deformation to the associated construction in \cite[p. 86]{Simpson-repnI} to $\Lambda^D$, we obtain $\Lambda^{D,t}.$ The GIT constructions in the work of Simpson and Langer imply the following. 

\begin{thm}[\cite{Simpson-repnI,langer-moduli-lie-algebroids}] \label{thm: moduli space for gln} \quad
   \begin{enumerate}[(1)] 
    \item The semistable geometric points of $\hodge_{\GL_N}$ are exactly the geometric points of an open substack $(\hodge_{\GL_N})^{ss} \subset \hodge_{\GL_N}$.
    \item For each integer $d \in \mathbb{Z}$, the open and closed substack $(\hodge_{\GL_N, d})^{ss} \subset (\hodge_{\GL_N})^{ss}$ where the underlying vector bundles has degree $d$ admits an $\mathbb{A}^1_{S}$-quasi-projective adequate moduli space (in the sense of \cite{alper_adequate}). \qed 
\end{enumerate}
\end{thm}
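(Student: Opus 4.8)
The plan is to reduce the statement to the moduli theory of $\Lambda$-modules developed by Simpson \cite{Simpson-repnI} and Langer \cite{langer2014semistable, langer-moduli-lie-algebroids}, after identifying $\hodge_{\GL_N}$ with a stack of sheaves carrying a $\Lambda^{D,t}$-action. First I would make the dictionary precise: for an $\bA^1_S$-scheme $Y$, a point $(\cE,\nabla)\in\hodge_{\GL_N}(Y)$ is, by \Cref{example: meromorphic connection vector bundles}, exactly a coherent $\cO_{C_Y}$-module that is locally free of rank $N$ together with a left $\Lambda^{D,t}$-module structure extending its $\cO_{C_Y}$-structure, the $t$-Leibniz rule being precisely the compatibility of the order-$\leq 1$ part of the action with the deformation parameter $t$. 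This identification is uniform over all of $\bA^1_S$, interpolating between meromorphic Higgs fields at $t=0$ and genuine meromorphic $t$-connections at $t\neq 0$. Under this dictionary the intrinsic semistability of \Cref{defn: classical semistability}, which for $\GL_N$ is reformulated in \Cref{example: semistability of meromorphic conenctions on vector bundles} in terms of slopes of $\Lambda^{D,t}$-subobjects, coincides with Simpson's slope-semistability for $\Lambda^{D,t}$-modules on $C_k$. Thus the theorem becomes the existence of an adequate moduli space for semistable $\Lambda^{D,t}$-modules of fixed rank $N$ and degree $d$, together with openness of semistability, relative to the base $\bA^1_S$.

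Second I would invoke the GIT construction. Having recorded in the excerpt that $\Lambda^D$ is split almost polynomial and that $\Lambda^{D,t}$ is its deformation to the associated graded \cite[pp.~81,~86]{Simpson-repnI}, the family of $\mu$-semistable $\Lambda^{D,t}$-modules of fixed rank and degree is bounded, so after twisting by a sufficiently ample line bundle on $C$ one realizes all of them as quotients in a single relative $\Quot$-scheme over $\bA^1_S$. The locus $R\subset\Quot$ where the quotient is locally free, compatible with the $\Lambda^{D,t}$-action, and semistable is a $\GL$-invariant, $\bA^1_S$-finite-type subscheme; the Le Potier--Simpson estimate matches its GIT-(semi)stable points with the intrinsically semistable modules and shows this locus is open, which yields (1). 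Langer's relative GIT then produces the quotient $R\git\GL\to\bA^1_S$, which is the desired quasi-projective adequate moduli space of $(\hodge_{\GL_N,d})^{\mathrm{ss}}$; in positive or mixed characteristic one obtains an adequate moduli space in the sense of Alper rather than a good one, since $\GL_N$ is only geometrically reductive there. The decomposition over $d\in\bZ$ is the decomposition of $\Quot$ by Hilbert polynomial, which on a curve is determined by rank and degree.

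The main obstacle, and the reason this is a genuine input rather than a formal consequence, is verifying that Langer's hypotheses hold uniformly in the present relative situation: that $\Lambda^{D,t}$ is a sheaf of rings of differential operators satisfying his axioms simultaneously for every value of the deformation parameter $t$ and over a possibly non-reduced Noetherian base $S$, and for a possibly non-reduced divisor of poles $D$; and that boundedness of semistable objects holds uniformly over all of $\bA^1_S$, not merely fiberwise over the two loci $t=0$ and $t\neq 0$, so that a single $\Quot$-scheme and a single GIT problem produce a moduli space quasi-projective over $\bA^1_S$ rather than over each fiber separately. Once boundedness and the axioms are in place, openness of semistability and the construction of the adequate quotient are exactly the content of \cite{Simpson-repnI} and \cite{langer-moduli-lie-algebroids}, so I would cite these rather than reprove them.
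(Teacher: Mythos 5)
Your proposal follows essentially the same route as the paper: identify $\hodge_{\GL_N}$ with the stack of $\Lambda^{D,t}$-modules for the deformed split almost polynomial sheaf of rings of differential operators, match the two notions of semistability, and then cite Simpson's openness argument and framed-module scheme together with Langer's boundedness and relative GIT to obtain the quasi-projective adequate moduli space over $\mathbb{A}^1_S$. The paper's own treatment is exactly this reduction, with the same key external inputs (Ramanan's openness argument, the scheme $R$ of framed objects, Langer's boundedness theorem, and the identification of the stack quotient $R/\GL_{N_0}$ with $(\hodge_{\GL_N,d})^{ss}$), so there is nothing substantive to add.
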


For the benefit of the reader, we explain in more detail the relation to the GIT construction. Openness of $(\hodge_{GL_N})^{ss}$ is given by Ramanan's argument as in \cite[Lem. 3.7]{Simpson-repnI}.
By \cite[Thm. 1.1]{langer-moduli-lie-algebroids}, there exist  quasi-projective moduli spaces $M_{\mathrm{Hod}, \GL_N,d}^D$ uniformly corepresenting the moduli functors of equivalence classes of meromorphic $t$-connections (with poles bounded by $D$).
 The moduli space $(\hodge_{GL_N,d})^{sch}$ is given by the GIT quotient $R\git GL_{N_0}$ where $R$ is the scheme constructed in \cite[Thm. 4.10]{Simpson-repnI} which represents the set-valued functor of framed meromorphic $t$-connections \cite[Lem. 4.9]{Simpson-repnI}.
  The construction relies on a boundedness result, which is proved in \cite[Thm. 0.2]{langer2004semistable} in arbitrary characteristic.
The stack quotient $R/GL_{N_0}$ is isomorphic to the stack $(\hodge_{GL_N,d})^{ss}.$
Therefore, the moduli space $(\hodge_{GL_N,d})^{sch}$ is indeed the adequate moduli space.

We use the result for $\GL_N$ and functoriality to conclude for general $G$.
\begin{thm} \label{thm: moduli space for hodge general G}
Suppose that $G$ is a reductive group satisfying properties (Em) and (Z) (see \Cref{defn: central property} and \ref{defn: embedding property}). Then,
   \begin{enumerate}[(1)] 
    \item The semistable geometric points of $\hodge_{G}$ are exactly the points of an open substack $(\hodge_{G})^{ss} \subset \hodge_{G}$.
    \item For each $d \in \pi_1(G)$, the open and closed substack $(\hodge_{G,d})^{ss} \subset (\hodge_{G})^{ss}$ where the underlying $G$-bundle has degree $d$ admits an $\mathbb{A}^1_{S}$-quasi-projective adequate moduli space $\Mhodge$. 
\end{enumerate}
\end{thm}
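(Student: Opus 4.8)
The strategy is to transport the $\GL_N$ result of \Cref{thm: moduli space for gln} to an arbitrary $G$ along the homomorphism $\Theta := \Ad \times q : G \to \GL(\mathfrak{g}) \times G_{ab} =: \Gamma$, whose kernel is precisely $Z_{\cD(G)}$. Property (Z) guarantees this kernel is étale, so $\Theta$ factors as a central isogeny $i : G \to \overline{G}$ with étale kernel followed by a closed immersion $\iota : \overline{G} \hookrightarrow \Gamma$, and property (Em) is what ties semistability to the linear picture. Part (1) is then immediate: by (Em) a geometric point $p$ of $\hodge_G$ is semistable if and only if $\Ad_*(p) \in \hodge_{\GL(\mathfrak{g})}$ is semistable, so since $\Ad_* : \hodge_G \to \hodge_{\GL(\mathfrak{g})}$ is a morphism of stacks and $(\hodge_{\GL(\mathfrak{g})})^{\mathrm{ss}}$ is open, the identity $(\hodge_G)^{\mathrm{ss}} = \Ad_*^{-1}\big((\hodge_{\GL(\mathfrak{g})})^{\mathrm{ss}}\big)$ exhibits $(\hodge_G)^{\mathrm{ss}}$ as an open substack.

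For part (2) the first point is that $\Gamma \cong \GL_N \times \mathbb{G}_m^{\,r}$ (with $N=\dim G$ and $G_{ab}$ split) is a product of general linear groups, and that the adjoint bundle and meromorphic $t$-Atiyah sequence of a bundle for a product group decompose as direct sums, whence $\hodge_\Gamma \cong \prod_{\bA^1_S} \hodge_{\GL_{n_i}}$. The key verification is that $\Theta_*$ preserves semistability. I would check this by observing that parabolic subgroups of a product are products of parabolics, that $P$-dominant characters decompose as sums of the factorwise dominant characters, and that $\nabla$-compatibility of a reduction is tested on each factor; for a torus (equivalently each $\mathbb{G}_m$-factor) there are no proper parabolics, so every object of $\hodge_{\mathbb{G}_m}$ is automatically semistable. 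Combining these, a point of $\hodge_\Gamma$ is semistable if and only if its $\GL(\mathfrak{g})$-component is, so $\Theta_*(p)$ is semistable $\iff \Ad_*(p)$ is semistable $\iff p$ is semistable by (Em). Thus $(\hodge_G)^{\mathrm{ss}} = \Theta_*^{-1}\big((\hodge_\Gamma)^{\mathrm{ss}}\big)$.

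The base case is then handled by the GIT description recalled after \Cref{thm: moduli space for gln}: since the $\GL_{n_i}$-moduli spaces are GIT quotients and GIT quotients commute with products over $\bA^1_S$, the fiber product of the $\GL_{n_i}$-moduli spaces is an adequate moduli space $M_\Gamma$ for $(\hodge_\Gamma)^{\mathrm{ss}}$, quasi-projective over $\bA^1_S$. Now descend: set $\mathcal{U} := \iota_*^{-1}\big((\hodge_\Gamma)^{\mathrm{ss}}\big) \subset \hodge_{\overline G}$. By \Cref{lemma: change of group under closed immersion is affine} the morphism $\iota_*$ is affine, so by the functoriality of adequate moduli spaces under affine morphisms $\mathcal{U}$ admits an adequate moduli space $M'$ with $M' \to M_\Gamma$ affine. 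Since $i_*^{-1}(\mathcal{U}) = \Theta_*^{-1}\big((\hodge_\Gamma)^{\mathrm{ss}}\big) = (\hodge_G)^{\mathrm{ss}}$, and by \Cref{lemma: change of group for etale isogenies} the restriction $i_* : (\hodge_G)^{\mathrm{ss}} \to \mathcal{U}$ is proper, quasi-finite and cohomologically affine with linearly reductive (multiplicative type, étale) inertia, this last morphism descends $M'$ to an adequate moduli space $M$ with $M \to M'$ finite. Carrying a fixed degree $d\in\pi_1(G)$ through the open–closed degree decompositions identifies $M$ with $\Mhodge$, and composing $\Mhodge \to M' \to M_\Gamma \to \bA^1_S$ (finite, then affine, then quasi-projective) shows $\Mhodge$ is quasi-projective over $\bA^1_S$, as required.

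The main obstacle, and the technical heart of the argument, is the pair of descent principles for adequate moduli spaces along the two change-of-group morphisms: that an affine morphism of stacks descends an adequate moduli space to an affine morphism of schemes, and that a proper, quasi-finite, cohomologically affine morphism with linearly reductive inertia descends one to a \emph{finite} morphism of schemes. The second, non-representable case (coming from the isogeny $i$, which introduces gerbe-like stabilizers banded by $Z_{\cD(G)}$) is the delicate one, and I would isolate it as a short lemma, in the spirit of the $\Bun_G$ statement \cite[Lem. A.4]{gauged_theta_stratifications} whose proof structure it parallels; the product compatibility, by contrast, is handled cleanly by the fact that the underlying $\GL_N$ objects are GIT quotients.
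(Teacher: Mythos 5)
Your proposal is correct and follows essentially the same route as the paper: part (1) is the identical observation via (Em), and part (2) uses the same factorization $G \to \overline{G} \hookrightarrow \GL(\mathfrak{g}) \times G_{ab}$, the $\GL_N$ GIT input, descent of adequate moduli spaces along the affine closed-immersion part, and descent along the quasi-finite proper isogeny-induced morphism (which the paper handles by producing a relative coarse space via Keel--Mori, exactly the ``short lemma'' you isolate). The only cosmetic differences are that you perform the two descent steps in the opposite order and restrict to the semistable locus before, rather than after, forming the relative coarse space; the degree bookkeeping (that $(\hodge_{G,d})^{\mathrm{ss}}$ is open and closed in the preimage of the degree-$d'$ component) is needed in both versions and is implicit in your last paragraph.
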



We provide a proof of \Cref{thm: moduli space for hodge general G} at the end of this subsection, after we discuss some necessary ingredients for the proof.

We shall use the homomorphism $\text{Ad} \times q: G \to \GL(\mathfrak{g}) \times G_{ab}$, which induces a morphism of stacks $\text{Ad}_*\times q_*: \hodge_G \to \hodge_{\GL(\mathfrak{g})\times G_{ab}}$. For the following lemma, recall that a quasi-compact and quasi-separated morphism $f: \cX \to \cY$ of algebraic stacks is called a good moduli space morphism \cite[Rmk. 4.4]{alper-good-moduli} if $f_*(\cO_{\cX}) = \cO_{\cY}$ and $f$ is cohomologically affine, i.e. the pushforward $f_*: \QCoh(\cX) \to \QCoh(\cY)$ is an exact functor between the abelian categories of quasicoherent sheaves. 
    \begin{lemma} \label{lemma: partial moduli space}
        Suppose that $G$ is split and satisfies property (Z) (see \Cref{defn: central property}). Then there is a factorization
        \[ \Ad_* \times q_*:\hodge_G \xrightarrow{h} \cB \xrightarrow{g}  \hodge_{\GL(\mathfrak{g})\times G_{ab}}\]
        where $h$ is a good moduli space morphism and $g$ is affine and of finite type.
    \end{lemma}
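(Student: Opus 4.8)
The plan is to realize the stated factorization from the two change-of-group lemmas already available, interpreting $\cB$ as a rigidification of $\hodge_G$ by the central kernel of $\Ad \times q$. The first step is to factor the group homomorphism itself. Since $\ker(\Ad \times q) = Z_{\cD(G)}$, the morphism $\Ad \times q$ factors through the central isogeny $i : G \to \overline{G}$ of \Cref{defn: isogeny} as $\Ad \times q = \overline{\rho} \circ i$, where $\overline{\rho} : \overline{G} \to \GL(\mathfrak{g}) \times G_{ab}$ now has trivial kernel. As $\overline{\rho}$ is a monomorphism of reductive group schemes over $S$, it is a closed immersion; hence by \Cref{lemma: change of group under closed immersion is affine} the induced morphism $\overline{\rho}_* : \hodge_{\overline{G}} \to \hodge_{\GL(\mathfrak{g}) \times G_{ab}}$ is affine and of finite type. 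By \Cref{lemma: change of group for etale isogenies}, the morphism $i_* : \hodge_G \to \hodge_{\overline{G}}$ is quasifinite and proper, with relative stabilizers that are \'etale of multiplicative type.

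Next I would construct $\cB$. The central subgroup $Z_{\cD(G)} \subset Z_G$ acts by constant automorphisms on every pair $(E, \nabla)$, because central elements act trivially through $\Ad$ and are therefore compatible with any meromorphic $t$-connection; this embeds the constant group scheme $Z_{\cD(G)}$ into the inertia of $\hodge_G$ as a flat central subgroup, \'etale over $S$ by property (Z) (\Cref{defn: central property}). I then set $\cB$ to be the rigidification of $\hodge_G$ along $Z_{\cD(G)}$, with its structural morphism $h : \hodge_G \to \cB$, which is a gerbe banded by $Z_{\cD(G)}$. Because $Z_{\cD(G)}$ is of multiplicative type and \'etale over $S$, its order is invertible on $S$ and it is linearly reductive; consequently $h$ is cohomologically affine and induces $\cO_{\cB} \xrightarrow{\ \sim\ } h_* \cO_{\hodge_G}$, so $h$ is a good moduli space morphism.

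Finally I would produce $g$ and verify its properties. Since $Z_{\cD(G)} = \ker(\Ad \times q)$, the automorphisms of $(E, \nabla)$ coming from $Z_{\cD(G)}$ are sent to the identity by $\Ad_* \times q_* = \overline{\rho}_* \circ i_*$; by the universal property of rigidification, $\Ad_* \times q_*$ factors uniquely as $g \circ h$. The same reasoning factors $i_*$ through $h$ as a morphism $\pi : \cB \to \hodge_{\overline{G}}$, and $g = \overline{\rho}_* \circ \pi$. The morphism $\pi$ is representable: a direct computation shows that the relative inertia of $i_*$ at a point is $\ker\big(\Aut(E, \nabla) \to \Aut(i_* E, i_* \nabla)\big) = Z_{\cD(G)}(H^0(C_k, \cO_{C_k})) = Z_{\cD(G)}(k)$ for geometrically connected fibers, which is exactly the band removed by $h$. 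As $\pi$ inherits quasifiniteness and properness from $i_*$, and a representable quasifinite proper morphism is finite, $\pi$ is finite and in particular affine of finite type; composing with the affine finite type $\overline{\rho}_*$ yields $g = \overline{\rho}_* \circ \pi$ affine and of finite type.

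The main obstacle is the middle step: checking that $h$ is genuinely a good moduli space morphism in the relative (stacky-target) sense, i.e.\ that the $Z_{\cD(G)}$-gerbe is cohomologically affine, which hinges on the linear reductivity guaranteed by property (Z). A secondary point is the bookkeeping in the last step, namely verifying that rigidifying by the full constant group $Z_{\cD(G)}$ removes precisely the relative inertia of $i_*$, so that $\pi$ becomes representable and hence finite.
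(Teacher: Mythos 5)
Your proposal is correct, but it constructs $\cB$ by a genuinely different mechanism than the paper does. The paper takes the morphism $i_*:\hodge_G \to \hodge_{\overline{G}}$ (quasifinite and proper with linearly reductive relative stabilizers, by \Cref{lemma: change of group for etale isogenies}), base-changes it along a smooth cover $U \to \hodge_{\overline{G}}$, applies the Keel--Mori theorem to produce a coarse space finite over $U$, and descends this by flat base change to obtain $\cB$ as a \emph{relative coarse moduli space}; the key point is that Keel--Mori never requires identifying the relative inertia explicitly, only knowing it is finite. You instead build $\cB$ intrinsically as the \emph{rigidification} of $\hodge_G$ along the central inertia subgroup $Z_{\cD(G)}$, and then must do the extra work of showing that the relative inertia of $i_*$ is not merely contained in $Z_{\cD(G)}$ but equal to it (sections of the constant multiplicative group scheme over the proper, geometrically connected curve $C_k$ are constant, and central automorphisms automatically preserve any meromorphic $t$-connection since they act trivially on the Atiyah sequence), so that $\pi:\cB \to \hodge_{\overline{G}}$ becomes representable and hence, being quasifinite and proper, finite. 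Both routes use the same first reduction via the factorization $\Ad\times q = j\circ i$ and \Cref{lemma: change of group under closed immersion is affine}, and both invoke linear reductivity of $Z_{\cD(G)}$ (from property (Z)) to see that $h$ is a good moduli space morphism. What your approach buys is an explicit description of $\cB$ as a $Z_{\cD(G)}$-gerbe quotient of $\hodge_G$; what it costs is the inertia computation and the small descent arguments (separatedness and properness of $\pi$ from those of $i_*$ along the fppf gerbe $h$) that you pass over with ``inherits,'' all of which are standard but should be recorded. Neither of these is a genuine gap.
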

    \begin{proof}
    We denote by $j$ the closed immersion given by the factorization $\text{Ad}\times q: G \xrightarrow{i} \overline{G} \xrightarrow{j} \GL(\mathfrak{g}) \times G_{ab}.$ This induces a factorization $\text{Ad}_*\times q_*: \hodge_G \xrightarrow{i_*} \Bun_{G/Z_{\cD(G)}} \xrightarrow{j_*} \hodge_{\GL(\mathfrak{g})\times G_{ab}}$, where the morphism 
        $j_*$ is affine and of finite type (\Cref{lemma: change of group under closed immersion is affine}). Therefore, we can replace $\hodge_{\GL(\mathfrak{g})\times G_{ab}}$ with $\hodge_{G/Z_{\cD(G)}}$ and prove instead that there is an analogous factorization $i_*:\hodge_G \xrightarrow{h} \cB \xrightarrow{g}  \hodge_{G/Z_{\cD(G)}}$.
        By \Cref{lemma: change of group for etale isogenies}, $i_*$ is quasifinite and proper with relative stabilizers of multiplicative type. The base change of $i_*$ with any smooth scheme cover $U \to \hodge_{G/Z_{\cD(G)}}$ is a quasifinite proper stack over $U$. By the Keel-Mori theorem \cite[\href{https://stacks.math.columbia.edu/tag/0DUT}{Tag 0DUT}]{stacks-project}, the base change admits a coarse moduli space, which is quasifinite and proper (and hence finite) over $U$. Since the formation of the coarse space is compatible with flat-base change, this descends to a relative coarse space $\hodge_G \xrightarrow{h} \cB \xrightarrow{g} \hodge_{G/Z_{\cD(G)}}$, where $g$ is finite and schematic. Moreover, since the relative stabilizers of $i_*$ are linearly reductive, $\hodge_G \xrightarrow{h} \cB$ is a good moduli space morphism.
    \end{proof}
    
\begin{proof}[Proof of {\Cref{thm: moduli space for hodge general G}}]
Both properties can be checked \'etale locally on the base, and hence we may assume without loss of generality that $G$ is split.

\noindent \textbf{(1).} 
By the result for the general linear group $\GL(\mathfrak{g})$, the semistable geometric points of $\hodge_{\GL(\mathfrak{g})}$ are exactly the points of an open substack $(\hodge_{\GL(\mathfrak{g})})^{ss} \subset \hodge_{\GL(\mathfrak{g})}$. Since $G$ satisfies property (Em), the semistable geometric points of $\hodge_G$ are the points of the open preimage $(\Ad_*)^{-1}((\hodge_{\GL(\mathfrak{g})})^{ss}) \subset \hodge_G$.

\noindent \textbf{(2).}
Consider the factorization $\Ad_* \times q_*:\hodge_G \xrightarrow{h} \cB \xrightarrow{g}  \hodge_{\GL(\mathfrak{g})\times G_{ab}}$ from \Cref{lemma: partial moduli space}. There is an open substack $(\hodge_{\GL(\mathfrak{g})})^{ss} \times \hodge_{G_{ab}} \subset \hodge_{\GL(\mathfrak{g})\times G_{ab}}$, with open preimage $\mathcal{U} := g^{-1}((\hodge_{\GL(\mathfrak{g})})^{ss} \times \hodge_{G_{ab}})$. By property (Em), we have 
 $h^{-1}(\mathcal{U}) = (\hodge_G)^{ss}.$
 Hence we get a factorization $(\hodge_G)^{ss} \xrightarrow{h} \mathcal{U} \xrightarrow{g} (\hodge_{\GL(\mathfrak{g})})^{ss} \times \hodge_{G_{ab}}$, where $h$ is a good moduli space morphism and $g$ is affine and of finite type.

Since $G$ is split, we have that $G_{ab} \cong \GL_1^r$ for some $r$, and that $\hodge_{G_{ab}} \cong \left(\hodge_{\GL_1}\right)^r$. For meromorphic $t$-connections of rank $1$ we have $(\hodge_{\GL_1})^{ss} = \hodge_{\GL_1}$, since there are no proper parabolic subgroups. It follows that
\begin{align*}
    (\hodge_{\GL(\mathfrak{g})\times G_{ab}})^{ss}  = (\hodge_{\GL(\mathfrak{g})})^{ss} \times \left( (\hodge_{\GL_1})^{ss}\right)^r = (\hodge_{\GL(\mathfrak{g})})^{ss} \times \left( \hodge_{\GL_1}\right)^r
\end{align*}
The homomorphism $\text{Ad} \times q: G \to \GL(\mathfrak{g}) \times \GL_1^r$ induces a morphism of algebraic fundamental groups $ \pi_1(\Ad \times q): \pi_1(G) \to \pi_1(\GL(\mathfrak{g}) \times \GL_1^r)$ compatible with the induced morphism on connected components for $\Bun_{G}(C) \to \Bun_{\GL(\mathfrak{g}) \times \GL_1^r}(C)$. The image $d' = \pi_1(\Ad \times q)(d)$ corresponds to a tuple of integers $d'= (d_0, d_1, \ldots, d_r)$. We have that
\[(\hodge_{\GL(\mathfrak{g})\times G_{ab},d'})^{ss} = (\hodge_{\GL(\mathfrak{g}),d_0})^{ss} \times (\prod_{i=1}^r \hodge_{\GL_1, d_i})^{ss}\]
is the open and closed substack of $t$-connections of multi-degree $d'$. By \Cref{thm: moduli space for gln}, $(\hodge_{\GL(\mathfrak{g})\times G_{ab},d'})^{ss}$ admits a quasi-projective adequate moduli space $M_{{\mathrm{Hod}},\GL(\mathfrak{g})\times G_{ab}, d'}^D$ over $\mathbb{A}^1_S$. The image of $(\hodge_{G,d})^{ss}$ under $\text{Ad}\times q$ is contained in $(\hodge_{\GL(\mathfrak{g})\times G_{ab},d'})^{ss}$. Set $\mathcal{U}_{d'}:= g^{-1}((\hodge_{\GL(\mathfrak{g})\times G_{ab},d'})^{ss})$, which is an open and closed substack of $\mathcal{U}$. We have a chain of morphisms
\[ (\hodge_{G,d})^{ss} \hookrightarrow h^{-1}(\mathcal{U}_{d'}) \xrightarrow{h} \mathcal{U}_{d'} \xrightarrow{g} (\hodge_{\GL(\mathfrak{g})\times G_{ab}})^{d',ss}\]
The left-most inclusion $(\hodge_{G,d})^{ss} \hookrightarrow h^{-1}(\mathcal{U}_{d'})$ is an open and closed immersion. Therefore, the image $h((\hodge_{G,d})^{ss})$ is an open and closed substack of $\mathcal{U}_{d'}$, and the induced morphism $h: (\hodge_{G,d})^{ss} \to h((\hodge_{G,d})^{ss})$ is a good moduli space morphism. We conclude that we have a factorization
\[(\hodge_{G,d})^{ss} \xrightarrow{h} h((\hodge_{G,d})^{ss}) \xrightarrow{g}  (\hodge_{\GL(\mathfrak{g})\times G_{ab}, d'})^{ss}\]
where $h$ is a good moduli space morphism and $g$ is affine of finite type. By \cite[Lemma 5.2.11+Thm. 6.3.3]{alper_adequate} it follows that $h((\hodge_{G,d})^{ss})$ admits an adequate moduli space $X$ that is affine and of finite type over the $\mathbb{A}^1_S$-quasi-projective scheme $M_{{\mathrm{Hod}},\GL(\mathfrak{g})\times G_{ab}, d'}^D$. In particular, $X$ is quasi-projective over $\mathbb{A}^1_S$. The composition $(\hodge_{G,d})^{ss} \xrightarrow{h} h(\hodge_{G,d})^{ss}) \to X$ of two adequate moduli space morphisms is an adequate moduli space morphism, hence $X$ is an adequate moduli space for $(\hodge_{G,d})^{ss}$.
\end{proof}
\end{subsection}

\begin{subsection}{Low height property implies (Em) and (Z)}  \label{subsection: stability under change of group}

In this subsection we give some bounds on the characteristic of residue fields of points in $S$ that guarantee that $G$ satisfies the properties (Em) and (Z) as in \Cref{defn: central property} and \Cref{defn: embedding property}. For this we need to recall the definition of height of a representation (see \cite{balaji-paramewaran-tensors, balaji-deligne-parameswaran} and references therein).

 Let $H$ be a split connected reductive group over a field, and fix a maximal split torus $T \subset H$ and a Borel subgroup $B \supset T$. We denote by $T' = T \cap \cD(H)$ the corresponding maximal torus for the derived subgroup $\cD(H)$. The Borel subgroup $B$ determines a set of fundamental weights $\{\omega_i\}$ inside $X^*(T')_{\mathbb{Q}}$. A character $\chi$ of $T'$ is called ($B$-)dominant if it is a nonnegative linear combination $\chi = \sum_{i} c_i \omega_i$ with $c_i \geq 0$. For each dominant character $\chi = \sum_{i} c_i \omega_i$ in $X^*(T')_{\mathbb{Q}}$ we define $\text{ht}(\chi) = \sum_i c_i$. 
 
\begin{defn}
   Let $H$ be a split reductive group over an algebraically closed field $k$. Let $V$ be a linear representation of $H$. Let $V = \oplus_{\chi} V_{\chi}$ denote the decomposition into $T'$-weight spaces. Set
   \[ \text{ht}(V) := \text{max} \left\{ 2\text{ht}(\chi) \; \mid \; \text{$\chi$ is a dominant character with $V_{\chi} \neq 0$} \right\}\]
   We say that the representation $V$ is of low height if either $\text{char}(k)=0$ or $\text{char}(k) > \text{ht}(V)$.
\end{defn}

\begin{defn} \label{defn: low height property}
We say that a reductive group scheme $G \to S$ satisfies the low height property (LH) if for every geometric point $s \to S$ the adjoint representation of $G_s$ is of low height.
\end{defn}

By \cite[(5.2.6)]{serre2005complete}, we have that the height of the adjoint representation of $G_s$ is $2h(G_s)-2$, where $h(G_s)$ is the Coxeter number as in \cite[\S5.1]{serre2005complete}.

\begin{lemma} \label{lemma: low height property implies (Z)}
    If $G$ satisfies the low height property (LH), then it satisfies (Z) (as in \Cref{defn: central property}).
\end{lemma}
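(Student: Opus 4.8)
The plan is to reduce to a fibrewise étaleness check and then to a combinatorial estimate on root data. Recall from the discussion preceding \Cref{defn: isogeny} that $Z_{\cD(G)}$ is a finite group scheme of multiplicative type over $S$; in particular it is finite and flat. Since étaleness of a finite flat group scheme may be verified on geometric fibres, and since a finite multiplicative-type group scheme over an algebraically closed field $k$ --- necessarily of the form $\Spec k[M]$ for its finite abelian character group $M$ --- is étale precisely when $\on{char}(k) \nmid |M|$, it suffices to prove the following: for every geometric point $s \to S$ with $\on{char}(k(s)) = p > 0$, the order $|Z_{\cD(G_s)}|$ is prime to $p$ (the case $p = 0$ being automatic, as every finite group scheme over a field of characteristic $0$ is étale).

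Next I would translate the order of the centre into lattice combinatorics. The group $\cD(G_s)$ is split semisimple with some root system $\Phi$, maximal torus $T'_s$, root lattice $Q = \bZ\Phi$, and weight lattice $P$; its centre has character group $X^*(T'_s)/Q$, which is a subquotient of $P/Q$, so its order divides the index of connection $f = [P : Q] = \det(\text{Cartan matrix})$. Thus it is enough to show that every prime dividing $f$ is strictly smaller than $p$. Writing $\Phi = \bigsqcup_j \Phi_j$ as a union of its irreducible components, we have $f = \prod_j f_j$ with $f_j$ the corresponding index for $\Phi_j$, so one may argue one simple factor at a time.

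The heart of the matter is to compare $f_j$ with the height of the adjoint representation. The nonzero weights of $\Fg_s$ are exactly the roots, and among the dominant roots the one of maximal height is the highest root $\widetilde{\alpha}_j$ of each component; since the (root-lattice) height of $\widetilde{\alpha}_j$ equals $h_j - 1$, where $h_j$ is the Coxeter number of $\Phi_j$, the low-height hypothesis (\Cref{defn: low height property}) gives $p > \on{ht}(\Fg_s) = 2\max_j (h_j - 1)$. It then remains to check that every prime factor of $f_j$ is at most $h_j$, so that it is bounded by $h_j \le 2(h_j - 1) < p$. I expect this last comparison to be the main obstacle, and I would settle it by the Dynkin classification: for type $A_{n}$ one has $f_j = n+1 = h_j$, whose prime factors are $\le h_j$; for the other classical types $f_j \in \{2,4\}$; and for the exceptional types $f_j \in \{1,2,3\}$, while each corresponding $h_j$ is at least $6$. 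In every case the prime factors of $f_j$ are bounded by $h_j$, hence are $< p$, whence $p \nmid f = \prod_j f_j$ and $Z_{\cD(G_s)}$ is étale. Since this holds at every geometric point, $Z_{\cD(G)}$ is étale over $S$, which is property (Z) as in \Cref{defn: central property}.
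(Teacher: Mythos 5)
Your proof is correct, and it is genuinely more self-contained than the paper's: the paper disposes of this lemma with a one-line citation to the discussion in \cite[4.5]{balaji-deligne-parameswaran}, whereas you actually carry out the argument. Your route --- reduce \'etaleness of the finite multiplicative-type group scheme $Z_{\cD(G)}$ to the fibrewise statement $p \nmid |X^*(Z_{\cD(G_s)})|$, bound that order by the index of connection $f=[P:Q]=\det(\text{Cartan matrix})$, split into simple factors, and check case by case that every prime divisor of $f_j$ is at most the Coxeter number $h_j$, hence at most $2(h_j-1)<p$ --- is essentially the standard argument underlying the cited reference, so the mathematical content agrees; what your write-up buys is that the reader does not have to chase the reference, at the cost of invoking the Dynkin classification for the final numerical comparison. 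All the intermediate assertions check out: \'etaleness of a finite locally free group scheme is detected on geometric fibres, $X^*(Z_{\cD(G_s)})=X^*(T'_s)/Q$ is a subgroup of $P/Q$, and the table of pairs $(f_j,h_j)$ is right in every type.

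One point you should be aware of, though it is not a flaw in your reasoning so much as a mismatch with the paper's wording: your key numerical input, $\operatorname{ht}(\Fg_s)=2\max_j(h_j-1)$, takes the height of a dominant weight to be the sum of its coefficients in the \emph{simple root} basis (equivalently $\langle\chi,\rho^\vee\rangle$, so that $2\operatorname{ht}(\chi)=\sum_{\alpha>0}\langle\chi,\alpha^\vee\rangle$ is Serre's height, the convention of \cite{balaji-deligne-parameswaran}). The paper's \Cref{defn: low height property} as literally written sums the coefficients in the \emph{fundamental weight} basis, under which $\operatorname{ht}(\widetilde{\alpha})=2$ for type $A_n$ and the lemma would be false (e.g.\ $\mathrm{SL}_5$ in characteristic $5$ would be ``low height'' while $Z_{\cD(G)}=\mu_5$ is not \'etale). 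So you are using the intended definition rather than the stated one; it would be worth saying so explicitly, since the correctness of the step $p>2\max_j(h_j-1)$ depends on it.
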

\begin{proof}
    This follows from the discussion in \cite[4.5]{balaji-deligne-parameswaran}.
\end{proof}

\begin{prop} \label{prop: semistability of low height representatons}
    Suppose that $G$ is reductive, and let $\rho: G \to \GL(V)$ be a representation of $G$, where $V$ is a vector bundle over $S$. Suppose that $\rho$ sends the neutral component $Z_{G}^{\circ}$ of the center into $Z_{\GL(V)}$ and that for all $s \in S$ the representation $G_{\kappa(s)} \to \GL(V_{\kappa(s)})$ is of low height. Then, for any semistable geometric point $p$ in $\hodge_G$ the associated geometric point $\rho_*(p)$ in $\hodge_{\GL(V)}$ is semistable.  
\end{prop}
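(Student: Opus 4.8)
The plan is to argue by contradiction: assuming that the associated $t$-connection $\rho_*(p)$ on the vector bundle $E(V)$ is unstable, I will extract a $\nabla$-compatible destabilizing parabolic reduction of the original $G$-bundle, contradicting the semistability of $p$. First I would reduce to the case $S = \Spec(k)$ with $k$ algebraically closed, since semistability is tested at geometric points and the hypotheses are fiberwise; write $p = (E,\nabla)$. Because $\rho$ carries $Z_G^{\circ}$ into the center of $\GL(V)$, the neutral component of the center acts on $V$ by scalars, which is exactly what makes degrees and slopes compatible under $\rho$: the central character contributes equally to a subbundle and to $E(V)$, so the relevant slope inequality is insensitive to it. As recalled in \Cref{example: semistability of meromorphic conenctions on vector bundles}, meromorphic $t$-connections on vector bundles over $C_k$ form an abelian category with the slope function $\mu = \deg/\rank$, hence it carries Harder--Narasimhan filtrations. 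If $\rho_*(p) = (E(V), \rho_*\nabla)$ were unstable, it would admit a canonical HN filtration $0 = \mathcal{F}_0 \subset \mathcal{F}_1 \subset \cdots \subset \mathcal{F}_\ell = E(V)$ by $\rho_*\nabla$-compatible subbundles with strictly decreasing slopes, in particular $\mu(\mathcal{F}_1) > \mu(E(V))$. This data is a $\rho_*\nabla$-compatible reduction of the $\GL(V)$-bundle $E(V)$ to a parabolic $Q \subset \GL(V)$ together with a $Q$-dominant character of strictly positive associated degree.

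The heart of the argument, and the step where I expect the main difficulty, is to show that this canonical reduction descends to a parabolic reduction of $E$ itself. Here I would generalize the argument of Balaji--Parameswaran \cite{balaji-paramewaran-tensors}: because the HN reduction is canonical it is invariant under the $G$-structure, and the low-height hypothesis forces the associated optimal (Kempf) cocharacter $\lambda$ of $\GL(V)$ to be conjugate to $\rho \circ \mu$ for a cocharacter $\mu$ of $G$. Concretely, the weight filtration of $V$ cut out by the HN type is realizable by a one-parameter subgroup of $G$, and low height (that is, $\operatorname{char}(k) = 0$ or $\operatorname{char}(k) > \operatorname{ht}(V)$) guarantees that no collapse of weights occurs in positive characteristic, so that the instability parabolic $Q$ is induced from a parabolic $P = P(\mu) \subset G_k$ with $\rho(P) \subseteq Q$ and $\operatorname{Lie}(P) = (d\rho)^{-1}(\operatorname{Lie}(Q))$. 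This Lie-algebra identity is the analogue of the computation used in the proof of \Cref{prop: semistability under etale isogeny}, and it is precisely what lets me transfer $\rho_*\nabla$-compatibility of the $\GL(V)$-reduction back to $\nabla$-compatibility of $E_P$: the splitting $\cO_{C_k} \to \operatorname{At}^{D}(E)$ factors through $\operatorname{At}^{D}(E_P)$ because $\operatorname{At}^{D}(E_P)$ is the preimage of the Atiyah bundle of the reduction $\mathcal{F}_\bullet$ under the map $\operatorname{At}^{D}(E) \to \operatorname{At}^{D}(E(V))$.

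Finally I would compare numerical invariants to reach the contradiction. The $P$-dominant character $\chi := \operatorname{tr}_{V}(\mu)$ obtained from the trace pairing of $V$ is, in low height, a nonnegative combination of fundamental weights (the trace form on $V$ stays positive, with no characteristic-$p$ degeneration), hence $P$-dominant, and $\deg E_P(\chi)$ equals, up to a positive scalar, the instability degree of $(E(V), \rho_*\nabla)$, which is strictly positive. Thus $E_P$ is a $\nabla$-compatible parabolic reduction of $E$ equipped with a $P$-dominant character of strictly positive degree, contradicting the semistability of $p = (E,\nabla)$ in \Cref{defn: classical semistability}. The remaining points are routine: the HN filtration is genuinely $\rho_*\nabla$-compatible (immediate, as it is canonical in the abelian category of $t$-connections), and the central hypothesis makes the slope/degree dictionary exact. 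The decisive and most delicate input is the low-height descent of the instability parabolic from $\GL(V)$ to $G$, which is where the Balaji--Parameswaran technology, and its adaptation to the connection setting, is essential.
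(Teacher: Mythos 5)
Your overall architecture matches the paper's: argue by contradiction, extract a destabilizing $\rho_*(\nabla)$-compatible reduction of $\rho_*(E)$ to a parabolic $Q\subset \GL(V)$, produce from it a $\nabla$-compatible destabilizing parabolic reduction of $E$, and invoke Balaji--Parameswaran low-height technology for the descent. But the concrete mechanism you give for the decisive step is not correct. You assert that the instability parabolic $Q$ "is induced from a parabolic $P=P(\mu)\subset G_k$ with $\rho(P)\subseteq Q$ and $\operatorname{Lie}(P)=(d\rho)^{-1}(\operatorname{Lie}(Q))$", i.e.\ that the HN filtration of $E(V)$ is realized by a weighted parabolic reduction of $E$ whose parabolic is (essentially) the preimage of $Q$. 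This is false in general, even in characteristic zero: for $G=\operatorname{SL}_2$ acting on $V=\mathfrak{g}$ and $Q$ the stabilizer of a line spanned by a regular semisimple element, $(d\rho)^{-1}(\operatorname{Lie}(Q))$ is the Cartan subalgebra, not a parabolic subalgebra. Canonicity of the HN filtration only makes it invariant under automorphisms of $(E(V),\rho_*(\nabla))$; it does not make it descend to a reduction of the $G$-structure. This is precisely why the paper does \emph{not} take preimages of $Q$: in \Cref{consruction: instability reduction} it instead stratifies the flag variety $X=\GL(V)/Q$ by the $G$-action (the Hesselink--Kempf--Kirwan--Ness $\Theta$-stratification attached to $\cO_X(-\chi)$ and the norm $b$), shows the section $C\to X/G$ determined by $\rho_*(E)_Q$ generically lands in an unstable stratum $Y_\lambda^{ss}/P_\lambda$, and extracts the reduction to the Kempf parabolic $P_\lambda\subset G$ of the optimal destabilizing cocharacter $\lambda$ of $G$ — a parabolic with no containment relation to $\rho^{-1}(Q)$. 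Low height enters to guarantee (via \cite[Cor.~8.8]{balaji-paramewaran-tensors}) that this weak stratification is a genuine $\Theta$-stratification, and the positivity $\deg(E_{P_\lambda}(\psi_\lambda))>0$ is obtained from semistability of the generic point in the center $Z_\lambda^{ss}/L_\lambda$ of the stratum, not from a direct identification of degrees.

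The gap propagates to your compatibility step: you transfer $\nabla$-compatibility by claiming $\operatorname{At}^D(E_P)$ is the preimage of the Atiyah bundle of $\mathcal{F}_\bullet$ under $\operatorname{At}^D(E)\to\operatorname{At}^D(E(V))$, which again rests on the false Lie-algebra identity (the analogous identity in \Cref{prop: semistability under etale isogeny} holds there only because the map is a central isogeny). Since the instability parabolic $P_\lambda$ is not a preimage of $Q$, showing that $E_{P_\lambda}$ is $\nabla$-compatible is a genuinely separate problem; the paper devotes \Cref{lemma: the instability reduction is compatible with the connection} and the whole of \Cref{appendix: D-affine schemes} to it, realizing the reduction as a closed subscheme $E_{P_\lambda}\times_C\cM^\times = v^{-1}((X_\lambda)_C)$ of an affine $C$-scheme with meromorphic $t$-connection and checking that the stratum-closure cone $(X_\lambda)_C\subset X_C$ is preserved by the trivial connection. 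Without replacing your descent step by the instability-flag construction (and supplying the compatibility argument for the resulting $P_\lambda$), the proof does not go through.
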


We shall prove \Cref{prop: semistability of low height representatons} after recording the following.

\begin{coroll} \label{thm: stability under change of group}
Suppose that $G$ satisfies the low height property (LH).  Then, for all geometric points $p$ in $\hodge_{G}$, $p$ is semistable if and only if $\text{Ad}_*(p)$ is semistable (in other words, $G$ satisfies the property (Em) as in Definition \ref{defn: embedding property}).
\end{coroll}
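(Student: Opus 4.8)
The plan is to split the biconditional into its two implications, of which only one requires real work.

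\emph{Forward implication} ($p$ semistable $\Rightarrow \Ad_*(p)$ semistable). This is exactly \Cref{prop: semistability of low height representatons} applied to the adjoint representation $\rho = \Ad\colon G \to \GL(\mathfrak{g})$, so I only need to verify its hypotheses. The Lie algebra $\mathfrak{g}$ is a vector bundle over $S$; the center $Z_G$ is contained in $\ker(\Ad)$, whence $\Ad(Z_G^\circ) = \{1\} \subseteq Z_{\GL(\mathfrak{g})}$; and the low height property of \Cref{defn: low height property} says precisely that $G_{\kappa(s)} \to \GL(\mathfrak{g}_{\kappa(s)})$ is of low height for every $s \in S$ (low height depends only on the characteristic and the root datum, so its validity at geometric points passes to all residue fields). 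With these checks the forward implication is immediate.

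\emph{Reverse implication} ($\Ad_*(p)$ semistable $\Rightarrow p$ semistable), which I would prove by contraposition and which is characteristic-free. Assume $p = (E, \nabla)$ is not semistable: by \Cref{defn: classical semistability} there are a parabolic $P \subseteq G_k$ with Levi $L$, a $\nabla$-compatible reduction $E_P$, and a $P$-dominant character $\chi = \text{tr}_{\mathfrak{g}}(\lambda)$ with $\deg E_P(\chi) > 0$, where $\lambda\colon \mathbb{G}_m \to Z_L^\circ$ is $P$-dominant. I would grade $\mathfrak{g} = \bigoplus_n \mathfrak{g}_n$ by $\lambda$-weight; then each $\mathfrak{g}_{\geq m} = \bigoplus_{n\geq m}\mathfrak{g}_n$ is a $P$-submodule, so the subbundles $\Ad(E)_{\geq m} := E_P \times^P \mathfrak{g}_{\geq m} \subseteq \Ad(E)$ are stable under the induced $t$-connection $\Ad_*(\nabla)$ (this is exactly where $\nabla$-compatibility of $E_P$ is used), and hence are genuine subobjects of $\Ad_*(p)$ in the abelian category of \Cref{example: semistability of meromorphic conenctions on vector bundles}.

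The core is a degree count. Set $d_n := \deg(E_L \times^L \mathfrak{g}_n)$. Since the degree of an associated bundle depends only on its determinant character, and $\chi = \text{tr}_{\mathfrak{g}}(\lambda)$ expands via the trace pairing as $\sum_w \langle \lambda, w\rangle\, w = \sum_n n\,[\det \mathfrak{g}_n]$, a direct computation gives $\deg E_P(\chi) = \sum_n n\, d_n$. The symmetry of the weights of $\mathfrak{g}$ under $w \mapsto -w$ forces $d_{-n} = -d_n$, so $\deg \Ad(E) = \sum_n d_n = 0$ and the destabilizing inequality becomes $\sum_{n\geq 1} n\, d_n > 0$. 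A one-line Abel summation $\sum_{m\geq 1}\big(\sum_{n\geq m} d_n\big) = \sum_{n\geq 1} n\, d_n > 0$ then yields some $m \geq 1$ with $\deg \Ad(E)_{\geq m} = \sum_{n \geq m} d_n > 0$. As $\Ad(E)_{\geq m}$ is a nonzero $\Ad_*(\nabla)$-stable subbundle of positive slope while $\Ad(E)$ has slope $0$, it destabilizes $\Ad_*(p)$, proving the contrapositive. The only genuine obstacle, apart from \Cref{prop: semistability of low height representatons} (which I treat as a black box and in which the low height hypothesis does its work), is the bookkeeping in the reverse direction: correctly identifying $\text{tr}_{\mathfrak{g}}(\lambda)$ with $\sum_n n\,[\det \mathfrak{g}_n]$ and checking that the weight filtration of $\Ad(E)$ is simultaneously $P$-stable and $\Ad_*(\nabla)$-compatible. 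Notably, the reverse implication uses no assumption on the characteristic, so (LH) enters the corollary solely through the forward implication.
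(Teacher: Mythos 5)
Your proof is correct, and the forward implication is handled exactly as in the paper (a direct appeal to \Cref{prop: semistability of low height representatons} for $\rho=\Ad$, with the same hypothesis checks). The reverse implication, however, follows a genuinely different route. The paper first invokes \Cref{lemma: low height property implies (Z)} and \Cref{prop: semistability under etale isogeny} to replace $G$ by $G/Z_G$ and reduce to the case where $\Ad$ is a closed immersion; it then transports the destabilizing datum $(E_P,\chi)$ to the parabolic $P_\lambda\subset\GL(\mathfrak{g})$ attached to $\lambda$, and compares the characters dual to $\lambda$ under $\text{tr}_{\mathfrak{g}}$ and $\text{tr}_{\mathfrak{gl}(\mathfrak{g})}$ to conclude that $j_*(E_P)(\widetilde{\chi})$ has positive degree. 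You instead stay with $G$ itself, pass to the $\lambda$-weight filtration $\Ad(E)_{\geq m}=E_P(\mathfrak{g}_{\geq m})$ (which is $\Ad_*(\nabla)$-stable because $\mathfrak{g}_{\geq m}$ is a $P$-submodule and $E_P$ is $\nabla$-compatible --- the same mechanism the paper uses for the filtrations in its Behrend-conjecture argument), and run an explicit degree count using $\text{tr}_{\mathfrak{g}}(\lambda)=\sum_n n\,[\det\mathfrak{g}_n]$, the antisymmetry $d_{-n}=-d_n$, and Abel summation to exhibit a single positive-slope subobject of the slope-zero object $\Ad_*(p)$ in the sense of \Cref{example: semistability of meromorphic conenctions on vector bundles}. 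Your version is more elementary and, as you note, makes visible that the reverse implication needs no hypothesis on the characteristic (whereas the paper's reduction to $G/Z_G$ consumes property (Z), hence (LH)); the paper's version is more uniform in that the same trace-pairing comparison works for the general change-of-group statements elsewhere in the article. The only step you leave implicit --- that a $P$-submodule of a $G$-module yields a $\nabla$-stable subbundle of the associated bundle when $E_P$ is $\nabla$-compatible --- is standard and is used without further comment in the paper as well, so the argument stands.
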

\begin{proof}
If $p$ is semistable, then $\rho_*(p)$ is semistable by \Cref{prop: semistability of low height representatons}. For the converse, factor the adjoint representation as $G \xrightarrow{j} G/Z_{G} \xrightarrow{\Ad} \GL(\mathfrak{g})$. By \Cref{lemma: low height property implies (Z)} and \Cref{prop: semistability under etale isogeny}, we can replace $G $ with $G/Z_{G}$ and assume without loss of generality that $\Ad: G \to \GL(\mathfrak{g})$ is a closed immersion. Choose an unstable geometric point $p = (E, \nabla) \in \hodge_{G}(k)$ for some algebraically closed field $k$. By definition of instability, there exists some parabolic subgroup $P\subset G_k$, a $\nabla$-compatible parabolic reduction $E_P$ and a $P$-dominant character $\chi$ such that $\text{deg}(E_P(\chi)) >0$. After scaling, we can assume that the corresponding real cocharacter under the trace pairing $\text{tr}_{\mathfrak{g}}$ is actually a cocharacter of $\lambda: \mathbb{G}_m \to Z_L^{\circ}$. Consider $\lambda$ as a cocharacter of $\GL(\mathfrak{g})_k$ by composition, and denote by $P_{\lambda} \subset \GL(\mathfrak{g})_k$ the associated parabolic subgroup with Levi subgroup $L_{\lambda}$. We have an inclusion $P = P_{\lambda} \cap G \xrightarrow{j} P_{\lambda}$, which induces a $j_*(\nabla)$-compatible reduction  $P_{\lambda}$-reduction $j_*(E_P)$ of $j_*(E)$. The trace pairing of the standard representation $\text{tr}_{\mathfrak{g}}$ for $\GL(\mathfrak{g})$ agrees up to scaling with the trace pairing $\text{tr}_{\mathfrak{gl}(\mathfrak{g})}.$ 
Hence the dual character $\widetilde{\chi}: P_{\lambda} \to \mathbb{G}_m$ to the cocharacter $\lambda: \mathbb{G}_m \to Z_{L_{\lambda}}$ using the trace pairing $\text{tr}_{\mathfrak{gl}(\mathfrak{g})}$ agrees up to positive scaling with the one using the pairing $\text{tr}_{\mathfrak{g}}$. This means that the restriction of $\widetilde{\chi}$ to $P$ agrees with $\chi$ up to scaling. Since the degree of the line bundle $E_P(\chi)$ is positive, it follows that the degree of $j_*(E_P)(\widetilde{\chi})$ is also positive. This shows that $(j_*(E), j_*\nabla)$ is unstable.
\end{proof}

The rest of this subsection is dedicated to the proof of \Cref{prop: semistability of low height representatons}. We use the instability flag approach in \cite{rr-instability-flag, balaji-paramewaran-tensors}. Let us first recall the construction in \cite{rr-instability-flag}.

\begin{construction}[Instability reduction \cite{rr-instability-flag}] \label{consruction: instability reduction}
Suppose that $S= \Spec(k)$ for an algebraically closed field $k$. Let $\rho: G \to \GL(V)$ be a representation as in \Cref{prop: semistability of low height representatons}. Fix a $G$-bundle $E$ on $C$, with corresponding $\GL(V)$-bundle $\rho_*(E)$. Assume that we are given a maximal parabolic subgroup $Q \subset \GL(V)$, a parabolic reduction $\rho_*(E)_Q$, and a $Q$-dominant character $\chi$ such that $\text{deg}(\rho_*(E)_Q(\chi)) >0$. Out of this data we will construct a destabilizing parabolic reduction of the original $G$-bundle $E$.

\noindent $\bullet$ \textit{Some setup.} Set $F= \GL(V)/Q$ to be the flag variety. Note that $\GL(V)$-equivariant line bundles on $F$ correspond to characters of $Q$. 
We denote by $\cO_F(-\chi) \in \Pic^{\GL(V)}(X) = \Pic(X/\GL(V))$ the line bundle corresponding to the character $-\chi$. Since $\chi$ is $Q$-dominant, it follows that $\cO_X(-\chi)$ is ample on $X$. The reductive group $G$ acts on $X$ by multiplication on the left, and we view $\cO_F(-\chi)$ naturally as a $G$-equivariant bundle. 
By the assumptions on $\rho$, the neutral component of the center $Z^{\circ}_G$ acts trivially on $F$ and $\cO_F(-\chi)$. 

Let $X = \Spec(\bigoplus_{i=0}^{\infty} H^0(\cO_F(-\chi)^{\otimes i})$ denote the affine cone of $F$, which is an affine scheme equipped with an action of $G \times \mathbb{G}_m$ (the action of the second coordinate is given by the evident grading on $\cO_X$ that assigns $H^0(\cO_F(-\chi)^{\otimes i})$ weight $-i$). Note that there is a distinguished $G \times \mathbb{G}_m$-fixed point $0 \in X(k)$, and there is an induced projection $X \setminus 0 \to F$ that witnesses $X \setminus 0$ as the total space of the $\mathbb{G}_m$-torsor corresponding to the line bundle $\cO_F(\chi)$ on $F$. We denote by $\cO_X\langle 1 \rangle$ the $G \times \mathbb{G}_m$-equivariant line bundle on $X$ obtained by shifting the $\mathbb{G}_m$-weight of the structure sheaf $\cO_X$ by $1$, so that its restriction to $X \setminus 0$ corresponds to the pullback of the $G$-equivariant line bundle bundle $\cO_F(-\chi)$ on $F$ via the morphism $X \setminus 0 \to F$ equipped with its natural $G \times \mathbb{G}_m$-equivariant structure. 
The assumption that $Z^{\circ}_{G}$ acts trivially on $\cO_{F}(-\chi)$ implies that $Z_G^{\circ} \subset G \times \mathbb{G}_m$ acts trivially on $X$ and the induced $Z_G^{\circ}$-equivariant structure on $\cO_X\langle 1 \rangle $ is trivial.

Let $T \subset G$ be a maximal torus of $G$, and let $T' = T \cap \cD(G)$ be the corresponding maximal torus of the derived subgroup $\cD(G)$. 
We have a canonical isomorphism of cocharacter groups $X_*(T)_{\mathbb{R}} = X_*(T')_{\mathbb{R}} \oplus X_*(Z^{\circ}_G)_{\mathbb{R}}$. The trace form $\text{tr}_{\mathfrak{g}}$ yields a Weyl group invariant bilinear form on $X_*(T')_{\mathbb{R}}$.
We fix a choice of a rational bilinear quadratic form on $X_*(Z^{\circ}_G)_{\mathbb{R}}$, which provides us with a rational nondegenerate Weyl invariant quadratic form $b$ on $X_*(T)_{\mathbb{R}}$ extending $\text{tr}_{\mathfrak{g}}$ (the choice of extension $b$ will not matter in the end for our construction). We denote by $\widetilde{b}$ the extended bilinear form on $X_*(T \times \mathbb{G}_m)_{\mathbb{R}} = X_*(T)_{\mathbb{R}} \oplus \mathbb{R}$ given by $\widetilde{b}((x,y),(w,z)) = b(x,w) + yz$. We thus obtain a $W$-invariant extension $\widetilde{b}$ of $b$ to the space of real cocharacters of the maximal torus $T \times \mathbb{G}_m \subset G \times \mathbb{G}_m$.

\noindent $\bullet$ \textit{Kirwan stratification.} The $G \times \mathbb{G}_m$-equivariant ample line bundle $\cO_X\langle 1 \rangle$ and the rational quadratic norm $\widetilde{b}$ induce a weak $\Theta$-stratification on the quotient stack $X/(G \times \mathbb{G}_m)$ \cite[Ex. 5.3.4]{halpernleistner2018structure}. By \cite[Cor. 8.8]{balaji-paramewaran-tensors} and our assumption that the representation $V$ is of low height, this is actually a $\Theta$-stratification of $X/(G\times \mathbb{G}_m)$, which is also known as the Hesselink-Kempf-Kirwan-Ness stratification in classical GIT. It induces a decomposition into locally closed substacks:
\begin{equation} \label{equation: kirwan stratitication}
    X/(G\times \mathbb{G}_m) = \bigsqcup_{\widetilde{\lambda}\in \Lambda} Y_{\widetilde{\lambda}}^{ss}/P_{\widetilde{\lambda}},
\end{equation}
where $\Lambda$ is a finite set of cocharacters in $X_*(T \times \mathbb{G}_m)$ and each $Y^{ss}_{\widetilde{\lambda}}$ is a $P_{\widetilde{\lambda}}$-equivariant locally closed subscheme of $X$ contained in the closed attractor locus $X^{\widetilde{\lambda} \geq 0} \subset X$. 

Let us recall some properties of the stratification in \eqref{equation: kirwan stratitication}. We may write each $\widetilde{\lambda}$ as a pair $(\lambda, a_{\widetilde{\lambda}})$ for some $\lambda \in X_*(T)$ and $a_{\widetilde{\lambda}} \in \mathbb{Z} = X_*(\mathbb{G}_m)$. For $\widetilde{\lambda} =0$, the corresponding stratum $Y_0^{ss}$ is the open subscheme of $\cO_{X}\langle 1 \rangle$-semistable points in $X$. Any other stratum $Y_{\widetilde{\lambda}}^{ss}$ for $\widetilde{\lambda} \neq 0$ consists of $\cO_X\langle 1 \rangle $-unstable points. Among these there is a closed stratum $Y^{ss}_{(0,-1)} = \{0\}$ corresponding to the cocharacter $\widetilde{\lambda} = (0,-1)$. Any other $\widetilde{\lambda} = (\lambda, a_{\widetilde{\lambda}}) \neq 0, (0,-1)$ satisfies $\lambda \neq 0$.

Each pair $(\lambda, a_{\widetilde{\lambda}}) \in \Lambda$ is ``maximally destabilizing'' with respect to the linear functional 
\[\widetilde{\lambda} \mapsto \frac{-a_{\widetilde{\lambda}}}{\|\widetilde{\lambda}\|_{\widetilde{b}}} = \frac{-a_{\widetilde{\lambda}}}{\sqrt{\|\lambda\|_{b}^2 + a_{\widetilde{\lambda}}^2}}.\]
Here ``destabilizing'' means that $-a_{\widetilde{\lambda}} > 0$ for all $\widetilde{\lambda} \in \Lambda \setminus \{0\}$. Since $Z_G^{\circ}$ acts trivially on $X$ and $\widetilde{\lambda}$, the fact that $\widetilde{\lambda}$ is maximally destabilizing implies that  $\lambda \in X_*(T')_{\mathbb{Q}}$, i.e. the component of $\lambda$ in $X_*(Z_G^{\circ})_{\mathbb{Q}}$ is $0$. 

We denote by $Y_{\widetilde{\lambda}} \supset Y_{\widetilde{\lambda}}^{ss}$ the scheme-theoretic closure of $Y_{\widetilde{\lambda}}^{ss}$ in $X$, which is contained in the closed attractor locus $X^{\widetilde{\lambda} \geq 0} \subset X$. Taking the limit $\lim_{t \to 0} \widetilde{\lambda}(t) \cdot(-)$ induces a morphism of schemes $q: Y_{\widetilde{\lambda}} \hookrightarrow X^{\widetilde{\lambda} \geq 0} \to Z_{\widetilde{\lambda}}$, where $Z_{\widetilde{\lambda}} \subset X$ is a closed subscheme contained in the fixed locus $X^{\widetilde{\lambda} = 0} \subset X$ of the cocharacter $\widetilde{\lambda}$. In particular, $Z_{\widetilde{\lambda}}$ is preserved by the Levi subgroup $L_{\widetilde{\lambda}}$, and $q$ induces a morphism of stacks $q: Y_{\widetilde{\lambda}} / P_{\widetilde{\lambda}} \to Z_{\widetilde{\lambda}}/L_{\widetilde{\lambda}}$.

For any given $\widetilde{\lambda} = (\lambda, a_{\widetilde{\lambda}}) \neq 0$, consider the rational character 
\[\psi_{\widetilde{\lambda}} := \frac{-a_{\widetilde{\lambda}}}{\|\widetilde{\lambda}\|_{\widetilde{b}}^2} (\widetilde{\lambda}, - )_{\widetilde{b}}\]
in $X^*(T \times \mathbb{G}_m)_{\mathbb{Q}}$. This can actually be viewed as a rational character of $L_{\widetilde{\lambda}} = L_{\lambda} \times \mathbb{G}_m \subset G \times \mathbb{G}_m$. By construction, we may write $\psi_{\widetilde{\lambda}} = (\psi_{\lambda}, \psi_{a_{\widetilde{\lambda}}})$, where $\psi_{\lambda} = \frac{-a_{\widetilde{\lambda}}}{\| \widetilde{\lambda}\|_{\widetilde{b}}^2}(\lambda, -)_b$ in $X_*(T)_{\mathbb{Q}}$ and $\psi_{a_{\widetilde{\lambda}}} = \frac{-a_{\widetilde{\lambda}}}{\| \lambda\|_b^2 + a_{\widetilde{\lambda}}^2}$ in $\mathbb{Q} = X_*(\mathbb{G}_m)_{\mathbb{Q}}$. Since $\lambda \in X_*(T')_{\mathbb{Q}}$, the rational character $(\lambda, -)_b$ is the dual with respect to the trace form $\text{tr}_{\mathfrak{g}}$. Since $-a_{\widetilde{\lambda}} > 0$, it follows that $\psi_{\lambda}$ is a $P_{\lambda}$-dominant rational character. 

Let $\cL_{\widetilde{\lambda}}: = \cO_{Z_{\widetilde{\lambda}}}\langle 1 \rangle  \otimes \cO_{Z_{\widetilde{\lambda}}}(-\psi_{\widetilde{\lambda}})$ be the $L_{\widetilde{\lambda}} = L_{\lambda} \times \mathbb{G}_m$-equivariant (rational) line bundle on $Z_{\widetilde{\lambda}}$ obtained by twisting the line bundle $\cO_{Z_{\widetilde{\lambda}}}\langle 1 \rangle$ by the (rational) character $-\psi_{\widetilde{\lambda}}$ of $L_{\widetilde{\lambda}}$. We denote by $Z_{\widetilde{\lambda}}^{ss} \subset Z_{\widetilde{\lambda}}$ the open semistable locus with respect to the $L_{\widetilde{\lambda}}$-equivariant line bundle $\cL_{\widetilde{\lambda}}$. The inverse image $q^{-1}(Z_{\widetilde{\lambda}}^{ss}) \subset Y_{\widetilde{\lambda}}$ is exactly the open subset $Y^{ss}_{\widetilde{\lambda}}$. The quotient stack $Z_{\widetilde{\lambda}}^{ss}/L_{\widetilde{\lambda}}$ is called the center of the stratum $Y^{ss}_{\widetilde{\lambda}}/P_{\widetilde{\lambda}}$. 

\noindent $\bullet$ \textit{Instability reduction.} We now return to our $G$-bundle $E$. The data of $E$ along with the $Q$-reduction of $\rho_*(E)$ is equivalent to a morphism $f: C \to F/G = (X\setminus 0)/(G \times \mathbb{G}_m)$. By construction there is an isomorphism of the composition $C \to (X\setminus 0)/(G\times \mathbb{G}_m) \to BG$ with the $G$-bundle $E: C \to BG$. The pullback of the bundle $\cO_X\langle 1\rangle$ on $X/(G \times \mathbb{G}_m)$ under $f$ is given by $\rho_*(E)_Q(-\chi)$. Since $\text{deg}(\rho_*(E)_Q(\chi)) >0$, it follows that $f^*(\cO_X\langle 1\rangle)$ has no nonzero sections, which forces the image of $f$ to be contained in the unstable locus of $X/(G\times \mathbb{G}_m)$ with respect to the line bundle $\cO_X\langle 1 \rangle$. In particular, the generic point $\eta$ of the curve $C$ maps to a stratum $Y_{\widetilde{\lambda}}^{ss}/P_{\widetilde{\lambda}}$ in \eqref{equation: kirwan stratitication} for some $\widetilde{\lambda} = (\lambda, a_{\widetilde{\lambda}}) \neq 0$. Since $\eta$ maps to $X \setminus \{0\}$, we also have that $Y^{ss}_{\lambda} \neq \{0\} = Y_{(0,-1)}$, and hence $\lambda \neq 0$. Since the morphism $Y_{\widetilde{\lambda}}/P_{\widetilde{\lambda}} \to X/(G \times \mathbb{G}_m)$ is representable and projective, the valuative criterion for properness induces an extension $C \to Y_{\widetilde{\lambda}}/P_{\widetilde{\lambda}}$ of the lift $\eta \to Y_{\widetilde{\lambda}}/P_{\widetilde{\lambda}} \to X/(G\times \mathbb{G}_m)$. If we set $P = P_{\lambda} \subset G$, then the composition $C \to Y_{\widetilde{\lambda}} /P_{\widetilde{\lambda}} \to BP_{\lambda} \times B\mathbb{G}_m \to BP_{\lambda}$ yields a $P$-bundle $E_P$ which is in fact a parabolic reduction of $E$. We call $E_P$ the \underline{instability parabolic reduction} induced by $\rho_*(E)_Q$. 

To conclude this construction, we claim that we have $\text{deg}(E_P(\psi_{\lambda})) >0$. In other words, the pair $E_P$ and a multiple of the rational $P$-dominant character $\psi_\lambda$ jointly destabilize the $G$-bundle $E$. To see this claim, consider the composition $g: C \to Y_{\widetilde{\lambda}}/P_{\widetilde{\lambda}} \to Z_{\widetilde{\lambda}}/L_{\widetilde{\lambda}}$. The generic point of the curve lands in the $\cL_{\widetilde{\lambda}}$-semistable locus $Z^{ss}_{\lambda}/L_{\widetilde{\lambda}} \subset Z_{\widetilde{\lambda}}/L_{\widetilde{\lambda}}$, because $\eta$ was mapped to $Y^{ss}_{\widetilde{\lambda}}/P_{\widetilde{\lambda}}$. In particular $g^*(\cL_{\widetilde{\lambda}})$ has nonzero global sections, and it follows that we must have 
\begin{equation} \label{second to last equation: instability parabolic reduction construction}
    \text{deg}(g^*(\cL_{\widetilde{\lambda}})) = \text{deg}(g^*(\cO_{Z_{\widetilde{\lambda}}}\langle 1 \rangle) \otimes g^*(\cO_{Z_{\widetilde{\lambda}}}(-\psi_{\widetilde{\lambda}})) \geq 0.
\end{equation}
Using the decomposition $\psi_{\widetilde{\lambda}} = (\psi_{\lambda}, \psi_{a_{\widetilde{\lambda}}})$ and the natural identification $g^*(\cO_{Z_{\widetilde{\lambda}}}(-\psi_{\lambda})) = E_P(\psi_{\lambda})$, we have
\[ g^*(\cO_{Z_{\widetilde{\lambda}}}(-\psi_{\widetilde{\lambda}})) = g^*(\cO_{Z_{\widetilde{\lambda}}}(-\psi_{\lambda})) \otimes g^*(\cO_{Z_{\widetilde{\lambda}}}(-\psi_{a_{\widetilde{\lambda}}})) = E_P(\psi_{\lambda}) \otimes g^*\left(\cO_{Z_{\widetilde{\lambda}}}\left(\frac{a_{\widetilde{\lambda}}}{\|\widetilde{\lambda}\|_{\widetilde{b}}^2 + a_{\widetilde{\lambda}}^2}\right)\right),\]
Hence, we may rewrite \eqref{second to last equation: instability parabolic reduction construction} as follows:
\begin{equation} \label{last equation: instability parabolic reduction construction}
    \left(1 + \frac{a_{\widetilde{\lambda}}}{\|\widetilde{\lambda}\|_b^2 + a_{\widetilde{\lambda}}^2} \right) \cdot \text{deg}(g^*(\cO_{Z_{\widetilde{\lambda}}}\langle 1\rangle) + \text{deg}(E_P(\psi_{\lambda})) \geq 0.
\end{equation}
Note that $g^*(\cO_{Z_{\widetilde{\lambda}}}\langle 1 \rangle) = f^*(\cO_X\langle 1 \rangle) = \rho_*(E)_Q(-\chi)$, and hence 
\[\text{deg}(g^*(\cO_{Z_{\widetilde{\lambda}}}\langle 1 \rangle)) = \text{deg}(\rho_*(E)_Q(-\chi)) <0.\]
We also have $1 + \frac{a_{\widetilde{\lambda}}}{\|\widetilde{\lambda}\|_b^2 + a_{\widetilde{\lambda}}^2} >0$ since $\lambda \neq 0$, and hence the inequality \eqref{last equation: instability parabolic reduction construction} forces $\text{deg}(E_P(\psi_{\lambda})) > 0$.
\qed
\end{construction}

Our strategy to prove \Cref{prop: semistability of low height representatons} is to apply \Cref{consruction: instability reduction} and checking that the compatibility with meromorphic $t$-connections is preserved. In order to do this, we use some results on affine schemes with meromorphic $t$-connections, generalizing \cite{balaji-paramewaran-tensors} in the spirit of \cite[\S A]{chen-zhu-nah}. We refer the reader to \Cref{appendix: D-affine schemes} for more details on this notion, which we use in the proof of the next lemma.

\begin{lemma} \label{lemma: the instability reduction is compatible with the connection}
Let $\rho: G \to \GL(V)$ be a representation as in \Cref{prop: semistability of low height representatons}. Let $k$ be an algebraically closed field, and let $p = (E, \nabla) \in \hodge_{G}(k)$. Let $\rho_*(E)_Q$ be a parabolic reduction to a maximal parabolic subgroup $Q \subset \GL(V)_k$ such that there exists a $Q$-dominant character $\chi$ with $\text{deg}(\rho_*(E)_Q(\chi)) >0$. If the reduction $\rho_*(E)_{Q}$ is $\rho_*(\nabla)$-compatible, then the instability reduction $E_P$ in \Cref{consruction: instability reduction} is $\nabla$-compatible.
\end{lemma}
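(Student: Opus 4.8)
The plan is to recast the whole of \Cref{consruction: instability reduction} in the language of associated bundles equipped with their induced meromorphic $t$-connection, as developed in \Cref{appendix: D-affine schemes}, and then to observe that every morphism appearing in the construction is $G$- (or $P_\lambda$-, $L_\lambda$-) equivariant and hence horizontal for $\nabla$. The starting point is that a meromorphic $t$-connection $\nabla$ on $E$ induces, functorially in the $G$-scheme $Y$, a meromorphic $t$-connection on the associated bundle $E(Y)\to C_k$, in such a way that for $Y=G/P$ the horizontal sections correspond exactly to the $\nabla$-compatible parabolic reductions of \Cref{defn: compatible parabolic reduction}. This assignment is compatible with change of group along $\rho$ through the canonical identification $E(X)=\rho_*(E)(\GL(V)/Q)$ with $X=\GL(V)/Q$. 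Under this dictionary the hypothesis that $\rho_*(E)_Q$ is $\rho_*(\nabla)$-compatible says precisely that the classifying section $f\colon C_k\to E(X)$ is horizontal.

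First I would record that the $\Theta$-stratification $X/G=\bigsqcup_\lambda Y_\lambda^{ss}/P_\lambda$, being the canonical (and in particular $G$-equivariant) Hesselink--Kempf--Kirwan--Ness stratification, pulls back to a stratification of the total space $E(X)\to C_k$ that is preserved by the induced $t$-connection: in a local trivialization of $E$ the horizontal lift of a vector field on $C_k$ is, heuristically, the coordinate field plus the fundamental vector field of a (meromorphic, $t$-scaled) section of $\mathfrak{g}$, and since each stratum of $X$ is $G$-invariant these fundamental fields are tangent to the relative strata. The retraction $q\colon Y_\lambda\to Z_\lambda$, the locally closed immersion $Y_\lambda^{ss}\hookrightarrow Y_\lambda$, and the structure map $Y_\lambda/P_\lambda\to BP_\lambda$ are all equivariant and therefore induce horizontal morphisms of associated bundles. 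Tracing the horizontal section $f$ through the valuative-criterion lift $C_k\to Y_\lambda/P_\lambda$ and then to $BP_\lambda$ thus produces the reduction $E_P$ together with a horizontal section over the generic point $\eta$; that is, $E_P$ is $\nabla$-compatible at $\eta$.

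Next I would upgrade generic compatibility to compatibility over all of $C_k$, which sidesteps any delicate analysis at the finitely many points where $f$ meets the boundary of the open stratum. By \Cref{defn: compatible parabolic reduction}, $\nabla$-compatibility of $E_P$ is the vanishing of the composite $\cO_{C_k}\xrightarrow{\nabla}\mathrm{At}^D(E)\to \mathrm{At}^D(E)/\mathrm{At}^D(E_P)$, and from the comparison of meromorphic $t$-Atiyah sequences this quotient is the locally free sheaf $E_P(\mathfrak{g}/\mathfrak{p})\otimes\Omega^1_{C_k/k}(D)$. Since $C_k$ is integral, a section of a vector bundle that vanishes at the generic point vanishes identically; hence the generic compatibility established above forces $E_P$ to be $\nabla$-compatible everywhere.

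The main obstacle will be the precise formalization underlying the middle paragraph. Concretely I expect the real work to be twofold: (a) setting up the induced meromorphic $t$-connection on $E(Y)$ and the dictionary ``horizontal section $\leftrightarrow$ compatible reduction'' uniformly, including for the possibly non-affine $Y_\lambda$ and across the boundary of the open stratum; and (b) verifying that the canonical $\Theta$-stratification — which is guaranteed to be a genuine stratification only under the low-height hypothesis via \cite[Cor. 8.8]{balaji-paramewaran-tensors}, as used already in \Cref{consruction: instability reduction} — is genuinely preserved by the induced connection. The generic-to-global argument of the last paragraph is exactly what lets me confine the horizontality analysis to the open stratum, where the equivariant retraction $q$ supplies explicit $\nabla$-compatible coordinates in the $D$-affine charts of \Cref{appendix: D-affine schemes}, so that the construction of \Cref{prop: semistability of low height representatons} becomes compatible with the $t$-connection.
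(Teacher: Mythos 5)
Your overall strategy is the one the paper uses: transport the instability construction into the formalism of associated $C$-schemes with induced meromorphic $t$-connections and exploit equivariance. Your closing generic-to-global reduction is also correct and is a legitimate simplification: the obstruction to $\nabla$-compatibility of $E_P$ is a section of the locally free sheaf $\mathrm{At}^D(E)/\mathrm{At}^D(E_P)\cong E_P(\mathfrak{g}/\mathfrak{p})\otimes\Omega^1_{C_k/k}(D)$ on the integral curve $C_k$, so vanishing at the generic point forces vanishing everywhere. The paper does not need this step because it works directly with the closed subscheme $Y_\lambda=\overline{Y_\lambda^{ss}}$ rather than with the open stratum.

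The genuine gap is in your middle paragraph, and it is not purely a matter of formalization. The subschemes $Y_\lambda^{ss}$ and $Y_\lambda$ of $X$ are only $P_\lambda$-invariant; what is $G$-invariant is the saturation $G\cdot Y_\lambda^{ss}$. Your ``fundamental vector fields are tangent to the strata'' heuristic therefore only shows that the horizontal section stays inside $E(G\cdot Y_\lambda^{ss})$; to conclude that the induced $P_\lambda$-reduction itself is horizontal you would still need the identification $G\cdot Y_\lambda^{ss}\cong G\times^{P_\lambda}Y_\lambda^{ss}$ together with compatibility of the equivariant projection to $G/P_\lambda$ with the connections, and you would need a notion of induced connection on the non-affine fibration $E(\GL(V)/Q)$, which \Cref{appendix: D-affine schemes} does not provide. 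The paper's proof sidesteps both issues with a different mechanism: passing to affine cones turns the reduction $\rho_*(E)_Q$ into a connection-compatible morphism $v\colon E\times_C\cM^{\times}\to X_C$ to the constant scheme $X_C$ equipped with the \emph{trivial} connection; the cone $(X_\lambda)_C$ of $Y_\lambda$ is then preserved by that trivial connection simply because $X_\lambda\subset X$ is a closed subscheme defined over $k$ --- no invariance of any kind is invoked --- and \Cref{lemma: compatibility of fiber products with closed subschemes preserved by connections} shows that its preimage $E_P\times_C\cM^{\times}$ is preserved by the connection on $E\times_C\cM^{\times}$. Taking degree-zero graded pieces and applying \Cref{lemma: compatiblity of parabolic reuductions in terms of affine schemes with connection} then yields $\nabla$-compatibility of $E_P$ over all of $C_k$ at once. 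If you wish to keep your outline, the missing ingredient you must supply is precisely this pair of lemmas (or a non-affine analogue of them), since the invariance argument as you state it does not yet reach the parabolic reduction.
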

\begin{proof}
We base change to $\Spec(k)$ and assume for simplicity that $S = \Spec(k)$. The parabolic reduction $\rho_*(E)_Q$ induces a $\GL(V) \times \mathbb{G}_m$-equivariant morphism of affine $C$-schemes with meromorphic $t$-connections $v: \rho_*(E) \times_C \cM^{\times} \to X_C$,
where $X$ denotes the affine cone of $\GL(V)/Q$ and $\cM$ denotes the line bundle $\rho_*(E)_Q(-\chi)$ for a $Q$-dominant character $\chi$ (see \Cref{appendix: D-affine schemes}, especially after \Cref{notn: compatibility of section} for more details). By composing with the inclusion $E \hookrightarrow \rho_*(E)$, we get a $G \times \mathbb{G}_m$-equivariant morphism of schemes $v: E \times_C \cM^{\times} \to X_C$ with meromorphic $t$-connections.

Consider the closed $P_{\lambda}$-subscheme $Y_{\lambda} \subset X$ as in \Cref{consruction: instability reduction}. By tracing back the correspondence between parabolic reduction and sections of $E(G/P)$, we see that the instability reduction $E_P \subset E$ fits into the following Cartesian $\mathbb{G}_m$-equivariant diagram
\[
\begin{tikzcd}
   E_P \times_C \cM^{\times}  \ar[r] \ar[d] & (Y_{\lambda})_C \ar[d] \\
  E \times_C \cM^{\times} \ar[r, "v"] & X_C
\end{tikzcd}
\]
The closed immersion $(Y_{\lambda})_{C} \hookrightarrow X_C$ is compatible with the corresponding trivial connections (see \Cref{example: trivial connection on affine scheme}). By \Cref{lemma: compatibility of fiber products with closed subschemes preserved by connections},  the closed immersion $E_P \times_C \cM^{\times} \hookrightarrow E\times_C \cM^{\times}$ is preserved by the meromorphic $t$-connection on $E\times_C \cM^{\times}$. Both $\cO_{E\times_C \cM^{\times}} = \bigoplus_{ n \in \mathbb{Z}} \cO_E \otimes \cM^{\otimes n}$ and $\cO_{E_P\times_C \cM^{\times }} = \bigoplus_{ n \in \mathbb{Z}} \cO_{E_P} \otimes \cM^{\otimes n}$ are naturally graded, and by construction the meromorphic $t$-connection on $\cO_{E\times_C \cM^{\times}}$ preserves the grading. By looking at the zero graded pieces, we see that the inclusion $E_P \hookrightarrow E$ is preserved by the connection $\nabla_E$. By \Cref{lemma: compatiblity of parabolic reuductions in terms of affine schemes with connection}, it follows that the parabolic reduction $E_P$ is $\nabla$-compatible.
\end{proof}

\begin{proof}[Proof of \Cref{prop: semistability of low height representatons}]
    The geometric point $p=(E, \nabla)$ is defined over an algebraically closed field $k$. Suppose that the associated point $(\rho_*(E), \rho_*(\nabla)) \in \hodge_{\GL(V)}(k)$ is unstable. This means that the vector bundle $\rho_*(E)$ admits a subbundle $\mathcal{F} \subset \rho_*(E)$ of larger slope such that $\rho_*(\nabla)$ preserves $\mathcal{F}$. This corresponds to a $\rho_*(\nabla)$-compatible reduction $\rho_*(E)_Q$ of structure group of $\rho_*(E)$ to a maximal parabolic $Q \subset \GL(V)$ such that $\text{deg}(\rho_*(E)_Q))>0$ for a $Q$-dominant character $\chi$. By \Cref{lemma: the instability reduction is compatible with the connection}, the corresponding instability reduction $E_P$ of $E$ (\Cref{consruction: instability reduction}) is $\nabla$-compatible. By \Cref{consruction: instability reduction}, there is a $P$-dominant character $\psi$ such that $\text{deg}(E_P(\psi))>0$. Hence, the geometric point $p = (E, \nabla)$ is unstable.
\end{proof}

\end{subsection}
\end{section}
\begin{section}{Deformation theory}\label{section: deformation theory}

This section generalizes the deformation theory in \cite{decataldo-herrero-nahpoles} to the more general setting of smooth group schemes.
It also generalizes the treatment of Biswas-Ramanan for twisted Higgs bundles over the complex numbers in \cite{biswas-ramanan-infinitesimal}.

\begin{subsection}{Cech cohomology and base change}
Let $A$ be a Noetherian ring over $\mathbb{A}^1_{S}$, inducing a morphism $Spec(A) \to \mathbb{A}^1_S$. We denote by $t_{A}$ the image of the $\mathbb{A}^1_S$-coordinate $t$ inside $A$, and set $C_{A}:= C\times_{S} Spec(A)$. Let $U \to C_{A}$ be an \'etale cover with $U$ affine. For any coherent $\mathcal{O}_{C_{A}}$-module $\mathcal{E}$, we denote by $(\widecheck{C}^{\bullet}_{U}(\mathcal{E}), \delta)$ the \v{C}ech complex corresponding to the cover. We use the notation $U^{(i)} := U \times_{C_{A}} \ldots  \times_{C_{A}} U$ for the $i^{th}$ fiber product. The \v{C}ech complex has $i^{th}$-term given by $\widecheck{C}^{i+1}_{U}(\mathcal{E}) = \mathcal{E}(U^{(i)})$, with differential $\delta$.  Since $C_{A}$ is separated, the morphism $U \to C_{A}$ is affine, and therefore each $U^{(i)}$ is an affine scheme.
\begin{lemma}[{\cite[Lemma 4.1]{decataldo-herrero-nahpoles}}] \label{lemma: base change cech cohomology lemma}
With notation as above, we have the following.
\begin{enumerate}[(a)]
    \item The $i^{th}$ cohomology $\widecheck{H}^{i}_{U}(\mathcal{E})$ of the Cech complex computes the sheaf cohomology $H^i(\mathcal{E})$. The cohomology groups $\widecheck{H}^{i}_{U}(\mathcal{E})$ vanish for $i \geq 2$.
    \item Suppose that $\mathcal{E}$ is $A$-flat. Then, for any $A$-module $M$, the natural morphism 
    \[H^1(\mathcal{E}) \otimes_{A} M \to H^1(\mathcal{E} \otimes_{A} M)\]
    is an isomorphism.
    \item Suppose that $\mathcal{E}$ is $A$-flat. Let $B$ be an $A$-algebra, inducing a morphism $f: Spec(B) \to Spec(A)$. Then the natural morphism $H^1(\mathcal{E}) \otimes_{A} B \to H^1((f_C)^*(\mathcal{E}))$ is an isomorphism.
    \item With notation as in part (c), assume that $\mathcal{G}$ is another $A$-flat coherent sheaf on $C_{A}$ equipped with a $A$-linear morphism of abelian sheaves $\varphi: \mathcal{E} \to \mathcal{G}$. Then, the induced diagram
\[
\begin{tikzcd}
  H^1(\mathcal{E})\otimes_{A} B \; \; \; \; \ar[r,"H^1(\varphi)\otimes_{A} B"] \ar[d, symbol = \xrightarrow{\sim}] & \; \; \; \;  H^1(\mathcal{G})\otimes_{A} B \ar[d, symbol = \xrightarrow{\sim}] \\  H^1((f_C)^* \mathcal{E}) \; \; \ar[r, "H^1(\varphi \otimes_{A} B)"] & \; \; H^1((f_C)^* \mathcal{G})
\end{tikzcd}
\]
    commutes. \qed
\end{enumerate}
\end{lemma}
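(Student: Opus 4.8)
The plan is to establish (a) first and then deduce the base-change statements (b)--(d) from it together with the flatness hypothesis.

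\textbf{Part (a).} First I would record that $C_A \to \Spec(A)$ is proper and smooth of relative dimension $1$, being a base change of $C \to S$, and that since $C_A$ is separated and $U \to C_A$ is affine, all the fiber products $U^{(i)}$ are affine. Because quasi-coherent cohomology can be computed by the Čech complex of a faithfully flat affine cover, I would use the descent spectral sequence $E_1^{p,q} = H^q(U^{(p+1)},\mathcal{E}) \Rightarrow H^{p+q}(C_A,\mathcal{E})$: each $U^{(p+1)}$ is affine and $\mathcal{E}$ is quasi-coherent, so $H^q(U^{(p+1)},\mathcal{E})=0$ for $q>0$, the spectral sequence degenerates, and the Čech cohomology $\widecheck{H}^i_U(\mathcal{E})$ computes $H^i(C_A,\mathcal{E})$. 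The vanishing for $i\geq 2$ is then a statement about the target: since $\Spec(A)$ is affine, the Leray spectral sequence for $\pi_A\colon C_A \to \Spec(A)$ degenerates to give $H^i(C_A,\mathcal{E}) = \Gamma(\Spec(A), R^i\pi_{A,*}\mathcal{E})$, and $R^i\pi_{A,*}\mathcal{E}=0$ for $i\geq 2$ because the fibers of $\pi_A$ are one-dimensional.

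\textbf{Parts (b) and (c).} The conceptual point is that $H^1$ is the \emph{top} nonvanishing cohomology, so it behaves like a cokernel and commutes with the right-exact operation $-\otimes_A M$. Since $\mathcal{E}$ is $A$-flat and every $U^{(i)}$ is affine, each term $\mathcal{E}(U^{(i)})$ of the Čech complex is a flat $A$-module, and the identity $(\mathcal{E}\otimes_A M)(U^{(i)}) = \mathcal{E}(U^{(i)})\otimes_A M$ (tensor commutes with sections over an affine) gives an isomorphism of complexes $\widecheck{C}^\bullet_U(\mathcal{E}\otimes_A M)\cong \widecheck{C}^\bullet_U(\mathcal{E})\otimes_A M$; applying part (a) to the quasi-coherent sheaf $\mathcal{E}\otimes_A M$ then shows $H^i\big(\widecheck{C}^\bullet_U(\mathcal{E})\otimes_A M\big)=0$ for all $i\geq 2$ and all $M$. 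I would conclude by invoking cohomology and base change for the proper morphism $\pi_A$ with $\mathcal{E}$ flat over $A$: the complex $R\pi_{A,*}\mathcal{E}$ is perfect, and the vanishing above bounds its Tor-amplitude to $[0,1]$, so it is represented by a two-term complex $[P^0 \xrightarrow{\phi} P^1]$ of finite projective $A$-modules computing the same $H^1$. Then
\[ H^1(\mathcal{E}\otimes_A M) = \operatorname{coker}(\phi\otimes_A M) = \operatorname{coker}(\phi)\otimes_A M = H^1(\mathcal{E})\otimes_A M, \]
by right-exactness of $-\otimes_A M$; this is exactly the natural map of (b), and specializing $M=B$ gives (c). The main obstacle lives here: the clean isomorphism is available precisely because $H^1$ is top cohomology, which rests on the geometric input that the fibers are curves (relative dimension one); if $H^2$ were nonzero this would fail and be replaced by the more delicate cohomology-and-base-change sequences. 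The technical heart is the flatness bookkeeping guaranteeing that the relevant complexes are flat and that tensoring stays exact where it must.

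\textbf{Part (d).} This is just naturality of the base-change isomorphism. The morphism $\varphi$ induces a map of Čech complexes $\widecheck{C}^\bullet_U(\mathcal{E}) \to \widecheck{C}^\bullet_U(\mathcal{G})$ of flat $A$-modules, compatible with $-\otimes_A B$ and with the identifications $\widecheck{C}^\bullet_U(\mathcal{E})\otimes_A B \cong \widecheck{C}^\bullet_U((f_C)^*\mathcal{E})$ (and likewise for $\mathcal{G}$). Passing to $H^1$, the vertical arrows of the square are the base-change isomorphisms of (c) and the horizontal arrows are $H^1(\varphi)\otimes_A B$ and $H^1(\varphi\otimes_A B)$, so commutativity is immediate from the compatibility of $\varphi$ with the section-level tensor identifications.
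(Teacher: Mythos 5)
Your proof is correct and follows essentially the same route as the cited source (the paper itself states this lemma with a reference to [dCH, Lem.\ 4.1] rather than reproving it): an affine \'etale \v{C}ech resolution by flat $A$-modules, vanishing of $H^{\geq 2}$ from the one-dimensionality of the fibers, and then cohomology and base change realizing $H^1$ as the cokernel of a map of finite projectives, which commutes with $-\otimes_A M$ by right-exactness. You correctly isolate the essential point that the vanishing of $H^{\geq 2}$ must be known after tensoring with \emph{every} $M$ (obtained by applying part (a) to $\mathcal{E}\otimes_A M$), not just for $\mathcal{E}$ itself, in order to bound the Tor-amplitude.
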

Fix an $\mathbb{A}^1_S$-morphism $p_{A}: Spec(A) \to (\hodge_{G})^{ss}$ represented by a pair $(E, \nabla)$ consisting of a $G$-bundle $E$ on $C_{A}$ and a meromorphic $t_{A}$-connection $\nabla$. We denote by $\Ad(\nabla): \Ad(E) \to \Ad(E) \otimes_{\mathcal{O}_{C}} \Omega^1_{C_A/A}(D)$ the corresponding meromorphic $t_A$-connection on the associated vector bundle $\Ad(E)$ induced by the adjoint representation $\Ad:G \to \GL(\mathfrak{g})$. Recall that $\Ad(\nabla)$ is an $A$-linear morphism of abelian sheaves.
\begin{defn} \label{defn: obstruction module}
The module of obstructions $\mathcal{Q}_{p_A}$ is the cokernel of the following morphism in sheaf cohomology.
\[\mathcal{Q}_{A} : = \text{coker}\left[ H^1(\Ad(E)) \xrightarrow{H^1(\Ad(\nabla))} H^1( \Ad(E) \otimes_{\mathcal{O}_{C_A}} \Omega^1_{C_A/A}(D)) \right]\]
\end{defn}

Note that $\mathcal{Q}_{A}$ is a finitely generated $A$-module, since $C_{A} \to A$ is proper.

\begin{coroll} \label{coroll: base change obstruction module}
Let $B$ be a Noetherian $A$-algebra, inducing a morphism $f: Spec(B) \to Spec(A)$. Consider the pullback $t_{B}$-connection $p_{B} = ((f_{C}^*(E), f^*(\nabla))$ on $C_{B}$. Then, there is a natural isomorphism $\mathcal{Q}_{p_{B}} \cong \mathcal{Q}_{p_{A}} \otimes_{A} B$.
\end{coroll}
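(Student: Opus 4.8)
The plan is to reduce the statement to the base-change results of \Cref{lemma: base change cech cohomology lemma}(c)--(d), using that the right-exact functor $-\otimes_A B$ commutes with the formation of cokernels.

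First I would record the two ingredients. The sheaves $\Ad(E)$ and $\Ad(E)\otimes_{\mathcal{O}_{C_A}}\Omega^1_{C_A/A}(D)$ are vector bundles on $C_A$; since $C_A\to\Spec(A)$ is flat, both are $A$-flat, so \Cref{lemma: base change cech cohomology lemma}(c) applies to each. Second, I would check that pullback along $f_C\colon C_B\to C_A$ is compatible with the adjoint construction and the connection: there is a canonical identification $f_C^*(\Ad(E))\cong\Ad(f_C^*E)$, and because $C\to S$ is smooth and $D$ is a relative Cartier divisor (so that the formation of $\Omega^1_{C/S}$ and of $\mathcal{O}(D)$ commutes with base change) one also gets $f_C^*\big(\Ad(E)\otimes\Omega^1_{C_A/A}(D)\big)\cong\Ad(f_C^*E)\otimes\Omega^1_{C_B/B}(D)$. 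Under these identifications the pullback of the adjoint connection $\Ad(\nabla)$ agrees with the adjoint connection $\Ad(f^*\nabla)$ of the pulled-back $t_B$-connection; this is exactly the functoriality of the associated-bundle and meromorphic Atiyah-sequence constructions of \Cref{subsection: stack of connections}.

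With these in hand, I would apply \Cref{lemma: base change cech cohomology lemma}(d) to $\varphi=\Ad(\nabla)\colon\Ad(E)\to\Ad(E)\otimes\Omega^1_{C_A/A}(D)$. This yields a commutative square whose vertical arrows are the base-change isomorphisms of (c), whose top arrow is $H^1(\Ad(\nabla))\otimes_A B$, and whose bottom arrow is, after the identifications above, $H^1(\Ad(f^*\nabla))$. Taking horizontal cokernels in a square with isomorphisms as vertical maps produces a canonical isomorphism of the two cokernels; since $-\otimes_A B$ is right exact and hence commutes with cokernels, this reads
\[
\operatorname{coker}\!\big(H^1(\Ad(\nabla))\big)\otimes_A B\;\xrightarrow{\ \sim\ }\;\operatorname{coker}\!\big(H^1(\Ad(f^*\nabla))\big).
\]
The left-hand side is $\mathcal{Q}_{p_A}\otimes_A B$ by \Cref{defn: obstruction module}, and the right-hand side is $\mathcal{Q}_{p_B}$ by the same definition applied to $p_B=(f_C^*E,f^*\nabla)$, giving the desired isomorphism.

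I do not expect a serious obstacle, since the content is carried entirely by \Cref{lemma: base change cech cohomology lemma}. The one point to handle with care is the identification of $f_C^*(\Ad(\nabla))$ with $\Ad(f^*\nabla)$, together with the compatibility of $\Omega^1_{C_A/A}(D)$ with base change, as this is what guarantees that the square supplied by part (d) is the one claimed. Naturality of the resulting isomorphism in $B$ is then automatic, because every arrow involved --- the base-change maps and the identifications --- is natural.
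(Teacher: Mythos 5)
Your proof is correct and follows exactly the paper's route: apply \Cref{lemma: base change cech cohomology lemma}(d) to $\Ad(\nabla)$, use that $-\otimes_A B$ commutes with cokernels, and invoke \Cref{defn: obstruction module}. You additionally spell out the identification $f_C^*(\Ad(\nabla))\cong\Ad(f^*\nabla)$ and the base-change compatibility of $\Omega^1(D)$, which the paper leaves implicit; this is a welcome elaboration rather than a divergence.
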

\begin{proof}
This follows from Lemma \ref{lemma: base change cech cohomology lemma} (d), the definition of $\mathcal{Q}_{p_{A}}$, and the fact that the formation of cokernels commutes with $(-)\otimes_{A}B$.
\end{proof}

\begin{defn} \label{defn: deformation complex}
    For any $p_A = (E, \nabla)$ over $C_A$ as above, we define the following two term complex
    \[ \mathcal{C}_{p_{A}}^{\bullet} := \left[\Ad(E) \xrightarrow{\Ad(\nabla)} \Ad(E) \otimes_{\cO_{C_A}} \Omega^1_{C_A/A} \right]\]
    Here we use cohomological grading, and the leftmost element $\Ad(E)$ sits in degree $0$.
\end{defn}
 The hypercohomology spectral sequence induces an identification $\mathcal{Q}_{p_A} = \mathbb{H}^2(\mathcal{C}_{E,\nabla}^{\bullet})$. We note that for any homomorphism $\rho: G \to H$ of smooth group schemes, we get an induced morphism of complexes $\mathcal{C}_{p_{A}}^{\bullet} \to \mathcal{C}_{\rho_*(p_{A})}^{\bullet}$.
\end{subsection}

\begin{subsection}{Deformation theory of the Hodge moduli stack}
\begin{context}[Lifting problem] \label{context: lifting problem}
    Let $A$ be a local Artin algebra with maximal ideal $\mathfrak{m}$ and residue field $k$. Choose a pair $p_{A} = (E, \nabla)$ corresponding to a morphism $p_{A}: Spec(A) \to \hodge_{G}$. Let $\widetilde{A}$ be another local Artin algebra, equipped with a surjective local homomorphism $\widetilde{A} \twoheadrightarrow A$ cut out by a square-zero ideal $I$ in $\widetilde{A}$. Choose a morphism $Spec(\widetilde{A}) \to \mathbb{A}^1_{S}$ such that $\iota: Spec(A) \hookrightarrow Spec(\widetilde{A})$ is a morphism over $\mathbb{A}^1_S$. We have a diagram
\[
\begin{tikzcd}
  Spec(A) \ar[r] \ar[d, symbol = \hookrightarrow, "\iota", labels= left] &  \hodge_{G} \ar[d] \\ Spec(\widetilde{A}) \ar[r] \ar[ur, dashrightarrow] & \mathbb{A}^1_S
\end{tikzcd}
\]
We are interested in finding lifts as in the dotted arrow.
\end{context} 

\begin{defn}
    With notation as in \Cref{context: lifting problem}, we denote by $\text{Def}_{\widetilde{A}/A}(p_A)$ the groupoid of pairs $(p_{\widetilde{A}}, \theta)$ where $p_{\widetilde{A}} \in \hodge_{G}(\widetilde{A})$ and $\theta$ is an isomorphism $\theta: \iota^*(p_{\widetilde{A}}) \xrightarrow{\sim} p_{A}$.
\end{defn}

The dotted lifts in \Cref{context: lifting problem} will be governed by the hypercohomology of the complex $\mathcal{C}_{p_{A}}^{\bullet}$ (\Cref{defn: deformation complex}). We shall denote by $R\Gamma(\mathcal{C}_{E,\nabla}^{\bullet})$ the derived pushforward to $\Spec(A)$. This can be represented by a complex of $A$-modules concentrated in degree $\geq 0$ and whose $i^{th}$ cohomology is the hypercohomology $\mathbb{H}^i(\mathcal{C}_{p_{A}}^{\bullet})$. 

We shall need the following standard notion, see \cite[Cor. 1.4.17]{deligne1973formule} for the equivalence between Picard groupoids and two-term complexes.
\begin{defn}
    Let $\mathcal{C} = \left[ C^0 \xrightarrow{d} C^1\right]$ be a two term complex of abelian groups concentrated in degrees $[0,1]$. The Picard groupoid $\text{Pic}(C^{\bullet})$ associated to $\mathcal{C}$ has objects given by $C^1$. The set $\Hom_{\text{Pic}(C^{\bullet})}(x,y)$ of isomorphisms between two objects $x,y \in C^1$ consists of all the elements $f \in C^0$ such that $d(f) = y-x$. Composition is defined by adding elements of $C^0$. A morphism $g: C^{\bullet} \to M^{\bullet}$ of two term complexes induces a functor $\text{Pic}(g): \text{Pic}(C^{\bullet}) \to \text{Pic}(M^{\bullet})$. If $g$ is a quasi-isomorphism, then $\text{Pic}(g)$ is an equivalence of groupoids.
\end{defn}

\begin{prop} \label{prop: obstruction class}
With notation as in \Cref{context: lifting problem}, we have the following.
\begin{enumerate}[(a)]
    \item There is an element $\text{ob}_{p_{A}} \in \mathcal{Q}_{p_{A}}\otimes_{A}I= \mathbb{H}^2(\mathcal{C}_{p_{A}}^{\bullet} \otimes_{A} I)$ such that $\text{ob}_{p_{A}} = 0$ if and only if a lift to $C_{\widetilde{A}}$ exists. In particular, such a lift always exists if $\mathcal{Q}_{p_{A}} = 0$.
    \item Suppose that $\text{ob}_{p_A}=0$. Let $\tau_{\leq 1}R\Gamma(\mathcal{C}_{p_A}^{\bullet} \otimes_A I)$ the truncation of the derived pushforward, and choose a representative two term complex $C^{\bullet}_{p_A}$. There is an equivalence of groupoids $\text{Def}_{\widetilde{A}/A}(p_A) \simeq \text{Pic}(C^{\bullet}_{p_A})$.

    \item Let $\rho: G \to H$ be a morphism of smooth group schemes, inducing a functor of groupoids $\rho_*: \text{Def}_{\widetilde{A}/A}(p_A) \to \text{Def}_{\widetilde{A}/A}(\rho_*(p_A))$. Then for any representative $g: C^{\bullet}_{p_A} \to C^{\bullet}_{\rho_*(p_A)}$ of the induced morphism $\tau_{\leq 1}R\Gamma(\mathcal{C}_{p_A}^{\bullet} \otimes_A I) \to \tau_{\leq 1}R\Gamma(\mathcal{C}_{\rho_*(p_A)}^{\bullet} \otimes_A I)$ one can choose the equivalences in (b) so that they fit into a 2-commutative diagram:
\[
\begin{tikzcd}
  \text{Def}_{\widetilde{A}/A}(p_A) \ar[r, "\sim"] \ar[d, "\rho_*"] &  \text{Pic}(C^{\bullet}_{p_A}) \ar[d, "\text{Pic}(g)"] \\ \text{Def}_{\widetilde{A}/A}(\rho_*(p_A)) \ar[r, "\sim"] & \text{Pic}(C^{\bullet}_{\rho_*(p_A)})
\end{tikzcd}
\]
\end{enumerate}
\end{prop}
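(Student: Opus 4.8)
My plan is to reduce all three parts to an explicit computation in the Čech bicomplex of $\mathcal{C}_{p_A}^{\bullet} \otimes_A I$, generalizing the $\GL_n$ argument of \cite{decataldo-herrero-nahpoles}. Fix an affine \'etale cover $U \to C_A$ trivializing the $G$-bundle $E$; by topological invariance of the \'etale site it lifts uniquely to an affine \'etale cover $\widetilde U \to C_{\widetilde A}$. Since $G \to S$ is smooth, hence formally smooth, and each fiber product $\widetilde U^{(\bullet)}$ is affine, every $G$-valued section over $U^{(\bullet)}$ lifts to $\widetilde U^{(\bullet)}$, and local connection data lift as well. I record $(E,\nabla)$ by a $G$-valued transition $1$-cochain $(g_{ij})$ on $U^{(1)}$ together with a $0$-cochain of local connection forms $a_i \in (\Ad(E) \otimes \Omega^1_{C_A/A}(D))(U_i)$, whose gluing law across overlaps incorporates the Maurer--Cartan term $t_A \cdot \omega$ from \Cref{subsection: stack of connections}. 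The differential $\Ad(\nabla)$ of $\mathcal{C}_{p_A}^{\bullet}$ is exactly what measures the compatibility of these two pieces of data under infinitesimal change.

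\textbf{Part (a).} A lift of $(E,\nabla)$ amounts to lifting the cochain $((g_{ij}),(a_i))$ compatibly. I first lift the transition functions to $\widetilde g_{ij} \in G(\widetilde U^{(1)})$; the failure of the cocycle identity on triple overlaps $\widetilde U^{(2)}$ is a Čech $2$-cochain valued in $\Ad(E)\otimes_A I$, whose class obstructs lifting $E$ alone. As $C_A \to \Spec(A)$ has relative dimension one, \Cref{lemma: base change cech cohomology lemma}(a) gives $H^2(\Ad(E)\otimes_A I)=0$, so after adjusting the $\widetilde g_{ij}$ the bundle lift exists. The remaining obstruction, to lifting $\nabla$ on a chosen $\widetilde E$, is governed by the interaction of the connection forms with $\Ad(\nabla)$; assembling both contributions in the total complex of the Čech bicomplex produces a single well-defined degree-two class $\text{ob}_{p_A}$, which by the hypercohomology spectral sequence, flatness of $\Ad(E)$, and the base-change isomorphism of \Cref{lemma: base change cech cohomology lemma}(b), is precisely $\mathbb{H}^2(\mathcal{C}_{p_A}^{\bullet}\otimes_A I) = \cQ_{p_A}\otimes_A I$. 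Its vanishing is equivalent to the existence of a lift, and it vanishes automatically when $\cQ_{p_A}=0$.

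\textbf{Part (b).} Assuming $\text{ob}_{p_A}=0$, I fix one lift as a basepoint. Any two lifts differ by a $1$-cocycle of the total complex valued in $\mathcal{C}_{p_A}^{\bullet}\otimes_A I$ (a connection-difference form glued by a bundle-automorphism cochain), and two such cocycles give isomorphic lifts exactly when they differ by a coboundary; hence isomorphism classes form a torsor under $\mathbb{H}^1(\mathcal{C}_{p_A}^{\bullet}\otimes_A I)$. An isomorphism $\theta$ of lifts covering the identity on $p_A$ is recorded by a $0$-cochain in $\Ad(E)\otimes_A I$ in the kernel controlled by $\Ad(\nabla)$, i.e.\ an element of $\mathbb{H}^0(\mathcal{C}_{p_A}^{\bullet}\otimes_A I)$, with composition given by addition. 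Choosing a two-term representative $C^{\bullet}_{p_A}$ of $\tau_{\leq 1}R\Gamma(\mathcal{C}_{p_A}^{\bullet}\otimes_A I)$, these identifications are exactly the data of $\text{Pic}(C^{\bullet}_{p_A})$: objects are degree-one cochains (lifts relative to the basepoint), morphisms $x \to y$ are degree-zero cochains $f$ with $df = y-x$, giving $\pi_0 = \mathbb{H}^1$ and $\pi_1 = \mathbb{H}^0$. This yields the equivalence $\text{Def}_{\widetilde A/A}(p_A) \simeq \text{Pic}(C^{\bullet}_{p_A})$.

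\textbf{Part (c) and the main difficulty.} Functoriality is formal once (a)--(b) are phrased at the cochain level: a homomorphism $\rho: G \to H$ sends the transition cochain $(g_{ij})$ to $(\rho(g_{ij}))$ and the connection forms by $d\rho$, which is exactly the map of complexes $\mathcal{C}_{p_A}^{\bullet} \to \mathcal{C}_{\rho_*(p_A)}^{\bullet}$ recorded before the statement, hence the map $g$ on the chosen two-term representatives. As every identification in (a)--(b) is built from these cochains, the equivalences may be chosen so that the square $2$-commutes, i.e.\ pushforward of deformations corresponds to $\text{Pic}(g)$. I expect the main obstacle to be Part (a): correctly packaging the connection---an affine rather than linear datum whose transition law carries the Maurer--Cartan term---into the Čech total complex so that the combined bundle-and-connection obstruction is visibly $\mathbb{H}^2(\mathcal{C}_{p_A}^{\bullet}\otimes_A I)$ with the differential $\Ad(\nabla)$ in the correct place; the remaining bookkeeping is forced once this identification is made.
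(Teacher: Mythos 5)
Your proposal is correct and follows essentially the same route as the paper: an explicit \v{C}ech computation relative to an affine \'etale cover trivializing $E$ (lifted to $C_{\widetilde A}$ by topological invariance of the \'etale site), with the bundle obstruction killed by $H^2$-vanishing on the curve, the combined obstruction packaged as a degree-two class of the total \v{C}ech complex of $\mathcal{C}_{p_A}^{\bullet}\otimes_A I$, and (b)--(c) read off from the same total complex after truncation. The only presentational difference is that the paper lifts $\nabla$ as a section of the affine torsor $A^{D}_{t\text{-Conn},E}$, so differences of local lifts automatically land in $\Ad(E)\otimes\Omega^1_{C_A/A}(D)\otimes_A I$ and the Maurer--Cartan bookkeeping you flag as the main difficulty never needs to be made explicit.
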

\begin{proof}
This is a standard but long exercise in classical deformation theory similar to arguments in \cite{biswas-ramanan-infinitesimal}. We present a construction of the obstruction class in (a) through the explicit use of Cech cohomology. The statements of (b) and (c) follow from similar considerations by using the representative of derived pushforward $R\Gamma(\mathcal{C}_{p_A}^{\bullet} \otimes_A I)$ coming from the total complex of the associated double Cech complex, and applying cohomological truncation.

Let $U$ be an affine \'etale cover of $C_{A}$ such that $E|_{U}$ is trivializable. By topological invariance of the \'etale site \cite[\href{https://stacks.math.columbia.edu/tag/04DZ}{Tag 04DZ}]{stacks-project}, there is an affine \'etale cover $\widetilde{U} \to C_{\widetilde{A}}$ such that the base change $\widetilde{U}_A$ is isomorphic to $U$ as a scheme over $C_A$. We use the notation and results in \Cref{lemma: base change cech cohomology lemma} freely without further mention. 
The obstruction to lifting the $G$-bundle $E$ to $C_{\widetilde{A}}$ lives in the cohomology group $H^2(\Ad(E) \otimes_{A} I)$, which is $0$ because $C$ is a curve. Hence, we can choose a lift $\widetilde{E}^0$ of $E$ to $C_{\widetilde{A}}$. We use the notation $E_{U} := E|_{U}$, $\nabla_U := \nabla|_{U}$ and $\widetilde{E}^0_{U} : = \widetilde{E}^0|_{\widetilde{U}}$. Note that we can lift the $t_{A}$-connection $\nabla_U$ to a $t_{\widetilde{A}}$-connection $\widetilde{\nabla}^0_U$ on $\widetilde{E}^0_U$. Indeed, a trivialization of $E_U$ induces a trivialization of the torsor $A^{D}_{t-\text{Conn},E_U}$. The $t_{A}$-connection $\nabla_U$ corresponds to some section $M \in \Ad(E_U) \otimes_{\mathcal{O}_{U}} \Omega^1_{U}(D)$. By the smoothness of $G$, we can lift the trivialization of $E_U$ to one of $\widetilde{E}^0_U$. Since $U$ is affine and $\widetilde{A} \to A$ is surjective, we can also lift $M$ to a section $\widetilde{M}$ in $\Ad(\widetilde{E}^0_U) \otimes_{\mathcal{O}_{\widetilde{U}}} \Omega^1_{\widetilde{U}}$, which corresponds to a lift $\widetilde{\nabla}^0_U$ on the trivialized $G$-bundle $\widetilde{E}^0_U$.

Set $c := (p_1)^*(\widetilde{\nabla}^0_U) - (p_2)^*(\widetilde{\nabla}^0_U) \in \Ad(\widetilde{E}^0_{U^{(2)}}) \otimes \Omega^1_{\widetilde{U}^{(2)}}(D)$, where $p_1,p_2: \widetilde{U}^{(2)} \to \widetilde{U}$ are the two projections. 
Since $(p_1)^*(\widetilde{\nabla}^0_U) \equiv (p_2)^*(\widetilde{\nabla}^0_{U}) = \nabla|_{U^{(2)}} (mod \, I)$, the element $c$ actually lands in the submodule $\Ad(\widetilde{E}^0_{U^{(2)}}) \otimes_{\mathcal{O}_{\widetilde{U}^{(2)}}} \Omega^1_{\widetilde{U}^{(2)}}(D) \otimes_{A} I \cong \Ad(E_{U^{(2)}}) \otimes_{\mathcal{O}_{U^{(2)}}} \Omega^1_{U^{(2)}}(D) \otimes_{A} I$. Therefore, we can view $c$ as a chain in $\widecheck{C}^1_{U}(\Ad(E) \otimes_{\mathcal{O}_{C}} \Omega^1_{C/S}(D) \otimes_{A} I)$. If we denote the natural projections by $p_{12}, p_{23}, p_{13}: U^{(3)} \to U^{(2)}$, then
\begin{gather*}
    \delta(c) = (p_{12})^*\left((p_1)^*(\widetilde{\nabla}^0_U)- (p_2)^*(\widetilde{\nabla}^0_U)\right) - (p_{13})^*\left((p_1)^*(\widetilde{\nabla}^0_U)- (p_2)^*(\widetilde{\nabla}^0_U)\right) + (p_{23})^*\left((p_1)^*(\widetilde{\nabla}^0_U)- (p_2)^*(\widetilde{\nabla}^0_U)\right) = 0
\end{gather*}
Therefore $c$ is a cocycle. Let $[c]$ be the corresponding cohomology class in $H^1(\Ad(E) \otimes_{\mathcal{O}_C} \Omega^1_{C/S}(D) \otimes_{A} I) \cong H^1(\Ad(E) \otimes_{\mathcal{O}_{C}} \Omega^1_{C/S}(D)) \otimes_{A} I$. Set $\text{ob}_{p_{A}}$ to be the image of $[c]$ in $\mathcal{Q}_{p_{A}} \otimes_{A} I$. We need to check that $\text{ob}_{p_{A}}$ does not depend on the choices (the covering and the lift $(\widetilde{E}^0, \widetilde{\nabla}^0_U)$), and that $\text{ob}_{p_{A}}=0$ if and only if there exists a lift of $(E, \nabla)$ to $C_{\widetilde{A}}$.

\noindent \textbf{Independence on the covering:} It suffices to check that we obtain the same $\text{ob}_{p_{A}}$ after taking a refinement $W_{new}$ of the covering $U$, while keeping the lift $\widetilde{E}^0$ and keeping the (refinements of) the lift $\widetilde{\nabla}^0_U$. By definition, the resulting cocycle $c_{new}$ will be the image of the old cocycle $c$ under the natural morphism on Cech complexes induced by refinement, and therefore $\text{ob}_{p_{A}}$ will not change.

\noindent \textbf{Independence on $\widetilde{\nabla}^0_U$:} Choose a different lift $\widetilde{\nabla}^1_U$. Let $c^1$ denote the cocycle defined by $c^1 = (p_1)^*(\widetilde{\nabla}^1_U)- (p_2)^*(\widetilde{\nabla}^1_U)$ Since $\widetilde{\nabla}^1_U \equiv \widetilde{\nabla}^0_U = \nabla_U (mod \, I)$, the difference $h : = \widetilde{\nabla}^1_U - \widetilde{\nabla}^0_U$ is a section of $\Ad(E_{U}) \otimes \Omega^1_{U} \otimes_{A} I$. We can view $h$ as a chain in $\widecheck{C}^0_{U} (\Ad(E) \otimes_{\mathcal{O}_{C}} \Omega^1_{C/S}(D) \otimes_{A} I)$. By definition, we have $c^1 = (p_1)^*(\widetilde{\nabla}^1_U) - (p_2)^*(\widetilde{\nabla}^1|_U)  = c + \delta(h)$. 
Therefore $c^1$ is cohomologous to $c$ in $\widecheck{C}^1_{U}(\Ad(E)\otimes_{\mathcal{O}_{C}} \Omega^1_{C/S}(D) \otimes_{A} I)$. 

\noindent \textbf{Observation 1.} Conversely, for any $0$-chain $h$ we can define $\widetilde{\nabla}^1_U = \widetilde{\nabla}^0_U +h$, and then $c^1 = c + \delta(h)$. 

\noindent \textbf{Independence on $\widetilde{E}$:} Choose a different lift $\widetilde{E}^1$ of the $G$-bundle $E$ to $C_{\widetilde{A}}$. Since the restrictions $E_U, \widetilde{E}^1_U, \widetilde{E}^0_U$ are trivializable on their corresponding affine schemes, we can choose isomorphisms $\psi: \widetilde{E}^0_U \xrightarrow{\sim} \widetilde{E}^1_U$ that restrict to the identity on $E_U$. The isomorphism $(p_2)^*(\psi)\circ (p_1)^*(\psi^{-1}): \widetilde{E}^0_{U^{(2)}} \to \widetilde{E}^0_{U^{(2)}}$ is of the form $1 + B$ for a unique section $B\in \Ad(\widetilde{E}^0_{U^{(2)}}) \otimes_{A} I \cong \Ad(E_{U^{(2)}}) \otimes_{A} I$. 
View $B$ as a cochain in $\widecheck{C}^1_{U} (\Ad(E))$. 
A similar computation as for $c$ shows that $B$ is a cocycle. 
Define the lift $\widetilde{\nabla}^1_U = \psi \circ \widetilde{\nabla}_U^0 \circ \psi^{-1}$ on $\widetilde{E}^1_U$. The new Cech cocycle $c^1$ for this choice of $\widetilde{E}^1$ and $\widetilde{\nabla}^1_U$ is given by
\[ c^1 =  (p_1)^*(\widetilde{\nabla}^1_U) - (p_2)^*(\widetilde{\nabla}^1_U) = (p_1)^*(\psi \circ \widetilde{\nabla}_U^0 \circ \psi^{-1}) - (p_2)^*(\psi \circ \widetilde{\nabla}_U^0 \circ \psi^{-1})\]
We have that $c^1$ is a cocycle in the submodule $\Ad(E) \otimes_{\mathcal{O}_{U^{(2)}}} \Omega^1_{U^{(2)}} \otimes_{A} I$. 
Since $\psi$ restricts to the identity on $E_U$, applying $(p_2)^*(\psi)^{-1} \circ (-) \circ (p_2)^*(\psi)$ does not affect the cocycles in $\Ad(E_{U^{(2)}}) \otimes_{\mathcal{O}_{U^{(2)}}} \Omega^1_{U^{(2)}} \otimes_{A} I$. Therefore we can rewrite
\begin{gather*}
    c^1 = (p_2)^*(\psi)^{-1}\circ c^1 \circ (p_2)^*(\psi) = (p_2)^{*}(\psi)^{-1} \circ (p_1)^*(\psi) \circ (p_1)^*(\widetilde{\nabla}_U^0) \circ (p_1)^*(\psi)^{-1} \circ (p_2)^*(\psi) - (p_2)^*(\widetilde{\nabla}_U^0)
\end{gather*}
This can be rewritten as $c^1 = (1-B) \circ (p_1)^*(\widetilde{\nabla}_U^0) \circ (1+B) - (p_2)^*(\widetilde{\nabla}_U^0)$. By expanding, we get $c^1=\Ad((p_1)^*(\widetilde{\nabla}^0_U))(B)+c.$
Since $B$ is a section of $\Ad(E_{U^{(2)}}) \otimes_{A} I$ (supported in $U^{(2)} \subset \widetilde{U}^{(2)}$) and $\widetilde{\nabla}_U^0$ is a $\widetilde{A}$-linear lift of $\nabla$, we can rewrite this as $c^1=\Ad(\nabla)(B)+c.$ In other words, $c^1 =  \widecheck{C}^1(\Ad(\nabla))(B) +c$,
where we write 
\[\widecheck{C}^1(\Ad(\nabla)): \widecheck{C}^1_{U}(\Ad(E) \otimes_{A} I) \to \widecheck{C}^1_{U}(\Ad(E) \otimes_{\mathcal{O}_C} \Omega^1_{C/S} \otimes_{A} I)\]
to denote the map induced by $\Ad(\nabla): \Ad(E) \to \Ad(E) \otimes_{O_{C}} \Omega^1_{C/S}(D)$ at the level of Cech cochains. Since $c^1$ differs from $c$ by the image of a cocycle in $\widecheck{C}^1_{U}(\Ad(E) \otimes_{A} I)$, it yields the same element $\text{ob}_{p_{A}}$ in $\mathcal{Q}_{p_{A}} \otimes_{A} I$.

\noindent \textbf{Observation 2.} The cocycle $B$ depends on the choice of isomorphisms $\psi$ up to the image of a $0$-chain, and so the corresponding cohomology class in $\widecheck{H}^1_{U}(\Ad(E) \otimes_{A} I)$ is well-defined. Conversely, by standard deformation theory of $G$-bundles (cf. \cite[\S2]{biswas-ramanan-infinitesimal}), every such cohomology class $[B]$ arises this way from a choice of lift $\widetilde{E}^1$. Hence, for every $B$ there is a lift $\widetilde{E}^1$ such that $c^1$ is cohomologous to $\widecheck{C}^1(\Ad(\nabla))(B) + c$.

\noindent \textbf{Relation to lifts:} If $\text{ob}_{p_{A}} = 0$, then there is $h \in \widecheck{C}^0_{U} (\Ad(E) \otimes_{\mathcal{O}_{C}} \Omega^1_{C/S} \otimes_{A} I)$ and $B \in \widecheck{C}^1_{U} (\Ad(E) \otimes_{A} I)$ such that $c = \delta(h) + \widecheck{C}^1(\Ad(\nabla))(B)$. By Observations 1 and 2 above, we can use $h$ and $B$ to choose new lifts $\widetilde{E}^1$ and $\widetilde{\nabla}^1_U$ such that the corresponding cocycle $c^1 =(p_1)^*(\widetilde{\nabla}^1_U)- (p_2)^*(\widetilde{\nabla}^1_U)$ vanishes. 
Therefore the section $\widetilde{\nabla}^1_U$ over $\widetilde{U}$ for the torsor $A^D_{t-\text{Conn},\widetilde{E}^1}$ satisfies a cocycle condition, and hence descends to a section over $C_{\widetilde{A}}$. 
This yields a meromorphic $t_{\widetilde{A}}$-connection $\widetilde{\nabla}$ on $\widetilde{E}$ that lifts $\nabla$.
Conversely, if there exists a lift $(\widetilde{E}, \widetilde{\nabla})$, then we can use $\widetilde{E}^0 := \widetilde{E}$ and $\widetilde{\nabla}^0_U := \widetilde{\nabla}$ to compute the cocycle $c$ representing $\text{ob}_{p_{A}}$. Since $c = (p_1)^*(\widetilde{\nabla})-(p_2)^*(\widetilde{\nabla}) = 0$, we have $\text{ob}_{p_{A}} = 0$. 
\end{proof}
\end{subsection}

\begin{subsection}{Deformation theory of compatible parabolic reductions}
In this subsection we assume that $G$ is reductive. Fix an algebraically closed field $k$ and a meromorphic $t$-connection $(E, \nabla) \in \hodge_{G}(k)$. Let $E_{P}$ be a parabolic reduction of $E$ that is compatible with $\nabla$. Consider the scheme of dual numbers $N=\Spec(k[\epsilon]/(\epsilon^2)) \to \Spec(k)$, and the pullback $(E|_{N}, \nabla|_{N}) \in \hodge_{G}(N)$ of the meromorphic $t$-connection. 

\begin{defn} \label{defn: deformations of parabolic reductions}
We denote by $\text{Def}_{\nabla}(E_{P})$ the groupoid of pairs $(\widetilde{E}_{P}, \theta)$, where $\widetilde{E}_{P}$ is a parabolic reduction of $E|_{N}$ compatible with $\nabla|_{N}$ and $\theta$ is an isomorphism of parabolic reductions $\theta: \widetilde{E_{P}}|_k \xrightarrow{\sim} E_{P}$.
\end{defn}

Since $P$ is a closed subgroup of $G_k$, it follows that the groupoid $\text{Def}_{\nabla}(E_{P})$ is equivalent to a set. We harmlessly identify $\text{Def}_{\nabla}(E_{P})$ with its set of isomorphism classes of objects. We shall describe the set of such deformations in terms of sheaf cohomology. 

Let $(\text{Ad}(E), \text{Ad}(\nabla))$ denote the corresponding adjoint vector bundle equipped with its induced connection. By $\nabla$-compatibility, the connection $\text{Ad}(\nabla): \text{Ad}(E) \to \text{Ad}(E) \otimes \Omega^1_{C_k/k}(D)$ preserves the vector subbundle $\text{Ad}(E_{P}) \subset \text{Ad}(E)$ formed using the adjoint representation of $P$. Hence it induces a well-defined $k$-linear map of sheaves $\text{Ad}(\nabla): \text{Ad}(E)/\text{Ad}(E_P) \to (\text{Ad}(E)/\text{Ad}(E_P))\otimes \Omega^1_{C_k/k}(D)$.
\begin{prop} \label{prop: parabolic reductions deformations}
Let $k$, $(E, \nabla)$ and $E_{P}$ as above. Then, the set of isomorphism classes of deformations $\text{Def}_{\nabla}(E_{P})$ is naturally isomorphic to the kernel of the morphism
\[ H^0(\text{Ad}(E)/\text{Ad}(E_P)) \xrightarrow{H^0(\text{Ad}(\nabla))} H^0((\text{Ad}(E)/\text{Ad}(E_P))\otimes \Omega^1_{C_k/k}(D)) \]
\end{prop}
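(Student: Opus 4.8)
The plan is to first describe the space of deformations of the parabolic reduction $E_P$ while forgetting the connection, and then to cut out exactly those deformations that remain compatible with $\nabla|_N$. Throughout I write $\mathcal{N} := \Ad(E)/\Ad(E_P)$ for the normal bundle of the reduction, so that the map in the statement reads $H^0(\Ad(\nabla))\colon H^0(\mathcal{N}) \to H^0(\mathcal{N} \otimes \Omega^1_{C_k/k}(D))$.

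\textbf{Step 1 (reduction deformations).} I would invoke the standard deformation theory of reductions of structure group (as in \cite{biswas-ramanan-infinitesimal}): a reduction $E_P$ is a section $\sigma$ of $E(G/P) \to C_k$, and since $E$ is held fixed, the set of lifts of $\sigma$ over $N$ restricting to $\sigma$ over $k$ is canonically $H^0(C_k, \sigma^* T_{E(G/P)/C_k}) = H^0(C_k, E_P(\mathfrak{g}/\mathfrak{p})) = H^0(C_k, \mathcal{N})$. Concretely, after trivializing $E$ étale-locally and writing $E_P$ as the constant reduction with $\Ad(E_P) = \mathfrak{p}\otimes\mathcal{O}$, the deformation $\widetilde{E}_P$ attached to $v \in H^0(\mathcal{N})$ satisfies $\Ad(\widetilde{E}_P) = (\operatorname{id} + \epsilon\,\operatorname{ad}(\tilde v))(\mathfrak{p}\otimes\mathcal{O}_N)$ for any local lift $\tilde v \in \Ad(E)$ of $v$, and changing $\tilde v$ by a section of $\Ad(E_P)$ does not change $\widetilde{E}_P$.

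\textbf{Step 2 (reduction to a local identity).} The forgetful map $\mathrm{Def}_{\nabla}(E_P) \hookrightarrow \mathrm{Def}(E_P) = H^0(\mathcal{N})$ is injective, so it suffices to show that its image is $\ker H^0(\Ad(\nabla))$. Here I use that $\widetilde{E}_P$ is $\nabla|_N$-compatible if and only if $\Ad(\nabla|_N)$ preserves $\Ad(\widetilde{E}_P)$: the two second fundamental forms (that of the Atiyah subbundle and that of $\Ad(\widetilde{E}_P)\subset\Ad(E|_N)$) differ by composition with $\operatorname{ad}\colon \mathfrak{g}/\mathfrak{p}\hookrightarrow \Hom(\mathfrak{p},\mathfrak{g}/\mathfrak{p})$, which is injective because $N_{\mathfrak{g}}(\mathfrak{p})=\mathfrak{p}$, so they vanish simultaneously. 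The obstruction to preservation is a global section of $(\Ad(E|_N)/\Ad(\widetilde{E}_P))\otimes\Omega^1_{C_k/k}(D)$; its reduction mod $\epsilon$ is the corresponding section for $E_P$, which vanishes by compatibility of $E_P$, so the obstruction equals $\epsilon\cdot\bar c$ for a well-defined $\bar c \in H^0(\mathcal{N}\otimes\Omega^1_{C_k/k}(D))$. The claim to prove is $\bar c = \Ad(\nabla)(v)$.

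\textbf{Step 3 (the local computation).} This is the technical heart. On a trivializing cover write $\Ad(\nabla) = t\,d + \operatorname{ad}(\theta)$, where compatibility of $E_P$ forces $\theta \in \mathfrak{p}\otimes\Omega^1_{C_k/k}(D)$. For $p$ a local section of $\mathfrak{p}\otimes\mathcal{O}$, a generator of $\Ad(\widetilde{E}_P)$ is $s = p + \epsilon[\tilde v,p]$; applying $\Ad(\nabla|_N)$ and reducing modulo $\Ad(\widetilde{E}_P)\otimes\Omega^1$, the obstruction is the $\epsilon$-coefficient $b-[\tilde v,a] \bmod \mathfrak{p}\otimes\Omega^1$, where $a = t\,dp+[\theta,p]$ and $b = t\,d[\tilde v,p]+[\theta,[\tilde v,p]]$. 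Expanding $d[\tilde v,p]=[d\tilde v,p]+[\tilde v,dp]$ and using the Jacobi identity $[\theta,[\tilde v,p]]-[\tilde v,[\theta,p]]=[[\theta,\tilde v],p]$, this reduces to $[\,t\,d\tilde v+[\theta,\tilde v],\,p\,]=[\Ad(\nabla)(\tilde v),p]$. It vanishes for all $p\in\mathfrak{p}$ iff $\Ad(\nabla)(\tilde v)$ normalizes $\mathfrak{p}$, i.e. (again by $N_{\mathfrak{g}}(\mathfrak{p})=\mathfrak{p}$) iff the image of $\Ad(\nabla)(\tilde v)$ in $\mathcal{N}\otimes\Omega^1_{C_k/k}(D)$ vanishes. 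Since $\tilde v$ lifts $v$ and $\Ad(\nabla)$ descends to the map of the statement, that image is exactly $\Ad(\nabla)(v)$, giving $\bar c = \Ad(\nabla)(v)$.

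The main obstacle is Step 3 together with the bookkeeping that globalizes it: one must confirm that the pointwise obstruction is independent of the chosen lift $\tilde v$ and of the trivialization, so that it assembles into the single global section $\Ad(\nabla)(v)$. Independence of $\tilde v$ is immediate because $\Ad(\nabla)$ preserves $\Ad(E_P)\otimes\Omega^1_{C_k/k}(D)$, while trivialization-independence is precisely the statement that $\bar c$ is the reduction of the globally defined second fundamental form from Step 2. Once this is in place, $\mathrm{Def}_{\nabla}(E_P)=\ker H^0(\Ad(\nabla))$ follows. Conceptually this identifies the deformation set with $\mathbb{H}^0$ of the quotient two-term complex $[\mathcal{N}\xrightarrow{\Ad(\nabla)}\mathcal{N}\otimes\Omega^1_{C_k/k}(D)]$, the natural deformation complex of the $\nabla$-compatible reduction.
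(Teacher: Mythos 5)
Your route is genuinely different from the paper's. The paper never computes anything locally: it treats the compatible reduction as a point $p=(E_P,\nabla_{E_P})$ of $\hodge_P$, applies the functorial obstruction/Picard-groupoid formalism of its Proposition on obstruction classes to the homomorphism $P\hookrightarrow G_k$, and reads off $\text{Def}_{\nabla}(E_P)$ as the $0$-fiber of $\text{Pic}(\tau_{\leq 1}C^{\bullet}_{p})\to\text{Pic}(\tau_{\leq 1}C^{\bullet}_{\rho_*(p)})$, which a diagram chase identifies with $\mathbb{H}^0$ of the quotient complex $[\mathcal{N}\to\mathcal{N}\otimes\Omega^1_{C_k/k}(D)]$ and hence with the kernel in the statement. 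Your Steps 1 and 3 replace this with an explicit gauge-theoretic computation ($\Ad(\nabla)=t\,d+\operatorname{ad}(\theta)$ with $\theta\in\mathfrak{p}\otimes\Omega^1(D)$, Jacobi identity, etc.), and that computation is correct as far as it goes; it is more concrete and arguably more illuminating, at the cost of the gluing bookkeeping you acknowledge.

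There is, however, a genuine gap: both Step 2 and the last line of Step 3 rest on $N_{\mathfrak{g}}(\mathfrak{p})=\mathfrak{p}$, equivalently on injectivity of $\operatorname{ad}\colon\mathfrak{g}/\mathfrak{p}\to\Hom(\mathfrak{p},\mathfrak{g}/\mathfrak{p})$, which you assert but do not justify. This is \emph{not} automatic over a field of small characteristic: for $\mathfrak{sl}_2$ in characteristic $2$ the lower-triangular root vector $f$ satisfies $[f,\mathfrak{b}]\subseteq\mathfrak{b}$ (since $[f,h]=2f=0$), so $N_{\mathfrak{g}}(\mathfrak{b})=\mathfrak{g}$ and the map $\operatorname{ad}$ is not injective on $\mathfrak{g}/\mathfrak{b}$. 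The proposition is stated for an arbitrary reductive $G$ over an arbitrary algebraically closed field, with no low-height or good-characteristic hypothesis, so your argument as written does not prove it: passing through $\Ad(\widetilde{E}_P)\subset\Ad(E|_N)$ only sees the image of the true obstruction under $\operatorname{ad}$, and without injectivity you would only identify $\text{Def}_{\nabla}(E_P)$ with the (possibly larger) preimage of sections valued in $(N_{\mathfrak{g}}(\mathfrak{p})/\mathfrak{p})\otimes\Omega^1(D)$. The fix is to avoid the adjoint bundle of the reduction altogether and compute the obstruction where the paper's Definition of compatibility lives, namely in $At^D(E|_N)/At^D(\widetilde{E}_P)\cong(\Ad(E|_N)/\Ad(\widetilde{E}_P))\otimes\Omega^1_{C_k/k}(D)$: in your trivialization the gauge transformation $1+\epsilon\tilde v$ carries the connection form $\theta$ to $\theta+\epsilon([\tilde v,\theta]-t\,d\tilde v)=\theta-\epsilon\,\Ad(\nabla)(\tilde v)$, so $\widetilde{E}_P$ is compatible if and only if $\Ad(\nabla)(\tilde v)\in\mathfrak{p}\otimes\Omega^1(D)$, i.e.\ $\Ad(\nabla)(v)=0$ in $\mathcal{N}\otimes\Omega^1(D)$ --- with no normalizer condition needed. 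With that replacement your proof goes through in full generality.
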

\begin{proof}
We use \Cref{prop: obstruction class} with $A = k$, $\widetilde{A} = k[\epsilon]/(\epsilon^2)$ and $I =k\epsilon$. Let us denote by $\rho$ the inclusion $P \hookrightarrow G_k$. 
The $\nabla$-compatible parabolic reduction induces a point $p= (E_P, \nabla_{E_P}) \in \hodge_P$ such that $\rho_*(p) = (E, \nabla)$. There is an induced morphism of groupoids $\rho_* \text{Def}_{\widetilde{A}/k}(p) \to \text{Def}_{\widetilde{A}/k}(\rho_*(p))$. 
Both groupoids are nonempty, because there are trivial deformations obtained by pulling back to $N$. Hence the obstructions $\text{ob}_{p}, \text{ob}_{\rho_*(p)}$ from \Cref{prop: obstruction class} vanish. By definition $\text{Def}_{\nabla}(E_P)$ is the fiber of $\rho_*$ over the trivial deformation $(E|_N, \nabla|_N)$ in $\text{Def}_{\widetilde{A}/k}(\rho_*(p))$.

Consider the short exact sequence of two term complexes of vector bundles \[ 0 \to \mathcal{C}^{\bullet}_{p} \to \mathcal{C}^{\bullet}_{\rho_*(p)} \to  \mathcal{C}^{\bullet}_{\rho_*(p)}/\mathcal{C}^{\bullet}_{p} \to 0\]
where we have $\mathcal{C}^{\bullet}_{\rho_*(p)}/\mathcal{C}^{\bullet}_{p}  = \left[ \text{Ad}(E)/\text{Ad}(E_P) \xrightarrow{\text{Ad}(\nabla)} (\text{Ad}(E)/\text{Ad}(E_P))\otimes \Omega^1_{C_k/k}(D)\right]$.

By taking injective resolutions as abelian sheaves and pushing forward to $\Spec(k)$, we obtain a short exact sequence of complexes
\[ 0 \to C^{\bullet}_{p} \to C^{\bullet}_{\rho_*(p)} \to C^{\bullet}_{\rho_*(p)}/C^{\bullet}_{p}  \to 0\]
representing the distinguished triangle $R\Gamma(\mathcal{C}^{\bullet}_{p}) \to R\Gamma(\mathcal{C}^{\bullet}_{\rho_*(p)}) \to R\Gamma(\mathcal{C}^{\bullet}_{p}/\mathcal{C}^{\bullet}_{\rho_*(p)})$. Consider the associated morphism of Picard groupoids $\text{Pic}(\tau_{\leq 1}C^{\bullet}_{p}) \to \text{Pic}(\tau_{\leq 1}C^{\bullet}_{\rho_*(p)})$. By \Cref{prop: obstruction class}, there is a two-commutative diagram of groupoids
\[
\begin{tikzcd}
  \text{Def}_{\widetilde{A}/k}(p) \ar[r, "\sim"] \ar[d, "\rho_*"] &  \text{Pic}(\tau_{\leq 1} C^{\bullet}_{p_A}) \ar[d] \\ \text{Def}_{\widetilde{A}/k}(\rho_*(p)) \ar[r, "\sim"] & \text{Pic}(\tau_{\leq 1} C^{\bullet}_{\rho_*(p)})
\end{tikzcd}
\]
Here the trivial deformation $(E|_N, \nabla|_N)$ of $\rho_*(p)$ corresponds to the zero cocycle of $\tau_{\leq 1} C^{\bullet}_{\rho_*(p)}$. Hence $\text{Def}_{\nabla}(E_P)$ is identified with the $0$-fiber under the functor $\text{Pic}(\tau_{\leq 1}C^{\bullet}_{p}) \to \text{Pic}(\tau_{\leq 1}C^{\bullet}_{\rho_*(p)})$, and we have reduced our question to a statement of linear algebra of $k$-vector space. 
A diagram chase shows that the isomorphism classes of objects of the $0$-fiber are in bijection with elements of the zero-cohomology group $H^0(C^{\bullet}_{\rho_*(p)}/C^{\bullet}_{p})$, which by definition is the hypercohomology group $\mathbb{H}^0(\mathcal{C}^{\bullet}_{\rho_*(p)}/\mathcal{C}^{\bullet}_{p} )$. The hypercohomology spectral sequence for the two term complex $\mathcal{C}^{\bullet}_{\rho_*(p)}/\mathcal{C}^{\bullet}_{p} $ identifies $\mathbb{H}^0(\mathcal{C}^{\bullet}_{\rho_*(p)}/\mathcal{C}^{\bullet}_{p} )$ with the kernel of the morphism $H^0(\text{Ad}(\nabla))$ as in the statement of the proposition.
\end{proof}
\end{subsection}

\end{section}
\begin{section}{\texorpdfstring{$\Theta$}{Theta}-stratification and semistable reduction}
In this section, we study some more refined structure of $\hodge_G$. 
\Cref{subsec: review invariants} is a concise review of Halpern-Leistner's notion of numerical invariants and $\Theta$-stratifications.
They provide the framework in which we study instability for $\hodge_G$.
In \Cref{subsec: semistable conditions agree}, we show that the semistablity defined by a suitable numerical invariant agrees with the usual notion of semistability for $\hodge_G$.
In \Cref{subsec: mono and hn bound}, we establish two technical properties, which are crucial in constructing the $\Theta$-stratification on $\hodge_G$ and proving the semistable reduction results in \Cref{sec: semistable reduction}.
Finally, in \Cref{subsec: smooth}, we show the smoothness of the semistable locus in the case when $D$ is nonempty.

\begin{subsection}{Review of the theory of numerical invariants and \texorpdfstring{$\Theta$}{Theta}-stratifications}\label{subsec: review invariants}
    We shall use the theory of numerical invariants and $\Theta$-stratifications to develop Harder-Narasimhan theory for $\hodge_{G}$. We refer the reader to \cite[\S 2.4]{gauged_theta_stratifications} for the necessary background and \cite{halpernleistner2018structure} for more details.

    The main ``test object" used in developing the abstract stack-theoretic approach to Harder-Narasimhan theory is the following.
    \begin{defn}[The stack $\Theta$]
        We denote by $\Theta$ the quotients stack $\mathbb{A}^1_{\mathbb{Z}}/\mathbb{G}_m = \Spec(\mathbb{Z}[t])/\mathbb{G}_m$, where $\mathbb{G}_m$ acts via its standard linear contracting action on $\mathbb{A}^1_{\mathbb{Z}}$ which induces weight $-1$ on the coordinate function $t$. For any scheme $T$, we set $\Theta_T:= \Theta \times _{\Spec(\mathbb{Z})} T$.
    \end{defn}
    
    \begin{notn}
        Given a scheme $T$, there is an open immersion $1: T \hookrightarrow \Theta_T$ corresponding to the orbit of the section $T \to \mathbb{A}^1_T$ induced by the function $1$ on $T$. Similarly, there is a section $0: T \to \Theta_T$ corresponding to the function $0$. We note that $0: T \to \Theta_T$ is the composition of the universal $\mathbb{G}_m$-bundle $T \to B\mathbb{G}_{m,T}$ followed by a closed immersion $0: B \mathbb{G}_{m,T} \hookrightarrow \Theta_T$, where we abuse the notation by letting $0$ denote two morphisms.
    \end{notn}

    For the rest of this subsection, we fix an algebraic stack $\mathcal{X} \to S$ which is locally finite type and with affine relative diagonal over a Noetherian base scheme. 
    
    The following notion of filtration introduced in \cite[\S 1]{halpernleistner2018structure} and \cite{heinloth-hilbertmumford} should be thought of as an abstract generalization of (weighted) parabolic reduction of $G$-bundles on a curve to the more general setting of an arbitrary moduli stack $\mathcal{X}$ (see \Cref{prop: filtrations of Bun_G} below for a more precise explanation of this).
    \begin{defn}[Filtrations of points] \label{defn: filtration point stack} Let $k$ be a field over $S$, and let $p \in \mathcal{X}(k)$. A filtration of $p$ consists of the data of a morphism $f: \Theta_k \to \mathcal{X}$ along with an isomorphism $\psi: f(1) \xrightarrow{\sim} p$. We denote by $\text{Filt}(p)$ the set of isomorphism classes of filtrations of $p$.
    \end{defn}
    We generally leave the choice of $\psi$ implicit and refer to $f$ as the filtration of the point $p$.
\begin{defn}
    We denote by $\text{Filt}(\mathcal{X}) = \Map_S(\Theta_S, \, \mathcal{X})$ the stack of filtrations of points in $\mathcal{X}$. The stack $\text{Filt}(\mathcal{X})$ is an algebraic stack with affine relative diagonal and locally of finite type over $S$ \cite[Prop. 1.1.2]{halpernleistner2018structure}.
\end{defn} 
There is a morphism $\text{ev}_1: \text{Filt}(\mathcal{X}) \to \mathcal{X}$ given by evaluating at $1: S \to \Theta_S$.

\begin{defn}[(Weak) $\Theta$-strata]
    Let $\mathcal{U}$ be an open substack of $\mathcal{X}$. A weak $\Theta$-stratum of $\mathcal{U}$ is a union of connected components of $\text{Filt}(\mathcal{U})$ such that the restriction of $\text{ev}_1$ is finite and radicial. A $\Theta$-stratum is a weak $\Theta$-stratum such that $\text{ev}_1$ is a closed immersion.
\end{defn}
 We can think of a $\Theta$-stratum as a closed substack of $\mathcal{U}$ that is identified with some connected components of $\text{Filt}(\mathcal{U})$. Hence the closed substack has a ``moduli-theoretic interpretation": it parametrizes pairs $(p,f)$ where $p$ is point of $\mathcal{U}$ and $f$ is a filtration of $p$ satisfying certain open and closed conditions in the stack of filtrations. 

 \begin{defn}[(Weak) $\Theta$-stratification] \label{defn: theta stratification}
A (weak) $\Theta$-stratification of $\mathcal{X}$ consists of a collection of open substacks $(\mathcal{X}_{\leq c})_{c \in \Gamma}$ indexed by a totally ordered set $\Gamma$. We require the following conditions to be satisfied
\begin{enumerate}[(1)]
\item $\mathcal{X}_{\leq c} \subset \mathcal{X}_{\leq c'}$ for all $c< c'$.
\item $\mathcal{X} = \bigcup_{c \in \Gamma} \mathcal{X}_{\leq c}$.
\item For all $c$, there exists a (weak) $\Theta$-stratum $\mathfrak{S}_c \subset \text{Filt}(\mathcal{X}_{\leq c})$ of $\mathcal{X}_{\leq c}$ such that
\[ \mathcal{X}_{\leq c} \setminus \text{ev}_1(\mathfrak{S}_c) = \bigcup_{c' < c} \mathcal{X}_{\leq c'}\]
\item For every point $p \in \mathcal{X}$, the set $\left\{ c \in \Gamma \, \mid \, p \in \mathcal{X}_{\leq c}\right\}$ has a minimal element.
\end{enumerate}
\end{defn}
The main motivation behind the definition of $\Theta$-stratification is to abstract a notion ``moduli-theoretic" locally closed stratifications of the stack $\mathcal{X}$, where each stratum satisfies certain strong structural properties reminiscent of the stratifications in GIT arising in the work of Hesselink, Kempf, Kirwan and Ness, as well as the classical Harder-Narasimhan stratification of the stack of $G$-bundles on a curve arising in the work of Atiyah-Bott \cite{atiyah-bott-yangmills}.

One of the main insights in \cite{halpernleistner2018structure} is the definition of a general notion of numerical invariant on a stack $\mathcal{X}$. Such numerical invariants can be used in practice for two purposes:
\begin{enumerate}[(1)]
    \item defining a locus of ``semistable points" in $\mathcal{X}$, and
    \item constructing an induced Harder-Narasimhan $\Theta$-stratification of $\mathcal{X}$.
\end{enumerate}

We proceed to review the theory of numerical invariants on $\mathcal{X}$. We will need the following.

\begin{defn}[(Nondegenerate) Graded points]\label{defn: nondegenerate}
    Let $q \geq 1$ be a positive integer. Let $k$ be a field over $S$. A ($k$-valued) $q$-graded point of the stack $\mathcal{X}$ is a morphism $B\mathbb{G}^q_{m,k} \rightarrow \mathcal{X}$. We say that a morphism $g: B\mathbb{G}^q_{m,k} \rightarrow \mathcal{X}$ is nondegenerate if the induced homomorphism $\gamma: \mathbb{G}_{m,k}^q \rightarrow \text{Aut}(g({\Spec(k))})$ has finite kernel.
\end{defn}
Similarly as in the case of filtrations, the mapping stack of $q$-graded points $\text{Grad}^q(\mathcal{X}) \vcentcolon = \text{Map}_S(B\mathbb{G}_{m,S}^q, \, \mathcal{X})$ is an algebraic stack locally of finite presentation over $S$ \cite[Prop. 1.1.2]{halpernleistner2018structure}. 

 A ($\mathbb{R}$-valued) numerical invariant is an assignment of a function $\nu_{\gamma}: \mathbb{R}^{q} \setminus \{0\} \to \mathbb{R}$ for each nondegenerate $g: B\mathbb{G}^q_{m,k} \rightarrow \mathcal{X}$, satisfying some compatibility conditions as follows.
\begin{defn} \label{defn: general numerical invariant}
A numerical invariant $\nu$ on the stack $\mathcal{X}$ is an assignment defined as follows. Let $k$ be a field and let $p \in \mathcal{X}(k)$. Let $\gamma: \mathbb{G}_{m,k}^q \rightarrow \text{Aut}(p)$ be a homomorphism of $k$-groups with finite kernel. 
The numerical invariant $\nu$ assigns to this data a scale-invariant function $\nu_{\gamma}: \mathbb{R}^q\setminus \{0\} \rightarrow \mathbb{R}$ such that
\begin{enumerate}[(1)]
    \item $\nu_{\gamma}$ is unchanged under field extensions $k \subset k'$.
    \item $\nu$ is locally constant in algebraic families. In other words, let $T$ be a scheme. Let $\xi: T \rightarrow \mathcal{X}$ be a morphism and let $\gamma: \mathbb{G}_{m,T}^{q} \to \text{Aut}(\xi)$ be a homomorphism of $T$-group schemes with finite kernel. Then, as we vary $t \in T$, the function $\nu_{\gamma_{t}}$ is locally constant in $T$.
    \item Given a homomorphism $\phi: \mathbb{G}^w_{m,k} \rightarrow \mathbb{G}^q_{m,k}$ with finite kernel, the function $\nu_{\gamma \circ \phi}$ is the restriction of $\nu_{\gamma}$ along the inclusion $\mathbb{R}^w \hookrightarrow \mathbb{R}^q$ induced by $\phi$.
\end{enumerate}
\end{defn}

For any field $k$ over $S$, we say that a filtration $f: \Theta_{k} \rightarrow \mathcal{X}$ of some $k$-point $p$ is nondegenerate if the restriction $f|_0: B\mathbb{G}_{m,k} \hookrightarrow \Theta_k \to \mathcal{X}$ is nondegenerate as in \Cref{defn: nondegenerate}.
\begin{notn}
    Given a numerical invariant $\nu$ on $\mathcal{X}$, we regard $\nu$ as a function $|\text{Filt}^{\mathrm{ndegn}}(\mathcal{X})| \to \mathbb{R}$ on the set of nondegenerate filtrations by defining $\nu(f) \vcentcolon = \nu_{f|_{0}}(1) \in \bR$.
\end{notn}

\begin{defn}[Semistability] \label{defn: semistability numerical invariant}
    Let $\nu$ be a numerical invariant on $\mathcal{X}$. Let $k$ be an algebraically closed field over $S$. Let $p \in \mathcal{X}(k)$. We say that $p$ is semistable if all nondegenerate filtrations $f$ of $p$ satisfy $\nu(f) \leq 0$. Otherwise, we say that $p$ is unstable. 
\end{defn}

\begin{remark}
We will mostly consider nondegenerate filtrations, because these are the ones relevant for stability. We will sometimes omit the adjective ``nondegenerate".
\end{remark}

Note that although \Cref{defn: general numerical invariant} involves data for all $q \geq 1$, only the $q=1$ data is used to define semistability.

Our next goal is to explain a way to define numerical invariants for a given stack $\mathcal{X}$ in practice, in terms of a line bundle on $\mathcal{X}$ and a so-called ``norm on graded points". 

\begin{notn}
    Let $L$ be a line bundle on $\mathcal{X}$. Given a morphism $g: B\mathbb{G}^q_{m,k} \rightarrow \mathcal{X}$, the pullback line bundle $g^*(L)$ amounts to a character in $X^*(\mathbb{G}^q_{m,k})$. Under the natural identification $X^*(\mathbb{G}^q_{m,k}) \cong \mathbb{Z}^q$, we can interpret this as a $q$-tuple of integers $(w^{(i)})_{i=1}^q$, which we call the weight of $g^*(\mathcal{L})$. 
\end{notn}  

\begin{notn}
    When $q=1$, we use the notation $\wt(g^*L) := w^{(1)}$ to denote the unique weight as above. If $f: \Theta_k \to \mathcal{X}$ is a filtration of a point in $\mathcal{X}$, then we may use the notation $\wt(L)(f) = \wt((f|_0)^*(L))$.
\end{notn}

\begin{notn}
    Given a line bundle $L$ on $\mathcal{X}$, we define for every $g: B\mathbb{G}^q_{m,k} \rightarrow \mathcal{X}$ an $\mathbb{R}$-linear function $\wt(L)_g : \mathbb{R}^q \to \mathbb{R}$ given by
\[ \wt(L)_g\left((r_i)_{i=1}^q\right) = \sum_{i=1}^q r_i \cdot w^{(i)} .\]
\end{notn}
Notice that the assignment $\wt(L): g \mapsto \wt(L)_g$ satisfies most of the properties of a numerical invariant (\Cref{defn: general numerical invariant}), except that the functions $\wt(L)_g$ are not scale invariant. It is important for $\nu$ to be scale invariant in order to set up the optimization problem that will define the associated Harder-Narasimhan $\Theta$-stratification. In order to obtain a scale invariant $\nu$, we use a rational quadratic norm on graded points as in \cite[Defn. 4.1.12]{halpernleistner2018structure}. 

\begin{defn}[Rational quadratic norm on graded points]
\label{defn: rational_quadratic_norm}
A rational quadratic norm $b$ on graded points of the stack $\mathcal{X}$ is an assignment defined as follows. Let $k$ be a field over $S$ and let $p \in \mathcal{X}(k)$. Let $\gamma: \mathbb{G}_{m,k}^q \to \text{Aut}(p)$ be a homomorphism with finite kernel. Then $b$ assigns to this data a positive definite quadratic norm $b_{\gamma}: \mathbb{R}^q \to \mathbb{R}$ with rational coefficients such that
\begin{enumerate}[(1)]
    \item $b_{\gamma}$ is unchanged under field extensions $k \subset k'$.
    \item $b$ is locally constant in algebraic families. In other words, choose any scheme $T$, a morphism $\xi: T \rightarrow \mathcal{X}$ and a homomorphism $\gamma: \mathbb{G}_{m,T}^{q} \rightarrow \text{Aut}(\xi)$ of $T$-group schemes with finite kernel. Then, as we vary $t \in T$, we require that the function $b_{\gamma_{t}}$ is locally constant in $T$.
    \item Given a homomorphism $\phi: \mathbb{G}^w_{m,k} \rightarrow \mathbb{G}^q_{m,k}$ with finite kernel, the norm $b_{\gamma \circ \phi}$ is the restriction of $b_{\gamma}$ along the inclusion $\mathbb{R}^w \hookrightarrow \mathbb{R}^q$ induced by $\phi$.
\end{enumerate}
\end{defn}

\begin{notn} \label{nont: numerical invariant in terms of line bundle and norm on graded points}
    Given a line bundle $L$ on $\mathcal{X}$ and a rational quadratic norm on graded points $b$, we denote by $\nu = \wt(L)/\sqrt{b}$ the numerical invariant on $\mathcal{X}$ defined as follows. For all nondegenerate $g: B\mathbb{G}^q_{m,k} \to \mathcal{X}$ with corresponding morphism $\gamma: \mathbb{G}^q_{m,k} \to \text{Aut}(g(\Spec(k)))$, we set
\[ \nu_{\gamma}(\vec{r}) = \frac{\wt(L)_{g}(\vec{r})}{\sqrt{b_{\gamma}(\vec{r})}}  \]
\end{notn}

Our next goal is to explain how to use a numerical invariant $\nu$ to define a (weak) $\Theta$-stratification on $\mathcal{X}$ (see \cite[\S 4.1]{halpernleistner2018structure} for more details). 

\begin{notn}
    For any unstable geometric point $p: \Spec(k) \to \mathcal{X}$, we set $M^{\nu}(p)$ to be the supremum of $\nu(f)$ over all filtrations $f$ of $p$ (if such supremum exists). If $p$ is semistable, then by convention we set $M^{\nu}(p) = 0$.
\end{notn}

\begin{defn}[Harder-Narasimhan filtration]
    For any unstable geometric point $p$ of $\mathcal{X}$, a filtration $f$ of $p$ is called a Harder-Narasimhan filtration if $\nu(f) = M^{\nu}(p)$.
\end{defn}

\begin{notn}
    For any $c \in \mathbb{R}_{\geq 0}$, we set $\mathcal{X}_{\leq c}$ to be the set of all points $p$ satisfying $M^{\nu}(p) \leq c$, and we let $\cX^{\nu\dash\rm{ss}} := \cX_{\leq 0}$ denote the set of semistable points. 
\end{notn}

\begin{defn} \label{defn: numerical invariant Theta stratification}
    We say that a numerical invariant $\nu$ defines a (weak) $\Theta$-stratification if the following conditions hold:
    \begin{enumerate}[(1)]
        \item every unstable geometric point has a Harder-Narasimhan filtration that is unique up to pre-composing with a ramified covering $\Theta_k \xrightarrow{[n]} \Theta_k$.
        \item For each $c\in\mathbb{R}_{\geq 0}$, the set $\cX_{\leq c}$ underlies an open substack of $\cX$, which comes from a (weak) $\Theta$-stratification as in \Cref{defn: theta stratification}. Furthermore, each (weak) stratum $\mathfrak{S}_c \subset \Filt(\cX_{\leq c})$ is an open and closed substack of Harder-Narasimhan filtrations $f$ with $\nu(f)=c$.
    \end{enumerate} 
\end{defn} 

If a numerical invariant $\nu$ defines a $\Theta$-stratification, then the Harder-Narasimhan filtration of any point is defined over the field of definition of that point. However, if it is only a weak $\Theta$-stratification, in general the Harder-Narasimhan filtration is only defined over a finite purely inseparable field extension.

Next, we explain a set of intrinsic criteria developed in \cite[\S5]{halpernleistner2018structure} that guarantee that a given numerical invariant defines a weak $\Theta$-stratification as in \Cref{defn: numerical invariant Theta stratification}. This is contained in the following theorem, whose hypotheses will be explained below.
\begin{thm}[{\cite[Theorem B(1)]{halpernleistner2018structure}}] \label{thm: theta stability paper theorem}
Let $\nu = \wt(L)/\sqrt{b}$ be a numerical invariant on $\mathcal{X}$ defined by a line bundle $L$ and a rational norm on graded points $b$ as in \Cref{nont: numerical invariant in terms of line bundle and norm on graded points}. If $\nu$ is strictly $\Theta$-monotone, then it defines a weak $\Theta$-stratification of $\mathcal{X}$ if and only if it satisfies the HN boundedness condition.
\end{thm}

We end this subsection by explaining the hypotheses in Theorem \ref{thm: theta stability paper theorem}. 

\begin{notn}[Weighted lines]
    Let $\kappa$ be a field and let $a \geq 1$ be an integer. We denote by $\mathbb{P}^1_{\kappa}[a]$ the $\mathbb{G}_m$-scheme $\mathbb{P}^{1}_{\kappa}$ equipped with the $\mathbb{G}_m$-action determined by the equation $t \cdot [x:y] = [t^{-a}x : y]$. We set $0 = [0: 1]$ and $\infty = [1:0]$.
\end{notn} 

\begin{notn}[``Rigidified" $\Theta_R$] \label{notn: rigidified theta and str}
Let $R$ be a complete discrete valuation ring over $S$. Choose a uniformizer $\varpi$ of $R$. We define $Y_{\Theta_{R}} \vcentcolon = \mathbb{A}^1_{R}$ equipped with its standard $\mathbb{G}_m$-action. By definition, we have $\Theta_{R} =\mathbb{A}^1_{R} / \mathbb{G}_m$. Note that $\mathbb{A}^1_{R}$ contains a unique $\mathbb{G}_m$-invariant closed point cut out by the ideal $(t, \varpi)$. We will denote this point by $\mathfrak{o}$.
\end{notn}

\begin{defn}[Strict $\Theta$-monotonicity] \label{defn: strictly theta monotone and STR monotone} A numerical invariant $\nu$ on $\mathcal{X}$ is strictly $\Theta$-monotone if the following condition holds. 

Let $R$ be any complete discrete valuation ring over $S$. Choose a map $\varphi: \Theta_R \setminus \mathfrak{o} \rightarrow \mathcal{X}$. Then, after possibly replacing $R$ with a finite DVR extension, there exists a reduced and irreducible $\mathbb{G}_m$-equivariant scheme $\Sigma$ with maps $f: \Sigma \rightarrow Y_{\Theta_R}$ and $\widetilde{\varphi}: \Sigma/ \mathbb{G}_m \rightarrow  \mathcal{X}$ such that
\begin{enumerate}[({M}1)]
    \item The map $f$ is proper, $\mathbb{G}_m$-equivariant, and its restriction induces an isomorphism $f : \, \Sigma_{Y_{\Theta_R} \setminus \mathfrak{o}} \xrightarrow{\sim} Y_{\Theta_R} \setminus \mathfrak{o}$.
    \item The following diagram commutes
\begin{figure}[H]
\centering
\begin{tikzcd}
  \left(\Sigma_{Y_{\Theta_R} \setminus \mathfrak{o}}\right)/ \, \mathbb{G}_m \ar[rd, "\widetilde{\varphi}"] \ar[d, "f"'] & \\   \Theta_R \setminus \mathfrak{o} \ar[r, "\varphi"'] &  \mathcal{X}
\end{tikzcd}
\end{figure}
    \item Let $\kappa$ denote a finite extension of the residue field of $R$. For any $a \geq 1$ and any finite $\mathbb{G}_m$-equivariant morphism $\mathbb{P}^1_{\kappa}[a] \to \Sigma_{\mathfrak{o}}$, we have $\nu\left( \;\widetilde{\varphi}|_{\infty / \mathbb{G}_m} \;\right) >  \nu\left(\; \widetilde{\varphi}|_{0 / \mathbb{G}_m} \;\right)$.
\end{enumerate}
\end{defn}

We use the following version of HN boundedness as in \cite[Def. 2.29]{gauged_theta_stratifications}.
\begin{defn}[HN Boundedness]\label{defn: HN boundedness}
We say that a numerical invariant $\nu$ on $\mathcal{X}$ satisfies the HN boundedness condition if the following condition is always satisfied:

For any affine Noetherian scheme $T$ over $S$, and any $g\in \mathcal{X}(T)$, there exists a quasi-compact open substack $\mathcal{U}_{T} \subset \mathcal{X}$ such that the following holds.
For all geometric points $t \in T$ with residue field $k$ and all nondegenerate filtrations $f: \Theta_{k} \rightarrow \mathcal{X}$ of the point $g(t)$ with $\nu(f)>0$, there exists another filtration $f'$ of $g(t)$ satisfying $\nu(f') \geq \nu(f)$ and $f'(0) \in \mathcal{U}_{T}$.
\end{defn}

\end{subsection}

\begin{subsection}{Semistability as \texorpdfstring{$\Theta$}{Theta}-semistability}\label{subsec: semistable conditions agree}
For the rest of this section, we make the following assumption.
\begin{context}
    We assume that $G \to S$ is a split reductive group.
\end{context}

Fix the choice of a maximal split torus $T \subset G$ over $S$. 
The constant Weyl group scheme $W$ acts on $T$, and therefore on its group of characters $X^*(T)$. We fix once and for all a choice of a $W$-invariant rational quadratic norm on the vector space of real characters $X_*(T)_{\mathbb{R}}$. By \cite[Lem. 2.20]{gauged_theta_stratifications} this defines a rational quadratic norm on graded points of the stack $BG$ (as in \Cref{defn: rational_quadratic_norm}).
\begin{defn}
We denote by $b$ the rational quadratic norm on graded points of $\hodge_{G}$ obtained by pullback via the composition $\hodge_{G} \xrightarrow{Forget} \Bun_{G} \to BG$. 
\end{defn}

\begin{defn} \label{defn: determinant line bundle}
We define the line bundle $\cD(\mathfrak{g})$ on the stack $\Bun_{G}(C)$ as follows. Let $E_{univ}$ denote the universal $G$-bundle on $C \times_S \Bun_{G}(C)$ and let $\pi: C \times_S \Bun_{G}(C) \to \Bun_{G}(C)$ denote the second projection. Then, we set $\cD(\mathfrak{g}) := \text{det}\left( R\pi_* \text{Ad}(E_{univ}) \right)^{\vee}$,
where we use the usual definition of determinant $\text{det}$ of the perfect complex $R\pi_* \text{Ad}(E_{univ})$. We shall also denote by $\cD(\mathfrak{g})$ the line bundle on $\hodge_{G}$ obtained by pulling back via the composition $\hodge_{G} \to \Bun_{G}(C) \times_{S} \mathbb{A}^1_{S} \xrightarrow{\text{pr}_1} \Bun_{G}(C)$.
\end{defn}

\begin{defn} \label{defn: numerical invariant on hodge}
We define a numerical invariant $\mu$ on $\hodge_{G}$ given by $\mu:= -\wt(\cD(\mathfrak{g}))/\sqrt{b}$ (see \Cref{nont: numerical invariant in terms of line bundle and norm on graded points}).
\end{defn}
We shall show that the notion of semistability in \Cref{defn: classical semistability} agrees with semistability for the numerical invariant $\mu$ as in \Cref{defn: semistability numerical invariant}. We start by describing filtrations of points in $\Bun_{G}(C)$ (in the sense of \Cref{defn: filtration point stack}).
\begin{defn}[{\cite[Def. 3.13]{rho-sheaves-paper}}]
Let $k$ be a field and let $E$ be a $G$-bundle in $\Bun_{G}(C)(k)$. A weighted parabolic reduction of $E$ consists of a pair $(E_P, \lambda)$ of a reduction of structure group $E_{P}$ of $E$ into some parabolic subgroup $P \subset G_k$ and a $P$-dominant cocharacter $\lambda: \mathbb{G}_m \to G_k$.
\end{defn}
Since $G$ is split, we have the following description of filtrations of $\Bun_{G}(C)$.
\begin{prop}[{\cite[Lem. 1.13]{heinloth-hilbertmumford}}] \label{prop: filtrations of Bun_G}
Let $E$ be a geometric point of $\Bun_{G}(C)$. There is a bijection between the set of isomorphism classes of filtrations of $E$ and the set of weighted parabolic reductions of $E$ up to conjugation. \qed
\end{prop}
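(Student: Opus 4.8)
The plan is to identify filtrations of $E$ — isomorphism classes of morphisms $f\colon \Theta \to \Bun_{G}(C)$ together with an identification $f(1)\cong E$, where $\Theta = [\bA^1/\mathbb{G}_m]$ and $f(1)$ denotes restriction to the open point $1\in\Theta$ — with weighted parabolic reductions by passing through mapping stacks. Since $E$ lives over a geometric point $\Spec(k)\to \Bun_{G}(C)$, I work over $k$ with the connected curve $C_k$, and use that $\Bun_{G}(C) = \on{Map}_k(C_k, BG)$. By the exponential law for mapping stacks, a filtration is then the same datum as a $G$-bundle $\cE$ on $\Theta\times C_k$ restricting to $E$ on $\{1\}\times C_k$; equivalently, a morphism $g\colon C_k \to \on{Map}(\Theta, BG)$ whose composite with the evaluation $\on{ev}_1\colon \on{Map}(\Theta,BG)\to BG$ is identified with the classifying map of $E$.

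Next I would invoke the standard computation of the filtration stack of $BG$: for split reductive $G$ one has an equivalence $\on{Map}(\Theta, BG)\cong \bigsqcup_{[\lambda]} BP_\lambda$, indexed by conjugacy classes of cocharacters $\lambda\colon \mathbb{G}_m\to G_k$, where $P_\lambda = \{g : \lim_{t\to 0}\lambda(t)\,g\,\lambda(t)^{-1}\text{ exists}\}$ is the associated parabolic and $BP_\lambda\to BG$ is induced by $P_\lambda\hookrightarrow G$ (this is the general principle underlying \cite{halpernleistner2018structure}). Since $C_k$ is geometrically integral, hence connected, the morphism $g\colon C_k\to \bigsqcup_{[\lambda]} BP_\lambda$ factors through a single component $BP_\lambda$; fixing this representative yields a cocharacter $\lambda$ and a $P_\lambda$-bundle on $C_k$, that is, a reduction of structure group $E_P$ with $P=P_\lambda$. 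Composing with $BP_\lambda\to BG$ and invoking the identification at $1$ recovers $E \cong E_P\times^{P}G$, so $E_P$ is genuinely a parabolic reduction of $E$. By construction $\lambda$ is central in the Levi of $P_\lambda$ and $P_\lambda$-dominant, so $(E_P,\lambda)$ is a weighted parabolic reduction in the sense of \cite[Def. 3.13]{rho-sheaves-paper}.

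To conclude I would verify that this assignment is a bijection on isomorphism classes. The inverse sends $(E_P,\lambda)$ to the composite $C_k\xrightarrow{E_P} BP_\lambda\hookrightarrow \on{Map}(\Theta,BG)$ — concretely, to the Rees-type degeneration of $E$ determined by $E_P$ and $\lambda$ — and the two assignments are mutually inverse precisely because the component decomposition above is an equivalence. The only genuine bookkeeping is the passage from the labelling by conjugacy classes $[\lambda]$ to \emph{weighted parabolic reductions up to conjugation}: replacing $\lambda$ by a $G_k$-conjugate $h\lambda h^{-1}$ replaces $(P_\lambda, E_P)$ by the conjugate datum, which is exactly the equivalence relation in the statement, while internal isomorphisms of $P_\lambda$-bundles account for isomorphisms of filtrations. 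The main obstacle — the one place requiring genuine input rather than formal manipulation — is justifying both the equivalence $\on{Map}(\Theta, BG)\cong\bigsqcup_{[\lambda]} BP_\lambda$ and the exponential law for the \emph{non-proper} source $\Theta$. Both rest on descent for $G$-torsors together with the classification of $\mathbb{G}_m$-equivariant $G$-bundles on $\bA^1$: every such bundle is trivial, and the equivariant structure is recorded, up to the attracting parabolic, by a single cocharacter. This is the technical heart of the cited \cite[Lem. 1.13]{heinloth-hilbertmumford}.
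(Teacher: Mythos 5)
Your proposal is correct and follows essentially the same route as the source this proposition quotes: the paper gives no independent proof (it cites \cite[Lem.~1.13]{heinloth-hilbertmumford} and stops), and your argument --- using the exponential law to view a filtration as a $\mathbb{G}_m$-equivariant $G$-bundle on $\mathbb{A}^1_k \times C_k$, invoking $\on{Map}(\Theta, BG) \cong \bigsqcup_{[\lambda]} BP_\lambda$, and using connectedness of $C_k$ to land in a single component --- is precisely the standard proof of that lemma, i.e.\ the Rees-construction picture the paper itself deploys later in \Cref{prop: filtrations of hodge}. You correctly isolate the classification of $\mathbb{G}_m$-equivariant $G$-bundles on $\mathbb{A}^1_k$ as the one nonformal input; the only small overstatement is that the exponential law needs no properness of $\Theta$, being a purely formal adjunction of (pre)stacks, so the "technical heart" is really only the computation of $\on{Map}(\Theta,BG)$.
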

Since the forgetful morphism $\hodge_{G} \to \Bun_{G}(C)$ is representable and separated (\Cref{prop: forget is affine and finite type}), for any $k$-point $p:=(E, \nabla)$ there is an induced inclusion of isomorphism classes of filtrations $\text{Filt}(p) \hookrightarrow \text{Filt}(E)$. The next proposition determines this subset of $\text{Filt}(E)$.
\begin{prop} \label{prop: filtrations of hodge}
Let $f$ be a filtration of the $G$-bundle $E$, corresponding to a weighted parabolic reduction $(E_{P}, \lambda)$. Then $f$ lifts to a unique filtration of $(E, \nabla)$ if and only if $E_{P}$ is compatible with $\nabla$. 
\end{prop}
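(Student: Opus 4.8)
The plan is to interpret a lift of $f$ to $\hodge_{G}$ as a $\mathbb{G}_m$-equivariant meromorphic $t$-connection on the degenerating family of $G$-bundles attached to $f$, and then to detect its existence by a weight computation on the Atiyah sequence. First I would dispose of uniqueness for free: since $F\colon \hodge_{G} \to \Bun_{G}(C)\times_S \mathbb{A}^1_S$ is affine, hence representable and separated (\Cref{prop: forget is affine and finite type}), the induced map $\text{Filt}(p)\hookrightarrow\text{Filt}(E)$ is injective, so $f$ admits at most one lift. It therefore remains only to characterize when a lift exists.

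Next I would unwind the datum of a lift. Regarding $\Theta_k = [\mathbb{A}^1_k/\mathbb{G}_m]$ in the usual way, the filtration $f=(E_P,\lambda)$ is the same as a $\mathbb{G}_m$-equivariant $G$-bundle $\mathcal{E}$ on $C_{\mathbb{A}^1_k}$ whose restriction to $1$ is $E$ and whose limit at $0$ is the associated graded $L$-bundle. By \Cref{prop: g_m action and limits}(b) the $\mathbb{A}^1_S$-coordinate extends canonically along $\Theta_k$ (the contraction of $t_0$ to $0$), so lifting $f$ to $\hodge_{G}$ amounts exactly to giving a $\mathbb{G}_m$-equivariant splitting of the $\mathbb{G}_m$-equivariant meromorphic $t$-Atiyah sequence
\[ 0 \to \Ad(\mathcal{E})\otimes\Omega^1_{C_{\mathbb{A}^1_k}/\mathbb{A}^1_k}(D) \to At^D(\mathcal{E}) \to \mathcal{O}_{C_{\mathbb{A}^1_k}} \to 0 \]
whose restriction to $1$ equals $\nabla$. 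Here I use that $F$ is affine, so that an equivariant section over $\Theta_k$ is governed by its restriction to the dense orbit $1$ together with the existence of a limit at $0$.

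The heart of the argument is then a weight analysis, in the same spirit as the extension argument in \Cref{prop: g_m action and limits}(b). Viewing the equivariant sheaves as $\mathbb{Z}$-graded, the cocharacter $\lambda$ grades $\Ad(\mathcal{E})=\bigoplus_n \Ad(\mathcal{E})_n$ in such a way that the non-negative part $\bigoplus_{n\ge 0}\Ad(\mathcal{E})_n$ is $\Ad(\mathcal{E}_P)$, with quotient $\Ad(\mathcal{E})/\Ad(\mathcal{E}_P)$ the strictly negative part; correspondingly $At^D(\mathcal{E}_P)\subset At^D(\mathcal{E})$ is the non-negative-weight subbundle. The equivariant section $\nabla$ over $1$ extends over $0$ if and only if its negative-weight component vanishes, i.e. if and only if the composite $\mathcal{O}_{C_k}\xrightarrow{\nabla} At^D(E) \to At^D(E)/At^D(E_P)\cong (\Ad(E)/\Ad(E_P))\otimes\Omega^1_{C_k/k}(D)$ is zero. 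By \Cref{defn: compatible parabolic reduction} this is precisely the condition that $E_P$ be $\nabla$-compatible, which closes the equivalence.

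I expect the main obstacle to be making the Rees bundle $\mathcal{E}$ and the identification of $At^D(\mathcal{E}_P)$ with the non-negative-weight subbundle fully precise, in particular pinning down the sign convention relating the $\lambda$-grading to the contracting $\mathbb{A}^1_S$-action so that ``the limit at $0$ exists'' matches ``$\nabla$ factors through $At^D(E_P)$''. A secondary point to justify carefully is that equivariant sections over $\Theta_k$ of the affine morphism $F$ are indeed detected by the generic section plus existence of the limit; this is where affineness of $F$ and the contracting action are both essential, and I would phrase the final step so that the reader can verify the weight sign directly on the graded pieces of the Atiyah sequence.
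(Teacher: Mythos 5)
Your proposal is correct and follows essentially the same route as the paper: both pass through the Rees construction to view $f$ as a $\mathbb{G}_m$-equivariant bundle on $C_{\mathbb{A}^1_k}$, identify the $0$-th (non-negative weight) filtered piece of $At^D(E)$ with $At^D(E_P)$, and observe that the equivariant splitting given by $\nabla$ over the open orbit extends over $0$ precisely when $\nabla$ factors through that piece. The uniqueness via separatedness/representability of the forgetful morphism is likewise how the paper handles it (in the sentence preceding the proposition).
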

\begin{proof}
Let $(E_P, \lambda)$ be a weighted parabolic reduction of $E$, corresponding to a $\mathbb{G}_{m,k}$-equivariant $G$-bundle $\widetilde{E}$ on $C_{\mathbb{A}^1_{k}}$. The induced $\mathbb{G}_{m,k}$-equivariant vector bundle $\text{Ad}(\widetilde{E})$ on $C_{\mathbb{A}^1_{k}}$ corresponds, via the Rees construction (cf. \cite[Prop. 1.0.1]{halpernleistner2018structure} \cite[Prop. 3.8]{rho-sheaves-paper}), to a $\mathbb{Z}$-weighted decreasing sequence $(\text{Ad}(E)_i)_{i \in \mathbb{Z}}$ of vector subbundles $\text{Ad}(E)_i \subset \text{Ad}(E)$. By tracing back the Rees construction for $G$-bundles in \cite[1.F.b]{heinloth-hilbertmumford}, we have $\text{Ad}(E)_0 = \text{Ad}(E_P) \subset \text{Ad}(E)$. On the other hand, the $\mathbb{G}_{m,k}$-equivariant structure sheaf $\cO_{C_{\mathbb{A}^1_{k}}}$ corresponds via the Rees construction to the $\mathbb{Z}$-indexed filtration by vector subbundles $(\cO_{C_{k}})_i \subset \cO_{C_{k}}$ defined by
\[ (\cO_{C_{k}})_i := \begin{cases} 0 & \text{if $i >0$} \\ \cO_{C_{k}} & \text{if $i \leq 0$} \end{cases} \]
The $\mathbb{G}_{m,k}$-equivariant meromorphic $t$-Atiyah bundle $\text{At}^{D}(\widetilde{E})$ defines a $\mathbb{Z}$-indexed filtration of $\text{At}^D(E)$ such that the meromorphic $t$-Atiyah sequence
\[ 0 \to \text{Ad}(E)(D) \otimes \Omega^1_{C_{k}/k} \to \text{At}^{D}(E) \to \cO_{C_k} \to 0\]
is compatible with all the induced fitrations. The $0^{th}$ piece of the filtration is given by the meromorphic $t$-Atiyah sequence for $E_{P}$.
We have that $\nabla$ yields a $\mathbb{G}_{m,k}$-equivariant splitting of the restriction of the meromorphic $t$-Atiyah sequence of $\widetilde{E}$ to $\mathbb{A}^1_{k}\setminus 0$. The filtration of $E$ corresponding to $(E_{P}, \lambda)$ lifts to a filtration of the point $(E, \nabla) \in \hodge_{G}(k)$ if and only if this splitting $\nabla$ extends to the whole $C_{\mathbb{A}^1_{k}}$. This is equivalent, via the Rees construction, to requiring that the splitting $\nabla: \cO_{C_{k}} \to \text{At}^{D}(E)$ factors through the $0^{th}$-filtered piece $\text{At}^{D}(E)_0 \subset \text{At}^{D}(E)$. Since $\text{At}^{D}(E)_0 = \text{At}^{D}(E_{P})$, this is equivalent to the parabolic reduction $E_{P}$ being compatible with $\nabla$.
\end{proof}

\begin{prop} \label{prop: comparison classical and Theta semistability}
Let $p=(E, \nabla)$ be a geometric point of $\hodge_{G}$. Then $p$ is semistable as in Definition \ref{defn: classical semistability} if and only if it is $\mu$-semistable for the numerical invariant $\mu$ in \Cref{defn: numerical invariant on hodge}.
\end{prop}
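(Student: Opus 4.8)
The plan is to translate both notions of semistability into inequalities about degrees of associated line bundles, and to bridge them through a weight computation for the determinant-of-cohomology line bundle $\cD(\mathfrak{g})$.

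First I would invoke \Cref{prop: filtrations of hodge} to identify the filtrations $f : \Theta_k \to \hodge_{G}$ with $f(1) \simeq p$ with exactly the weighted parabolic reductions $(E_P, \lambda)$ of $E$ for which $E_P$ is compatible with $\nabla$ and $\lambda$ is $P$-dominant. Under this identification the definition of $\Theta$-semistability with respect to $\cD(\mathfrak{g})$ becomes the condition that $-\wt(\cD(\mathfrak{g}))(f) \leq 0$ for every such $(E_P,\lambda)$, so the entire problem reduces to computing $\wt(\cD(\mathfrak{g}))(f)$ in terms of the data $(E_P,\lambda)$.

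For the weight computation I would use the Rees construction recalled in the proof of \Cref{prop: filtrations of hodge}: the filtration $f$ equips $\Ad(E)$ with a $\mathbb{Z}$-grading whose graded pieces are $\operatorname{gr}_i \Ad(E) = E_L(\mathfrak{g}_i)$, where $E_L$ is the associated Levi reduction and $\mathfrak{g} = \bigoplus_i \mathfrak{g}_i$ is the decomposition of the adjoint representation into $\lambda$-weight spaces. Since $\cD(\mathfrak{g}) = \det(R\pi_* \Ad(E_{univ}))^{\vee}$ (\Cref{defn: determinant line bundle}), the $\mathbb{G}_m$-weight on the fiber of $f^*\cD(\mathfrak{g})$ at $0$ is $-\sum_i i\,\big(\deg E_L(\mathfrak{g}_i) + (1-g)\dim \mathfrak{g}_i\big)$ by Riemann--Roch on $C_k$ of genus $g$. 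The symmetry $\dim \mathfrak{g}_i = \dim \mathfrak{g}_{-i}$ of the adjoint weights makes $\sum_i i\,\dim\mathfrak{g}_i = 0$, so the rank contributions cancel and $\wt(\cD(\mathfrak{g}))(f) = -\sum_i i\,\deg E_L(\mathfrak{g}_i)$. Writing $\det(\mathfrak{g}_i)$ for the character of $L$ on $\det \mathfrak{g}_i$, I would then observe $\sum_i i\,\deg E_L(\mathfrak{g}_i) = \deg E_L\big(\sum_i i\,\det(\mathfrak{g}_i)\big)$, and evaluating $\sum_i i\,\det(\mathfrak{g}_i)$ against a cocharacter $\mu$ gives $\sum_{\alpha} \langle \alpha,\lambda\rangle\,\langle\alpha,\mu\rangle = \text{tr}_{\mathfrak{g}}(\lambda)(\mu)$, whence $\sum_i i\,\det(\mathfrak{g}_i) = \text{tr}_{\mathfrak{g}}(\lambda)$. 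As degrees of associated line bundles depend only on the character of $L$ and $E_P(\chi) = E_L(\chi)$, this yields the clean identity $-\wt(\cD(\mathfrak{g}))(f) = \deg E_P(\text{tr}_{\mathfrak{g}}(\lambda))$.

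Finally I would conclude by matching quantifiers. By the very definition of $P$-dominant character, the characters $\text{tr}_{\mathfrak{g}}(\lambda)$ for $P$-dominant $\lambda$ are precisely the $P$-dominant characters in \Cref{defn: classical semistability}; and since $\chi \mapsto \deg E_P(\chi)$ is linear it suffices to test integral $\lambda$, which are exactly those arising from filtrations $\Theta_k \to \hodge_G$. Combining this with the previous identity, the $\Theta$-semistability condition $-\wt(\cD(\mathfrak{g}))(f)\leq 0$ over all $f$ with $f(1)\simeq p$ is equivalent to $\deg E_P(\chi) \leq 0$ over all $\nabla$-compatible reductions $E_P$ and all $P$-dominant $\chi$, which is classical semistability. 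I expect the main obstacle to be the weight computation itself, specifically reconciling the grading conventions of the Rees construction with the sign in the definition of $\cD(\mathfrak{g})$, and carefully verifying both the cancellation of the Euler-characteristic rank terms and the identity $\sum_i i\,\det(\mathfrak{g}_i) = \text{tr}_{\mathfrak{g}}(\lambda)$.
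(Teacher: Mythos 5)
Your proof is correct and follows the same route as the paper: the paper's proof consists precisely of combining \Cref{prop: filtrations of hodge} with the weight computation of $\cD(\mathfrak{g})$ on filtrations of $\Bun_G(C)$, which it cites from Heinloth's work rather than reproducing. Your explicit Riemann--Roch computation of $-\wt(\cD(\mathfrak{g}))(f) = \deg E_P(\text{tr}_{\mathfrak{g}}(\lambda))$ is exactly the cited content (and is consistent with the formula the paper itself uses later in the proof of \Cref{prop: weak theta stratification}), so the two arguments agree in substance.
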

\begin{proof}
This is a consequence of the computation of the weight of the line bundle $\cD(\mathfrak{g}) \in \text{Pic}(\Bun_{G}(C))$ done in \cite[1.F.c]{heinloth-hilbertmumford} and \Cref{prop: filtrations of hodge}.
\end{proof}

As a consequence of \Cref{prop: comparison classical and Theta semistability}, the semistable locus of $\hodge_{G}$ with respect to the numerical invariant $\mu$ coincides with the ``classical" semistable locus in the sense of Definition \ref{defn: classical semistability}.
\end{subsection}
\begin{subsection}{Monotonicity and HN boundedness}\label{subsec: mono and hn bound}
We use the technique of ``infinite dimensional GIT" \cite{torsion-freepaper, rho-sheaves-paper, gauged_theta_stratifications} to show the following.
\begin{prop} \label{prop: monotonicity general}
Suppose that $G$ satisfies the central property (Z) (\Cref{defn: central property}). Then the numerical invariant $\mu$ is strictly $\Theta$-monotone (as in \Cref{defn: strictly theta monotone and STR monotone}).
\end{prop}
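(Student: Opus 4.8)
The plan is to reduce strict $\Theta$- and $S$-monotonicity of $\mu$ for $\hodge_G$ to the case of the general linear group, where the relevant monotonicity is supplied by the infinite-dimensional GIT machinery, by transporting it along the change-of-group morphisms of Subsection \ref{subsection: change of groups}.

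First I would record that $\mu$ is pulled back along $\Ad_* \times q_*$. Under $\Ad_* \colon \hodge_G \to \hodge_{\GL(\mathfrak{g})}$ the universal adjoint bundle $\Ad(E_{univ})$ is the pullback of the universal vector bundle, so the determinant line bundle $\cD(\mathfrak{g})$ of \Cref{defn: determinant line bundle} is the pullback of the analogous determinant-of-cohomology line bundle on $\hodge_{\GL(\mathfrak{g})}$. Moreover, property (Z) (\Cref{defn: central property}) guarantees that $\Ad \times q$ has finite kernel $Z_{\cD(G)}$, hence is injective on rational cocharacter lattices, and the norm $b$ on graded points, built from $\tr_{\mathfrak{g}}$ on $X_*(T')_{\mathbb{R}}$ together with the chosen form on the central directions $X_*(Z_G^{\circ})_{\mathbb{R}}$, agrees up to positive scaling with the pullback of the orthogonal sum of the trace-form norm on $\GL(\mathfrak{g})$ and a matching norm on the torus $G_{ab}$; here I would invoke the comparison of trace pairings already used in \Cref{consruction: instability reduction}. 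Thus $\mu = (\Ad_* \times q_*)^*\mu'$ for the evident numerical invariant $\mu'$ on $\hodge_{\GL(\mathfrak{g}) \times G_{ab}}$. I would emphasize that $\Ad_*$ alone does not suffice, since it collapses the central cocharacters that $b$ detects; this is exactly why the auxiliary factor $G_{ab}$ and property (Z) enter the argument.

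Next I would establish monotonicity for $\mu'$ on the target. Writing $G_{ab} \cong \GL_1^r$, we have $\hodge_{\GL(\mathfrak{g}) \times G_{ab}} \cong \hodge_{\GL(\mathfrak{g})} \times_S (\hodge_{\GL_1})^r$, and $\mu'$ is a product-type invariant whose polarization involves only the $\GL(\mathfrak{g})$-factor. Strict $\Theta$- and $S$-monotonicity for $\hodge_{\GL_N}$, viewed through the lens of $\Lambda^{D,t}$-modules, is provided by the infinite-dimensional GIT arguments of \cite{torsion-freepaper, rho-sheaves-paper, gauged_theta_stratifications} together with the boundedness of \cite{langer2004semistable}. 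The torus factors $\hodge_{\GL_1}$ carry no nontrivial parabolic reductions and contribute only to the denominator $\sqrt{b}$, so the monotonicity of the product follows from that of the $\GL(\mathfrak{g})$-factor within the same formalism.

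Finally I would transfer monotonicity back to $\hodge_G$ along the factorization $\Ad_* \times q_* = j_* \circ i_*$, where $i_* \colon \hodge_G \to \hodge_{\overline{G}}$ is quasi-finite and proper (\Cref{lemma: change of group for etale isogenies}, using property (Z)) and $j_* \colon \hodge_{\overline{G}} \to \hodge_{\GL(\mathfrak{g}) \times G_{ab}}$ is affine of finite type (\Cref{lemma: change of group under closed immersion is affine}). Since $\mu$ is the pullback of the monotone invariant $\mu'$ and the numerical values match by the identity of the first paragraph, it suffices to know that strict $\Theta$- and $S$-monotonicity are inherited under pullback along quasi-finite proper morphisms and along affine morphisms of finite type, which is part of the framework of \cite{halpernleistner2018structure, gauged_theta_stratifications}. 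I expect the main obstacle to be precisely this last transfer across the affine morphism $j_*$: an affine morphism is not proper, so to verify the valuative extension conditions built into monotonicity one must check that a filtration of a $\overline{G}$-connection over $\Theta_R$ whose image under $j_*$ extends over the base itself extends, using the affineness of $j_*$ together with the properness of $i_*$ to control the relevant limits.
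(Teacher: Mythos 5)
Your overall architecture (use property (Z) to split off the isogeny $i_*\colon \hodge_G \to \hodge_{\overline{G}}$, which is quasi-finite and proper by \Cref{lemma: change of group for etale isogenies}, and reduce the rest to the general linear group) matches the paper's, and the transfer of monotonicity across $i_*$ is indeed done as in \cite[Prop. 3.6]{gauged_theta_stratifications}. The gap is in the step you yourself flag as the main obstacle: transferring strict $\Theta$- and $S$-monotonicity back across the affine morphism $j_*\colon \hodge_{\overline{G}} \to \hodge_{\GL(\mathfrak{g})\times G_{ab}}$. Strict monotonicity is a properness-type condition: given a map defined on the complement of the codimension-two point $0 \in \Theta_R$ (resp.\ $\overline{ST}_R$), one must extend it over \emph{some} modification $\Sigma$, i.e.\ across the exceptional \emph{divisor} $E \subset \Sigma$. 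Knowing that the composite with $j_*$ extends to $\Sigma$ reduces you to extending a section of an affine morphism over $\Sigma$ from the open $\Sigma \setminus E$ whose complement is a divisor, not a codimension-two locus; such sections do not extend in general (think of $1/x$ across $x=0$), and no further blowup repairs this. Affineness is exactly the wrong property here; it would be properness of $j_*$ that you want, and $j_*$ is not proper. So the last paragraph of your argument does not close.

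The paper avoids this by never transferring monotonicity across $j_*$ at the level of moduli stacks. Instead it proves monotonicity of $\overline{\mu}$ \emph{directly} on $\hodge_{\overline{G}}$ by the infinite-dimensional GIT method: it introduces the ind-scheme $\text{Gr}_{E,\nabla,D}$ of modifications of a $\overline{G}$-bundle with meromorphic $t$-connection along the divisor $D$ (\Cref{defn: affine grassmannian for log connections}), and the whole content of \Cref{lemma: affine grassmannian is ind-projective} is that the forgetful map $\text{Gr}_{E,\nabla,D} \to \text{Gr}_{\cE,D}$ is a \emph{closed immersion} into an ind-projective ind-scheme on which $\cD(\mathfrak{g})$ is relatively ample. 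This is where the closed-immersion property of $j$ (via \Cref{lemma: change of group under closed immersion is affine}) is actually used: a closed subscheme of a projective scheme is projective, so the limit required by monotonicity exists inside $\text{Gr}_{E,\nabla,D}$ itself and maps to $\hodge_{\overline{G}}$. In other words, the affineness of $j_*$ is exploited only after embedding the extension problem into an ambient ind-projective space of modifications supported on $D$, which is the step missing from your proposal. (Two smaller inaccuracies: the norm $b$ is an arbitrary fixed $W$-invariant rational norm on $X_*(T)_{\mathbb{R}}$, transported to $\overline{G}$ via the isogeny of maximal tori, not necessarily the trace form; and the boundedness result of \cite{langer2004semistable} enters the GIT construction of the moduli space, not the monotonicity argument.)
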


Let $\overline{{G}}$ be as in Definition \ref{defn: isogeny}.
The representation $\overline{G} \hookrightarrow  \GL(\mathfrak{g}) \times G_{ab} \to \GL(\mathfrak{g})$, can be used to define a line bundle $\cD(\mathfrak{g})$ on $\hodge_{\overline{G}}$, given by the dual of the determinant of the derived pushforward of the universal associated vector bundle on $C\times_S \mathrm{Bun}_{GL(\Fg)}(C)$ with fibers isomorphic to $\mathfrak{g}$, as in \Cref{defn: determinant line bundle}. 
Notice that the image $\overline{T} \subset \overline{G}$ of the maximal split torus $T$ is itself a maximal split torus of $\overline{G}$. The restriction $T \to \overline{T}$ is an isogeny of tori, and so it induces an isomorphism $X_*(T)_{\mathbb{R}} \to X_*(\overline{T})_{\mathbb{R}}$ of real cocharacters compatible with the Weyl group actions. We use this to view $b$ as a Weyl invariant quadratic norm on $X_*(\overline{T})_{\mathbb{R}}$.
\begin{defn}
The numerical invariant $\overline{\mu}$ on $\hodge_{\overline{G}}$ is defined to be $\overline{\mu} = \frac{-\wt(\cD(\mathfrak{g}))}{\sqrt{b}}$.
\end{defn}
Note that the numerical invariant $\overline{\mu}$ is just the result of applying \Cref{defn: numerical invariant on hodge} for the group scheme $\overline{G}$. By definition, the numerical invariant $\mu$ on $\hodge_{G}$ is the pullback of $\overline{\mu}$ under the natural morphism $\hodge_{G} \to \hodge_{\overline{G}}$. We shall show the monotonicity for $\overline{\mu}$ first, and then use it afterward to conclude monotonicity of $\mu$ on $\hodge_{G}$. 

We need the following notion from \cite{torsion-freepaper, gauged_theta_stratifications}. 
Choose an $\mathbb{A}^1_{S}$-scheme $T$, and a $T$-flat Cartier divisor $Z \subset C_{T}$. 
Fix a $\GL(\mathfrak{g})$-bundle $\cE$ on $C_{T}$.
\begin{defn}
Let $\text{Gr}_{\cE,Z}$ be the functor from affine $T$-schemes into sets that sends $X \to T$ to the set of isomorphism classes of tuples $(\cF, \psi)$ consisting of a $\GL(\mathfrak{g})$-bundle $\cF$ on $C_{X}$ and an isomorphism of restrictions $\psi: \cF|_{C_{X} \setminus Z_{X}} \xrightarrow{\sim} \cE|_{C_{X} \setminus Z_{X}}$.
\end{defn}

Let $E$ be a $\overline{G}$-bundle on $C_{T} \setminus D$ equipped with a meromorphic $t$-connection $\nabla$. 
Assume that $\rho_*(E)$ is the restriction $(\cE, \cH)|_{C_{T} \setminus Z_T}$ of some $\GL(\mathfrak{g})\times G_{ab}$-bundle $(\cE, \cH)$ on $C_{T}$.
\begin{defn} \label{defn: affine grassmannian for log connections}
Let $\text{Gr}_{E, \nabla, Z}$ be the functor from affine $T$-schemes into sets that sends $X \to T$ to the set of isomorphism classes of tuples $(\cF, \psi, F, \widetilde{\nabla})$, where 
\begin{enumerate}[(1)]
    \item $(\cF, \psi)$ is an $X$-point of $\text{Gr}_{\cE, Z}$.
    \item $F$ is a $\overline{G}$-reduction of structure group of $(\cF, \cH)$ such that $\psi$ induces an isomorphism of restrictions $F|_{C_{X} \setminus Z_{X}} \xrightarrow{\sim} E|_{C_{X} \setminus Z_{X}}$. 
    \item $\widetilde{\nabla}$ is a meromorphic $t$-connection on $F$ that restricts on $C_{X} \setminus Z_{X}$ to $\nabla|_{C_{X} \setminus Z_{X}}$ under the identification $F|_{C_{X} \setminus Z_{X}} \xrightarrow{\sim} E|_{C_{X} \setminus Z_{X}}$.
\end{enumerate}
\end{defn}
There exists a morphism of pseudofunctors $\text{Gr}_{E, \nabla, Z} \to \hodge_{G} \times_{\mathbb{A}^1_{S}} T$ given by $(\cF, \psi, F, \widetilde{\nabla}) \mapsto (F, \widetilde{\nabla})$. We also denote by $\cD(\mathfrak{g})$ the pullback of $\cD(\mathfrak{g})$ to $\text{Gr}_{E, \nabla, Z}$.
\begin{lemma} \label{lemma: affine grassmannian is ind-projective}
We have that $\text{Gr}_{E, \nabla, Z}$ is an ind-projective ind-scheme over $T$, and that the line bundle $\cD(\mathfrak{g})$ is ample on each closed projective subscheme of $\text{Gr}_{E, \nabla, Z}$.
\end{lemma}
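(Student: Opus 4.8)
The plan is to reduce everything to the $D$-twisted affine Grassmannian of $\GL(\mathfrak{g})$, where ind-projectivity and ampleness of the determinant line bundle are classical, and then to show that the extra data of a $\overline{G}$-reduction and of a compatible meromorphic $t$-connection impose only closed conditions. Concretely, I would factor the natural forgetful morphism as
\[ \text{Gr}_{E, \nabla, D} \xrightarrow{\ a\ } \text{Gr}^{\overline{G}}_{E, D} \xrightarrow{\ b\ } \text{Gr}_{\cE, D}, \]
where $\text{Gr}^{\overline{G}}_{E, D}$ is the intermediate ind-scheme parametrising triples $(\cF, \psi, F)$ with no connection, the map $a$ forgets $\widetilde{\nabla}$, and $b$ forgets the $\overline{G}$-reduction $F$, remembering only the modified $\GL(\mathfrak{g})$-bundle $\cF$. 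Note that the $G_{ab}$-bundle $\cH$ is held fixed, so no torus affine Grassmannian intervenes and $b$ really does land in $\text{Gr}_{\cE, D}$. The goal is to prove that $a$ and $b$ are closed immersions and that $\cD(\mathfrak{g})$ is the pullback under $b \circ a$ of the ample determinant line bundle on $\text{Gr}_{\cE, D}$.

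First I would record the input for $\GL(\mathfrak{g})$. The ind-scheme $\text{Gr}_{\cE, D}$ is the meromorphic (Beauville--Laszlo type) affine Grassmannian parametrising modifications of the rank-$(\dim \mathfrak{g})$ bundle $\cE$ supported along $D$; it is an increasing union of projective Quot-type schemes, hence ind-projective, and the determinant-of-cohomology line bundle $\det(R\pi_* \cF_{\mathrm{univ}})^{\vee}$ is ample on each of these projective pieces \cite{torsion-freepaper, gauged_theta_stratifications}. By \Cref{defn: determinant line bundle}, the line bundle $\cD(\mathfrak{g})$ on $\text{Gr}_{E, \nabla, D}$ is exactly the pullback of this determinant bundle along $b \circ a$.

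The heart of the argument is that $a$ and $b$ are closed immersions, which I would establish by a uniqueness-plus-closedness principle. Since $D$ is a relative Cartier divisor, the open complement $C_X \setminus D_X$ is schematically dense in $C_X$, so a section of any separated $C_X$-scheme is determined by its restriction to $C_X \setminus D_X$. For $b$: a $\overline{G}$-reduction of $(\cF, \cH)$ is a section of the associated bundle with fibre $(\GL(\mathfrak{g}) \times G_{ab})/\overline{G}$, which is affine because $\overline{G}$ is reductive (Matsushima's criterion); the reduction is prescribed to equal $E$ on $C_X \setminus D_X$, so by density it is unique if it exists, and the locus of $\text{Gr}_{\cE, D}$ where such an extension across $D$ exists, together with its unique value, is represented by a closed subscheme via the representability of sections of affine $C_X$-schemes, exactly as in the proof of \Cref{prop: forget is affine and finite type} (using \cite[Thm. 2.3]{hall-rydh-hilbert-quot}). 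The same argument applies to $a$: a meromorphic $t$-connection on $F$ is a section of the affine bundle $A^D_{t\dash\text{Conn},F}$, it is required to restrict to $\nabla$ on $C_X \setminus D_X$, hence it is unique by density, and its extension across $D$ is again cut out by a closed condition. Thus $a$ and $b$ are representable by closed immersions; morally this is \Cref{lemma: change of group under closed immersion is affine} transported to the Grassmannian setting.

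Finally I would assemble the conclusion: a closed immersion into an ind-projective ind-scheme is ind-projective, so $\text{Gr}_{E, \nabla, D}$ is ind-projective over $T$; and for any closed projective subscheme $Z \subset \text{Gr}_{E, \nabla, D}$ the composite $Z \to \text{Gr}_{\cE, D}$ is a closed immersion onto a projective subscheme, so $\cD(\mathfrak{g})|_Z = (b \circ a)^* \det(\dots)^{\vee}|_Z$ is the restriction of an ample line bundle and hence ample. The main obstacle I anticipate is the closed-immersion claim: packaging the statement ``unique extension across $D$ is a closed condition'' cleanly for both the $\overline{G}$-reduction and the connection, and checking that the affineness of $(\GL(\mathfrak{g}) \times G_{ab})/\overline{G}$ and the separatedness of the affine bundle of connections are precisely what make the relevant section functors representable by closed subschemes.
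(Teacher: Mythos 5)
Your proposal is correct and follows essentially the same route as the paper: reduce to the ind-projectivity and ampleness of $\cD(\mathfrak{g})$ on $\text{Gr}_{\cE,D}$ (from \cite{gauged_theta_stratifications}) and show the forgetful map is a chain of closed immersions, using affineness of $(\GL(\mathfrak{g})\times G_{ab})/\overline{G}$ for the reduction datum and the ``extension of a section of an affine bundle across $D$ is a closed condition'' principle for the connection. The only cosmetic difference is that the paper inserts one extra intermediate functor (first pushing the connection to the ambient group, then cutting out descent to $\overline{G}$ via \Cref{lemma: change of group under closed immersion is affine}), whereas you treat the connection on the $\overline{G}$-bundle in a single step; both are valid.
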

\begin{proof}
There is a forgetful morphism $\text{Gr}_{E, \nabla, Z} \to \text{Gr}_{\cE, Z}$ given by $(\cF, \psi, F, \widetilde{\nabla}) \mapsto (\cF, \psi)$. There is an analogous line bundle $\cD(\mathfrak{g})$ on $\text{Gr}_{\cE, Z}$ such that $\cD(\mathfrak{g})$ on $\text{Gr}_{E, \nabla, Z}$ is the pullback under the morphism above. By the proof of \cite[Lem. 3.4]{gauged_theta_stratifications}, $\text{Gr}_{\cE, Z}$ is an ind-projective ind-scheme and the line bundle $\cD(\mathfrak{g})$ is $T$-ample on each closed projective subscheme of $\text{Gr}_{\cE, Z}$ (notice that in \cite{gauged_theta_stratifications} the determinant line bundle involves a square-root $\sqrt{\Omega^1_{C/S}}$ which has no effect on the ampleness by the proof of \cite[Lem. 3.4]{gauged_theta_stratifications}). We shall conclude by showing that the forgetful morphism $\text{Gr}_{E, \nabla, Z} \to \text{Gr}_{\cE, Z}$ is a closed immersion.

We define some intermediate functors. Set $\text{Gr}_{E, Z}$ to be the functor parametrizing $(\cF, \psi, F)$ as in Definition \ref{defn: affine grassmannian for log connections}, but without the data of $\widetilde{\nabla}$. 
On the other hand, define $\text{Gr}_{E, Z, \rho_*(\nabla)}$ to be the functor parametrizing tuples $(\cF, \psi, F, \widetilde{\rho_*(\nabla)})$, where $(\cF, \psi, F)$ is a point in $\text{Gr}_{E, Z}$ and $\widetilde{\rho_*(\nabla)}$ is a meromorphic $t$-connection on $(\cF, \cH)|_{C_X}$ extending $\rho_*(\nabla)_{\cE}|_{C_{X} \setminus Z_{X}}$ under the 
identification $(\psi,\mathrm{id}_{\mathcal{H}}): (\cF, \cH)|_{C_{X} \setminus Z_{X}} \xrightarrow{\sim} (\cE, \cH)|_{C_{X} \setminus Z_{X}}$. The morphism $\text{Gr}_{E, \nabla, Z} \to \text{Gr}_{\cE, Z}$ factors as a composition
\begin{gather*}
    \text{Gr}_{E, \nabla, Z} \to \text{Gr}_{E, D, \rho_*(\nabla)} \to \text{Gr}_{E,Z} \to \text{Gr}_{\cE, Z}, \; \; \; (\cF, \psi, F, \widetilde{\nabla}) \mapsto (\cF, \psi,F, \rho_*(\widetilde{\nabla})) \mapsto (\cF, \psi, F) \mapsto (\cF, \psi) 
\end{gather*} 
We show that all of these morphisms are closed immersions. 

By definition, the left-most morphism fits into a Cartesian diagram
\[
\begin{tikzcd}
    \text{Gr}_{E, \nabla,Z} \ar[r] \ar[d]& \text{Gr}_{E, Z, \rho_*(\nabla)} \ar[d]\\
    \hodge_{\overline{G}} \ar[r] & \hodge_{\GL(\mathfrak{g})\times G_{ab}} \times_{\Bun_{\GL(\mathfrak{g}) \times G_{ab}}(C)} \Bun_{\overline{G}}(C) 
\end{tikzcd}
\]
where the right vertical arrow is given by 
\[(\cF, \psi, F, \widetilde{\rho_*(\nabla)}) \mapsto (((\cH, \cF)|_{C_X}, \widetilde{\rho_*(\nabla)}), F).\]
It follows from the proof of Lemma \ref{lemma: change of group under closed immersion is affine} that the bottom horizontal morphism is a closed immersion, and therefore $\text{Gr}_{E, Z, \nabla} \to \text{Gr}_{E, Z, \rho_*(\nabla)}$ is a closed immersion. 

For the morphism $\text{Gr}_{E, Z, \rho_*(\nabla)} \to \text{Gr}_{E, Z}$, choose a $T$-scheme $X$ and an $X$-point $(\cF, \psi, F) \in \text{Gr}_{E, Z}(X)$. 
Consider the affine bundle 
$A^{D}_{t-Conn, (\cF, \cH|_{C_{X}})}/ C_{X}$ 
of meromorphic $t$-connections on the $G_{ab} \times \GL(\mathfrak{g})$-bundle $(\cF, \cH|_{C_{X}})$. 
The meromorphic $t$-connection $\rho_*(\nabla)|_{C_{X} \setminus Z_{X}}$ yields a section $s:C_{X} \setminus Z_{X} \to A^{D}_{t-Conn, (\cH|_{C_{X}} \times \cF)}$. 
The fiber product $\text{Gr}_{E, Z, \rho_*(\nabla)} \times_{\text{Gr}_{E, Z}} X$ is the subfunctor of $X$ parametrizing morphisms $R \to X$ such that the base change $s_{R}: C_{R} \setminus Z_{R} \to A^{D}_{t-Conn, ( \cH|_{C_{R}} \times \cF|_{C_{R}})}$ extends to a section $C_{R} \to A^{D}_{t-Conn, ( \cH|_{C_{R}} \times \cF|_{C_{R}})}$. Since $A^{D}_{t-Conn, ( \cH|_{C_{R}} \times \cF|_{C_{R}})} \cong A^{D}_{t-Conn, (\cH|_{C_{X}} \times \cF)} \times_{X} R$, it follows from \cite[Lem. 4.12]{rho-sheaves-paper} that $\text{Gr}_{E, Z, \rho_*(\nabla)} \times_{\text{Gr}_{E, Z}} X \to X$ is represented by a closed subscheme of $X$.

Finally, applying \cite[Lem. 3.5]{gauged_theta_stratifications} to the affine scheme $A:= \GL(\mathfrak{g})/\overline{G}$, we have that $\text{Gr}_{E,Z} \to \text{Gr}_{\cE, Z}$ is also a closed immersion.
\end{proof}

\begin{lemma} \label{lemma: monotonicity g overline}
$\overline{\mu}$ is strictly $\Theta$-monotone on $\hodge_{\overline{G}}$.
\end{lemma}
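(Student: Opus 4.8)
The plan is to verify the two conditions of \cite[Def. 2.28]{gauged_theta_stratifications} directly, reducing each to a positivity statement on the ind-projective ind-scheme $\text{Gr}_{E,\nabla,D}$ produced in \Cref{lemma: affine grassmannian is ind-projective}. Recall that strict $\Theta$-monotonicity concerns the behaviour of $\overline{\mu}$ along a non-degenerate map $\Theta_R \to \hodge_{\overline{G}}$ over a DVR $R$, while strict $S$-monotonicity concerns a map out of the $S$-completeness test stack over $R$ whose generic fibre encodes a pair of opposed filtrations; in each case the task is to produce a competitor degeneration over $R$ along which $\overline{\mu}$ does not decrease, and strictly increases unless the original configuration was already balanced. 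The whole point of having set up $\text{Gr}_{E,\nabla,D}$ as an ind-proper space carrying the ample polarization $\cD(\mathfrak{g})$ is to make these competitor degenerations, and their control by $\overline{\mu}$, into a finite-dimensional GIT problem.

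First I would unwind the data of such a test map using \Cref{prop: filtrations of hodge}: over $K=\text{Frac}(R)$ a filtration of $(E,\nabla)$ is the same as a $\nabla$-compatible weighted parabolic reduction $(E_{P},\lambda)$, and an integral extension of the generic object over $C_R$ together with a lattice modification along $D$ is precisely an $R$-point of $\text{Gr}_{E,\nabla,D}$. Under this dictionary the weight $\wt(\cD(\mathfrak{g}))$ entering $\overline{\mu} = -\wt(\cD(\mathfrak{g}))/\sqrt{b}$ is computed as the pairing of the cocharacter $\lambda$ against the polarization $\cD(\mathfrak{g})$ on $\text{Gr}_{E,\nabla,D}$, while $\sqrt{b}$ is the length of $\lambda$ measured in the fixed Weyl-invariant quadratic norm on $X_*(\overline{T})_{\mathbb{R}}$, which is well defined because the isogeny $T\to\overline{T}$ induces the identification $X_*(T)_{\mathbb{R}}\xrightarrow{\sim} X_*(\overline{T})_{\mathbb{R}}$ used to transport $b$.

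The geometric heart is then the ampleness half of \Cref{lemma: affine grassmannian is ind-projective}. Because $\text{Gr}_{E,\nabla,D}$ is ind-proper over $T$ and $\cD(\mathfrak{g})$ is relatively ample on each of its closed projective pieces, every one-parameter degeneration of a $\nabla$-compatible filtration extends, by the valuative criterion, to a limiting point of $\text{Gr}_{E,\nabla,D}$, and the weight of $\cD(\mathfrak{g})$ on this limit controls $\wt(\cD(\mathfrak{g}))$ with the sign forced by ampleness. This positivity, combined with the strict convexity of $b$ on $X_*(\overline{T})_{\mathbb{R}}$, is exactly the input required to run the monotonicity criterion of \cite{halpernleistner2018structure} in the form implemented in \cite{gauged_theta_stratifications, rho-sheaves-paper, torsion-freepaper}, where strict $\Theta$- and $S$-monotonicity of an invariant $-\wt(\cL)/\sqrt{b}$ are deduced formally from ind-properness of the ambient Hecke-modification space together with relative ampleness of $\cL$. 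Our $\text{Gr}_{E,\nabla,D}$ plays the role of that modification space, now carrying the connection.

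The main obstacle I anticipate is the bookkeeping of the connection $\widetilde{\nabla}$ throughout. The abstract criteria are phrased for moduli of bundles, so one must check that imposing compatibility with the meromorphic $t$-connection carves out a closed ind-subscheme on which $\cD(\mathfrak{g})$ stays ample, which is precisely the content of the closed immersion $\text{Gr}_{E,\nabla,D}\hookrightarrow \text{Gr}_{\cE,D}$ established in \Cref{lemma: affine grassmannian is ind-projective}, and that the limits produced by the valuative criterion automatically inherit a compatible meromorphic $t$-connection, so that the competitor degenerations genuinely live in $\hodge_{\overline{G}}$ rather than only in $\Bun_{\overline{G}}(C)$. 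Making this compatibility of limits with $\widetilde{\nabla}$ precise, and matching it against the $\Theta$- and $S$-completeness hypotheses of the general criterion, is where the real work of the proof lies; once that is in place, strict $\Theta$- and strict $S$-monotonicity of $\overline{\mu}$ follow.
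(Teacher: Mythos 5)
Your proposal follows essentially the same route as the paper: the paper's proof of this lemma is precisely an appeal to the ``infinite dimensional GIT'' argument of \cite[\S3]{gauged_theta_stratifications}, with the ind-projectivity of $\text{Gr}_{E,\nabla,D}$ and the relative ampleness of $\cD(\mathfrak{g})$ from \Cref{lemma: affine grassmannian is ind-projective} supplying exactly the inputs (existence of limiting modifications by the valuative criterion, weight inequalities from ampleness, and closedness of the connection-compatibility condition) that you identify. Your write-up just spells out more of the mechanics that the paper delegates to the citation; the only caveat is to keep the modification divisor in $\text{Gr}_{E,\nabla,D}$ (a $T$-flat Cartier divisor chosen for the degeneration test) distinct from the fixed pole divisor $D$ of the meromorphic $t$-connections, as the paper overloads the notation.
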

\begin{proof}
This follows from the same ``infinite dimensional GIT" argument as in \cite[\S3]{gauged_theta_stratifications} by using the ind-projective ind-scheme $\text{Gr}_{\cH \times \cE, Z}$ and \Cref{lemma: affine grassmannian is ind-projective}.
\end{proof}

\begin{proof}[Proof of Proposition \ref{prop: monotonicity general}]
The numerical invariant $\mu$ on $\hodge_{G}$ is the pullback of the numerical invariant $\overline{\mu}$ under morphism $\hodge_{G} \to \hodge_{\overline{G}}$. By the assumption (Z) on the kernel $Z_{\cD(G)}$ and Lemma \ref{lemma: change of group for etale isogenies}, $\hodge_{G} \to \hodge_{\overline{G}}$ is quasifinite and proper. Hence, we are in a similar setup as in the proof of \cite[Prop. 3.6]{gauged_theta_stratifications}. Since $\overline{\mu}$ is monotone (Lemma \ref{lemma: monotonicity g overline}), we conclude monotonicity of $\mu$ in exactly the same way as in the proof of \cite[Prop. 3.6]{gauged_theta_stratifications}.
\end{proof}

\begin{prop} \label{prop: hn boundedness}
The numerical invariant $\mu$ satisfies HN-boundedness (as in \Cref{defn: HN boundedness}).
\end{prop}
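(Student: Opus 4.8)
The plan is to mirror the proof of $\Theta$- and $S$-monotonicity (\Cref{prop: monotonicity general}) by first reducing to the group $\overline{G}$ and then running the same ``infinite dimensional GIT'' argument on the ind-projective ind-scheme of modifications. Since $G$ satisfies (Z) (\Cref{defn: central property}), \Cref{lemma: change of group for etale isogenies} shows that $\hodge_{G} \to \hodge_{\overline{G}}$ is quasifinite and proper, and by construction $\mu$ is the pullback of $\overline{\mu}$. HN-boundedness can be transferred along this quasifinite proper morphism exactly as monotonicity was transferred in the proof of \Cref{prop: monotonicity general}, so it suffices to establish HN-boundedness for $\overline{\mu}$ on $\hodge_{\overline{G}}$, in analogy with \Cref{lemma: monotonicity g overline}.

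For $\hodge_{\overline{G}}$ I would argue as follows. Fix a finite-type family $\xi \colon T \to \hodge_{\overline{G}}$ and a real threshold $c$; after passing to a suitable cover I may assume the underlying $\GL(\mathfrak{g}) \times G_{ab}$-bundle is the restriction of a fixed bundle $(\cE, \cH)$ as in \Cref{defn: affine grassmannian for log connections}. By \Cref{prop: filtrations of hodge} every filtration $f$ of a fiber $(E,\nabla)$ is a $\nabla$-compatible weighted parabolic reduction, hence a point of the ind-scheme $\mathrm{Gr}_{E,\nabla,D}$, and its numerical invariant $\overline{\mu}(f) = -\wt(\cD(\mathfrak{g}))(f)/\sqrt{b}$ agrees with the one computed on the underlying $\overline{G}$-bundle. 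Because $\cD(\mathfrak{g})$ is ample on each closed projective subscheme of $\mathrm{Gr}_{E,\nabla,D}$ (\Cref{lemma: affine grassmannian is ind-projective}), a uniform lower bound $\overline{\mu}(f) \geq c$ together with the norm $b$ confines the relevant filtrations to a quasi-compact union of such projective subschemes, exactly as in \cite[\S3]{gauged_theta_stratifications}. This is the mechanism that turns a numerical lower bound into a boundedness statement for the associated graded points.

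The remaining point is to organize the destabilizing filtrations so that this Grassmannian bound applies uniformly across the family. Here I would use \Cref{lemma: extension of admissible parabolic reduction is admissible}: compatibility with $\nabla$ is preserved under passing to a larger parabolic, so the poset of $\nabla$-compatible parabolic reductions is closed upwards and one may reduce to finitely many numerical types of destabilizing reductions, whose associated graded objects then form a bounded family in the Hodge stack of the corresponding Levi. Finally, since the forgetful morphism is affine of finite type (\Cref{prop: forget is affine and finite type}), boundedness of the underlying graded bundles upgrades to boundedness of the graded points equipped with their induced $t$-connections, yielding HN-boundedness in the sense of \cite[Defn. 2.29]{gauged_theta_stratifications}. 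I expect the main obstacle to be precisely the uniformity in these last two steps: translating the lower bound on $\overline{\mu}$ into a bound on which projective stratum of $\mathrm{Gr}_{E,\nabla,D}$ the filtrations occupy, and verifying that finitely many compatible parabolic types suffice over the whole family, rather than any single explicit computation.
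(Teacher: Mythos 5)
There is a genuine gap: the central mechanism you propose is the one the paper uses for \emph{monotonicity}, not for HN-boundedness, and it does not actually prove HN-boundedness. First, a filtration of $(E,\nabla)$ is \emph{not} a point of $\mathrm{Gr}_{E,\nabla,D}$: by \Cref{defn: affine grassmannian for log connections} that ind-scheme parametrizes modifications of the bundle along the divisor $D$ (tuples $(\cF,\psi,F,\widetilde{\nabla})$ with $\psi$ an isomorphism away from $D$), whereas filtrations are weighted parabolic reductions $(E_P,\lambda)$, i.e.\ maps from $\Theta_k$; these are different objects, so the ampleness of $\cD(\mathfrak{g})$ on the closed projective pieces of $\mathrm{Gr}_{E,\nabla,D}$ (\Cref{lemma: affine grassmannian is ind-projective}) gives you no handle on the set of filtrations with $\overline{\mu}\geq c$. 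Second, even setting that aside, a lower bound on a numerical invariant is not converted into quasi-compactness by ampleness alone; what is needed is the explicit formula for the weight of $\cD(\mathfrak{g})$ on a weighted parabolic reduction and a convex-optimization argument over the cones of the degeneration fan. This is exactly what the paper does: it uses \Cref{prop: filtrations of hodge} and \Cref{lemma: extension of admissible parabolic reduction is admissible} to write $|\DF(\hodge_G,x)|$ as a union of closed cones $\overline{\sigma}_{E_P}$ indexed by the $\nabla$-compatible parabolic reductions, identifies $\mu$ on each cone with the invariant $(\lambda,d_{E_P})_{\mathfrak{g}}/\|\lambda\|_b$ of \cite[\S4.1]{gauged_theta_stratifications} via \cite[1.F.c]{heinloth-hilbertmumford}, and then runs the optimization there to bound the degrees $d_{E_P}$ of the relevant reductions; boundedness of the underlying reductions in $\Bun_G(C)$ then suffices because $\hodge_G\to\Bun_G(C)$ is quasi-compact (\Cref{prop: forget is affine and finite type}). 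Your last two steps (upward closure of compatible parabolics, and upgrading boundedness along the affine forgetful morphism) do appear in the paper's argument, but the step you flag as the main obstacle — turning the lower bound on $\mu$ into boundedness — is precisely the part your outline does not supply a working mechanism for.

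A smaller point: the initial reduction to $\overline{G}$ is unnecessary and changes the hypotheses. The proposition is stated for any split reductive $G$ and the paper's proof works directly on $\hodge_G$; your reduction invokes property (Z) (\Cref{defn: central property}), which is assumed in \Cref{prop: monotonicity general} but not here. Moreover, transferring HN-boundedness along the quasifinite proper morphism $\hodge_G\to\hodge_{\overline{G}}$ is not ``exactly as'' the transfer of monotonicity: monotonicity transfers by a valuative-criterion/extension argument, while boundedness of associated graded points would require a separate argument matching filtrations and their graded objects across the isogeny. Since the direct argument is available, this detour only adds unverified steps.
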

\begin{proof}
This follows similarly as in the arguments of \cite[\S4.1]{gauged_theta_stratifications}. For the benefit of the reader, we elaborate on how to apply the proof in this setup. We will need some notation, see \cite{gauged_theta_stratifications} for more details on the following. Fix a Borel subgroup $B \supset T$ containing the maximal split torus $T$. We denote by $\Phi \subset X^*(T)$ and $\Phi^{+} \subset X^*(T)$ the subsets of roots and positive roots respectively. Let $\Delta = \{\alpha_i\}_{i \in I}$ denote the corresponding simple roots. We denote by $T'$ the intersection of $T$ with the derived subgroup $\cD(G) \subset G$. Let $P \supset B_k$ be a parabolic subgroup of $G_k$ containing the $B_k$. To such parabolic we can associate a subset $I_{P} \subset I$ consisting of those simple roots $\alpha_i$ such that the root subgroup $U_{-\alpha_i} \subset G_k$ is not contained in $P$. In fact, the subset $I_P \subset I$ uniquely determines $P$. Recall that a real cocharacter $\lambda \in X^*(T)_{\mathbb{R}}$ is called $P$-dominant if we have $\langle \lambda, \alpha_i \rangle >0$ for $i \in I_P$, and $\langle \lambda, \alpha_i \rangle =0$ for $i \notin I_P$.

For any geometric point $x = (E, \nabla) \in \hodge_{G}(k)$, we denote by $|\DF(\hodge_{G}, x)|$ the geometric realization of the degeneration fan of $x$, as defined in \cite[Defn. 3.2.2]{halpernleistner2018structure}. 
Similarly, we use the notation $|\DF(\Bun_{G}(C), E)|$ for the degeneration fan of $E$. 
A reduction of structure group $E_{P}$ to a parabolic $P \supset B_k$ yields a cone $\sigma_{E_{P}} \subset |\DF(\Bun_{G}(C), E)|$ consisting of all weighted parabolic reductions $(E_{P}, \lambda)$ with $\lambda$ a positive real combination of $P$-dominant cocharacters. The degeneration fan $|\DF(\Bun_{G}(C), E)|$ is a union of the cones $\sigma_{E_{P}}$, where $E_{P}$ runs over all parabolic reductions. The assignment $(E_{P}, \lambda) \mapsto \lambda$ defines an identification of $\sigma_{P}$ with the cone spanned by $P$-dominant cocharacters inside $X_*(T)_{\mathbb{R}}$. We denote by $\overline{\sigma}_{E_{P}}$ the closure of $\sigma_{E_{P}}$ inside $X_*(T)_{\mathbb{R}}$.
A point $\lambda \in \overline{\sigma}_{E_{P}} \subset X_*(T)_{\mathbb{R}}$ can be viewed as an element $(E_{P'}, \lambda) \in |\DF(\Bun_{G}(C), E)|$, where $E_{P'}$ is the extension of structure group of $E_{P}$ to the unique parabolic $P' \supset P$ such that $\lambda$ is $P'$-dominant. Therefore, we can view $\overline{\sigma}_{E_{P}} \subset |\DF(\Bun_{G}(C), E)|$. We view $|\DF(\hodge_{G}, x)| \hookrightarrow |\DF(\Bun_{G}(C), E)|$, and consider the intersection $\sigma_{E_{P},x}:= |\DF(\hodge_{G}, x)| \cap \sigma_{E_{P}}$. 
By \Cref{prop: filtrations of hodge}, a parabolic reduction $(E_{P}, \lambda)$ belongs to $|\DF(\hodge_{G}, x)|$ if and only if $E_{P}$ is compatible with $\nabla$. This is independent of $\lambda$, and so we have either $\sigma_{E_{P}, x} = \sigma_{E_{P}}$ or $\sigma_{E_{P},x} = \emptyset$. By \Cref{lemma: extension of admissible parabolic reduction is admissible}, if $\sigma_{E_{P}} \subset |\DF(\hodge_{G}, x)|$, then we have $\overline{\sigma}_{E_{P}} \subset |\DF(\hodge_{G}, x)|$. Therefore $|\DF(\hodge_{G}, x)|$ is a union of closures of cones $\overline{\sigma}_{E_{P}}$ for parabolic reductions $E_P$ compatible with $\nabla$. 
By the computation of the weight of $\cD(\mathfrak{g})$ in \cite[1.F.c]{heinloth-hilbertmumford}, the numerical invariant $\mu = - \wt(\cD(\mathfrak{g}))/\sqrt{b}$ agrees on each $\overline{\sigma}_{E_{P}}$ with the numerical invariant defined in \cite[\S4.1]{gauged_theta_stratifications} for the adjoint representation $V= \mathfrak{g}$ (notice that in \cite{gauged_theta_stratifications} the determinant line bundle involves a square-root $\sqrt{\Omega^1_{C/S}}$ which has no effect on the weight computation for the adjoint representation). Since $\hodge_{G} \to \Bun_{G}(C)$ is quasi-compact, in order to prove the sought-after boundedness of a subset of filtrations in $\hodge_{G}$, it is enough to show boundedness of the underlying parabolic reductions $E_{P}$ in $\Bun_{G}(C)$. We can now conclude HN boundedness by the same optimization argument in $\overline{\sigma}_{E_{P}} \subset |\DF(\hodge_G,x)|$ as in \cite[\S4.1]{gauged_theta_stratifications}. 
\end{proof}
\end{subsection}
\begin{subsection}{\texorpdfstring{$\Theta$}{Theta}-stratification and semistable reduction}\label{sec: semistable reduction}

\begin{prop} \label{prop: weak theta stratification}
Suppose that $G$ satisfies the central property (Z). Then the numerical invariant $\mu$ defines a well-ordered weak $\Theta$-stratification on $\hodge_{G}$.
\end{prop}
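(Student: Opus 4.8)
The plan is to deduce the statement as a formal application of the existence criterion \cite[Thm. B]{halpernleistner2018structure}, which asserts that a numerical invariant on a suitable algebraic stack that is strictly monotone and satisfies HN-boundedness cuts out a well-ordered weak $\Theta$-stratification. All the substantive hypotheses have already been verified in the preceding subsections, so the work here is to check that the inputs of that theorem are in place. First I would record the standing assumptions on the stack: by \Cref{prop: forget is affine and finite type}, $\hodge_{G}$ is an algebraic stack with affine diagonal that is locally of finite type over $\mathbb{A}^1_{S}$, which is exactly the kind of stack to which \cite[Thm. B]{halpernleistner2018structure} applies. Next, $\mu = -\wt(\cD(\mathfrak{g}))/\sqrt{b}$ is a numerical invariant in the required sense, since $b$ and $\cD(\mathfrak{g})$ are defined on graded points and $\mu$ is the associated real-valued function on the degeneration fans $|\DF(\hodge_{G}, x)|$. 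The two analytic hypotheses are then supplied verbatim by the earlier results: strict $\Theta$-monotonicity and strict $S$-monotonicity hold by \Cref{prop: monotonicity general} (this is precisely where the assumption (Z) is used), and HN-boundedness holds by \Cref{prop: hn boundedness}. Feeding these into \cite[Thm. B]{halpernleistner2018structure} yields that for every unstable geometric point $x$ the invariant $\mu$ attains a maximum on $|\DF(\hodge_{G}, x)|$ at an essentially unique filtration, and that the resulting locally closed substacks assemble into a weak $\Theta$-stratification.

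For the well-ordering of the index set, I would note that the strata are indexed by the values $\mu$ takes on the optimal destabilizing filtrations, with the minimal value $0$ indexing the semistable locus (by \Cref{prop: comparison classical and Theta semistability}). HN-boundedness (\Cref{prop: hn boundedness}) guarantees that, below any fixed bound, these filtrations form a bounded family whose underlying numerical data---the component $d \in \pi_1(G)$, the discrete type of the compatible parabolic reduction $E_{P}$, and the constrained weight $\lambda$---range over a finite set. Hence only finitely many values of $\mu$ occur below any bound, so the set of values, being a subset of $\mathbb{R}_{\geq 0}$ all of whose bounded-above subsets are finite, is well-ordered. This is exactly the well-ordering packaged into the conclusion of \cite[Thm. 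B]{halpernleistner2018structure}.

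Since the difficult monotonicity and boundedness statements have already been established, I do not expect a serious obstacle: the proof is a citation of \cite[Thm. B]{halpernleistner2018structure} together with the verification above. The only points demanding care are bookkeeping ones, namely confirming that the normalizations of $b$ and of $\wt(\cD(\mathfrak{g}))$ match the conventions of \cite{halpernleistner2018structure}, and making precise (via HN-boundedness) that the values of $\mu$ accumulate only at $+\infty$, which is what secures the well-ordering.
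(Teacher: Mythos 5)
The first half of your argument — feeding \Cref{prop: monotonicity general} and \Cref{prop: hn boundedness} into \cite[Thm. B(1)]{halpernleistner2018structure} to get the weak $\Theta$-stratification — is exactly what the paper does. The gap is in your treatment of well-ordering. You claim that HN-boundedness implies that the canonical filtrations with $\mu$ below any fixed bound "form a bounded family whose underlying numerical data range over a finite set," so that only finitely many values of $\mu$ occur below any bound. HN-boundedness does not give this: it is a statement relative to a fixed quasi-compact family of points (for each map from a finite-type affine scheme into the stack, the relevant destabilizing filtrations lie in a quasi-compact substack of $\Filt$), whereas $\hodge_{G}$ is only locally of finite type over $\mathbb{A}^1_S$ and has infinitely many connected components indexed by $\pi_1(G)$. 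The locus of points whose canonical filtration has $\mu \leq M$ is not a priori bounded, and the degrees $d_{E_P}$ of the canonical parabolic reductions genuinely range over an infinite set, so no finiteness of the value set below a bound can be extracted this way. (Your parenthetical that well-ordering is "packaged into the conclusion of Thm. B" is also not how the paper reads that theorem; the existence statement alone is what is cited, and well-ordering is argued separately.)

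What the paper actually proves is discreteness, not finiteness-below-a-bound via boundedness of families: using the weight computation of \cite[1.F.c]{heinloth-hilbertmumford} and the optimization in \cite[\S4.1]{gauged_theta_stratifications}, the maximal value satisfies $\mu(f_{can})^2 = \|\phi(d_{E_P})\|_b^2$, where $\phi: X_*(T)_{\mathbb{Q}} \to X_*(T)_{\mathbb{Q}}$ is the rational "adjoint" map determined by $(\phi(v),w)_b = (v,w)_{\mathfrak{g}}$ and $d_{E_P}$ lies in a fixed lattice $L$ (since it pairs integrally with all roots and characters). Hence $\mu(f_{can})^2 \in \|\phi(L)\|_b^2 \subset \tfrac{1}{N}\mathbb{Z}$ for some $N$ depending only on $G$ and $b$, which gives the discreteness of the value set and therefore the well-ordering. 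To repair your proof you would need to supply this (or an equivalent) lattice argument; the appeal to HN-boundedness cannot replace it.
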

\begin{proof}
Proposition \ref{prop: monotonicity general} and Proposition \ref{prop: hn boundedness} verify the hypotheses of \Cref{thm: theta stability paper theorem}, which implies the existence of the weak $\Theta$-stratification. The open strata are indexed by the value of the numerical invariant $\mu(f_{can})$ at a Harder-Narasimhan maximizing filtration $f_{can}$, and hence they are indexed by nonnegative real numbers. 
For the well-ordering property, we only need to show that such maximal values $\mu(f_{can})$ lie inside a discrete subset of $\mathbb{R}$, or equivalently show that the squares $\mu(f_{can})^2$ lie in a discrete subset of $\mathbb{R}$. 
Using the notation of \cite[\S4.1]{gauged_theta_stratifications}, Riemann-Roch implies that the numerical invariant for a weighted parabolic reduction $(E_{P}, \lambda)$ is given by $\mu = -\wt(\cD(\mathfrak{g}))/\sqrt{b}= (\lambda, d_{E_{P}})_{\mathfrak{g}} / \| \lambda\|_b$, where $(-,-)_{\mathfrak{g}}$ is the trace form associated to the adjoint representation $\mathfrak{g}$ and $d_{E_{P}}$ is the degree of the $P$-bundle $E_{P}$ as in \Cref{defn: deg of par} below. 
By the explicit optimization arguments in \cite[\S4.1]{gauged_theta_stratifications}, 
we have $\lambda=\phi(d_{E_{P}})$ up to scaling, 
where $\phi: X_*(T)_{\mathbb{R}} \to X_*(T)_{\mathbb{R}}$ is the ``adjoint" morphism determined by the equation $(\phi(v), w)_{b} = (v, w)_{\mathfrak{g}}$ for any $v,w \in X_*(T)_{\mathbb{R}}$. 
This shows that $\mu(f_{can})^2 = \|\phi(d_{E_{P}})\|_{b}^2$. 
Notice that $\phi$ is actually a matrix with rational entries $\phi: X_*(T)_{\mathbb{Q}} \to X_*(T)_{\mathbb{Q}}$. Also, there is a lattice $L \subset X_*(T)_{\mathbb{Q}}$ containing every element of the form $d_{E_{P}}$, because all such $d_{E_{P}}$ pair integrally with every root and every character of $G$. Hence, we have $\|\phi(d_{E_{P}})\|_b^2 \subset \|\phi(L)\|_{b}^2$. Now $\phi(L)$ is a lattice in $X_*(T)_{\mathbb{Q}}$ and so we have $\|\Phi(L)\|_b^2 \subset \frac{1}{N}{ \mathbb{Z}}$ for $N$ depending only on $G$ and $b$, thus concluding the desired discreteness.
\end{proof}

By \Cref{prop: weak theta stratification}, for any algebraically closed field $k$ over $S$ and any $k$-point $(E, \nabla) \in \hodge_G(k)$, there is a canonical maximally destabilizing filtration $(E_P, \lambda)$, where the $\nabla$-compatible parabolic reduction $E_P$ is unique, and the $P$-dominant cocharacter $\lambda$ is unique up to scaling. We call $E_P$ the canonical parabolic reduction.

\begin{defn}[Degree of a parabolic bundle]
\label{defn: deg of par}
Let $k$ be an algebraically closed field over $S$. Let $P \subset G_k$ be a parabolic subgroup, and fix a $P$-bundle $E_P$ on $C_k$. Let $L$ denote the Levi quotient of $P$. We define $d_{E_P}: X^*(P) \to \mathbb{Z}$ by sending any character $\chi: P_{\lambda} \to \mathbb{G}_m$ to $d_{E_P}(\chi) := \text{deg}(E_P(\chi))$. Using the natural identification $X^*(P)_{\mathbb{Q}} \cong X^*(Z_{L}^{\circ})_{\mathbb{Q}}$, we view $d_{E_{P}}$ a rational cocharacter in $X_*(Z_{L}^{\circ})_{\mathbb{Q}}$. 
\end{defn}

\begin{prop} \label{prop: description of canonical parabolic reduction}
    Let $k$ be an algebraically closed field over $S$, and let $(E, \nabla)$ be point in $\hodge_G(k)$. Let $E_P$ denote the canonical $\nabla$-compatible parabolic reduction of $E$. Then the following conditions are satisfied:
    \begin{enumerate}[(1)]
        \item The degree $d_{E_P} \in X_*(Z_{L}^{\circ})_{\mathbb{Q}}$ is $P$-dominant.
        \item Let $L$ denote the Levi quotient of $P$. Then the associated $L$-bundle with meromorphic $t$-connection $(E_L, \nabla_L)$ under the quotient morphism $P \twoheadrightarrow L$ is semistable.
    \end{enumerate}
\end{prop}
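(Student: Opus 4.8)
The plan is to extract both statements from the explicit shape of the numerical invariant $\mu$ on the cones $\overline{\sigma}_{E_P}$ established in the proof of \Cref{prop: weak theta stratification}, combined with the general structure theory of $\Theta$-stratifications in \cite{halpernleistner2018structure}. Recall from that proof that the canonical reduction arises from the weighted parabolic reduction $(E_P,\lambda)$ maximizing $\mu$, that $\lambda$ is $P$-dominant, and that up to a positive scalar $c$ one has $\lambda = c\,\phi(d_{E_P})$, where $\phi\colon X_*(T)_{\mathbb{R}}\to X_*(T)_{\mathbb{R}}$ is determined by $(\phi(v),w)_b = (v,w)_{\mathfrak{g}}$.

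For part (1), I would first identify $\phi$. Since $b$ was chosen (in \Cref{consruction: instability reduction}) to extend $\mathrm{tr}_{\mathfrak{g}}$ along the orthogonal decomposition $X_*(T)_{\mathbb{R}} = X_*(T')_{\mathbb{R}}\oplus X_*(Z_G^\circ)_{\mathbb{R}}$, and since $Z_G^\circ$ acts trivially in the adjoint representation, a direct check shows that $\phi$ is the identity on $X_*(T')_{\mathbb{R}}$ and vanishes on $X_*(Z_G^\circ)_{\mathbb{R}}$; that is, $\phi$ is the projection onto the derived part. Therefore $d_{E_P}$ and $c^{-1}\lambda$ differ by a cocharacter in $X_*(Z_G^\circ)_{\mathbb{Q}}$, which pairs to zero with every root. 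Hence $\langle d_{E_P},\alpha_i\rangle = c^{-1}\langle\lambda,\alpha_i\rangle$ for all simple roots $\alpha_i$, and since $\lambda$ is $P$-dominant (so these pairings are positive exactly for $i\in I_P$ and zero otherwise), the same sign pattern holds for $d_{E_P}$; this is precisely $P$-dominance of $d_{E_P}$.

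For part (2), the associated graded of the canonical filtration is $f(0)=\lim_{t\to 0}\lambda(t)\cdot(E,\nabla)$. Tracing through the Rees description of filtrations in \Cref{prop: filtrations of hodge} together with the quotient $P\twoheadrightarrow L$, this limit is exactly the pair $(E_L,\nabla_L)$, now regarded as a point of the stack of graded points $\mathrm{Grad}_{\lambda}(\hodge_G)\cong\hodge_L$. By \cite[Thm. B]{halpernleistner2018structure}, the center of each stratum — equivalently, the associated graded of the canonical filtration — is semistable with respect to the numerical invariant induced by $\mu$ on $\mathrm{Grad}_{\lambda}(\hodge_G)$. Thus $(E_L,\nabla_L)$ is semistable for this induced invariant, which is built from $\cD(\mathfrak{g})$, i.e. from the form $\mathrm{tr}_{\mathfrak{g}}$.

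The main obstacle is to identify this induced semistability with classical semistability of $(E_L,\nabla_L)$ in the sense of \Cref{defn: classical semistability} applied to $L$, the subtlety being that the induced invariant is computed from $\mathrm{tr}_{\mathfrak{g}}$ rather than from the intrinsic degrees of $L$. To do this I would reduce to testing maximal parabolics $Q_L\subset L$, which generate the rays of the degeneration fan, so that semistability is detected by the single fundamental weight $\omega_j$ with $I_{Q_L}=\{j\}$. For such $Q_L$ the relevant cocharacters lie in a one-dimensional space modulo $X_*(Z_L^\circ)$; choosing the test cocharacter $\nu$ in the derived part $X_*(T'_L)_{\mathbb{Q}}$, with $T'_L:=T\cap\cD(L)$, makes it $b$-orthogonal to $\lambda\in X_*(Z_L^\circ)$, so the character twist $\psi_\lambda$ entering the center's polarization (\Cref{consruction: instability reduction}) evaluates trivially on $\nu$. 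A short computation then shows that $\mu(\nu)$ is a positive multiple of $\deg\big((E_L)_{Q_L}(\omega_j)\big)$: the central part of $d_{(E_L)_{Q_L}}$ drops out because a $W_L$-invariant character annihilates the $L$-coroots, and the surviving term is proportional to $(\nu,\nu)_{\mathfrak{g}}>0$ by positive-definiteness of the trace form on the derived directions. Hence the induced semistable locus and the classical one coincide, and $(E_L,\nabla_L)$ is semistable. Carrying out this sign comparison, and checking that the reduction to maximal parabolics is legitimate for the induced invariant, is where the real work lies.
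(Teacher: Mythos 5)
The paper's own proof is a short deferral to the argument of \cite[Prop.~10.11]{rho-sheaves-paper}: part (1) is the convexity/maximality argument for the optimizing cocharacter, and part (2) is proved by lifting a putative destabilizing reduction of $(E_L,\nabla_L)$ along $q\colon P\twoheadrightarrow L$ to a $\nabla$-compatible reduction of $E$ refining $E_P$, and contradicting maximality of the canonical filtration. Your route is genuinely different --- you try to read both statements off the formula $\mu=(\lambda,d_{E_P})_{\mathfrak g}/\|\lambda\|_b$ and the abstract structure theory of $\Theta$-stratifications --- but as written it contains one error and one substantive gap.

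For (1): your identification of $\phi$ with the orthogonal projection onto $X_*(T')_{\mathbb R}$ conflates two different norms. The norm chosen in \Cref{consruction: instability reduction} does extend $\mathrm{tr}_{\mathfrak g}$, but the $b$ entering the numerical invariant $\mu$ of the $\Theta$-stratification section is an \emph{arbitrary} $W$-invariant rational quadratic norm fixed at the start of that section; there is no reason for $b|_{X_*(T')_{\mathbb R}}$ to equal $\mathrm{tr}_{\mathfrak g}$. The conclusion you need survives: decomposing $X_*(T')_{\mathbb R}$ into irreducible $W$-subrepresentations (one per simple factor), Schur's lemma forces $\phi$ to act by a positive scalar on each, and $\phi$ kills $X_*(Z_G^\circ)_{\mathbb R}$, so $\langle\phi(d_{E_P}),\alpha_i\rangle$ and $\langle d_{E_P},\alpha_i\rangle$ have the same sign for every simple root. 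Note also that the relation $\lambda\propto\phi(d_{E_P})$ with $\lambda$ strictly $P$-dominant is itself imported from the optimization in \cite[\S4.1]{gauged_theta_stratifications}, so you are re-deriving (1) from that citation rather than proving it independently.

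For (2) there are two problems. First, the assertion that the associated graded of the canonical filtration is semistable for an induced invariant on $\mathrm{Grad}_{\lambda}(\hodge_G)$ is not literally \cite[Thm.~B]{halpernleistner2018structure}; it is essentially equivalent to the statement being proved and needs its own argument --- in the paper's (cited) approach this is exactly the ``refine and contradict maximality'' step. Second, and more seriously, identifying the induced semistable locus on $\hodge_L$ with classical semistability of $L$-bundles is the entire content of the claim, and your sketch does not establish it: the induced degree pairing is built from $\mathrm{tr}_{\mathfrak g}|_L=\mathrm{tr}_{\mathfrak l}+2\,\mathrm{tr}_{\mathfrak u}$, and the term $(\nu,d_{(E_L)_{Q_L}})_{\mathfrak u}$ does not vanish for a general $Q_L$-reduction even when $\nu\in X_*(T'_L)_{\mathbb Q}$, so the claimed ``short computation'' producing a positive multiple of $\deg((E_L)_{Q_L}(\omega_j))$ is unjustified. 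You acknowledge this is where the work lies, but it means the proposal does not yet prove (2). The paper's argument sidesteps the comparison entirely by never leaving $G$: a destabilizing, $\nabla_L$-compatible $Q_L$-reduction of $E_L$ pulls back to a $\nabla$-compatible reduction of $E$ to $q^{-1}(Q_L)\subset P$, and the contradiction is computed with the single invariant $\mu$ on $\hodge_G$.
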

\begin{proof}
    This follows from a similar argument as in the proofs of (i) and (ii) in the proof of \cite[Prop. 6.11]{rho-sheaves-paper}. The proof of (i) (in our case (1)) goes through in the same way.
    For the proof of (ii) (in our case (2)) one just needs to observe that the lifted parabolic reduction under $q: P \to \overline{P}$ is $\nabla$-compatible.
\end{proof}

\begin{prop}[Behrend's conjecture] \label{prop: behrends conjecture}
Suppose that $G$ satisfies the low height property (LH) (see \Cref{defn: low height property}). Let $k$ be an algebraically closed field, and let $(E, \nabla) \in \hodge_{G}(k)$ be a meromorphic $t$-connection. Let $(E_{P}, \lambda)$ be the canonical HN weighted parabolic HN reduction provided by Proposition \ref{prop: weak theta stratification}. Then the set of first order deformations $\text{Def}_{\nabla}(E_{P})$ (Definition \ref{defn: deformations of parabolic reductions}) consists of only one element.
\end{prop}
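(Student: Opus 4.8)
The plan is to start from \Cref{prop: parabolic reductions deformations}, which identifies $\text{Def}_{\nabla}(E_P)$ with the kernel of $H^0(\Ad(\nabla))$ on $\mathcal{V} := \Ad(E)/\Ad(E_P)$, and to show that this kernel vanishes. Writing $\overline{\nabla} \colon \mathcal{V} \to \mathcal{V} \otimes \Omega^1_{C_k/k}(D)$ for the induced meromorphic $t$-connection, an element of the kernel is precisely a horizontal global section of $(\mathcal{V}, \overline{\nabla})$, equivalently (compare \Cref{example: semistability of meromorphic conenctions on vector bundles}) a nonzero morphism of meromorphic $t$-connections from the trivial object $(\mathcal{O}_{C_k}, t\,d)$ into $(\mathcal{V}, \overline{\nabla})$. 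So the goal is to rule out such morphisms.

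Next I would exploit the fact that $\mathcal{V} = E_P(\mathfrak{g}/\mathfrak{p})$. Identifying $\mathfrak{g}/\mathfrak{p}$ with $\mathfrak{u}^{-}$ as an $L$-module, the decomposition into $Z_L^{\circ}$-weight spaces gives a filtration of $\mathfrak{g}/\mathfrak{p}$ by $P$-submodules, since the unipotent radical $U$ raises $Z_L^{\circ}$-weights and hence preserves each ``weight $\geq j$'' subspace. Because $E_P$ is $\nabla$-compatible, the reduction carries a $t$-connection functorially inducing $\overline{\nabla}$ on every associated bundle; as the filtration is by $P$-submodules, $\overline{\nabla}$ preserves the resulting filtration $0 \subset \mathcal{W}_1 \subset \cdots \subset \mathcal{W}_r = \mathcal{V}$, with graded pieces the associated meromorphic $t$-connections $(E_L(\mathfrak{u}^{-}_{j}), \nabla_L(\mathfrak{u}^{-}_{j}))$ built from the Levi bundle $(E_L, \nabla_L)$ via the single-weight $L$-representations $\mathfrak{u}^{-}_{j}$. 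A nonzero horizontal section of $(\mathcal{V}, \overline{\nabla})$ projects to a nonzero horizontal section of some graded piece, so it suffices to treat each graded piece separately.

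For the graded pieces I would combine a semistability statement with a slope computation. Since $G$ satisfies (LH), the Levi $L$ inherits the low-height property for the subquotients $\mathfrak{u}^{-}_{j}$ of $\mathfrak{g}|_L$, and $Z_L^{\circ}$ acts on $\mathfrak{u}^{-}_{j}$ by the single character $j$, hence by scalars; therefore \Cref{prop: semistability of low height representatons} applies and, using that $(E_L, \nabla_L)$ is semistable by \Cref{prop: description of canonical parabolic reduction}(2), shows that $(E_L(\mathfrak{u}^{-}_{j}), \nabla_L(\mathfrak{u}^{-}_{j}))$ is a semistable meromorphic $t$-connection. Its slope equals $\langle j, d_{E_P}\rangle$ by \Cref{defn: deg of par}, because only the $Z_L^{\circ}$-part of $\det \mathfrak{u}^{-}_{j}$ pairs with $d_{E_P} \in X_*(Z_L^{\circ})_{\mathbb{Q}}$; and since $d_{E_P}$ is strictly $P$-dominant (the regularity of the Harder--Narasimhan cocharacter from \Cref{prop: weak theta stratification} upgrades \Cref{prop: description of canonical parabolic reduction}(1)) while $j$ is a weight of $\mathfrak{u}^{-}$, this slope is strictly negative. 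A nonzero horizontal section $s$ then gives an injection $\mathcal{O}_{C_k} \hookrightarrow E_L(\mathfrak{u}^{-}_{j})$ whose saturation $\mathcal{L}$ is a $\overline{\nabla}$-stable line subbundle with $\deg \mathcal{L} \geq 0$, contradicting semistability, which forces every sub-$t$-connection to have slope at most the negative slope of the whole. Hence no graded piece, and therefore $\mathcal{V}$, admits a nonzero horizontal section, so $\text{Def}_{\nabla}(E_P)$ is a single point.

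The main obstacle is the pair of structural claims underlying the second step: that $\overline{\nabla}$ genuinely preserves the $Z_L^{\circ}$-weight filtration, and that the induced connection on each $\text{gr}_j$ is exactly the connection associated to $(E_L, \nabla_L)$ by the representation $\mathfrak{u}^{-}_{j}$. Establishing these cleanly requires tracking the $\nabla$-compatibility of $E_P$ through the associated-bundle construction, after which one must also verify that the low-height property for $G$ descends to the relevant Levi representations so that \Cref{prop: semistability of low height representatons} is applicable. The slope estimate and the final semistability contradiction are then routine.
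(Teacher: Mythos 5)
Your proposal is correct and follows the same overall strategy as the paper: identify $\text{Def}_{\nabla}(E_P)$ with horizontal sections of $(\Ad(E)/\Ad(E_P),\overline{\nabla})$ via \Cref{prop: parabolic reductions deformations}, filter $\mathfrak{g}/\mathfrak{p}$ by $P$-submodules with the unipotent radical acting trivially on graded pieces, transfer semistability from $(E_L,\nabla_L)$ to the associated bundles of the graded pieces via \Cref{prop: semistability of low height representatons}, and conclude from the strictly negative degree (forced by $P$-dominance of $d_{E_P}$) that no nonzero map from $(\cO_{C_k}, t\,d)$ exists.

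The one place where you genuinely diverge is in verifying the centrality hypothesis of \Cref{prop: semistability of low height representatons}. The paper takes the Biswas--Holla filtration, then invokes Serre's semisimplicity theorem for low-height representations to split each graded piece into simple $L$-modules and applies Schur's lemma to see that $Z_L$ acts by scalars on each simple factor. You instead refine the filtration so that each graded piece is a single $Z_L^{\circ}$-weight space $\mathfrak{u}^{-}_{j}$, on which $Z_L^{\circ}$ tautologically acts by the scalar character $j$; this makes the hypothesis of \Cref{prop: semistability of low height representatons} immediate and avoids the semisimplicity input entirely. That is a legitimate simplification (the weight spaces are $L$-subrepresentations because $Z_L^{\circ}$ is central in $L$, and the filtration is $P$-stable once ordered by pairing against a $P$-dominant cocharacter). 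Two small remarks: your parenthetical about needing ``strict'' $P$-dominance is unnecessary, since the paper's definition of $P$-dominant already requires strictly positive pairing with the weights of $\mathfrak{u}$, so $\langle j, d_{E_P}\rangle<0$ for every $Z_L^{\circ}$-weight $j$ of $\mathfrak{u}^{-}$; and the structural claims you flag as the ``main obstacle'' (that $\overline{\nabla}$ preserves the associated-bundle filtration and induces the associated connection on the graded pieces) are exactly the functoriality of the associated-connection construction for $P$-stable subrepresentations, which the paper uses without further comment.
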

\begin{proof}
We denote by $t$ the image of $(E,\nabla)$ under $\hodge_{G} \to \mathbb{A}^1_S$. Let $\mathfrak{p}$ denote the Lie algebra of the parabolic subgroup $P \subset G_k$. The adjoint representation induces an action of $P$ on $\mathfrak{g}_k/\mathfrak{p}$ via a homomorphism $\rho: P \to \GL(\mathfrak{g}_k/\mathfrak{p})$. 
The associated vector bundle $\rho_*(E_P):= \Ad(E)/\Ad(E_P)$ is equipped with the meromorphic $t$-connection induced by $\Ad(\nabla)$. 
By \Cref{prop: parabolic reductions deformations}, deformations of the compatible parabolic reduction $E_P$ are in bijection with sections $s: \cO_{C_k} \to \Ad(E)/\Ad(E_P)$ such that $\Ad(\nabla)(s) =0$. 
Such flat sections correspond to homomorphisms of vector bundles with meromorphic $t$-connection $s: \cO_{C_k}\to \Ad(E)/\Ad(E_P)$, where $\cO_{C_k}$ is equipped with the standard meromorphic $t$-connection $t \cdot  d$ induced by the exterior differential $d$. To conclude the proposition, we shall show that any such $s$ is $0$. As in the proofs of \cite[Thm. 4.1, Prop. 6.10]{biswas-holla-hnreduction}, there is a filtration of $\mathfrak{g}_k/\mathfrak{p}$ by $P$-subrepresentations
\[ 0 \subset V_1 \subset V_2 \subset V_3, \subset \ldots \subset V_{n-1} \subset V_n = \mathfrak{g}_k/\mathfrak{p}\]
such that for each associated graded piece $V_i/V_{i+1}$ the unipotent radical $U\subset P$ acts trivially. Taking the associated bundle construction for the $P$-parabolic reduction $(E_P, \nabla_P)$, we obtain a filtration of $\Ad(E)/\Ad(E_P)$ by subbundles with meromorphic $t$-connections
\[ 0 \subset E_P(V_1) \subset E_P(V_2) \subset E_P(V_3) \subset \ldots \subset E_P(V_{n-1}) \subset E_P(V_n) = \Ad(E)/\Ad(E_P)\]
For any index $i$, the associated graded bundle $E_P(V_i/V_{i-1})$ is equipped with an induced meromorphic $t$-connection $\nabla_i$. It suffices to show that for each $i$ there are no nonzero homomorphisms $(\cO_{C_k}, t \cdot d)\to (E_P(V_i/V_{i-1}), \nabla_i)$ compatible with the connections. Since the unipotent radical acts trivially, the representation $P \to \GL(V_i/V_{i-1})$ factors through the Levi quotient $L$. In particular, we can think of $E_P(V_i/V_{i-1})= E_L(V_i/V_{i-1})$ as bundles associated to the corresponding Levi bundle $(E_L, \nabla_L)$ equipped with the $t$-meromorphic connection $\nabla_L$. The height of $V_i/V_{i-1}$ viewed as a $L$-representation is smaller than the characteristic of $k$ whenever $\text{char}(k) \neq 0$, because it is a subquotient of the low height representation $\mathfrak{g}$. Therefore by \cite[Lect.4, Thm.6]{serre20031998}, $V_i/V_{i-1}$ is a semisimple representation of $L$, and after breaking $V_i/V_{i-1}$ into a direct sum of simple representations we may assume that the associated graded pieces $V_i/V_{i-1}$ are simple. By Schur's lemma, this means that $L \to \GL(V_i/V_{i-1})$ maps the center of $L$ into the center of $\GL(V_i/V_{i-1})$. Since $(E_L, \nabla_L)$ is semistable (\Cref{prop: description of canonical parabolic reduction}), it follows from \Cref{prop: semistability of low height representatons} that $(E_L(V_i/V_{i-1}), \nabla_i)$ is semistable. On the other hand, we have that the degree coweight $d_{E_P}$ is $P$-dominant (\Cref{prop: description of canonical parabolic reduction}), which means that it acts with negative weights on $\mathfrak{g}/\mathfrak{p}$. It follows that the degree of the vector bundle $E_L(V_i/V_{i-1}) = E_P(V_i, V_{i-1})$ is strictly negative. By a standard argument (cf. \cite[Lem. 1.3.3]{huybrechts.lehn}), there are no nonzero homomorphisms from the degree 0 semistable $(\cO_{C_k}, t\cdot d)$ into the negative degree semistable $(E_L(V_i/V_{i-1}), \nabla_i)$, as desired.
\end{proof}

\begin{remark}
    The low height property (LH) in our assumptions in \Cref{prop: behrends conjecture} amounts to a lower bound on the characterisitics of the ground fields depending on the type of the reductive group. Heinloth \cite{heinloth-behrend-conjecture} obtained better bounds on the characteristic of the ground field in order for Behrend's conjecture to hold for the Harder-Narasimhan reductions of $G$-bundles (without the data of a $t$-connection). We note that it is possible to modify the argument found in \cite[\S2]{heinloth-behrend-conjecture} for classical groups and use the analysis of the heights of restrictions of the adjoint representation to Levi subgroups performed in \cite[\S3]{heinloth-behrend-conjecture} in order to obtain the same bounds as in \cite[Thm. 1]{heinloth-behrend-conjecture} for the moduli of $G$-bundles with $t$-connections. We have preferred to include a more uniform proof with worse bounds in our \Cref{prop: behrends conjecture}.
\end{remark}

As a consequence of the arguments in the proof of \Cref{prop: behrends conjecture}, we obtain the converse of \Cref{prop: description of canonical parabolic reduction}. 
\begin{prop} \label{prop: description of canonical parabolic reduction strong}
    Suppose that $G$ satisfies the low height property (LH). Let $k$ be an algebraically closed field over $S$, and let $(E, \nabla)$ be point in $\hodge_G(k)$. A $\nabla$-compatible reduction $E_P$ to a parabolic subgroup $P \subset G_k$ is the canonical parabolic reduction if and only if both of the following conditions are satisfied
    \begin{enumerate}[(1)]
        \item The degree $d_{E_P} \in X_*(Z_{L}^{\circ})_{\mathbb{Q}}$ is $P$-dominant.
        \item The associated $L$-bundle with meromorphic $t$-connection $(E_L, \nabla_L)$ under the quotient morphism $P \twoheadrightarrow L$ is semistable.
    \end{enumerate}
\end{prop}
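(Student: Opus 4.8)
The plan is to deduce this recognition criterion from the uniqueness of the canonical reduction established in \Cref{prop: weak theta stratification}. By that proposition there is a unique canonical $\nabla$-compatible reduction $E_{P_{\mathrm{can}}}$, and by \Cref{prop: description of canonical parabolic reduction} it already satisfies conditions (1) and (2); so the forward implication is done, and it suffices to prove the following uniqueness statement: any $\nabla$-compatible reduction $E_P$ satisfying (1) and (2) must coincide with $E_{P_{\mathrm{can}}}$. I would prove this by recognizing the filtration of the adjoint bundle $(\Ad(E), \Ad(\nabla))$ induced by such a reduction as the Harder--Narasimhan filtration in the abelian category of meromorphic $t$-connections on vector bundles (\Cref{example: semistability of meromorphic conenctions on vector bundles}), which is unique; then both $E_P$ and $E_{P_{\mathrm{can}}}$ induce the same filtration and must agree.

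First I would attach to $E_P$ the $P$-dominant cocharacter $\lambda$ provided by condition (1) (taking $\lambda = \phi(d_{E_P})$ up to scaling, in the notation of the proof of \Cref{prop: weak theta stratification}). The weight decomposition of $\mathfrak{g}_k$ under $\lambda$ induces, via the associated bundle construction, a filtration of $(\Ad(E), \Ad(\nabla))$ by $\nabla$-stable subbundles whose graded pieces are the Levi bundles $E_L(\mathfrak{g}_i)$, where $\mathfrak{g}_i \subset \mathfrak{g}_k$ is the $\lambda$-weight-$i$ subspace and $\mathfrak{g}_0 = \mathfrak{l}$. Exactly as in the proof of \Cref{prop: behrends conjecture}, the low height property guarantees (via \cite[Lect.4, Thm.6]{serre20031998}) that each $\mathfrak{g}_i$ is a semisimple low-height representation of $L$, so by \Cref{prop: semistability of low height representatons} and the semistability of $(E_L, \nabla_L)$ from condition (2) each graded piece $(E_L(\mathfrak{g}_i), \nabla_i)$ is a semistable object. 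A degree computation via the trace form, together with the dominance of $d_{E_P}$ in condition (1), shows that $\deg E_L(\mathfrak{l}) = 0$ (the sum of the roots of $L$ vanishes) and that the slopes of the graded pieces are strictly decreasing along the filtration, the positive-weight pieces having strictly positive slope and the negative-weight pieces strictly negative slope, precisely matching the optimization in \cite[\S4.1]{gauged_theta_stratifications}. Therefore this filtration is the Harder--Narasimhan filtration of $(\Ad(E), \Ad(\nabla))$.

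Since the Harder--Narasimhan filtration in this abelian category is unique, and the canonical reduction $E_{P_{\mathrm{can}}}$ induces it as well by the identical argument, the two filtrations of $(\Ad(E), \Ad(\nabla))$ coincide. It then remains to recover the parabolic reduction itself from this intrinsic filtration: the subbundle $\Ad(E_P) = \Ad(E)_{\geq 0}$ is exactly the sum of the Harder--Narasimhan pieces of slope $\geq 0$ (using that the weight-$0$ piece has slope $0$ while the sign of the slope matches the sign of the weight for the other pieces), hence depends only on $(\Ad(E), \Ad(\nabla))$. As $P$ is recovered as the stabilizer of the parabolic subalgebra $\mathfrak{p} = \mathfrak{g}_{\geq 0}$, the subbundle $\Ad(E_P)$ determines both $P$ and the reduction $E_P$, forcing $E_P = E_{P_{\mathrm{can}}}$.

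The main obstacle I anticipate is the degree/slope computation of the previous paragraph: one must check that dominance of $d_{E_P}$ (condition (1)) yields the strict decrease of slopes needed to identify the weight filtration with the Harder--Narasimhan filtration, and in particular that the slope-$\geq 0$ part picks out exactly the non-negative weights, so that the reduction can be reconstructed unambiguously. This is where condition (1) is essential, and where the explicit form of the numerical invariant from \cite[\S4.1]{gauged_theta_stratifications}, together with $\deg E_L(\mathfrak{l}) = 0$, must be used carefully; by contrast, the semistability of the graded pieces, handled exactly as in \Cref{prop: behrends conjecture}, is comparatively routine.
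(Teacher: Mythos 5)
Your reduction to uniqueness is exactly the paper's: by \Cref{prop: description of canonical parabolic reduction} the canonical reduction satisfies (1) and (2), so one only has to show there is a unique $\nabla$-compatible reduction with these properties. From there the paper simply runs the argument of Biswas--Holla (Thm.\ 4.1 of their paper) on the filtration of $E_P(\mathfrak{g}_k/\mathfrak{p})$ built in the proof of \Cref{prop: behrends conjecture}; that argument is in essence a Hom-vanishing statement (no nonzero maps of meromorphic $t$-connections from the nonnegative-slope semistable pieces filtering $\Ad(E_{P'})$ to the negative-slope semistable pieces filtering $\Ad(E)/\Ad(E_P)$), giving $\Ad(E_{P'})\subset\Ad(E_P)$ and, by symmetry, equality. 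Your route --- exhibiting the full Harder--Narasimhan filtration of $(\Ad(E),\Ad(\nabla))$ and invoking its uniqueness --- proves more than is strictly needed but is a legitimate and more self-contained alternative resting on the same inputs (\Cref{prop: semistability of low height representatons}, the Serre semisimplicity argument, and the signs of the degrees of the weight pieces).

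There is, however, one concrete misstep. You filter $\mathfrak{g}_k$ by the weights of $\lambda=\phi(d_{E_P})$. Since both $b$ and the trace form are Weyl-invariant, $\phi$ acts by a positive scalar on the coroot space of each simple factor of $G$, but these scalars differ in general (they depend on the chosen norm $b$); hence when $G$ has several simple factors a single $\phi(d_{E_P})$-weight space can contain root spaces $\mathfrak{g}_\alpha$ with different values of $\langle d_{E_P},\alpha\rangle$. The corresponding graded bundle is then a direct sum of semistables of different slopes, so it is not semistable and the claim that the slopes decrease strictly along the $\phi(d_{E_P})$-weight filtration fails; the weight filtration need not be the HN filtration. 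The repair is to filter by the weights of the rational cocharacter $d_{E_P}$ itself: $P$-dominance gives $\mathfrak{g}_{\geq 0}=\mathfrak{p}$ and makes each $\mathfrak{g}_{\geq w}$ a $P$-submodule, and the weight-$w$ graded piece has slope exactly $w$, so the filtration is the HN filtration as you intend. Finally, your last step --- recovering $E_P$ from the subbundle $\Ad(E_P)\subset\Ad(E)$ --- uses that $P$ coincides with the normalizer of $\mathfrak{p}$ in $G_k$, so that parabolic reductions correspond to parabolic subalgebra bundles; this holds under (LH) but is precisely where ``same subalgebra bundle'' becomes ``same reduction'' and should be justified rather than asserted.
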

\begin{proof}
In view of \Cref{prop: description of canonical parabolic reduction}, it suffices to show that there is a unique compatible parabolic reduction $E_P$ satisfying (1) and (2). For this, we can run the argument in \cite[Thm. 4.1]{biswas-holla-hnreduction} using the filtration of the vector bundle with meromorphic $t$-connection $E_P(\mathfrak{g}_k/\mathfrak{p})$ that we obtained in the proof of \Cref{prop: behrends conjecture}.
\end{proof}

\begin{thm} \label{prop: theta stratification}
Suppose that $G$ satisfies the low height property (LH). Then $\mu$ defines a well-ordered $\Theta$-stratification on $\hodge_{G}$.
\end{thm}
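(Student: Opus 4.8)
The plan is to deduce the statement from the weak $\Theta$-stratification already produced, using Behrend's conjecture to upgrade ``weak'' to genuine. First, since $G$ satisfies (LH), \Cref{lemma: low height property implies (Z)} shows that $G$ satisfies the central property (Z). Hence \Cref{prop: weak theta stratification} applies and produces a well-ordered \emph{weak} $\Theta$-stratification of $\hodge_{G}$ associated to $\mu$, indexed by the discrete set of values $\mu(f_{can})$. The well-ordering and the identification of the open stratum with the semistable locus are already part of that statement (and of \Cref{prop: comparison classical and Theta semistability}), so the only remaining task is to promote each weak $\Theta$-stratum to an honest $\Theta$-stratum.

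Recall from \cite{halpernleistner2018structure} that the two notions differ exactly in the nature of the evaluation morphism on each stratum: writing $\mathfrak{S}_{\alpha} \subset \Filt(\hodge_{G})$ for the locally closed substack of canonical Harder--Narasimhan filtrations of numerical type $\alpha$, a weak $\Theta$-stratification only guarantees that $\on{ev}_1 \colon \mathfrak{S}_{\alpha} \to \hodge_{G}$ is finite and radicial (universally injective), whereas a genuine $\Theta$-stratification requires $\on{ev}_1$ to be a closed immersion. Since a morphism that is finite, radicial and unramified is automatically a closed immersion, it suffices to prove that $\on{ev}_1$ is unramified along every stratum $\mathfrak{S}_{\alpha}$. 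Unramifiedness may be checked at geometric points, so fix an algebraically closed field $k$ over $S$ and a point $x = (E,\nabla) \in \hodge_{G}(k)$ in the image of $\mathfrak{S}_{\alpha}$, with associated canonical weighted parabolic reduction $(E_{P},\lambda)$ provided by \Cref{prop: weak theta stratification}.

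The key step is to identify the relative tangent space of $\on{ev}_1$ at this filtration. A first-order deformation of the filtration $(E_{P},\lambda)$ that fixes the underlying point $x = (E,\nabla)$ is precisely a deformation of the $\nabla$-compatible parabolic reduction $E_{P}$ inside $(E,\nabla)$ over the dual numbers, i.e. an element of $\text{Def}_{\nabla}(E_{P})$. By \Cref{prop: parabolic reductions deformations} this space is the kernel of $H^0(\Ad(\nabla))$ acting on $\Ad(E)/\Ad(E_{P})$, and by \Cref{prop: behrends conjecture} (Behrend's conjecture, where (LH) enters) it consists of a single element, the trivial deformation. Thus the relative tangent space of $\on{ev}_1$ vanishes at every geometric point of $\mathfrak{S}_{\alpha}$, so $\on{ev}_1 \colon \mathfrak{S}_{\alpha} \to \hodge_{G}$ is unramified. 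Combined with the finiteness and radiciality supplied by the weak $\Theta$-stratification, $\on{ev}_1$ is a closed immersion, hence each $\mathfrak{S}_{\alpha}$ is a genuine $\Theta$-stratum. This turns the weak $\Theta$-stratification of \Cref{prop: weak theta stratification} into a $\Theta$-stratification with the same well-ordered index set, and by construction its open stratum is the semistable locus, matching the claim in \Cref{main thm}(b).

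The main obstacle has in fact already been surmounted in \Cref{prop: behrends conjecture}: the crux is the infinitesimal rigidity of the canonical reduction, whose proof rests on the low-height hypothesis through the semistability of the Levi subquotients. The only genuinely new bookkeeping here is the identification of the relative tangent complex of $\on{ev}_1$ with the deformation space $\text{Def}_{\nabla}(E_{P})$ of \Cref{prop: parabolic reductions deformations}, together with the elementary fact that a finite, radicial, unramified morphism is a closed immersion; I would double-check that the identification of relative tangent spaces is compatible with the stacky (automorphism) directions, but since deformations of a structure-group reduction are representable this presents no difficulty.
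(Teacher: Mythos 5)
Your argument is correct and follows essentially the same route as the paper: invoke \Cref{prop: weak theta stratification} for the well-ordered weak $\Theta$-stratification, then upgrade it to a genuine one via the rigidity of the canonical Harder--Narasimhan parabolic reduction, which is exactly \Cref{prop: behrends conjecture}. The only difference is that the paper cites \cite[Lem.~2.1.7(1)]{halpernleistner2018structure} for the reduction of ``weak $\Theta$-stratum is a $\Theta$-stratum'' to ``the canonical filtration is rigid,'' whereas you unfold that citation by hand (finite $+$ radicial $+$ unramified $\Rightarrow$ closed immersion); the mathematical content is the same.
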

\begin{proof}
By Proposition \ref{prop: weak theta stratification}, $\mu$ defines a well-ordered weak $\Theta$-stratification. Showing that it is a $\Theta$-stratification amounts to proving that for any unstable geometric point $(E,\nabla) \in \hodge_{G}$, the canonical Harder-Narasimhan filtration is rigid \cite[Lem.2.1.7(1)]{halpernleistner2018structure}. In our case, the canonical filtration is given by the HN weighted parabolic reduction $E_{P}$ compatible with $\nabla$. Relative deformations of the $\nabla$-compatible parabolic reduction $E_{P}$ with respect to the fixed meromorphic connection $(E,\nabla)$ are given by the set $\text{Def}_{\nabla}(E_P)$ (Definition \ref{defn: deformations of parabolic reductions}), which contains only the trivial deformation by Proposition \ref{prop: behrends conjecture}.
\end{proof}

\begin{prop}[Semistable reduction] \label{prop: semistable reduction}
Suppose that $G$ satisfies the low height property (LH). Let $R$ be a discrete valuation ring. Choose an $R$-point $f: \Spec(R) \to \hodge_{G}$ such that the image $f(\eta)$ of the generic point $\eta \in \Spec(R)$ belongs to the semistable locus $(\hodge_{G})^{ss}$. Then there exists an extension of DVR $R\subset R'$ with finite extension of quotient fields $K(R)\subset K(R')$, and an $R'$-point $\widetilde{f}: \Spec(R') \to (\hodge_{G})^{ss}$ and an isomorphism between the restrictions of $f$ and $\widetilde{f}$ to $\mathrm{Spec}(K'(R))$.
\end{prop}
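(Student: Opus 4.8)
The plan is to deduce the statement from the abstract semistable reduction theorem attached to a $\Theta$-stratification, feeding in the two properties established above: strict $\Theta$- and $S$-monotonicity of $\mu$ (\Cref{prop: monotonicity general}) and the fact that $\mu$ defines a well-ordered $\Theta$-stratification of $\hodge_{G}$ (\Cref{prop: theta stratification}). These are exactly the hypotheses under which the machinery of \cite{alper2019existence} (see also \cite{halpernleistner2018structure} and \cite[\S2.4]{gauged_theta_stratifications}) produces a semistable limit without altering the generic fiber, so the bulk of the argument is to check that our situation fits the framework and then invoke the termination.

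Concretely, I would run a Langton-style modification algorithm at the special point. If the special fiber $f(0)$ is already semistable, there is nothing to do. Otherwise, \Cref{prop: theta stratification} equips $f(0)$ with its canonical (rigid) Harder--Narasimhan filtration, i.e. a nondegenerate map $\Theta_{\kappa} \to \hodge_{G}$ (defined over the residue field $\kappa$, since the canonical reduction is unique and hence descends) whose numerical value $\mu(f_{\mathrm{can}}) > 0$. Using this destabilizing filtration together with strict $S$-monotonicity, one performs an elementary modification of $f$ at the special point, producing a new $R$-point $f_1 : \Spec(R) \to \hodge_{G}$ with $f_1|_{\eta} \cong f|_{\eta}$ and with the maximal value of $\mu$ on the new special fiber strictly smaller than $\mu(f_{\mathrm{can}})$ for $f(0)$. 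Strict $\Theta$-monotonicity is what guarantees that the modification genuinely decreases the invariant and that the modified family still lands in $\hodge_{G}$ rather than merely in $\Bun_{G}(C)\times_S\mathbb{A}^1_S$.

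Iterating yields a sequence of $R$-points with fixed generic fiber $f|_{\eta}$ and strictly decreasing special-fiber invariants. Here I would invoke the well-ordering established in the proof of \Cref{prop: weak theta stratification}: the values $\mu(f_{\mathrm{can}})^2$ lie in the discrete set $\tfrac{1}{N}\mathbb{Z}$, so any strictly decreasing sequence of such values is finite. When the sequence terminates, the special fiber can no longer be destabilized, i.e. it is semistable, and the resulting $R$-point is the desired $\widetilde{f}$, agreeing with $f$ over $\eta$ by construction. Note that because each step keeps the generic fiber unchanged, no ramified base change of $R$ is needed, which matches the precise form of the statement.

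The main obstacle, and the point requiring care, is verifying that each modification stays inside $\hodge_{G}$ — that is, that the elementary modification of the underlying $G$-bundle carries along a compatible meromorphic $t$-connection — and that strict monotonicity applies verbatim in the relative setting over $\mathbb{A}^1_S$. Both of these are precisely what is packaged into the definitions of strict $\Theta$- and $S$-monotonicity verified in \Cref{prop: monotonicity general}, whose proof passes through the ind-projective ind-schemes $\mathrm{Gr}_{E,\nabla,D}$ of \Cref{lemma: affine grassmannian is ind-projective}; the connection-compatibility is exactly what was built into the definition of those Grassmannians. Consequently no new geometric input beyond those results should be needed, and the remaining work is the bookkeeping to match our normalization of $\mu = -\wt(\cD(\mathfrak{g}))/\sqrt{b}$ with the abstract monotonicity statements and to confirm that the elementary modification at the special point is well-defined in our moduli problem.
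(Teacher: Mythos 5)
Your proposal is correct and follows essentially the same route as the paper: the paper's proof simply combines \Cref{prop: theta stratification} with the abstract semistable reduction theorem \cite[Thm. 6.5]{alper2019existence}, whose hypotheses are exactly the well-ordered $\Theta$-stratification you invoke. The Langton-style iteration and discreteness/termination argument you sketch is the internal mechanism of that cited theorem rather than new input, so no further verification is needed beyond what \Cref{prop: monotonicity general}, \Cref{prop: hn boundedness}, and \Cref{prop: behrends conjecture} already supply.
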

\begin{proof}
This follows from Proposition \ref{prop: theta stratification} and \cite[Thm. 6.5]{alper2019existence}.
\end{proof}
\end{subsection}

\begin{subsection}{Smoothness of the semistable locus}\label{subsec: smooth}

\begin{thm} \label{thm: main theorem smoothness}
Suppose that $G$ satisfies the low height property (LH), and that the $S$-fibers of the divisor $D$ are nonempty. Then the morphism $(\hodge_{G})^{ss} \to \mathbb{A}^1_{S}$ is smooth.
\end{thm}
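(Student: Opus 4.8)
The plan is to prove smoothness via the infinitesimal lifting criterion, using the deformation theory of \S\ref{section: deformation theory}. Since $(\hodge_{G})^{ss}$ is an open substack of $\hodge_{G}$, which is locally of finite type over $\mathbb{A}^1_S$ (\Cref{prop: forget is affine and finite type}), it suffices to establish formal smoothness over $\mathbb{A}^1_S$. By \Cref{prop: obstruction class}(a), the obstruction to a square-zero lifting problem for $p_A=(E,\nabla)$ lives in $\mathcal{Q}_{p_A}\otimes_A I=\mathbb{H}^2(\mathcal{C}^{\bullet}_{p_A}\otimes_A I)$ and vanishes whenever the obstruction module $\mathcal{Q}_{p_A}$ does. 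Because semistability is open and a square-zero extension has the same underlying topological space, any lift of a semistable point stays semistable; hence it is enough to show $\mathcal{Q}_{p_A}=0$ for every $A$-point of $(\hodge_{G})^{ss}$. By the base-change compatibility of \Cref{coroll: base change obstruction module} and Nakayama (recall $\mathcal{Q}_{p_A}$ is a finitely generated $A$-module), this further reduces to proving $\mathcal{Q}_p=0$ for every geometric point $p=(E,\nabla)$ of $(\hodge_{G})^{ss}$, over an algebraically closed field $k$ above $\mathbb{A}^1_S$.

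By \Cref{defn: obstruction module} and \Cref{defn: deformation complex}, $\mathcal{Q}_p$ is the cokernel of
\[ H^1(\Ad(E)) \xrightarrow{H^1(\Ad(\nabla))} H^1\!\big(\Ad(E)\otimes\Omega^1_{C_k/k}(D)\big),\]
so I must show this map is surjective. I would invoke Serre duality on the smooth projective curve $C_k$: surjectivity of $H^1(\Ad(\nabla))$ is equivalent to injectivity of the transpose
\[ H^0\!\big(\Ad(E)^{\vee}(-D)\big) \longrightarrow H^0\!\big(\Ad(E)^{\vee}\otimes\Omega^1_{C_k/k}\big),\]
using the dualizing sheaf $\Omega^1_{C_k/k}$ and the identification $(\Ad(E)\otimes\Omega^1_{C_k/k}(D))^{\vee}\otimes\Omega^1_{C_k/k}=\Ad(E)^{\vee}(-D)$. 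This transpose is exactly $H^0$ of the dual meromorphic $t$-connection $\Ad(\nabla)^{\vee}$ on $\Ad(E)^{\vee}$, so the kernel I must kill consists of the horizontal sections of $(\Ad(E)^{\vee},\Ad(\nabla)^{\vee})$ vanishing along $D$.

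This is where the hypotheses enter. Because $G$ satisfies (LH), the adjoint representation is of low height, so \Cref{prop: semistability of low height representatons} (equivalently \Cref{thm: stability under change of group}) shows that $(\Ad(E),\Ad(\nabla))$ is a semistable meromorphic $t$-connection; as $\mathfrak{g}$ is self-dual, $\Ad(E)$ has degree $0$, and duality in the abelian category of \Cref{example: semistability of meromorphic conenctions on vector bundles} makes $(\Ad(E)^{\vee},\Ad(\nabla)^{\vee})$ semistable of slope $0$. Now suppose $s\in H^0(\Ad(E)^{\vee}(-D))$ is a nonzero horizontal section. Viewing $s$ as a section of $\Ad(E)^{\vee}$ vanishing on $D$ produces an injection $\mathcal{O}_{C_k}(D)\hookrightarrow\Ad(E)^{\vee}$; its saturation is an $\Ad(\nabla)^{\vee}$-invariant sub-line-bundle $\mathcal{L}\subset\Ad(E)^{\vee}$ (horizontality forces $\Ad(\nabla)^{\vee}(\mathcal{L}(-D))\subset\mathcal{L}\otimes\Omega^1_{C_k/k}$), with $\deg\mathcal{L}\geq\deg D>0$ since the fibers of $D$ are nonempty. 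Thus $\mathcal{L}$ is a subobject of slope $>0$, contradicting semistability. Hence $s=0$, the transpose is injective, $H^1(\Ad(\nabla))$ is surjective, and $\mathcal{Q}_p=0$.

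The main obstacle is the pair of structural inputs in the middle step: first, that $\Ad_*(E,\nabla)$ is genuinely semistable, for which the low height property and the change-of-group results of \S\ref{subsection: stability under change of group} are indispensable; and second, the precise identification of the Serre transpose of $\Ad(\nabla)$ with the dual meromorphic $t$-connection, which is what converts the vanishing of obstructions into a statement about horizontal subobjects. The nonemptiness of $D$ is used in an essential way: it is exactly what upgrades $\deg\mathcal{L}\geq 0$ to the strict inequality $\deg\mathcal{L}>0$, and the conclusion genuinely fails when $D$ is empty. Finally, I would record that this obstruction vanishing gives formal smoothness, which together with local finite presentation yields smoothness of $(\hodge_{G})^{ss}\to\mathbb{A}^1_S$; the semistable reduction theorem (\Cref{prop: semistable reduction}) then supplies the complementary global good behavior of $(\hodge_{G})^{ss}$ in which this smoothness is subsequently applied.
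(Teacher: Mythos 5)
Your proposal reaches the right conclusion and shares the paper's skeleton (infinitesimal lifting criterion, reduction via \Cref{coroll: base change obstruction module} and Nakayama to the vanishing of $\mathcal{Q}_p$ at geometric points, and a slope argument using (LH) and $\deg D>0$), but it diverges at the crucial step. The paper does \emph{not} dualize $H^1(\Ad(\nabla))$ directly: it first uses the $\mathbb{G}_m$-action (\Cref{prop: g_m action and limits}) together with semistable reduction (\Cref{prop: semistable reduction}) to degenerate a semistable point over arbitrary $t$ to a semistable meromorphic Higgs bundle over $0\in\mathbb{A}^1$, and only then applies Serre duality --- where the differential $\Ad(\varphi)$ of $\mathcal{C}^{\bullet}_p$ is $\cO_{C_k}$-linear, so coherent duality applies verbatim. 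You instead apply ``Serre duality'' to $H^1(\Ad(\nabla))$ for general $t$ and assert that its transpose is $H^0(\Ad(\nabla)^{\vee})$. Since $\Ad(\nabla)$ is only $k$-linear (a first-order differential operator), this is not an instance of Serre duality for coherent sheaves; it is a true statement, but it needs an extra argument --- the Leibniz identity $\langle\Ad(\nabla)a,\phi\rangle+\langle a,\Ad(\nabla)^{\vee}\phi\rangle=t\,d\langle a,\phi\rangle$ at the cochain level, combined with the vanishing of the map $H^1(C_k,\cO_{C_k})\to H^1(C_k,\Omega^1_{C_k/k})$ induced by $d$ (e.g.\ because residues of exact forms vanish). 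This is precisely the point the paper's degeneration step is designed to avoid. If you supply that lemma, your route is a legitimate alternative and arguably more economical, since it uses neither the $\mathbb{G}_m$-degeneration nor semistable reduction; the rest of your argument (semistability of $(\Ad(E),\Ad(\nabla))$ from \Cref{prop: semistability of low height representatons}, degree $0$ of $\Ad(E)$, and the saturated $\Ad(\nabla)^{\vee}$-invariant line subbundle of degree $\geq\deg D>0$ contradicting semistability) matches the paper's slope argument, merely phrased for the dual bundle rather than for $\Ad(E)(-D)$ itself. One last correction: your closing sentence has the logical dependence backwards --- in the paper, \Cref{prop: semistable reduction} is an input to this proof, not a downstream complement of it.
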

\begin{proof}
We use the lifting criterion for smoothness \cite[\href{https://stacks.math.columbia.edu/tag/0DP0}{Tag 0DP0}]{stacks-project} for the finite type morphism $(\hodge_{G})^{ss} \to \mathbb{A}^1_B$. By \cite[\href{https://stacks.math.columbia.edu/tag/02HT}{Tag 02HT}]{stacks-project} it suffices to show the existence of lifting for square-zero thickenings of local Artin algebras. We are reduced to finding a lift as in \Cref{context: lifting problem} with $\hodge_G$ replaced by $(\hodge_G)^{ss}.$

Using \Cref{prop: obstruction class}(a), \Cref{prop: g_m action and limits} and \Cref{prop: semistable reduction}, the same degeneration argument as in the proof of \cite[Prop. 5.6]{decataldo-herrero-nahpoles} reduces us to the case when $A=k$ and $p$ lies over $0\in \mathbb{A}^1_k$, and so it is represented by a meromorphic Higgs bundle $(E, \varphi) \in (\Higgs_{G})^{ss}$. Now we can conclude as in the proof of smoothness in \cite{decataldo-herrero-nahpoles,decomposition-higgs} to show $\mathcal{Q}_p= \mathbb{H}^2(\mathcal{C}^{\bullet}_p)=0$. Indeed, by Serre duality we have that $\mathbb{H}^2(\mathcal{C}^{\bullet}_p)$ is dual to $\mathbb{H}^{-1}((\mathcal{C}^{\bullet}_p)^{\vee} \otimes \Omega^1_{C_k/k})$. The hypercohomology spectral sequence identifies $\mathbb{H}^{-1}((\mathcal{C}^{\bullet}_p)^{\vee} \otimes \Omega^1_{C_k/k})$ with the kernel $K$ of
\[ H^0(\Ad(\varphi) \otimes id_{\cO(-D)}): H^0(\Ad(E) (-D)) \to H^0(\Ad(E) \otimes \Omega^1_{C_k/k})\]
Elements of $K$ can be interpreted in terms of homomorphisms of Higgs bundles. Consider $\cO_{C_k}$ equipped with the zero Higgs field $0$. Then, elements of $K$ are equivalent to homomorphisms $\cO_{C_k} \to \Ad(E)(-D)$ compatible with the Higgs fields $0$ and $\Ad(\varphi)\otimes id_{\cO(-D)}$ respectively. By \Cref{thm: stability under change of group}, since $G$ satisfies the low height property (LH) we have that $(\Ad(E)(-D), \varphi\otimes \text{id}_{\cO(-D)})$ is a semistable Higgs bundle. Since $D$ has nonempty $S$-fibers, the vector bundle $\Ad(E)(-D)$ has negative slope, and therefore there a no nonzero homomorphisms $(\cO_{C_k}, 0) \to (\Ad(E)(-D), \varphi \otimes \text{id}_{\cO(-D)})$ from the semistable Higgs bundle $(\cO_{C_k}, 0)$ of slope $0$. We conclude that $0 = K^{\vee} = \mathbb{H}^2(\mathcal{C}^{\bullet}_p) =\mathcal{Q}_p$, which implies the existence of the desired lift by \Cref{prop: obstruction class}(a).

\end{proof}
\end{subsection}

\end{section}

\begin{section}{The Hodge-Hitchin morphism}\label{section: hh morphism}

In this section, we establish the so-called Hodge-Hitchin morphism in characteristic $p$ which is larger than the Coxeter number of $G$, and show the properness of the induced morphism on the moduli space.
These results are fundamental for the development of the ramified versions of the Non Abelian Hodge Theory in prime characteristic \cite{de-zhang-log, li2024tame}.

In this section, we assume that $S$ is of characteristic $p>2$, and $G$ is a split reductive group scheme over $S.$ Under this assumption, $G$ is root-smooth as in \cite[\S4.1.1]{bouthier2019torsors}.
    We also assume that for every geometric point $\bar{s}$ of $S,$ $p$ is not a torsion prime for the root datum for $G_{\bar{s}}$ in the sense of \cite[\S 4.1.12]{bouthier2019torsors}.
    This assumption is satisfied if $p$ does not divide the order of the Weyl group of $G_{\bar{s}}$ for any geometric point $\bar{s}$ of $S.$

\begin{subsection}{Hitchin morphism and \texorpdfstring{$p$}{p}-curvature} \label{subsection: Hitchin morphism and p-curv}

    Let $\mathfrak{c}=\mathfrak{g}\git G := \Spec_S(\Sym_{\cO_S}(\mathfrak{g})^G)$ be the GIT quotient by the adjoint action of $G.$
     By \cite[Thm. 4.1.10]{bouthier2019torsors} and \cite[Thm. 3]{demazure1973invariants}, we have the Chevalley isomorphism of $S$-schemes:
    \begin{equation}
    \label{eqn: proto hitchin base}
        \mathfrak{c}=\Fg\git G\cong \mathfrak{t} \git W \cong \mathbb{A}_S^N.
    \end{equation}
Note that the last isomorphism is not canonical. Whenever we use it in the proofs, we fix one particular isomorphism.
    The isomorphisms in (\ref{eqn: proto hitchin base}) are $\bG_m$-equivariant for a suitably weighted $\bG_m$-action on $\mathbb{A}_S^N.$
    Consider the twist 
    $\Fc_D:= \Fc\times^{\bG_m}\Omega_{C/S}^1(D),$
    which is an affine scheme over $C.$
    The Hitchin base $A(C,\Omega_{C/S}^1(D))$ is defined to be the functor of $C$-sections of $\Fc_D.$ 
    The stack of Higgs bundles $\Higgs_{G}$ is isomorphic to the stack of $C$-sections of $(\Fg/G)_D:= (\mathfrak{g} \otimes \Omega^1_{C/S}(D))/G.$
The Hitchin morphism $h: \Higgs_{G}\to A(C,\Omega^1_{C/S}(D))$
is induced by the natural morphism $\Fg/G\to \Fg\git G.$

Our next goal is to define $p$-curvature. We will need the following.

\begin{prop}[Deligne's identity {\cite[Prop. 5.3]{katz1970nilpotent}}] 
Let $A$ be an associative algebra of characteristic $p.$ Let $f,\partial\in A.$
If the elements $(ad(\partial))^n(f)\in A$ mutually commute for $n\ge 0,$ then we have the identity
        \begin{equation}
            \label{eqn: deligne id}
            (f\partial)^p=f^p\partial^p+f(ad(\partial)^{p-1})(f^{p-1})\partial.
        \end{equation}
    \end{prop}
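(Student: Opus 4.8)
The plan is to reduce \eqref{eqn: deligne id} to a universal computation in a twisted polynomial ring and then deduce it from Jacobson's classical formula for $p$-th powers of derivations. Writing $f_n := \on{ad}(\partial)^n(f)$, the hypothesis that the $f_n$ mutually commute says exactly that the subalgebra $R \subseteq A$ they generate is commutative and that $\delta := \on{ad}(\partial)$ restricts to a derivation of $R$ with $\delta(f_n) = f_{n+1}$. Hence $\partial$ and $R$ satisfy the defining relations of the Ore extension $R[\Theta;\delta]$, in which $\Theta r = r\Theta + \delta(r)$ for $r \in R$, and $\Theta \mapsto \partial$, $R \hookrightarrow A$ defines an algebra homomorphism. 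It therefore suffices to prove the identity in $R[\Theta;\delta]$; in fact I would prove it once in the universal case $R_0[\Theta;\delta_0]$, where $R_0 = \bF_p[y_0,y_1,\dots]$ is a polynomial ring (in particular a domain), $\delta_0(y_i) = y_{i+1}$, and $f = y_0$, and then push it forward along $y_i \mapsto f_i$.

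The core of the argument is the following. First, in any associative $\bF_p$-algebra one has $\on{ad}(x)^p = \on{ad}(x^p)$ (because $\on{ad}(x) = L_x - R_x$ with $L_x, R_x$ commuting), so $\on{ad}\bigl((f\Theta)^p\bigr) = \on{ad}(f\Theta)^p$, and a direct check gives $\on{ad}(f\Theta)|_R = f\delta$. Thus $\on{ad}\bigl((f\Theta)^p\bigr)$ acts on $R$ as the iterate $(f\delta)^p$. Now $f\delta$ is a derivation of the commutative ring $R$, so Jacobson's formula (the classical expression for the $p$-th iterate of a derivation of a commutative $\bF_p$-algebra) yields $(f\delta)^p = f^p\delta^p + (f\delta)^{p-1}(f)\,\delta$ as operators on $R$. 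Setting $Y := f^p\Theta^p + (f\delta)^{p-1}(f)\,\Theta \in R[\Theta;\delta]$, a short computation shows $\on{ad}(Y)|_R = f^p\delta^p + (f\delta)^{p-1}(f)\,\delta$, which by Jacobson equals $(f\delta)^p = \on{ad}\bigl((f\Theta)^p\bigr)|_R$.

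It remains to upgrade the equality of adjoint actions to an equality of elements, i.e. to show $Z := (f\Theta)^p - Y = 0$. An elementary recursion for the coefficients of $(f\Theta)^k$ in powers of $\Theta$ shows that $(f\Theta)^p$ has leading term $f^p\Theta^p$ and vanishing constant term; since $Y$ has the same leading term and no constant term, $Z$ has $\Theta$-degree at most $p-1$ and no degree-zero part. As $\on{ad}(Z)|_R = 0$, writing $Z = \sum_{j=1}^{p-1} c_j \Theta^j$ and comparing the top-degree part of $[Z, r] = 0$ gives $m\,c_m\,\delta(r) = 0$ for all $r$, where $m$ is the top degree of $Z$; since $R_0$ is a domain, $\delta_0 \neq 0$, and $m \not\equiv 0 \pmod p$ for $1 \le m \le p-1$, this forces $c_m = 0$ and hence $Z = 0$ by descending induction.

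I expect the genuine obstacle to be Jacobson's formula itself, which is where the characteristic-$p$ hypothesis enters and which encodes the collapse of all the intermediate powers of $\Theta$; the reduction to the Ore extension and the centralizer--degree argument are formal once that identity is available. The final bookkeeping step is to match the coefficient $(f\delta)^{p-1}(f) = \bigl(f\,\on{ad}(\partial)\bigr)^{p-1}(f)$ produced by Jacobson's formula with the closed form $f\,\on{ad}(\partial)^{p-1}(f^{p-1})$ displayed in \eqref{eqn: deligne id}, which is a direct computation in the commutative differential ring $(R,\delta)$.
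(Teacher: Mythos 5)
The paper offers no proof of this proposition (it is quoted from \cite[Prop.~5.3]{katz1970nilpotent}), so there is no in-house argument to compare you against. Your architecture is sound and each of its steps checks out: the commuting hypothesis does make the subring $R$ generated by the $f_n=\on{ad}(\partial)^n(f)$ a commutative differential ring on which $\partial$ acts through the Ore relation, the universal case over the domain $\bF_p[y_0,y_1,\dots]$ suffices, $\on{ad}(x^p)=\on{ad}(x)^p$ and $\on{ad}(f\Theta)|_R=f\delta$ are correct, Hochschild's formula $(f\delta)^p=f^p\delta^p+(f\delta)^{p-1}(f)\,\delta$ is a legitimate citation, and the centralizer-plus-degree argument does force $Z=0$ (the coefficient of $\Theta^{m-1}$ in $[Z,r]$ is exactly $m\,c_m\,\delta(r)$, and $1\le m\le p-1$ kills $c_m$ over the universal domain). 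What this proves is $(f\partial)^p=f^p\partial^p+\bigl((f\,\on{ad}(\partial))^{p-1}(f)\bigr)\,\partial$.

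The genuine gap is the step you defer as ``final bookkeeping'': the identity $(f\delta)^{p-1}(f)=f\,\delta^{p-1}(f^{p-1})$ that you would need is false --- the direct computation gives $(f\delta)^{p-1}(f)=-f\,\delta^{p-1}(f^{p-1})$. For $p=3$ one has $(f\delta)^{2}(f)=f\,\delta(f)^2+f^2\delta^2(f)$ while $f\,\delta^{2}(f^{2})=2f\,\delta(f)^2+2f^2\delta^2(f)$, which differ by the factor $2=-1$. Consequently your argument cannot be closed against \eqref{eqn: deligne id} as printed, and in fact the printed equation fails: in the Weyl algebra over $\bF_3$ with $f=x$ and $\partial=\partial_x$ one has $(x\partial)^3=x^3\partial^3+x\partial$ (since $x^3\partial^3=\theta(\theta-1)(\theta-2)=\theta^3-\theta$ for $\theta=x\partial$), whereas the displayed right-hand side is $x^3\partial^3+x\cdot\on{ad}(\partial)^2(x^2)\cdot\partial=x^3\partial^3+2x\partial$. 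So the second term of \eqref{eqn: deligne id} should read $\bigl((f\,\on{ad}(\partial))^{p-1}(f)\bigr)\,\partial=-f\,\on{ad}(\partial)^{p-1}(f^{p-1})\,\partial$; your proof then goes through verbatim. (The sign is immaterial for the paper's uses of the identity --- the lemma on $\partial^{[p]}$ and the $p$-linearity of $\Psi(\nabla)$ only need the correction term to be a left multiple of $f$ ending in $\partial$ --- but as a proof of the displayed equation your last step would fail, and you should not assert the coefficient match without performing it.)
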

    
    Since we are in characteristic $p,$ the $p$-th iteration of any derivation $\partial$ in $T_{C_Y/Y}$ is again a derivation, which we denote by $\partial^{[p]}.$

    \begin{lemma}
    \label{deligne-id}
        Given a derivation $\partial$ which is a local section of $T_{C_Y/Y}(-D),$ the derivation $\partial^{[p]}$ is also a local section of $T_{C_Y/Y}(-D).$
    \end{lemma}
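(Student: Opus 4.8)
The plan is to reduce the statement to an étale-local computation and then feed it into Deligne's identity \eqref{eqn: deligne id}. Since the inclusion $T_{C_Y/Y}(-D) \subset T_{C_Y/Y}$ is an inclusion of sheaves, being a local section of the subsheaf is an étale-local condition, and over the open complement of the support of $D$ the two sheaves coincide, so nothing needs to be checked there. Hence I would only verify the claim at points lying on $D$. Near such a point I would choose an étale-local coordinate $z$ of $C_Y$ over $Y$, so that $\Omega^1_{C_Y/Y}$ is generated by $dz$, the relative tangent sheaf $T_{C_Y/Y}$ is generated by the dual derivation $\delta := \partial_z$, and $D$ is cut out by $z^n$ for $n\geq 1$ its local multiplicity. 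Then $T_{C_Y/Y}(-D)$ is locally generated by $z^n\delta$, and the given local section can be written as $\partial = f\,\delta$ for some $f \in z^n\,\mathcal{O}_{C_Y}$.

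The key characteristic-$p$ input I would record is that $\delta^{[p]} = 0$. This holds on the polynomial subring $\mathcal{O}_Y[z]$, where $\delta^p(z^m) = m(m-1)\cdots(m-p+1)\,z^{m-p}$ and the coefficient, being a product of $p$ consecutive integers, is divisible by $p$; the vanishing then propagates to the étale localization because a relative derivation of $\mathcal{O}_{C_Y}$ is determined by its restriction to the étale subring $\mathcal{O}_Y[z]$ (as $\Omega^1$ of an étale map vanishes), so $\delta^{[p]}$ and the zero derivation must agree. With this in hand I would apply \eqref{eqn: deligne id} in the associative algebra of relative differential operators, taking $f$ in the role of $f$ and $\delta$ in the role of $\partial$. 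The hypothesis is satisfied because the iterated brackets $(\operatorname{ad}\delta)^m(f) = \delta^m(f)$ are all functions and thus commute as multiplication operators. Noting $\partial^{[p]} = (f\delta)^p$, the identity gives
\[
\partial^{[p]} \;=\; f^p\,\delta^{[p]} \;+\; f\,\big((\operatorname{ad}\delta)^{p-1}(f^{p-1})\big)\,\delta \;=\; f\,\delta^{p-1}(f^{p-1})\,\delta,
\]
where the first term drops out since $\delta^{[p]}=0$.

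The conclusion is then immediate: the coefficient of $\delta$ in the final expression is divisible by $f$, hence by $z^n$, so $\partial^{[p]}$ is again a local section of $z^n\,\mathcal{O}_{C_Y}\cdot\delta = T_{C_Y/Y}(-D)$. I do not anticipate a genuine obstacle here; the essential mechanism is simply that the single factor $f$ already carries the full order of vanishing along $D$, so no cancellation or estimate on the more complicated term $\delta^{p-1}(f^{p-1})$ is required. The only steps demanding a little care are the justification that $\delta^{[p]}=0$ in the relative étale-local setting (rather than the naive polynomial one) and the verification of the commutativity hypothesis of \eqref{eqn: deligne id}, both of which are routine.
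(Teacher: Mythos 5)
Your proof is correct and follows essentially the same route as the paper: reduce to a local coordinate where $D=(z^n)$ and apply Deligne's identity \eqref{eqn: deligne id}, observing that both resulting terms carry the required factor of $z^n$. The only cosmetic difference is that you factor the section as $f\cdot\partial_z$ and kill the first term via $\partial_z^{[p]}=0$, whereas the paper factors it as $z^n\cdot\partial$ for a general derivation $\partial$ and simply notes that the first term $z^{pn}\partial^{[p]}$ is already divisible by $z^n$; both observations are valid and interchangeable.
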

    \begin{proof}
        We are immediately reduced to the case where $Y$ is an affine scheme with function ring $R$, and we can replace $C_Y$ with $\mathbb{A}^1_R$ upon choosing a local coordinate $x$ around a component of the Cartier divisor $D.$
        In this coordinate, $D$ is given by $x^n$ for some $n>0.$
        We are reduced to showing that for any derivation $\partial,$ the element $(x^n\partial)^{[p]}$ lies inside the right ideal generated by $x^n$ in the Weyl algebra $R\langle x,\partial_x\rangle.$ This follows from Deligne's identity \eqref{eqn: deligne id}, which gives us $(x^n\partial)^{[p]}=x^{pn}\partial^{[p]}+x^n\Big( \partial^{p-1}\cdot(x^{pn-p})\Big)\partial,$
        where $\partial^{p-1}\cdot(x^{pn-p})$ means we apply the differential operator $\partial^{p-1}$ to $x^{pn-p}.$
    \end{proof}

 Let $Y$ be an $S$-scheme and $(E,\nabla)$ a $Y$-point of $\hodge_{G}.$
    By \Cref{example: induced connection in the total space of G-bundle}, with the evident modifications over a base $S$ that is not a field,
    we see that the data of $(E,\nabla)$ yields a $\cO_{C_Y}$-linear morphism $\nabla: T_{C_Y/Y}(-D)\to End_{\cO_Y}(\cO_E,\cO_E),$
    which satisfies the $t$-Leibniz rule and the multiplicative Leibniz rule, and is compatible with the $G_{C_Y}$-action. \Cref{deligne-id} makes the following definition meaningful.
    \begin{defn}
        The $p$-curvature $\Psi(\nabla)$ of $(E,\nabla)$ is the $\cO_Y$-linear assignment that takes a derivation $\partial$ in $T_{C_Y/Y}(-D)$ to the section of $End_{\cO_Y}(\cO_E,\cO_E)$ defined by the formula
        \begin{equation*}
            \Psi(\nabla)(\partial):= \Big(\nabla(\partial)\Big)^p-t^{p-1}\nabla(\partial^{[p]}).
        \end{equation*}
    \end{defn}

    \begin{lemma}
    \label{lemma: ad-p-linear}
        The morphism $\Psi(\nabla)$
        satisfies the following:
        \begin{enumerate}
            \item[(a)] There is a factorization
            \begin{equation*}
                \Psi(\nabla): T_{C_Y/Y}(-D)\to Ad(E)\hookrightarrow Der_{\cO_{C_Y}}(\cO_E) \hookrightarrow End_{\cO_Y}(\cO_E,\cO_E),
            \end{equation*}
            where we have identified $Ad(E)$ with the $G$-invariant derivations on $\cO_E.$
            \item[(b)] The morphism $\Psi(\nabla)$ is $p$-linear, i.e., given any local function $f$ on $C_Y,$ we have that $\Psi(\nabla)(f\partial)=f^p\Psi(\nabla)(\partial).$
            The morphism $\Psi(\nabla)$ is also additive.
        \end{enumerate}
    \end{lemma}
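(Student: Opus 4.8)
The plan is to work with differential operators directly. For a local vector field $\partial \in T_{C_Y/Y}(-D)$ write $\theta := \nabla(\partial) \in End_{\cO_Y}(\cO_E,\cO_E)$, and for a local function $f$ on $C_Y$ write $m_f$ for multiplication by $f$ on $\cO_E$. The multiplicative Leibniz rule makes $\theta$ a derivation of $\cO_E$ over $\cO_Y$, and the $t$-Leibniz rule records that its restriction to $\cO_{C_Y}$ is $t\partial$; together these give the single commutation relation $[\theta, m_f] = m_{t\,\partial(f)} = t\, m_{\partial(f)}$, where $t \in \cO_Y$ is central. Iterating and using that $\partial$ is $\cO_Y$-linear yields $(ad\,\theta)^n(m_f) = t^n m_{\partial^n(f)}$ for all $n \geq 0$. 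Since $\nabla$ arises from an $\cO_{C_Y}$-linear splitting of the $t$-Atiyah sequence, the assignment $\partial \mapsto \theta$ is $\cO_{C_Y}$-linear, i.e. $\nabla(f\partial) = m_f\,\theta$; this, together with the $G$-invariance of $\theta$ and the fact that each $\theta$ is a derivation, is all I will use about $\nabla$.

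For part (a), first $\Psi(\nabla)(\partial) = \theta^p - t^{p-1}\nabla(\partial^{[p]})$ is a derivation of $\cO_E$: in characteristic $p$ the $p$-fold composite $\theta^p$ of a derivation is again a derivation, $\nabla(\partial^{[p]})$ is a derivation (with $\partial^{[p]} \in T_{C_Y/Y}(-D)$ by \Cref{deligne-id}), and $t^{p-1} \in \cO_Y$ is a scalar. To see it is vertical, i.e. $\cO_{C_Y}$-linear, I compute its commutator with $m_f$: by the Jacobson identity $ad(\theta^p) = (ad\,\theta)^p$ in $End_{\cO_Y}(\cO_E,\cO_E)$, together with the relation above and $\partial^p = \partial^{[p]}$, we get $[\theta^p, m_f] = t^p m_{\partial^{[p]}(f)}$, while $[\nabla(\partial^{[p]}), m_f] = t\, m_{\partial^{[p]}(f)}$. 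Hence $[\Psi(\nabla)(\partial), m_f] = t^p m_{\partial^{[p]}(f)} - t^{p-1}\cdot t\,m_{\partial^{[p]}(f)} = 0$, so $\Psi(\nabla)(\partial)$ is an $\cO_{C_Y}$-linear derivation of $\cO_E$. Finally $\theta$, its powers, and $\nabla(\partial^{[p]})$ are all $G$-invariant, so $\Psi(\nabla)(\partial)$ is a $G$-invariant vertical derivation, which is precisely a section of $Ad(E)$; this gives the asserted factorization.

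For the $p$-linearity in part (b), I apply Deligne's identity \eqref{eqn: deligne id} in $End_{\cO_Y}(\cO_E,\cO_E)$ to the pair $(m_f, \theta)$; its hypothesis holds because the elements $(ad\,\theta)^n(m_f) = t^n m_{\partial^n(f)}$ are all multiplication operators and hence mutually commute. This gives $(m_f\theta)^p = m_{f^p}\theta^p + t^{p-1} m_{f\cdot \partial^{p-1}(f^{p-1})}\,\theta$. Applying the same identity \eqref{eqn: deligne id} to the derivation $f\partial$ on $\cO_{C_Y}$ gives $(f\partial)^{[p]} = f^p \partial^{[p]} + \big(f\,\partial^{p-1}(f^{p-1})\big)\,\partial$, whence, by $\cO_{C_Y}$-linearity of $\nabla$, $\nabla((f\partial)^{[p]}) = m_{f^p}\nabla(\partial^{[p]}) + m_{f\cdot\partial^{p-1}(f^{p-1})}\,\theta$. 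Substituting both expansions into $\Psi(\nabla)(f\partial) = (m_f\theta)^p - t^{p-1}\nabla((f\partial)^{[p]})$, the two terms involving $m_{f\cdot\partial^{p-1}(f^{p-1})}\theta$ cancel, leaving $m_{f^p}\big(\theta^p - t^{p-1}\nabla(\partial^{[p]})\big) = f^p\,\Psi(\nabla)(\partial)$. The point to get right is the bookkeeping of powers of the central element $t$, which is exactly what forces these cross terms to cancel; I expect this and the commutator computation in (a) to be the only genuine obstacles, everything else being formal.

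Additivity then follows for free, since $C \to S$ has relative dimension one: $T_{C_Y/Y}(-D)$ is a line bundle, so locally any two sections can be written $\partial_1 = f\partial_0$ and $\partial_2 = g\partial_0$ for a local generator $\partial_0$ and functions $f,g$. Using $p$-linearity twice and the Frobenius identity $(f+g)^p = f^p + g^p$ in the $\mathbb{F}_p$-algebra $\cO_{C_Y}$, I obtain $\Psi(\nabla)(\partial_1+\partial_2) = (f+g)^p\,\Psi(\nabla)(\partial_0) = f^p\,\Psi(\nabla)(\partial_0) + g^p\,\Psi(\nabla)(\partial_0) = \Psi(\nabla)(\partial_1) + \Psi(\nabla)(\partial_2)$; as additivity of a map of sheaves is a local condition, this suffices.
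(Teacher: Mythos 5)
Your proof is correct, and for parts of it you follow essentially the paper's route: the $\cO_{C_Y}$-linearity of $\Psi(\nabla)(\partial)$ is the same computation (you phrase it as $[\Psi(\nabla)(\partial),m_f]=0$ via $ad(\theta^p)=(ad\,\theta)^p$, the paper expands $(\nabla(\partial))^p(fe)$ directly and kills the middle binomial coefficients), and your $p$-linearity argument is the paper's verbatim, Deligne's identity applied once in $End_{\cO_Y}(\cO_E)$ and once in the ring of differential operators on $C_Y$, with the two cross terms cancelling against each other; you add the worthwhile observation that the hypothesis of Deligne's identity holds because the $(ad\,\theta)^n(m_f)=t^nm_{\partial^n(f)}$ are all multiplication operators. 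Where you genuinely diverge is additivity: the paper invokes Jacobson's formula, writing $(\nabla(\partial)+\nabla(\partial'))^p$ and $(\partial+\partial')^{[p]}$ each as the sum of $p$-th powers plus universal Lie polynomial terms and cancelling those, whereas you exploit that $T_{C_Y/Y}(-D)$ is a line bundle, write $\partial_1=f\partial_0$, $\partial_2=g\partial_0$ locally, and deduce additivity from $p$-linearity and $(f+g)^p=f^p+g^p$. Your route is cleaner and sidesteps having to match the Lie-polynomial error terms on the two sides (which in the paper's computation requires a moment's thought about why both equal the same ``$o$''), at the cost of being specific to the curve case; the paper's argument would survive in higher relative dimension. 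On $G$-invariance you assert that $\theta$, $\theta^p$ and $\nabla(\partial^{[p]})$ are all invariant vector fields on the total space of $E$; this is true (sections of $At^D(E)$ are by definition invariant, and invariance is preserved by the $p$-operation since the $G$-action respects the restricted Lie algebra structure on $T_{E/Y}$), but it is the one place where you suppress the content that the paper makes explicit via the coaction $a$ and the vanishing of $\Psi(\nabla^{can})$ — worth a sentence of justification, though not a gap.
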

    \begin{proof}
       Let $\partial$ be a local section of $T_{C_Y/Y}(-D).$
       We first show that $\Psi(\nabla)(\partial)$ is $\cO_{C_Y}$-linear.
       Let $e$ be a local section of $\cO_E$ and $f$ a local function on $C_Y.$ We then have 
       \begin{align*}
          \Psi(\nabla)(\partial)(fe) & = \Big(\nabla(\partial)\Big)^p(fe)-t^{p-1}\nabla(\partial^{[p]})(fe) \\
           &= \Big( \sum_{i=0}^p {p \choose i}t^i\partial^i(f)(\nabla(\partial))^{p-i}(e) \Big)-t^{p-1}\Big(t\partial^p(f) e+f\nabla(\partial^{[p]})(e)\Big)\\
           & = \Big(f(\nabla(\partial))^p(e)+t^p\partial^p(f)e\Big)-\Big( t^p\partial^p(f)e+t^{p-1}f\nabla(\partial^{[p]})(e)\Big)\\
           & = f\Psi(\nabla)(e).
       \end{align*}

       That $\Psi(\nabla)(\partial)$ is a derivation of $\cO_E$ follows from that $\nabla(\partial)$ is a derivation of $\cO_E.$

       We now show that $\Psi(\nabla)(\partial)$ is $G$-invariant.
       Let $a:\cO_E\to \cO_E\otimes_{\cO_{C_Y}} \cO_{G_{C_Y}}$ be the morphism defined by the $G$-action.
       Let $\nabla^{can}$ be the canonical $t$-connection on $\cO_{G_{C_Y}}$ that comes from $td$ on $\cO_{C_Y}.$
       Since $\nabla(\partial)$ is $G$-invariant, we have: 
       $a\Big(\nabla(\partial)(e)\Big)=(\nabla\otimes\nabla^{can})\Big(a(e)\Big)$,
       where the tensor connection is defined in \Cref{example: tensor connection}.
       Since $\Psi(\nabla^{can})=0,$ we have: 
       $a\Big(\Psi(\nabla)(\partial)(e)\Big)=\Psi(\nabla)\Big(a(e)\Big)$.
       We have thus finished the proof of item (a) in the lemma.

       We now show item (b).
       We first show that $\Psi(\nabla)$ is additive.
       Let $o:=\sum_{i=1}^{p-1}s_i(\nabla(\partial),\nabla(\partial')),$ where $s_i$ are the universal Lie polynomials.
       By \cite[(5.2.7-8)]{katz1970nilpotent}, we have: 
       \[\Psi(\nabla)(\partial+\partial')=\Big((\nabla(\partial)^p+\nabla(\partial')^p+o\Big)-t^{p-1}\Big(\nabla(\partial^{[p]})+\nabla((\partial')^{[p]})+o\Big)=\Psi(\nabla)(\partial)+\Psi(\nabla)(\partial').\]
       Finally, we show that $\Psi(\nabla)$ is $p$-linear.
       Using Deligne's identity (\ref{eqn: deligne id}), we have:
       \begin{align*}
           \Psi(\nabla)(f\partial) &= \Big(f\nabla(\partial)\Big)^p-t^{p-1}\nabla\Big( (f\partial)^{[p]}\Big) \\
           &= f^p \nabla(\partial)^p+f(ad(\nabla(\partial))^{p-1})(f^{p-1})\nabla(\partial) -t^{p-1}\nabla\Big( f^p\partial^{[p]}+f(ad(\partial)^{p-1})(f^{p-1})\partial\Big)\\
           &= f^p\nabla(\partial)^p+ft^{p-1}\partial^{p-1}(f^{p-1})\nabla(\partial)-t^{p-1}\nabla(f^p\partial^{[p]}+f\partial^{p-1}(f^{p-1})\partial)\\
           &=f^p\Psi(\nabla)(\partial).
       \end{align*}
    \end{proof}

\begin{notn}[Frobenii] 
    Let $F_S:S\to S$ be the absolute Frobenius morphism on $S.$
    Given any $S$-scheme $X,$ we denote the Frobenius twist $X':=X\times_{S,F_S} S.$
    We have the following diagram of schemes:
    \begin{equation*}
        \xymatrix{
        X\ar[r]^-{F_{X/S}} \ar[dr]& X' \ar[r]^-{\sigma_X} \ar[d]& X\ar[d]\\
        & S \ar[r]^-{F_S} & S.
        }
    \end{equation*}
    \end{notn}
 
    \begin{lemma}
    \label{lemma: F pullback cl}
     Let $D':= \sigma_C^*D$ be the pullback Cartier divisor on $C'/S.$
Pullback by $F_{C/S}$ defines a closed embedding of total spaces of $S$-vector bundles:
    \begin{equation*}
        F_{C/S}^*: A(C', \Omega_{C'/S}^1(D'))\hookrightarrow A(C,\Omega^1_{C/S}(D)^{\otimes p}).
    \end{equation*}
    \end{lemma}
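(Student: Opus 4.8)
The plan is to reduce the whole statement to a single canonical identification of line bundles on $C$ and then to a fiberwise injectivity check. The key geometric input is the isomorphism
\[ F_{C/S}^{*}\bigl(\Omega^{1}_{C'/S}(D')\bigr)\;\cong\;\Omega^{1}_{C/S}(D)^{\otimes p}. \]
First I would produce it. Since $\sigma_C\colon C'\to C$ is the base change of $F_S\colon S\to S$ along $\pi$, the base-change formula for Kähler differentials gives $\Omega^{1}_{C'/S}=\sigma_C^{*}\Omega^{1}_{C/S}$, and together with the hypothesis $D'=\sigma_C^{*}D$ this yields $\Omega^{1}_{C'/S}(D')=\sigma_C^{*}\bigl(\Omega^{1}_{C/S}(D)\bigr)$. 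Applying $F_{C/S}^{*}$ and using the factorization $F_C=\sigma_C\circ F_{C/S}$ of the absolute Frobenius of $C$, together with the standard fact that $F_C$ acts on line bundles by raising to the $p$-th tensor power (transition functions get raised to the $p$), I get, for $L:=\Omega^{1}_{C/S}(D)$,
\[ F_{C/S}^{*}\bigl(\Omega^{1}_{C'/S}(D')\bigr)=F_{C/S}^{*}\sigma_C^{*}L=F_C^{*}L\cong L^{\otimes p}. \]
Write $L':=\Omega^{1}_{C'/S}(D')$ and let $\theta\colon F_{C/S}^{*}L'\xrightarrow{\sim}L^{\otimes p}$ denote this isomorphism. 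Crucially, the formation of $\sigma_C$, of the relative Frobenius $F_{C/S}$, and of $\theta$ all commute with arbitrary base change $T\to S$.

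Next I would define the morphism on Hitchin bases. The assignment $M\mapsto \Fc\times^{\bG_m}M$ is functorial in the line bundle $M$ and compatible with pullback, so a $T$-point of $A(C',L')$, i.e.\ a section $C'_T\to \Fc\times^{\bG_m}L'_T$, pulls back under $F_{C_T/T}$ (the base change of $F_{C/S}$) to a section of $\Fc\times^{\bG_m}F_{C_T/T}^{*}L'_T$, and $\theta$ identifies the target with $\Fc\times^{\bG_m}(L_T)^{\otimes p}$. This produces a natural transformation, hence a morphism of $S$-schemes $F_{C/S}^{*}\colon A(C',L')\to A(C,\Omega^{1}_{C/S}(D)^{\otimes p})$, compatible with base change since each ingredient is.

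Finally I would prove it is a closed immersion. Using the $\bG_m$-weights $e_1,\dots,e_N$ of the action on $\Fc\cong\mathbb{A}^N_S$ and the resulting splitting $\Fc\times^{\bG_m}M\cong\bigoplus_{i}M^{\otimes e_i}$, the morphism becomes the direct sum over $i$ of the pullback maps $H^0(C',(L')^{\otimes e_i})\to H^0(C,L^{\otimes p\,e_i})$, i.e.\ a linear map $\mathcal V\to\mathcal W$ of the underlying locally free $\cO_S$-sheaves. Such a linear map induces a closed immersion of total spaces exactly when it is fiberwise injective (then the dual is a surjection of vector bundles, so the induced map of symmetric algebras is surjective). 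Because the construction commutes with base change on $S$, the fiber over a geometric point $s$ is the Frobenius pullback $A(C'_s,L'_s)\to A(C_s,L_s^{\otimes p})$, so it suffices to show that $F_{C_s/s}^{*}\colon H^0(C'_s,N)\to H^0(C_s,F_{C_s/s}^{*}N)$ is injective for a line bundle $N$ on $C'_s$. This holds because $F_{C_s/s}\colon C_s\to C'_s$ is a finite flat surjection (hence faithfully flat) of integral schemes: tensoring the injection $\cO_{C'_s}\hookrightarrow (F_{C_s/s})_{*}\cO_{C_s}$ with $N$ and applying the projection formula and $H^0$ shows $F_{C_s/s}^{*}$ is injective on sections. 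This fiberwise injectivity yields the closed embedding.

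I expect the only genuine subtlety to be the bookkeeping in the first step: ensuring the twisting line bundle on the target is precisely $\Omega^{1}_{C/S}(D)^{\otimes p}$ and not a differently grouped variant. The clean accounting is that $\sigma_C^{*}$ carries $\Omega^{1}_{C/S}(D)$ to $\Omega^{1}_{C'/S}(D')$ while $F_{C/S}^{*}\sigma_C^{*}=F_C^{*}$ multiplies the line bundle class by $p$, and that the $\bG_m$-grading of $\Fc$ is respected automatically since $\theta$ is merely an isomorphism of line bundles; everything else is formal once the injectivity of Frobenius pullback on sections is in hand.
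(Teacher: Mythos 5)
Your overall strategy matches the paper's: identify $F_{C/S}^{*}\sigma_C^{*}=F_C^{*}$ so that the twisting line bundle becomes $\Omega^1_{C/S}(D)^{\otimes p}$, split the Hitchin base into graded pieces via the Chevalley isomorphism, and reduce the closed-immersion claim to an injectivity statement that follows from $F_{C/S}$ being finite, flat and surjective. Your injectivity argument (tensor $\cO_{C'}\hookrightarrow (F_{C/S})_*\cO_C$ with the line bundle, apply the projection formula and $H^0$) is correct and is a clean variant of the paper's check that the unit map $\Omega^1_{C'/S}(D')^{\otimes n_i}\to (F_{C/S})_*F_{C/S}^*(\Omega^1_{C'/S}(D')^{\otimes n_i})$ is a subbundle inclusion.

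There is, however, a genuine gap: you assert that the graded pieces of the two Hitchin bases are ``the underlying locally free $\cO_S$-sheaves'' and that ``the construction commutes with base change on $S$,'' but neither is automatic, and the statement of the lemma itself claims the sources and targets are total spaces of $S$-vector bundles. The functor of sections $\Gamma_{C/S}(L^{\otimes N})$ is represented by the total space of a locally free sheaf whose formation commutes with base change only under a cohomology-and-base-change hypothesis, typically $H^1(C_s,L_s^{\otimes N})=0$ for all $s\in S$. This vanishing fails precisely in the low-genus cases (for instance $g=1$ with $D=\emptyset$, where $\Omega^1_{C_s/s}(D_s)\cong\cO_{C_s}$ and $H^1\neq 0$, or $g=0$ with $\deg D\le 2$), and the paper devotes roughly half of its proof to isolating these cases and handling them separately (using constancy of the fiber invariants over the connected base and a different base-change argument when the twisting bundle is trivial, and observing the functor is zero in the remaining case). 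Without this, your reduction to a ``linear map $\mathcal V\to\mathcal W$ of locally free sheaves'' and your fiberwise injectivity check over $S$ (which implicitly uses that $\mathcal V_s=H^0(C'_s,(L'_s)^{\otimes n_i})$) are not justified. Once representability and base change are established, the rest of your argument goes through.
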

    
    \begin{proof}
By the Chevalley isomorphism (\ref{eqn: proto hitchin base}), we have an identification of functors $A(C',\Omega_{C'/S}^1(D')) = \prod_{i} \Gamma_{C'/S}(\Omega_{C'/S}^1(D')^{\otimes n_i})$, where $\Gamma_{C'/S}(\Omega_{C'/S}^1(D')^{\otimes n_i})$ is the functor of sections of the bundle $\Omega_{C'/S}^1(D')^{\otimes n_i}$ relative to the morphism $C' \to S$ and $n_i$ are certain nonnegative integers. Similarly we have $A(C,\Omega^1_{C/S}(D)^{\otimes p}) = \prod_i\Gamma_{C/S}(\Omega^1_{C/S}(D)^{\otimes n_i \cdot p})$. By the base change criterion \cite[Rmk. 3.11.2]{illusie2005grothendieck}, the Hitchin base $A(C,\Omega^1_{C/S}(D)^{\otimes p})$ is represented by an $S$-vector bundles whenever we have $H^1(C_s,\Omega_{C_s/s}^1(D_s)^{\otimes N})=0$ for each point $s\in S$ and $N>0$.
This vanishing is satisfied unless 

\noindent (1$)_s$ $g(C_s)=1$ and $D_s=\emptyset;$

\noindent (2$)_s$ $g(C_s)=0$ and $deg(D_s)=2;$ 

\noindent (3$)_s$ $g(C_s)=0$ and $deg(D_s)<2.$

Using the finiteness and flatness of $D/S$ and the connectedness of $S,$ we have that if $(i)_s, i=1,2,3,$ is true for some $s\in S,$ then it is true for all $s\in S.$
In cases (1) and (2), we have $\Omega_{C/S}^1(D)\cong \cO_C$ and the desired base change isomorphism follows from \cite[Ex. 3.11]{kleiman2005picard}.
In case (3), the Hitchin base functor is 0, thus it is an $S$-vector bundle tautologically. The same argument with $C'$ replacing $C$ shows that $A(C',\Omega_{C'/S}^1(D'))$ is also represented by an $S$-vector bundle.

By \cite[\href{https://stacks.math.columbia.edu/tag/0FW2}{Tag 0FW2}]{stacks-project}, the morphism $F_{C/S}$ is flat, in addition to finite and surjective. Hence $ (F_{C/S})_*F_{C/S}^*(\Omega_{C'/S}^1(D')^{\otimes N} ) = (F_{C/S})_*(\Omega^1_{C/S}(D)^{\otimes pN})$ is a vector bundle on $C'$ for all $N$. To finish the proof, it suffices to show that each pullback morphism $\Gamma_{C'/S}(\Omega_{C'/S}^1(D')^{\otimes n_i}) \to \Gamma_{C/S}(\Omega_{C/S}^1(D)^{\otimes p \cdot n_i})= \Gamma_{C'/S}( (F_{C/S})_*(\Omega^1_{C/S}(D)^{\otimes p \cdot n_i}))$ is a closed immersion. This morphism is induced by the unit morphism of sheaves $\Omega_{C'/S}^1(D')^{\otimes n_i} \to (F_{C/S})_*F_{C/S}^*(\Omega_{C'/S}^1(D')^{\otimes n_i})$. It suffices to show that the unit morphism is a subbundle inclusion (i.e. it is injective and the cokernel is a vector bundle). Since both the source and target of the unit are vector bundles, by the slicing criterion for flatness \cite[\href{https://stacks.math.columbia.edu/tag/00ME}{Tag 00ME}]{stacks-project} it suffices to show that the unit is injective after base change to any field valued point of $C'$. But, since $F_{C/S}$ is finite, the formation of the unit commutes with base change on $C'$. And since $F_{C/S}$ is surjective, it follows that the corresponding unit is injective when restricted to any point of $C'$.
    \end{proof}

\subsection{Existence of the Hodge-Hitchin morphism}
\label{subsection: de Rham- Hitchin morphism}

 By \Cref{lemma: ad-p-linear}, $\Psi(\nabla)$ defines an $\cO_{C_Y}$-linear morphism $F_{C_Y/Y}^*\Big( T_{C_Y'/Y}(-D')\Big)\to Ad(E)$.
Therefore, the pair $(E,\Psi(\nabla))$ defines a $(\Omega_{C_Y/Y}^1(D))^{\otimes p}$-twisted Higgs bundle on $C_Y/Y.$


In the case where $S$ is a point, recall the Coxeter number $h(G)$ of $G$ as defined in \cite[\S5.1]{serre2005complete}.
If $G$ is a torus, then $h(G):=1$. Otherwise $h(G)$ is defined as the maximum of the Coxeter numbers of the simple quotients of $G$.
For simple groups, the number $h(G)$ according to the type of $G$ is: $A_n, h=n+1; B_n, C_n: h=2n; D_n: h=2n-2; G_2: h=6; F_4, E_6: h=12; E_7: h=18; E_8: h=30$.

\begin{prop}
    \label{prop: hodge-hitchin}
Assume that for each geometric point $\overline{s}$ of $S$, we have $p>h(G_{\overline{s}})$.  
Then there is the following commutative diagram of $S$-stacks:
    \begin{equation}
    \label{diagram: factorization}
        \xymatrix{
        \hodge_{G} \ar[r]^-{\Psi} \ar[d]_-{\exists ! \; \widetilde{h_{Hod}}} &
        \text{Higgs}(C, (\Omega_{C/S}^1(D))^{\otimes p}) \ar[d]^-{h}\\
        A(C',\Omega_{C'/S}^1(D')) \ar[r]_-{F_{C/S}^*} &
        A(C, (\Omega_{C/S}^1(D))^{\otimes p}).
        }
    \end{equation}
\end{prop}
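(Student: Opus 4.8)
The plan is to first dispose of uniqueness and then produce the lift by descending the Hitchin image along relative Frobenius. Since $F_{C/S}^*$ is a closed immersion by \Cref{lemma: F pullback cl}, it is a monomorphism of $S$-functors, so any $\widetilde{h_{Hod}}$ making \eqref{diagram: factorization} commute is unique; thus only existence requires argument, and existence amounts to showing that $h\circ\Psi\colon\hodge_{G}\to A(C,(\Omega^1_{C/S}(D))^{\otimes p})$ factors scheme-theoretically through the closed subscheme $A(C',\Omega^1_{C'/S}(D'))$. To write down the candidate factorization, fix the Chevalley presentation \eqref{eqn: proto hitchin base} and let $p_1,\dots,p_N\in\Sym(\mathfrak{g}^{\vee})^G$ be homogeneous generators of degrees $n_1,\dots,n_N$. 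By \Cref{lemma: ad-p-linear} the $p$-curvature is $p$-linear, so it is a global section $\Psi(\nabla)\in H^0\big(C_Y,\,\Ad(E)\otimes_{\cO_{C_Y}}F_{C_Y/Y}^*\Omega^1_{C'_Y/Y}(D')\big)$ under the canonical identification $F_{C/S}^*\Omega^1_{C'/S}(D')\cong(\Omega^1_{C/S}(D))^{\otimes p}$ used in \Cref{lemma: F pullback cl}. Applying each $p_i$ produces $p_i(\Psi(\nabla))\in H^0\big(C_Y,\, F_{C_Y/Y}^*((\Omega^1_{C'/S}(D'))^{\otimes n_i})\big)$, and $h\circ\Psi$ is the tuple $(p_i(\Psi(\nabla)))_i$. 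The whole problem reduces to showing that each $p_i(\Psi(\nabla))$ lies in the image of the unit map $F_{C/S}^*\colon H^0\big(C'_Y,(\Omega^1_{C'/S}(D'))^{\otimes n_i}\big)\to H^0\big(C_Y,F_{C_Y/Y}^*(\cdots)\big)$.

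Over the open locus $\{t\neq 0\}=\hodge_{G}\times_{\mathbb{A}^1_S}\mathbb{G}_{m,S}$ I would argue by Cartier descent. After rescaling $\nabla$ to the genuine connection $\tfrac{1}{t}\nabla$, Katz's theorem \cite{katz1970nilpotent} (whose local content is exactly Deligne's identity \eqref{eqn: deligne id}, already exploited in \Cref{deligne-id} and \Cref{lemma: ad-p-linear}) shows that the $p$-curvature is a horizontal section of $\Ad(E)\otimes F_{C/S}^*\Omega^1_{C'/S}(D')$ for the connection induced by $\Ad(\nabla)$ together with the canonical Cartier (i.e.\ $p$-curvature-zero) connection on the Frobenius pullback. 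Since the invariants $p_i$ are flat for the induced connections, each $p_i$ of the $p$-curvature is then a horizontal section of $F_{C/S}^*((\Omega^1_{C'/S}(D'))^{\otimes n_i})$, and the horizontal sections of a Frobenius pullback are exactly the image of the unit map by Cartier descent. Rescaling back by the appropriate power of $t$ shows that $p_i(\Psi(\nabla))$ descends over $\{t\neq 0\}$.

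At $t=0$ the $t$-connection is a Higgs field $\varphi$, the canonical connection degenerates to zero, and horizontality no longer characterizes the image; here I would instead compute directly. By the definition of $\Psi$ one has $\Psi(\nabla)(\partial)=\varphi(\partial)^{p}$, the $p$-th power inside $\Ad(E)$, and the hypothesis $p>|W|$ guarantees, via Chevalley restriction and the fact that power sums generate the invariant ring in this range, the identity $p_i(\varphi(\partial)^{p})=p_i(\varphi(\partial))^{(p)}$ in a local trivialization, i.e.\ the invariants of a $p$-th power are themselves $p$-th powers. As any $p$-th power of a local function on $C$ is a polynomial in the local Frobenius coordinate, each $p_i(\Psi(\nabla))$ lies in the image of $F_{C/S}^*$ over the Higgs locus as well.

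Finally, to upgrade this pointwise factorization to an honest morphism of stacks, I would measure the failure of descent by the image $\overline{s}$ of $h\circ\Psi$ in the quotient $\mathcal{Q}:=A(C,(\Omega^1_{C/S}(D))^{\otimes p})/A(C',\Omega^1_{C'/S}(D'))$, which is the total space of a vector bundle over $S$ precisely because $F_{C/S}^*$ is a subbundle inclusion in the proof of \Cref{lemma: F pullback cl}. The resulting natural transformation $\overline{s}\colon\hodge_{G}\to\mathcal{Q}$ vanishes on $\{t\neq 0\}$ by the second paragraph; since $\mathcal{Q}\to S$ is separated and $\overline{s}$ is $\mathbb{G}_m$-equivariant for the contracting action, testing on a $t$-flat (hence $t$-torsion-free) family, on which $\{t\neq 0\}$ is schematically dense, forces $\overline{s}=0$ identically, and this vanishing produces the desired $\widetilde{h_{Hod}}$. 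I expect this last step to be the main obstacle: the clean Cartier-descent mechanism is available only away from $t=0$, so one must both supply the separate Higgs-case computation and argue—via flatness and separatedness, equivalently $\mathbb{G}_m$-equivariance and contraction—that the \emph{scheme-theoretic} descent obstruction $\overline{s}$, and not merely its values at geometric points, vanishes over the entire $t$-line.
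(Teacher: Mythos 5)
Your two fiberwise computations are sound in outline, and reformulating the problem as the vanishing of an obstruction section $\overline{s}$ with values in the quotient bundle $\mathcal{Q}$ is a reasonable framing. The genuine gap is in the last step, exactly where you flag it: $\mathbb{G}_m$-equivariance, contraction, and schematic density of $\{t\neq 0\}$ in a $t$-flat test family do not suffice to kill $\overline{s}$ on all $Y$-points of $\hodge_{G}$. First, not every $Y$-point factors through a family on which $t$ is a non-zero-divisor, and $\{t\neq 0\}$ need not even be dense in $\hodge_{G}$ (for $D=\emptyset$ and $G=\GL_1$, the degree-$d$ component with $p\nmid d$ lies entirely over $t=0$, since such a line bundle admits no genuine connection in characteristic $p$). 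Second, for a test scheme such as $Y=\Spec(k[\epsilon]/(\epsilon^2))$ with $t\mapsto\epsilon$, the locus $\{t\neq 0\}_Y$ is empty while $\{t=0\}_Y$ is a proper closed subscheme, so neither your Cartier-descent argument (needs $t$ invertible) nor your Higgs-locus computation (needs $t=0$) applies, and equivariance gives nothing. The missing ingredient is the paper's reduction: spread $(Y,C_Y,E,\nabla)$ out to a datum over an integral affine scheme $\hat{S}$ of finite type over $\mathbb{F}_p$; the obstruction (the paper uses $\nabla^{can}(h(E,\Psi(\nabla)))$, the Cartier-descent characterization of the image of $F_{C/S}^*$, equivalent to your $\overline{s}$) is then a section of a locally free sheaf on the integral $\hat{S}$ and vanishes iff it vanishes at every point, where $t$ is either zero or a unit and your two computations apply. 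Without some such reduction to a reduced base the argument does not close.

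Two further remarks. Your identity at $t=0$, namely $p_i(\varphi(\partial)^{[p]})=(p_i(\varphi(\partial)))^p$, needs more than ``power sums generate'': one must take generators of $\Sym(\Ft^*)^W$ of the form $sym(\ell^{n})$ with $\ell$ defined over $\mathbb{F}_p$ (this is \Cref{lemma: invariants}(ii); for $\ell$ with coefficients outside $\mathbb{F}_p$ one has $\ell(t^{[p]})\neq\ell(t)^p$), and one must check compatibility of the restricted $p$-power with Chevalley restriction for non-semisimple sections — this is the content of \Cref{lemma: 0-fiber of hh and frobenius}. Granting these repairs, your route over a geometric point (horizontality of the $p$-curvature, flatness of the invariant polynomials, and Cartier descent for $t\neq 0$; a direct Frobenius computation for $t=0$) is genuinely different from the paper's, which instead embeds the Steinberg--Hitchin base of $G$ into a product of $\Fgl_{N_i}\git \GL_{N_i}$'s (\Cref{lemma: immersion of steinberg bases}, \Cref{lemma: cartesian sq hit base}) and imports Langer's theorem for $\GL_N$. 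Your approach would make the proposition self-contained at the cost of reproving the $\GL_N$ input; the paper's buys the statement from the known $\GL_N$ case at the cost of the new closed-immersion lemma.
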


\begin{remark}\label{rmk: lh implies coxeter}
Since the height of the adjoint representation of $G_{\overline{s}}$ is $2h(G_{\overline{s}})-2$ \cite[(5.2.6)]{serre2005complete}, we see that the low height condition implies the assumption of \Cref{prop: hodge-hitchin}.    
\end{remark}

\begin{defn}
\label{defn: hodge hitchin}
    The Hodge-Hitchin morphism
    \[h_{Hod}:\hodge_G\to A(C',\Omega_{C'/S'}^1(D'))\times_S\mathbb{A}^1_S\]
    is defined to be the product of  $\widetilde{h_{Hod}}$ as in (\ref{diagram: factorization}) and the structural morphism $\hodge_G\to \mathbb{A}^1_S.$
\end{defn}

The goal of the rest of this subsection is to prove \Cref{prop: hodge-hitchin}. 
We use a variation of Steinberg's proof of the Chevalley Isomorphism Theorem as in \cite[Thm. 23.1]{humphreys2012introduction}.

Let us start with a reminder on some invariant theory in \cite{bourbaki2002lie}.
\begin{remark}
\label{rmk: invariants}
    Let $k$ be a field of characteristic $p$. 
    Let $V$ be a finite dimensional $k$-vector space with an action of a finite group $W$ generated by pseudo-reflections.
    The ring of invariants $Sym(V)^W$ is a graded $k$-algebra of finite type \cite[Ch. V, \S5.2]{bourbaki2002lie}.
    Choose a minimal set of homogeneous generators $\alpha_1,...,\alpha_m$ of the ideal $R_+$ of $R$.
    Let $n_i:=\deg(\alpha_i)$.
    \cite[Ch. V, \S5.3]{bourbaki2002lie} entails that if $p\nmid n_i$ and if $p\nmid |W|$, then $m=\mathrm{dim}(V)$, $\prod_{i=1}^m n_i=|W|$, and $Sym(V)^W$ is isomorphic to the polynomial algebra $k[\alpha_1,...,\alpha_m]$.
\end{remark}

\begin{notn}
Let $k$ be a field, and let $U$ be a $k$-vector space equipped with an action of a finite group $W$. For any element $f \in U$, we denote by $sym(f):=\sum_{w\in W}w(f)$ the sum of the elements in the orbit of $f$.
\end{notn}

\begin{lemma}
\label{lemma: invariants}
We assume the notation in \Cref{rmk: invariants}.
Assume that $p>n_i, i=1,...,m$ and that $p\nmid |W|$. 
Then the ring of invariants $\Sym(V)^W$ is generated as a $k$-algebra by finitely many elements of the form $sym(v_i^{n_i})$ with $v_i\in V$.
\end{lemma}
\begin{proof}
Recall the polarization identity in combinatorics, see \cite[Ex.6.50(d)]{grinberg2020notes} for a proof. 
Namely, given any commutative ring $X$ and elements $x_i\in X,$ we have the following identity in $X$:
\[n!\cdot x_1...x_n=\sum_{I\subset \{1,...,n\}}(-1)^{n-|I|}\Big(\sum_{i\in I} x_i\Big)^n.\]
Therefore, for each $p> n,$ the image of the morphism $(\cdot)^n: V\to \Sym^n(V)$, which sends an element $v\in V$ to its $n$-th power $v^n$, spans $\Sym^n(V).$
It follows that each $\alpha_i$ can be written as linear combinations of the elements of the form $(v_i)^{n_i}$ for some $v_i\in V$.
Finally, we conclude by using $p\nmid |W|$ to write $\alpha_i=\frac{1}{|W|}sym(\alpha_i)$.
\end{proof}

In the following lemmata, we assume that $G$ is a split connected reductive group over a field $k.$
We fix a maximal split torus $T \subset G$ with corresponding Weyl group $W$. We also fix a set $\Phi^+$ of positive roots. 
For each dominant weight $\lambda\in X^+(T),$
let $L(\lambda)$ be the highest weight representation \cite[Prop. II.2.6]{jantzen_representations}.
Let $\phi_{\lambda}: G\to GL_N$ denote the morphism corresponding to the representation $L(\lambda).$
Let $\Phi_{\lambda}:\Fg\to \Fgl_N$ be the corresponding $k$-morphism of schemes of Lie algebras. 
Let $(\cdot)^n: \Fgl_N\to \Fgl_N$ be the morphism of $k$-schemes defined by $n$-th power of matrices.
Let $tr:\Fgl_N\to \Fgl_1$ be the $k$-morphism of schemes defined by taking the trace of matrices.
Let $tr(\Phi_{\lambda}^n):\Fg\to \Fgl_1$ be the morphism of $k$-schemes defined by the composition 
\[\Fg\xrightarrow{\Phi_{\lambda}} \Fgl_N\xrightarrow{(\cdot)^n}\Fgl_N\xrightarrow{tr}\Fgl_1. \]
Let $Lie(\lambda):\Ft\to \Fgl_1$ be the $k$-morphism of schemes defined by the weight $\lambda.$
Let $(Lie(\lambda))^n: \Ft\to \Fgl_1$ be the morphism of $k$-schemes defined by the composition $\Ft\xrightarrow{Lie(\lambda)} \Fgl_1\xrightarrow{(\cdot)^n}\Fgl_1$.
The natural action of $W$ on $T$ descends to $\Ft,$ so that we have a morphism of $k$-schemes $sym((Lie(\lambda))^n):=\sum_{w\in W}w((Lie(\lambda))^n):\Ft\to \Fgl_1$. 

\begin{lemma}
    There exist natural numbers $c(\lambda,\theta)$ which give the following identity of morphisms of $k$-schemes $\Ft\to \Fgl_1:$
    \begin{equation}
    \label{eqn: tr and sym}
tr(\Phi_{\lambda}^n)|_{\Ft}=sym((Lie(\lambda))^n)+\sum_{\theta\in X^+(T),\; \theta <\lambda} c(\lambda,\theta)sym((Lie(\theta))^n).
    \end{equation}
\end{lemma}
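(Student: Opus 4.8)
The plan is to restrict all three morphisms to the Cartan $\Ft$ and reduce the identity to the trace of a diagonal matrix, after which grouping weights into $W$-orbits produces the claimed expansion. First I would use the weight decomposition $L(\lambda)=\bigoplus_{\mu}L(\lambda)_{\mu}$ of the highest weight module under $T$, which is a direct sum of weight spaces because $T$ acts semisimply. Choosing a basis of $L(\lambda)$ adapted to this decomposition, the composite $\Ft\hookrightarrow \Fg\xrightarrow{\Phi_{\lambda}}\Fgl_N$ becomes the diagonal linear map $H\mapsto \mathrm{diag}\big((Lie(\mu)(H))_{\mu}\big)$, in which each weight $\mu$ of $L(\lambda)$ occurs $m_{\mu}:=\dim_k L(\lambda)_{\mu}$ times. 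This is an equality of $\Fgl_N$-valued linear forms on $\Ft$, i.e.\ an identity in $\Fgl_N\otimes \Ft^{*}$, so it holds at the level of coordinate rings and is independent of the cardinality of $k$. Taking $n$-th powers of this diagonal matrix and then the trace yields
\[
tr(\Phi_{\lambda}^{n})|_{\Ft}=\sum_{\mu}m_{\mu}\,(Lie(\mu))^{n}
\]
in $\Sym^{n}(\Ft^{*})$, where $\mu$ runs over all weights of $L(\lambda)$.

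Next I would organize the right-hand side by $W$-orbits. The formal character of the rational $G$-module $L(\lambda)$ is $W$-invariant, so $m_{w\mu}=m_{\mu}$ for every $w\in W$; hence the multiset of weights breaks into $W$-orbits on each of which the multiplicity is constant. Each orbit contains a unique dominant weight $\theta$, and by the definition of $sym$ we have $\sum_{\mu\in W\theta}(Lie(\mu))^{n}=sym((Lie(\theta))^{n})$. Setting $c(\lambda,\theta):=m_{\theta}$, which is a weight multiplicity and hence a natural number, the displayed identity becomes $\sum_{\theta}c(\lambda,\theta)\,sym((Lie(\theta))^{n})$, the sum taken over dominant $\theta$. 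Finally, every weight of $L(\lambda)$ satisfies $\mu\le\lambda$ in the dominance order, so each dominant representative obeys $\theta\le\lambda$; the orbit of $\lambda$ is the only one whose dominant representative is $\lambda$ itself, and since $\lambda$ is the highest weight its multiplicity is $m_{\lambda}=1$. Thus $c(\lambda,\lambda)=1$ and all remaining $\theta$ occurring satisfy $\theta<\lambda$, which is exactly \eqref{eqn: tr and sym}.

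The only inputs requiring care in characteristic $p$ are the two standard facts used above: that the highest weight of $L(\lambda)$ has multiplicity one, and that the formal character of $L(\lambda)$ is $W$-invariant. Both hold for $L(\lambda)$ as a rational $G$-module over an arbitrary field \cite{jantzen_representations}, so no hypothesis on $p$ is needed for this particular lemma. The main point to keep straight is purely bookkeeping: $sym$ is read as the sum over the $W$-orbit of its argument, so that grouping the trace by orbits produces integral coefficients with leading coefficient exactly one; this identification of the orbit structure of the weights with the $sym$-sums, together with the multiplicity-one property pinning down the $\lambda$-term, is the crux of the argument, while the diagonalization and the power-sum computation are routine.
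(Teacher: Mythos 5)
Your proof is correct and follows essentially the same route as the paper's: restrict to the Cartan, use the weight-space decomposition of $L(\lambda)$ together with $\dim L(\lambda)_{\lambda}=1$ and the $W$-invariance of weight multiplicities (the same two facts from Jantzen that the paper cites), and group the resulting power sums into $W$-orbits. The one caveat — shared with the paper's own statement and proof — is the normalization of $sym$: under the literal definition $sym(f)=\sum_{w\in W}w(f)$ an orbit sum equals $\tfrac{1}{|\mathrm{Stab}_W(\theta)|}\,sym((Lie(\theta))^{n})$ rather than $sym((Lie(\theta))^{n})$, so the coefficients are really $m_{\theta}/|\mathrm{Stab}_W(\theta)|$; this is harmless for the later application since $p>|W|$ makes these denominators invertible, but your claimed equality of the orbit sum with $sym$ holds only for the orbit-sum reading of the notation.
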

\begin{proof}
  Given any $\theta\in X^*(T),$ let $L(\lambda)_{\theta}$ be the corresponding weight space.
By \cite[Prop. II.2.4(b)]{jantzen_representations}, we have that $dim(L(\lambda)_{\lambda})=1.$
By \cite[\S II.1.19(1)]{jantzen_representations}, for any  $w\in W$, we have that $dim(L(\lambda)_{\theta})=dim(L(\lambda)_{w(\theta)}).$
We see that the equality (\ref{eqn: tr and sym}) holds on the level of $A$-points for any $k$-algebra $A.$ 
\end{proof}

     The following lemma provides a method to do change of groups for Hitchin bases. 

\begin{lemma}
\label{lemma: immersion of steinberg bases}
   Let $k$ be a field with $char(k)=p.$
   Let $G$ be a split reductive algebraic group over $k.$
   Assume either $p=0$ or $p>h(G)$.
   Then there is a homomorphism $\phi_G: G\to \prod_{i=1}^mGL_{N_i}$ such that the induced morphism on the Steinberg-Hitchin bases $\Xi_{\Fg}: \Fg\git G\hookrightarrow \prod_{i=1}^m \Fgl_{N_i}\git GL_{N_i}$
   is a closed immersion.
\end{lemma}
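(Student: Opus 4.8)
The plan is to reduce the statement to the surjectivity of the induced homomorphism on coordinate rings. Since both $\Fg \git G$ and $\prod_{i} \Fgl_{N_i} \git GL_{N_i}$ are affine $k$-schemes of finite type, $\Xi_{\Fg}$ is a closed immersion precisely when the pullback $\mathcal{O}\big(\prod_{i} \Fgl_{N_i} \git GL_{N_i}\big) \to \mathcal{O}(\Fg \git G)$ is surjective. By the Chevalley isomorphism \eqref{eqn: proto hitchin base}, restriction to $\Ft$ identifies $\mathcal{O}(\Fg \git G)$ with the invariant ring $\Sym(\Ft^*)^W$. For any homomorphism $\phi_G = \prod_{i} \phi_{\lambda_i}$, the invariant power-trace function $tr(X^n)$ on the $i$-th factor pulls back to $tr(\Phi_{\lambda_i}^n)$, whose restriction to $\Ft$ is given by \eqref{eqn: tr and sym}. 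Hence it suffices to exhibit finitely many dominant weights for which the functions $tr(\Phi_{\lambda_i}^n)|_{\Ft}$ (over the chosen weights and $n \ge 1$) generate $\Sym(\Ft^*)^W$ as a $k$-algebra.

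First I would extract a convenient set of algebra generators of $\Sym(\Ft^*)^W$. The Weyl group $W$ acts on $V := \Ft^* = X^*(T) \otimes_{\mathbb{Z}} k$ through the reflections $s_\alpha$, hence as a finite group generated by pseudo-reflections, and $p > |W|$, so \Cref{lemma: invariants} applies. A $\mathbb{Z}$-basis of the character lattice $X^*(T)$ furnishes a $k$-basis of $V$ for which $W$ acts by $\mathbb{F}_p$-matrices, so part (ii) of the lemma produces generators $sym(v_j^{n_j})$, $j = 1, \dots, r$, with $p > n_j$ and each $v_j$ an $\mathbb{F}_p$-linear combination of this basis. Lifting the coefficients to $\{0, 1, \dots, p-1\} \subset \mathbb{Z}$ realizes each $v_j$ as $Lie(\mu_j)$ for a genuine character $\mu_j \in X^*(T)$. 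Since $sym(Lie(\mu)^n) = \sum_{w \in W} Lie(w\mu)^n$ depends only on the $W$-orbit of $\mu$, I may replace each $\mu_j$ by its unique dominant representative $\nu_j \in X^+(T)$, thereby presenting $\Sym(\Ft^*)^W$ as generated by the finitely many elements $sym(Lie(\nu_j)^{n_j})$ with $\nu_j$ dominant and $p > n_j$.

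Next I would invert the identity \eqref{eqn: tr and sym}. For each fixed exponent $n$ it expresses $tr(\Phi_\lambda^n)|_{\Ft}$ as $sym(Lie(\lambda)^n)$ plus a combination of terms $sym(Lie(\theta)^n)$ with $\theta \in X^+(T)$ and $\theta < \lambda$, so the change of basis between $\{tr(\Phi_\lambda^n)|_{\Ft}\}$ and $\{sym(Lie(\lambda)^n)\}$ is unitriangular for the dominance order. By downward induction on this order, each generator $sym(Lie(\nu_j)^{n_j})$ lies in the $k$-span of the functions $tr(\Phi_\theta^{n_j})|_{\Ft}$ with $\theta \in X^+(T)$, $\theta \le \nu_j$. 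The set $\Lambda := \{\theta \in X^+(T) : \theta \le \nu_j \text{ for some } j\}$ is finite, because the dominant weights below a fixed dominant weight form a finite set. Enumerating $\Lambda = \{\lambda_1, \dots, \lambda_m\}$, setting $N_i := \dim L(\lambda_i)$ and $\phi_G := \prod_{i=1}^m \phi_{\lambda_i} \colon G \to \prod_{i=1}^m GL_{N_i}$, every algebra generator of $\Sym(\Ft^*)^W$ becomes a $k$-linear combination of pullbacks of the invariant functions $tr(X^{n_j})$ on the respective factors. Consequently the pullback on coordinate rings is surjective and $\Xi_{\Fg}$ is a closed immersion.

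The step I expect to be the main obstacle is the transition from the abstract generators $sym(v_j^{n_j})$ of \Cref{lemma: invariants}, attached to arbitrary vectors $v_j \in \Ft^*$, to generators attached to genuine dominant weights of $G$, since only the latter arise as restrictions of power-trace functions through \eqref{eqn: tr and sym}. This is exactly what the $\mathbb{F}_p$-rationality refinement \Cref{lemma: invariants}(ii), combined with the $W$-orbit invariance of $sym(Lie(\mu)^n)$, is designed to bridge. A secondary point requiring care is ensuring that only finitely many representations are needed, which rests on the unitriangularity of \eqref{eqn: tr and sym} and on the finiteness of the set of dominant weights below a fixed one.
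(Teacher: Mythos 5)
Your proposal is correct and follows essentially the same route as the paper's proof: reduce to surjectivity on invariant rings via the Chevalley isomorphism, produce generators $sym(Lie(\nu_j)^{n_j})$ attached to dominant weights using \Cref{lemma: invariants}(ii), and invert the dominance-triangular identity \eqref{eqn: tr and sym} by induction to realize these as pullbacks of power traces of highest-weight representations. Your treatment is in fact slightly more careful than the paper's on two minor points --- passing from $\mathbb{F}_p$-linear combinations to genuine dominant representatives via $W$-invariance of $sym$, and taking the full (finite, downward-closed) set $\Lambda$ of dominant weights so the induction closes --- but these are refinements of the same argument, not a different one.
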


\begin{proof}
Choose a split maximal torus $T$. Let $\Ft^*$ be the dual of the $k$-vector space underlying $Lie(T).$
    The affine scheme $\Ft$ has function ring $Sym(\Ft^*).$
    The affine scheme $\Ft \git W$ has function ring $Sym(\Ft^*)^W.$
    Let $\{t_l''\}$ be a basis  of $\Ft^*$ defined by integral dominant characters $\tau_l''\in X^+(T).$
    In this case, the degrees $n_i$ in \Cref{rmk: invariants} can be checked at \cite[\S3.7, Table 1]{humphreys1992reflection}.
    We see that the maximal $n_i$ equals $h(G)$ and the product of $n_i$ equals $|W|$.
    Therefore, the assumptions of Lemma \ref{lemma: invariants} are satisfied and we have that 
    the ring $Sym(\Ft^*)^W$ is generated as a $k$-algebra by finitely elements of the form $sym(t_l^{n_l})$ with $t_l\in \Ft^*$ and $n_l<p,$ such that each $t_l=Lie(\tau_l)$ for some dominant integral weight $\tau_l$.
    For each $\tau_l,$ let $\theta(l)_{j},j=1,...,m_l$ be the dominant integral weights such that the natural number  $c(\tau_l, \theta(l)_j)$ in (\ref{eqn: tr and sym}) in positive. 
    Let $I$ be the finite set consists of all the dominant integral weights $\tau_l$'s and $\theta(l)_j$'s. 
    Then we define $\phi_G$ to be the product of the highest weight representations
    \[\phi_G:= \prod_{\lambda\in I}\phi_{\lambda}: G\to \prod_{\lambda\in I}GL(L(\lambda))=: \prod_{i=1}^m GL_{N_i}.\]
We are left to show that $\Xi_{\Fg}$ is a closed immersion.
For each $i=1,...,m,$ let $T_i$ be a split maximal torus containing the image of $T.$
    Let $W_i$ be the Weyl group for $(GL_{N_i},T_i).$
    Let $\Ft_i$ be the corresponding schemes of Lie algebras.
    By the Chevalley Isomorphism (\ref{eqn: proto hitchin base}), to conclude the proof it suffices to show that the induced morphism of affine $k$-schemes $\Xi_{\Ft}: \Ft \git W \hookrightarrow \prod_i \Ft_i \git W_i$
    is a closed immersion. 
    We check that the morphism on the function rings is surjective. 
    Let $\Fgl_i$ be the Lie algebra of $GL_{N_i}.$
    It suffices to show that each $sym(t_l^{n_l})$ can be lifted to a $\prod GL_{N_i}$-invariant function on $\prod \Fgl_i. $
    We do upward induction on the partial order on the $\tau_l$'s.
    When $\tau_l$ is minimal, by (\ref{eqn: tr and sym}), we have that $sym(t_l^{n_l})$ lifts to the
    $GL(L(\tau_l))$-invariant function $tr(\Phi_{\tau_l}^{n_l})$ on $\Fgl(L(\tau_l)).$
    When $\tau_l$ is not minimal, we use (\ref{eqn: tr and sym}) again to see that the function on the right-hand side $sym(t_l^{n_l})+o(<\lambda)$ lifts to the $GL(L(\tau_l))$-invariant function $tr(\Phi_{\tau_l}^{n_l})$ on $\Fgl(L(\tau_l)),$
    but the lower order term $o(<\lambda)$ already lifts by the induction assumption. 
    Therefore, the function $sym(t_l^{n_l})$ also lifts.
\end{proof}

Any homomorphism $\phi_G: G \to \prod_{i=1}^mGL_{N_i}$ as above induces a morphism between the corresponding Hitchin bases and moduli stacks of Higgs bundles. In the following lemma, we keep the set-up as in Lemma \ref{lemma: immersion of steinberg bases}, and let $L'=\Omega_{C'}^1(D')$ and $L^{\otimes p}:= F_{C/k}^* L'.$
\begin{lemma}
\label{lemma: cartesian sq hit base}
    We have the following Cartesian square of closed immersions of Hitchin bases:
    \begin{equation}
    \label{eqn: four embeddings}
        \xymatrix{
        A(C', G, L') \ar@{^{(}->}[r]^-{\Xi_{\Fg}}  \ar@{^{(}->}[d]_-{F_{X/k}^*}&
        \prod_{i=1}^m A(C', GL_{N_i}, L') \ar@{^{(}->}[d]^-{F_{X/k}^*} \\
        A(C,G,L^{\otimes p}) \ar@{^{(}->}[r]_-{\Xi_{\Fg}} &
        \prod_{i=1}^m A(C, GL_{N_i}, L^{\otimes p}).
        }
    \end{equation}
\end{lemma}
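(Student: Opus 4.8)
The plan is to verify the two assertions of the statement in turn: that all four arrows are closed immersions, and that the square is Cartesian. For the closed immersions, the horizontal arrows are induced by the $\bG_m$-equivariant closed immersion $\Xi_{\Fg}\colon \Fg\git G\hookrightarrow \prod_i \Fgl_{N_i}\git GL_{N_i}$ of \Cref{lemma: immersion of steinberg bases}. Twisting $\Xi_{\Fg}$ by $L'$ over $C'$ (resp. by $L^{\otimes p}$ over $C$) yields a closed immersion of the associated affine cones over the curve, and passing to the functor of $C$-sections (resp. $C'$-sections) preserves closed immersions, exactly as in the proof of \Cref{lemma: affine grassmannian is ind-projective} via \cite[Lem. 3.5]{gauged_theta_stratifications}. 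The left vertical arrow is a closed immersion by \Cref{lemma: F pullback cl}, and the right vertical arrow follows by applying the same lemma to each factor $GL_{N_i}$ (whose Hitchin-base exponents are $1,\dots,N_i$) and taking the product.

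For the Cartesian property I would work in Chevalley coordinates. Fix homogeneous generators $u_1,\dots,u_N$ of $\Sym(\Ft^*)^W$ of degrees $d_1,\dots,d_N$; by \eqref{eqn: proto hitchin base} a $T$-point of $A(C,G,L^{\otimes p})$ is then a tuple $(a_j)_j$ with $a_j\in H^0(C_T,(L^{\otimes p})^{\otimes d_j})$, and similarly the three other corners are described by the $GL_{N_i}$-invariants $e_{i,k}$ of weights $k=1,\dots,N_i$. Surjectivity of $\Xi_{\Fg}^{\sharp}$ (i.e. \Cref{lemma: immersion of steinberg bases}) lets me write each generator as a weighted-homogeneous polynomial with $k$-coefficients $u_j=Q_j(\{e_{i,k}\})$, where $e_{i,k}=P_{i,k}(u_\bullet)$ and hence $Q_j(\{P_{i,k}(u_\bullet)\})=u_j$ as an identity in $k[u_1,\dots,u_N]$. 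On sections, $\Xi$ sends $(a_j)\mapsto(P_{i,k}(a_\bullet))$. The square commutes because both composites apply the $P_{i,k}$ and the Frobenius pullback, which commute: since $F_{C/k}$ is a $k$-morphism, $F_{C/k}^*$ is a $k$-linear, degree-preserving, multiplicative map of the graded section rings.

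The heart of the matter is then a descent step. Given a point of the fibre product, that is data $(a_j)$ over $C$ and $(b'_{i,k})$ over $C'$ satisfying $P_{i,k}(a_\bullet)=F_{C/k}^*(b'_{i,k})$ for all $i,k$, I set $a'_j:=Q_j(\{b'_{i,k}\})\in H^0(C',L'^{\otimes d_j})$. Applying the graded $k$-algebra map $u_\bullet\mapsto a_\bullet$ to the polynomial identity above gives $a_j=Q_j(\{P_{i,k}(a_\bullet)\})$, whence, using $k$-linearity and multiplicativity of $F_{C/k}^*$,
\[ F_{C/k}^*(a'_j)=Q_j(\{F_{C/k}^*(b'_{i,k})\})=Q_j(\{P_{i,k}(a_\bullet)\})=a_j. \]
Thus $(a'_j)\in A(C',G,L')$ maps to $(a_j)$ under the left vertical arrow. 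Moreover $F_{C/k}^*(P_{i,k}(a'_\bullet))=P_{i,k}(a_\bullet)=F_{C/k}^*(b'_{i,k})$, so the injectivity of $F_{C/k}^*$ from \Cref{lemma: F pullback cl} forces $\Xi((a'_j))=(b'_{i,k})$; the same injectivity gives uniqueness of $(a'_j)$. Hence the canonical map from $A(C',G,L')$ to the fibre product is a bijection on $T$-points, functorially in $T$, and the square is Cartesian.

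The main obstacle is precisely this descent: a priori the individual $G$-invariant coordinates $a_j$ over $C$ need not descend to $C'$ even though the $GL_{N_i}$-invariants $b'_{i,k}$ do. What rescues the argument is that $\Xi_{\Fg}^{\sharp}$ is surjective, so each $a_j$ is a $k$-polynomial in quantities already defined over $C'$, and the $k$-linearity of the relative Frobenius lets this relation descend. I would also dispose of the degenerate cases of \Cref{lemma: F pullback cl}: where a Hitchin base vanishes the corresponding corners are zero and the square is trivially Cartesian, while in all remaining cases the four corners are the $S$-vector bundles produced by \Cref{lemma: F pullback cl} and the computation above applies verbatim in families over $T$.
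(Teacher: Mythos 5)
Your proof is correct, and its skeleton matches the paper's: both get the vertical closed immersions from \Cref{lemma: F pullback cl} (applied to $G$ and to each $\GL_{N_i}$), both get the horizontal ones by twisting the Steinberg-base immersion of \Cref{lemma: immersion of steinberg bases} by the line bundle and passing to section functors, and both reduce Cartesianness to a statement about the image of $F_{C/k}^*$ inside the big graded section space. Where you genuinely diverge is in how that last step is executed. The paper asserts that the twisted $\Xi_{\Ft}$ is induced by a \emph{linear} embedding of locally free sheaves $\bigoplus_i L^{\otimes n_i}\hookrightarrow \bigoplus_{i,j}L^{\otimes j}$ and then settles Cartesianness by a one-line intersection identity of $k$-subspaces of $\bigoplus_{i,j}H^0(C,L^{\otimes pj})$; this implicitly treats $\Xi$ as linear in the Chevalley coordinates, whereas the pullbacks of the $\Fgl_{N_i}$-invariants are in general nonlinear weighted-homogeneous polynomials $P_{i,k}(u_\bullet)$. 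You instead keep the polynomial structure, use the surjectivity of $\Xi_{\Fg}^{\sharp}$ to record the identity $u_j=Q_j(\{P_{i,k}(u_\bullet)\})$, and perform the descent on arbitrary $T$-points using that $F_{C/k}^*$ is an injective, $k$-linear, multiplicative map of graded section rings. This costs a slightly longer computation but buys robustness (it does not depend on being able to choose generators making $\Xi$ linear), gives the commutativity of the square explicitly rather than by assertion, and yields the universal property functorially in $T$ rather than only on global sections over $k$. Your handling of the degenerate cases of \Cref{lemma: F pullback cl} is harmless but not needed, since the functor-of-points argument is insensitive to representability.
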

\begin{proof}
    By Lemma \ref{lemma: F pullback cl}, the vertical morphisms in (\ref{eqn: four embeddings}) are closed immersions induced by embeddings of $k$-vector spaces. 
    By Lemma \ref{lemma: immersion of steinberg bases} and  (\ref{eqn: proto hitchin base}), the twisted $\Xi_t,$
    \[\Xi_{\Ft}\times^{\bG_m} L: \Ft \git W\times^{\bG_m} L\hookrightarrow (\prod_{i=1}^m \Ft_i \git W)\times^{\bG_m} L\]
    is induced by an embedding of locally free sheaves: $\bigoplus_{i=1}^N L^{\otimes n_i}\hookrightarrow \bigoplus_{i=1}^m \bigoplus_{j=1}^{N_i} L^{\otimes j}.$
Therefore, the horizontal morphisms in (\ref{eqn: four embeddings}) are also closed immersions.
    The Cartesianness of (\ref{eqn: four embeddings}) follows from the identity of $k$-vector subspaces of $\bigoplus_{i=1}^m\bigoplus_{j=1}^{N_i}H^0(C, L^{\otimes pj}):$
    \[\Big(F_{C/k}^*\bigoplus_{i=1}^m\bigoplus_{j=1}^{N_i} H^0(C', (L')^{\otimes j})\Big)\cap \bigoplus_{i=1}^N H^0(C, L^{\otimes pn_i})= F_{C/k}^* \bigoplus_{i=1}^N H^0(C',(L')^{\otimes n_i}).\]
\end{proof}

\begin{proof}[Proof of \Cref{prop: hodge-hitchin}]
By a similar argument as in \cite[Appendix]{decataldo-herrero-nahpoles} we can assume that the triple $(Y,C_Y,E)$ is the pullback of a triple $(\hat{S}, \widetilde{C}, \widetilde{E}),$ where $\hat{S}$ is an integral affine scheme over $\mathbb{F}_p.$
Furthermore, after choosing a trivialization of $E$ etale locally over $C_Y,$ the $t$-connection $\nabla$ descends to $\widetilde{\nabla}$ on $\widetilde{E}.$ By Cartier descent \cite[Thm. 5.1]{katz1970nilpotent}, checking that $h(E,\Psi(\nabla))$ lands in $F^*A(C')$ is equivalent to checking that the section $\nabla^{can}(h(E,\Psi(\nabla)))$ vanishes, where $\nabla^{can}$ is the Cartier descent connection, so it is a local problem on $C_Y.$ Therefore, it suffices to check the factorization for all quadruples of the form $(\hat{S}, \widetilde{C},\widetilde{E}, \widetilde{\nabla}),$ where $\hat{S}$ is an integral affine Noetherian scheme over $\mathbb{F}_p,$ $\widetilde{C}$ is a quasi-projective smooth curve over $\hat{S}$, and $(\widetilde{E},\widetilde{\nabla})$ is a $G$-bundle with a $t$-connection.

Let $a:\widetilde{C}\to \hat{S}$ be the structure morphism.
We have that $\nabla^{can}(h(\widetilde{E},\Psi(\widetilde{\nabla})))$ is a section of $a_*(\bigoplus_i \Omega^1_{\widetilde{C}}(D)^{pn_i}),$
which is a locally free sheaf on $\hat{S}.$
Since $\hat{S}$ is integral, this section vanishes if it vanishes fiberwise over points on $\hat{S}.$
We are thus reduced to the case where $S$ is the spectrum of a field $k$ with characteristic $p>|W|.$

Let $\phi_G: G\to \prod_{i=1}^m GL_{N_i}$ be as in Lemma \ref{lemma: immersion of steinberg bases}.
It induces the following solid commutative diagram of $S$-stacks:
\[
\xymatrix{
        A(C', G, L') \ar@{^{(}->}[rrr]^-{\Xi_{\Fg}}  \ar@{^{(}->}[dd]_-{F_{C/k}^*}&&&
        \prod_{i=1}^m A(C', GL_{N_i}, L') \ar@{^{(}->}[dd]^-{F_{C/k}^*} \\
 & \hodge_G(C) \ar[r]^-{(\pi_G)_*} \ar[dl]_-{h\circ \Psi} \ar@{.>}[lu]^-{\widetilde{h_{Hod}}} &  \hodge_{\prod_i GL_{N_i}}(C) \ar[dr]^-{h\circ \Psi} \ar@{.>}[ur]_-{\widetilde{h_{GL}}}&
        \\
        A(C,G,L^{\otimes p}) \ar@{^{(}->}[rrr]_-{\Xi_{\Fg}} &&&
        \prod_{i=1}^m A(C, GL_{N_i}, L^{\otimes p}).
}
\]
By \cite[Cor. 5.7]{langer-moduli-lie-algebroids} and \cite[Appendix]{decataldo-herrero-nahpoles} the factorization marked by the dashed arrow $\widetilde{h_{GL}}$ exists.
Therefore, by the Cartesianness of the outer square as in \Cref{lemma: cartesian sq hit base}, the desired factorization $\widetilde{h_{Hod}}$ exists. 
\end{proof}

\subsection{Properness of the Hodge-Hitchin morphism}

\begin{lemma}
    The Hodge-Hitchin morphism $h_{Hod}$ is $\bG_m$-equivariant for the natural $\bG_m$-actions.
\end{lemma}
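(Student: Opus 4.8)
The plan is to reduce the equivariance of $h_{Hod}$ to the scaling behavior of the $p$-curvature together with the functoriality of the factorization constructed in \Cref{prop: hodge-hitchin}. First I would recall from \Cref{prop: g_m action and limits} how $\mathbb{G}_m$ acts on $\hodge_G$: for $a \in \mathbb{G}_m$, a point $(E,\nabla)$ lying over $t \in \mathbb{A}^1_S$ is sent to $(E, a\cdot\nabla)$ lying over $a\cdot t = a\,t$, where after an \'etale trivialization the splitting $\nabla$ is multiplied by $a$, so that $(a\cdot\nabla)(\partial) = a\,\nabla(\partial)$ for every local section $\partial$ of $T_{C_Y/Y}(-D)$, while the parameter transforms as $t \mapsto a\,t$.

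The crux of the argument is the computation of $\Psi(a\cdot\nabla)$. Substituting $\nabla(\partial)\mapsto a\,\nabla(\partial)$ and $t\mapsto a\,t$ into the defining formula
\[ \Psi(\nabla)(\partial) = \big(\nabla(\partial)\big)^p - t^{p-1}\,\nabla(\partial^{[p]}), \]
I would observe that the two summands acquire the same factor: the degree-$p$ term gives $\big(a\,\nabla(\partial)\big)^p = a^p\big(\nabla(\partial)\big)^p$, while the second term gives $(a\,t)^{p-1}\cdot a\,\nabla(\partial^{[p]}) = a^p\,t^{p-1}\,\nabla(\partial^{[p]})$, so that $\Psi(a\cdot\nabla) = a^p\,\Psi(\nabla)$. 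I want to emphasize that this is the main point: the weight $-1$ of $t$ and the linear scaling of $\nabla$ conspire precisely so that the degree-$p$ term and the degree-$(p-1)$-in-$t$ term scale by the common power $a^p$. In particular, the morphism $\Psi\colon \hodge_G \to \text{Higgs}(C,(\Omega^1_{C/S}(D))^{\otimes p})$ is equivariant, where $\mathbb{G}_m$ acts on the target of $\Psi$ by $a\cdot(E,\varphi) = (E, a^p\varphi)$.

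Next I would transport this through the Hitchin morphism $h$ and the Frobenius embedding $F_{C/S}^*$. The morphism $h$ is equivariant for the weighted scaling actions, since the $i$-th component of $h(E,\varphi)$ is homogeneous of degree $n_i$ in $\varphi$; hence $\varphi \mapsto a^p\varphi$ scales the $i$-th coordinate of $A(C,(\Omega^1_{C/S}(D))^{\otimes p})$ by $a^{p n_i}$. I would then equip $A(C',\Omega^1_{C'/S'}(D'))$ with the weighted $\mathbb{G}_m$-action scaling its $i$-th graded piece $\Gamma_{C'/S}\big((\Omega^1_{C'/S'}(D'))^{\otimes n_i}\big)$ by $a^{p n_i}$; since $F_{C/S}^*$ is $\mathcal{O}_S$-linear (the relative Frobenius is a morphism of $S$-schemes) and a closed immersion by \Cref{lemma: F pullback cl}, this is exactly the action for which $F_{C/S}^*$ is equivariant onto its image. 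Because $F_{C/S}^*$ is a closed immersion, the unique factorization $\widetilde{h_{Hod}}$ of \Cref{prop: hodge-hitchin} inherits equivariance from that of $F_{C/S}^*\circ\widetilde{h_{Hod}} = h\circ\Psi$. Finally, the remaining component $\hodge_G \to \mathbb{A}^1_S$ of $h_{Hod}$ (\Cref{defn: hodge hitchin}) is equivariant by \Cref{prop: g_m action and limits}, so the product $h_{Hod}$ is equivariant.

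The main obstacle is pinning down the scaling conventions consistently: confirming that the $\mathbb{G}_m$-action multiplies the connection operator $\nabla(\partial)$ by $a$ and the parameter $t$ by $a$, checking the resulting factor $a^p$ in $\Psi(a\cdot\nabla)$, and then matching the induced weights $a^{p n_i}$ on the Steinberg--Hitchin base across the Frobenius pullback, so that the naturally defined action on $A(C',\Omega^1_{C'/S'}(D'))$ is precisely the one rendering the closed immersion $F_{C/S}^*$, and hence $\widetilde{h_{Hod}}$, equivariant.
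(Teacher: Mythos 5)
Your proposal is correct and follows essentially the same route as the paper: decompose $h_{Hod}$ into the structural morphism to $\mathbb{A}^1_S$ (equivariant by \Cref{prop: g_m action and limits}) and the factorization through $\Psi$, $h$, and $F_{C/S}^*$, with the key computation being $\Psi(a\cdot\nabla)(\partial) = (a\,\nabla(\partial))^p - (at)^{p-1}a\,\nabla(\partial^{[p]}) = a^p\,\Psi(\nabla)(\partial)$. Your extra care in matching the weights $a^{pn_i}$ across the closed immersion $F_{C/S}^*$ and in noting that $\widetilde{h_{Hod}}$ inherits equivariance because $F_{C/S}^*$ is a monomorphism is a slightly more explicit bookkeeping of what the paper leaves implicit, but it is not a different argument.
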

\begin{proof}
Lemma \ref{prop: g_m action and limits} shows that the natural morphism $\hodge_C(D)\to \mathbb{A}^1_S$ is $\bG_m$-equivariant.
 It suffices to show that all three other morphisms $h$, $\psi$, $F_{C/S}^*$ in (\ref{diagram: factorization}) are $\mathbb{G}_m$-equivariant. 
The Hitchin morphism $h$ is $\bG_m$-equivariant, because of the $\bG_m$-equivariance of the 
    natural morphism $\Fg/G\to \Fg\git G.$ On the other hand, the Frobenius pullback $F_{C/S}^*$ is by definition $\bG_m$-equivariant for the standard $\bG_m$-action on $A(C',\Omega^1_{C'/S}(D'))$ and the $p$-th power of the standard $\bG_m$-action on $A(C,\omega_{C/S}^1(D)^{\otimes p}).$
   Finally, we consider the $p$-curvature morphism $\Psi.$
    Let $Y$ be an $S$-scheme and $(E,\nabla_t)$ be a $t$-connection on $C_Y.$
    Let $t'\in \bG_m(Y).$
    Then $t\cdot (E,\nabla)$ is the $tt'$-connection given by $(E,t'\nabla_t).$
    For any derivation $\partial$ in $T_{C_Y/Y}(-D),$ we have 
    \[\Psi(t'\nabla_t)(\partial)=\Big( t'\nabla_t(\partial)\Big)^p-(t't)^{p-1}t'\nabla_t(\partial^{[p]})=(t')^p\Psi(\nabla_t)(\partial).\]
    Therefore, the morphism $\Psi$ is $\bG_m$-equivariant for the standard action on $\hodge_G(C)$ and the $p$-th power of the standard action on $\text{Higgs}(C, (\Omega_{C/S}^1(D))^{\otimes p}).$
\end{proof}

\begin{lemma}
\label{lemma: gm equi sch}
Let $G$ be a split reductive group satisfying properties (Em) and (Z). The $\bG_m$-action on the stack $(\hodge_{G,d})^{ss}$ induces a $\mathbb{G}_m$-action on its quasi-projective moduli space $M_{\mathrm{Hod}, G, d}^D$ constructed in \Cref{thm: moduli space for hodge general G}(2).
    Furthermore, assume that for each geometric point $\overline{s}$ of $S$, we have $p>h(G_{\overline{s}})$, then there is an induced Hodge-Hitchin morphism 
    \begin{equation}
    \label{eqn: hodge hitchin scheme}
        h_{Hod}: M_{\mathrm{Hod}, G, d}^D \to A(C',\Omega_{C'/S'}^1(D'))\times_S \mathbb{A}^1_S
    \end{equation}
    which is $\bG_m$-equivariant for the natural $\bG_m$-actions.
\end{lemma}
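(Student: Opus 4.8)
The plan is to deduce both assertions formally from two properties of the adequate moduli space morphism $\pi \colon (\hodge_{G,d})^{ss} \to M_{\mathrm{Hod},G,d}^D$ produced in \Cref{thm: moduli space for hodge general G}(2): its universal property as the initial object among morphisms from the stack to algebraic spaces, and its compatibility with flat base change \cite{alper_adequate}. First I would record that the $\mathbb{G}_{m,S}$-action of \Cref{prop: g_m action and limits} preserves the open and closed substack $(\hodge_{G,d})^{ss}$: for $a \in \mathbb{G}_m(T)$ the automorphism $(E,\nabla)\mapsto(E,a\cdot\nabla)$ leaves the underlying $G$-bundle $E$ unchanged (hence fixes the degree $d$) and carries $\nabla$-compatible parabolic reductions to $(a\cdot\nabla)$-compatible ones, so semistability as in \Cref{defn: classical semistability} is preserved. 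Thus we obtain a genuine action $\alpha \colon \mathbb{G}_{m,S}\times_S (\hodge_{G,d})^{ss}\to (\hodge_{G,d})^{ss}$ restricting the one on $\hodge_G$.

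For the first claim I would observe that the projection $q\colon \mathbb{G}_{m,S}\times_S M_{\mathrm{Hod},G,d}^D\to M_{\mathrm{Hod},G,d}^D$ is smooth, and that the base change of $\pi$ along $q$ is exactly $\mathrm{id}_{\mathbb{G}_{m,S}}\times\pi\colon \mathbb{G}_{m,S}\times_S(\hodge_{G,d})^{ss}\to \mathbb{G}_{m,S}\times_S M_{\mathrm{Hod},G,d}^D$; by the stability of adequate moduli spaces under flat base change this is again an adequate moduli space morphism. The composite $\pi\circ\alpha$ then factors uniquely through a morphism $\bar\alpha\colon \mathbb{G}_{m,S}\times_S M_{\mathrm{Hod},G,d}^D\to M_{\mathrm{Hod},G,d}^D$ by the universal property. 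Applying the same base-change observation to $\mathbb{G}_{m,S}\times_S\mathbb{G}_{m,S}$ and to the identity section $S\to\mathbb{G}_{m,S}$, the associativity and unit axioms for $\bar\alpha$ follow from those for $\alpha$ through the uniqueness half of the universal property. This yields the desired $\mathbb{G}_m$-action on $M_{\mathrm{Hod},G,d}^D$, compatible with $\pi$ by construction.

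For the second claim, the stack-level Hodge--Hitchin morphism of \Cref{defn: hodge hitchin}, restricted to $(\hodge_{G,d})^{ss}$, is a morphism into the scheme $A(C',\Omega^1_{C'/S'}(D'))\times_S\mathbb{A}^1_S$, whose first factor is a relative affine space over $S$ as in the proof of \Cref{lemma: F pullback cl}. Since the target is an algebraic space, the universal property of $\pi$ produces a unique factorization, which is by definition the morphism $h_{\mathrm{Hod}}$ of \eqref{eqn: hodge hitchin scheme}. To check $\mathbb{G}_m$-equivariance, I would compare the two morphisms $\mathbb{G}_{m,S}\times_S M_{\mathrm{Hod},G,d}^D\to A(C',\Omega^1_{C'/S'}(D'))\times_S\mathbb{A}^1_S$ given by $h_{\mathrm{Hod}}\circ\bar\alpha$ and by the target $\mathbb{G}_m$-action precomposed with $\mathrm{id}_{\mathbb{G}_{m,S}}\times h_{\mathrm{Hod}}$. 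Both pull back along the adequate moduli morphism $\mathrm{id}_{\mathbb{G}_{m,S}}\times\pi$ to the single morphism coming from the $\mathbb{G}_m$-equivariance of the stack-level $h_{\mathrm{Hod}}$ established in the preceding lemma; hence they coincide by uniqueness.

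The only genuinely delicate point is the verification that the relevant base changes remain adequate moduli space morphisms, which is precisely where the flat base change property of \cite{alper_adequate} enters; once this is in place, every coherence and equivariance statement is forced by the uniqueness in the universal property, so no further computation is needed. I expect the main obstacle to be purely organizational: making sure the same base-change argument is invoked consistently for $\mathbb{G}_m$, for $\mathbb{G}_m\times_S\mathbb{G}_m$, and for the identity section when promoting the group-action axioms and the equivariance from the stack to the moduli space.
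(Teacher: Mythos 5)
Your proposal is correct and follows essentially the same route as the paper: both deduce the action and the descended Hodge--Hitchin morphism from the flat base change property of adequate moduli spaces \cite[Prop.~5.2.9]{alper_adequate} combined with the universal property of the adequate moduli space morphism, with the coherence and equivariance statements forced by uniqueness. The only difference is that you spell out details the paper leaves implicit (preservation of the semistable locus and degree by the action, and the verification of the group-action axioms), which is harmless.
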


\begin{proof}
    Since taking the adequate moduli space commutes with flat base change \cite[Prop. 5.2.9]{alper_adequate}, the projection 
    \begin{equation}
    \label{eqn: hodge times bgm}
        (\hodge_{G,d})^{ss}\times_S\bG_m\to M_{\mathrm{Hod}, G, d}^D \times_S\bG_m
    \end{equation}
    is an adequate moduli space morphism. 
    Therefore, by the universal property of adequate moduli spaces, the composition
    \[\bG_m\times_S (\hodge_{G,d})^{ss}\xrightarrow{act} (\hodge_{G,d})^{ss}\to M_{\mathrm{Hod}, G, d}^D\]
    factors through (\ref{eqn: hodge times bgm}) and defines a $\mathbb{G}_m$-action $\bG_m\times_S M_{\mathrm{Hod}, G, d}^D\to M_{\mathrm{Hod}, G, d}^D$. 
    Using \Cref{prop: hodge-hitchin} and the universal property for the adequate moduli spaces $M_{\mathrm{Hod}, G, d}^D$ and $\bG_m\times_S M_{\mathrm{Hod}, G, d}^D$ again, we obtain the desired Hodge-Hitchin morphism (\ref{eqn: hodge hitchin scheme}) and its $\bG_m$-equivariance.  
\end{proof}

\begin{lemma}
\label{lemma: 0-fiber of hh and frobenius}
    There is an isomorphism $\alpha: A(C',\Omega^1_{C'/S}(D'))\xrightarrow{\sim} A(C,\Omega^1_{C/S}(D))'$ of $S$-schemes,
    where the latter is the Frobenius twist of the Hitchin base for $C.$
    Furthermore, the $0_{\mathbb{A}^1_S}$-fiber of the Hodge-Hitchin morphism admits the following factorization:
    \begin{equation}
    \label{diagram: 0 fiber of hh}
        \xymatrix{
        \Higgs_{G} \ar[d]_-{h_{Hod,0_{\mathbb{A}^1_S}}} \ar[r]^-{h} &
        A(C,\Omega_{C/S}^1(D)) \ar[d]^-{F_{A/S}} \ar[dl]_-{\sigma_C^*}\\
        A(C',\Omega_{C/S}^1(D')) \ar[r]_-{\alpha} &
        A(C,\Omega_{C/S}^1(D))'.
        }
    \end{equation}
\end{lemma}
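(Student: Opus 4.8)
The plan is to first produce the isomorphism $\alpha$ by base change, and then to observe that the asserted commutative diagram splits into two triangles: a ``geometric'' one, $F_{A/S} = \alpha \circ \sigma_C^{*}$, which merely expresses the relative Frobenius of the Hitchin base through the Frobenius-twist projection, and a ``spectral'' one, $h_{\mathrm{Hod},0_{\mathbb{A}^1_S}} = \sigma_C^{*}\circ h$, which carries the actual content. For the construction of $\alpha$: by the Chevalley isomorphism \eqref{eqn: proto hitchin base} the Hitchin base $A:=A(C,\Omega^1_{C/S}(D))$ is the product $\prod_i \Gamma_{C/S}(\Omega^1_{C/S}(D)^{\otimes n_i})$, which by the representability and base-change argument of \Cref{lemma: F pullback cl} is an $S$-vector bundle whose formation commutes with arbitrary base change $S'\to S$. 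Applying this to $F_S\colon S\to S$, and using that $C'=C\times_{S,F_S}S$ with $\Omega^1_{C'/S}(D')=\sigma_C^{*}\Omega^1_{C/S}(D)$, produces a canonical identification $A' \cong A(C',\Omega^1_{C'/S}(D'))$; I take $\alpha$ to be its inverse.

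For the \emph{geometric triangle}, I would argue that, since $A$ is a product of total spaces of $S$-vector bundles over the $\mathbb{F}_p$-scheme $S$, its relative Frobenius $F_{A/S}\colon A\to A'$ is determined through the Cartesian square defining $A'$, and that under $\alpha$ the section-pullback map $\sigma_C^{*}$ along the Frobenius-twist projection $\sigma_C\colon C'\to C$ is exactly this relative Frobenius. This is checked one vector-bundle factor at a time, where it reduces to the elementary fact that the relative Frobenius of $\mathbb{A}^1_S$ sends the coordinate to its $p$-th power. This yields $\alpha\circ\sigma_C^{*}=F_{A/S}$.

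The \emph{spectral triangle} is the heart of the matter. By definition $\Higgs_{G}=0_S\times_{\mathbb{A}^1_S}\hodge_{G}$, and restricting the $p$-curvature morphism $\Psi$ to this fiber sends $(E,\varphi)\mapsto (E,\varphi^{[p]})$ with $\varphi^{[p]}(\partial)=\varphi(\partial)^p$ the restricted $p$-th power, by the $t=0$ case of the defining formula for $\Psi$ together with \Cref{lemma: ad-p-linear}(a). The defining property of $\widetilde{h_{\mathrm{Hod}}}$ in \eqref{diagram: factorization} gives $F_{C/S}^{*}\circ h_{\mathrm{Hod},0_{\mathbb{A}^1_S}} = h\circ \Psi|_{\Higgs_{G}}$, so it suffices to compute $h(E,\varphi^{[p]})$. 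Choosing the generators of $\Sym(\Ft^{*})^{W}$ in the form $\mathrm{sym}(\lambda^{n})$ with $\lambda$ an $\mathbb{F}_p$-rational weight, as permitted by \Cref{lemma: invariants}(ii), the computation of the restricted power on the Cartan (where $d\lambda(\theta^{[p]})=(d\lambda(\theta))^p$) combined with the identity $(a+b)^p=a^p+b^p$ in characteristic $p$ gives $P_{n_i}(\varphi^{[p]})=(P_{n_i}(\varphi))^{p}$. Hence each component of $h(E,\varphi^{[p]})$ is the $p$-th power of the corresponding component of $h(E,\varphi)$, i.e. $h\circ\Psi|_{\Higgs_{G}}=F_C^{*}\circ h$ for the absolute Frobenius $F_C\colon C\to C$. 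Using $F_C=\sigma_C\circ F_{C/S}$ this reads $F_{C/S}^{*}\circ h_{\mathrm{Hod},0_{\mathbb{A}^1_S}} = F_{C/S}^{*}\circ\sigma_C^{*}\circ h$, and cancelling the closed immersion $F_{C/S}^{*}$ of \Cref{lemma: F pullback cl} yields $h_{\mathrm{Hod},0_{\mathbb{A}^1_S}}=\sigma_C^{*}\circ h$.

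I expect the main obstacle to be the key identity $P_{n_i}(\varphi^{[p]})=(P_{n_i}(\varphi))^{p}$: this is precisely where the $\mathbb{F}_p$-rational invariants of \Cref{lemma: invariants}(ii) are indispensable, since it is the $\mathbb{F}_p$-coefficients that make the Hitchin invariants commute with the Frobenius (the $p$-power operation). The delicate point is to make the restricted power $\varphi^{[p]}$ interact with the Chevalley map integrally over a general base $S$ rather than merely fiberwise; I would handle this by the spreading-out and fiberwise-vanishing reduction already used in the proof of \Cref{prop: hodge-hitchin}, reducing to $S=\Spec(k)$ for a field where the diagonalization argument on a maximal torus applies directly.
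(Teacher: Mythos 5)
Your proposal is correct and follows essentially the same route as the paper: construct $\alpha$ by (flat) base change along $F_S$, verify the lower triangle by identifying $\alpha^{-1}\circ F_{A/S}$ with pullback along the Frobenius-twist projection, and reduce the upper triangle, after composing with the closed immersion $F_{C/S}^*$ of \Cref{lemma: F pullback cl}, to the identity $h(E,\varphi^{[p]})=F_C^*h(E,\varphi)$ checked \'etale-locally on a maximal torus. The only (minor) divergence is that you verify this last identity via the explicit $\mathbb{F}_p$-rational generators $\mathrm{sym}(\lambda^{n})$ of \Cref{lemma: invariants}(ii) and the freshman's dream, whereas the paper argues more abstractly that the $W$- and $G$-actions commute with the restricted $p$-power and that $(\cdot)^{[p]}$ on $\Ft$ is induced by Frobenius; the two verifications are interchangeable.
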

\begin{proof}
    The Cartesian diagram of schemes
    \begin{equation*}
        \xymatrix{
        C' \ar[r]^-{\sigma_C}\ar[d]_-{\pi'} & C\ar[d]^-{\pi}\\
        S\ar[r]_-{F_S} & S. 
        }
    \end{equation*}
    induces isomorphisms of $S$-vector bundles $A(C',\Omega_{C'/S}^1(D'))\cong \pi'_* \sigma_C^* \Big((\Fg\git G)\times^{\bG_m} \Omega_{C}^1(D)\Big)$ and $A(C,\Omega_{C/S}^1(D))'\cong F_S^*\pi_* \Big((\Fg\git G)\times^{\bG_m} \Omega_C^1(D)\Big).$
    Therefore, the existence of the isomorphism $\alpha$ follows from flat base change \cite[\href{https://stacks.math.columbia.edu/tag/02KH}{Tag 02KH}]{stacks-project}.

    We now show the commutativity of (\ref{diagram: 0 fiber of hh}).
    Let $Y$ be an $S$-scheme.
    Let $s$ be a section of $(\Fg\git G)\times^{\bG_m}\Omega_{C_Y/Y}^1(D).$
    Then $\alpha^{-1}\circ F_{A/S}$ is given by the pullback by the morphism $\sigma_{C}\times F_Y: C_Y'=C'\times_S Y\to C\times_S Y=C_Y.$
    We thus have the commutativity of the lower triangle. 
    In view of \Cref{lemma: F pullback cl}, it suffices to show the commutativity of the upper triangle after composing with the closed immersion $F_{C/S}^*: A(C',\Omega_{C/S}^1(D'))\hookrightarrow A(C,\Omega_{C/S}^1(D)^{\otimes p}).$
    After the composition, the morphism that passes through $h$ becomes $F_C^*\circ h.$
    We are then reduced to show that given a Higgs bundle $(E,\phi)$ on $C_Y,$
    we have an equality of sections:
    \begin{equation}
    \label{eqn: frobenius pull back of higgs}
        F_C^*h(E,\phi)=h(E,\phi^p)\in A(C,\Omega_{C/S}^1(D)^{\otimes p})(Y).
    \end{equation}
    This can be checked \'etale locally on $C_Y$, so we are free to pass to an affine \'etale cover and fix trivializations of $\Omega_{C_Y/Y}^1(D)$ and $E$.
    Then $\phi$ is identified with a section $g$ of the trivial sheaf $\Fg\otimes_S\cO_{C_Y}.$
    The $p$-th power $\phi^p$ is identified with the section $(g)^p,$ where $()^p$ is the $p$-th power map in the $p$-Lie algebra $\Fg.$
    Let $t$ be a section of $\mathfrak{t}$ that has the same image as $g$ in $\Fg\git G\cong \Ft \git W$ (\ref{eqn: proto hitchin base}).
    Let $(t)^p$ be the $p$-th power of $t$ in the $p$-Lie algebra $\mathfrak{t}.$
    Since the actions of $W$ and $G$ commute with the $p$-th power map, the images of $(t)^p$ and $(g)^p$ coincide in $\Fg\git G.$
    Since $(\cdot)^p$ on $\mathfrak{t}$ is induced from Frobenius pullback, the desired equality (\ref{eqn: frobenius pull back of higgs}) follows.
\end{proof}

\begin{lemma}
\label{lemma: higgs universal homeo}
Suppose that $G$ satisfies property (LH).
    The stack $(\Higgs_{G,d})^{ss}$ of semistable meromorphic Higgs bundles of degree $d$ admits a quasi-projective moduli space $M_{\mathrm{Higgs}, G, d}^D$.
    Moreover, there is a canonical universal homeomorphism between quasi-projective schemes:
    \begin{equation*}
        M_{\mathrm{Higgs}, G, d}^D\xrightarrow{u} (M_{\mathrm{Hod}, G, d}^D)\times_{\mathbb{A}^1_S}0_S.
    \end{equation*}
\end{lemma}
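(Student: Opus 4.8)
The plan is to identify the semistable Higgs locus with the zero-fiber of the semistable Hodge locus, construct its moduli space by rerunning the argument for $\Mhodge$, and then produce $u$ through a universal property; the only genuine content lies in the last step, where the non-flatness of $0_{S}\hookrightarrow\mathbb{A}^1_{S}$ forces a universal homeomorphism rather than an isomorphism. Throughout I note that under (LH) the group $G$ satisfies both (Z) (by \Cref{lemma: low height property implies (Z)}) and (Em) (by \Cref{thm: stability under change of group}), so that \Cref{thm: moduli space for hodge general G} is available.

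First I would observe that semistability in \Cref{defn: classical semistability} is phrased purely in terms of $\nabla$-compatible parabolic reductions and degrees of associated line bundles, with no reference to the parameter $t$. Hence a point of $\Higgs_{G}=0_{S}\times_{\mathbb{A}^1_{S}}\hodge_{G}$ is semistable as a meromorphic Higgs bundle precisely when the corresponding meromorphic $t$-connection lying over $0$ is semistable. Combined with the openness of $(\hodge_{G})^{\mathrm{ss}}$ from \Cref{thm: moduli space for hodge general G}(1), this yields an identification of open substacks
\[(\Higgs_{G,d})^{\mathrm{ss}} \;=\; (\hodge_{G,d})^{\mathrm{ss}}\times_{\mathbb{A}^1_{S}}0_{S}.\]

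Second, I would construct $M_{\mathrm{Higgs},G,d}^{D}$ by repeating the proof of \Cref{thm: moduli space for hodge general G}(2) with $\hodge$ replaced by $\Higgs$ throughout. All the inputs restrict to the zero-fiber: the change-of-group results of \Cref{subsection: change of groups} hold over $\mathbb{A}^1_{S}$ and hence over $0_{S}$, the factorization $\Ad_{*}\times q_{*}$ of \Cref{lemma: partial moduli space} restricts to a good moduli space morphism $h$ followed by an affine finite-type morphism $g$ (good moduli space morphisms and affine morphisms being stable under arbitrary base change), and the semistable $\GL(\mathfrak{g})\times G_{ab}$-Higgs moduli space is the quasi-projective zero-fiber supplied by \Cref{thm: moduli space for gln} together with the GIT constructions of Simpson and Langer. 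As in \Cref{thm: moduli space for hodge general G}(2), combining this with \cite[Lemma 5.2.11 + Thm. 6.3.3]{alper_adequate} produces a quasi-projective adequate moduli space morphism $(\Higgs_{G,d})^{\mathrm{ss}}\to M_{\mathrm{Higgs},G,d}^{D}$ over $S=0_{S}$.

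Finally, the composite $(\Higgs_{G,d})^{\mathrm{ss}}\hookrightarrow(\hodge_{G,d})^{\mathrm{ss}}\to\Mhodge$ factors through $\Mhodge\times_{\mathbb{A}^1_{S}}0_{S}$, so the universal property of the adequate moduli space $M_{\mathrm{Higgs},G,d}^{D}$ yields the canonical morphism $u$. By the first step, the morphism $(\Higgs_{G,d})^{\mathrm{ss}}\to\Mhodge\times_{\mathbb{A}^1_{S}}0_{S}$ is exactly the base change along $0_{S}\hookrightarrow\mathbb{A}^1_{S}$ of the adequate moduli space morphism $(\hodge_{G,d})^{\mathrm{ss}}\to\Mhodge$. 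Since adequacy of the structural homomorphism is preserved under base change and adequate homomorphisms induce finite universal homeomorphisms on spectra \cite{alper_adequate}, the canonical comparison map $u$ corresponds to a finite adequate homomorphism and is therefore a universal homeomorphism; as a corroborating check, both sides have the same geometric points, namely the $S$-equivalence classes in $(\Higgs_{G,d})^{\mathrm{ss}}(\overline{k})$. Both being quasi-projective over $S$, this gives the asserted universal homeomorphism of quasi-projective schemes. I expect the main obstacle to be precisely this last point: because $0_{S}\hookrightarrow\mathbb{A}^1_{S}$ is a non-flat closed immersion, adequate moduli spaces need not commute with it, and the discrepancy — governed by Frobenius twists in the geometrically-but-not-linearly-reductive setting of positive characteristic — is exactly what forces $u$ to be only a universal homeomorphism rather than an isomorphism. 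Making this rigorous requires invoking the base-change behaviour of adequate homomorphisms rather than the cleaner base-change properties of good moduli spaces used for the morphisms $h$ and $g$ above.
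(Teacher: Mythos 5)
Your proposal is correct and follows essentially the same route as the paper: identify $(\Higgs_{G,d})^{\mathrm{ss}}$ with the base change of $(\hodge_{G,d})^{\mathrm{ss}}$ along $0_S\hookrightarrow\mathbb{A}^1_S$, and then invoke the base-change behaviour of adequate moduli spaces (\cite[Prop.~5.2.9]{alper_adequate}) to produce $u$ as an integral universal homeomorphism, with quasi-projectivity obtained by rerunning the argument of \Cref{thm: moduli space for hodge general G}. The only cosmetic difference is that the paper extracts both the existence of $M_{\mathrm{Higgs},G,d}^D$ and the universal homeomorphism simultaneously from that single citation, whereas you rebuild the moduli space from the change-of-group factorization and then derive $u$ from the universal property; the underlying inputs are identical.
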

\begin{proof}
  We have the isomorphism of $S$-stacks:
    \begin{equation}
        \label{eqn: higgs uni}
        (\Higgs_{G,d})^{ss}\cong (\hodge_{G,d})^{ss}\times_{M_{\mathrm{Hod}, G, d}^D} (M_{\mathrm{Hod}, G, d}^D\times_{\mathbb{A}^1_S}0_S).
    \end{equation}
    \cite[Prop. 5.2.9]{alper_adequate} then implies the existence of the adequate moduli space $M_{\mathrm{Higgs}, G, d}^D$ and the adequate (thus universal) homeomorphism $u$ as above. 
    The fact that $M_{\mathrm{Higgs}, G, d}^D$ is a quasi-projective scheme follows similarly as in the proof of \Cref{thm: moduli space for hodge general G}. 
\end{proof}

\begin{lemma}
\label{lemma: for the third step}
    The commutative diagram (\ref{diagram: 0 fiber of hh}) of $S$-stacks gives rise to the following commutative diagram of $S$-schemes:
     \begin{equation*}
        \xymatrix{
        M_{\mathrm{Higgs}, G, d}^D \ar[rr]^-{h}\ar[d]_-{u}&& A(C,\Omega_{C/S}^1(D))  \ar[d]_-{\sigma_C^*}\\
        (M_{\mathrm{Hod}, G, d}^D)\times_{\mathbb{A}^1_S} 0_{S} \ar[rr]_-{h_{Hod,0_{\mathbb{A}^1_S}}}  &&   A(C',\Omega_{C/S}^1(D')),
        }
    \end{equation*}
    where $u$ is the morphism defined in \Cref{lemma: higgs universal homeo}.
\end{lemma}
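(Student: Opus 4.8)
The plan is to reduce the commutativity of the square of schemes to the already-established commutativity of the upper triangle of diagram (\ref{diagram: 0 fiber of hh}) at the level of stacks, and then to descend along the adequate moduli space morphism using its universal property. First I would fix notation: let $\pi: (\Higgs_{G,d})^{ss} \to M_{\mathrm{Higgs}, G, d}^D$ denote the adequate moduli space morphism from \Cref{lemma: higgs universal homeo}. An adequate moduli space morphism is universal among morphisms to schemes; in particular, for any scheme $Z$, precomposition with $\pi$ gives an injection $\Hom(M_{\mathrm{Higgs}, G, d}^D, Z) \hookrightarrow \Hom((\Higgs_{G,d})^{ss}, Z)$ (cf. \cite{alper_adequate}). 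Applying this with $Z = A(C, \Omega^1_{C/S}(D))$, the stack-level Hitchin morphism factors uniquely through a morphism $h: M_{\mathrm{Higgs}, G, d}^D \to A(C,\Omega^1_{C/S}(D))$, which is the top arrow of the square, so that the stack Hitchin morphism equals $h \circ \pi$.

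Next I would observe that both composites in the square agree after precomposition with $\pi$. On the one hand, by \Cref{lemma: 0-fiber of hh and frobenius} (the upper triangle of (\ref{diagram: 0 fiber of hh})) we have the identity of stack morphisms $h_{Hod,0_{\mathbb{A}^1_S}} = \sigma_C^* \circ h$ on $\Higgs_{G}$, hence on $(\Higgs_{G,d})^{ss}$; composing the right-hand side with $h = h \circ \pi$ from the previous step rewrites it as $(\sigma_C^* \circ h) \circ \pi$. On the other hand, the scheme-level morphism $h_{Hod,0_{\mathbb{A}^1_S}}$ of \Cref{lemma: gm equi sch} was obtained from the stack morphism $\widetilde{h_{Hod}}$ via the universal property of $\pi_{\mathrm{Hod}}$; restricting that identity to the fibre over $0_S \in \mathbb{A}^1_S$ and invoking the identification (\ref{eqn: higgs uni}), under which the restricted morphism $\pi_{\mathrm{Hod}}|_{0_S}$ factors as $u \circ \pi$, yields $h_{Hod,0_{\mathbb{A}^1_S}} \circ u \circ \pi$ equal to the left-vertical stack-level morphism $h_{Hod,0_{\mathbb{A}^1_S}}$ of (\ref{diagram: 0 fiber of hh}). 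Combining the two identities gives the equality of morphisms $(\Higgs_{G,d})^{ss} \to A(C',\Omega^1_{C/S}(D'))$ expressed by $h_{Hod,0_{\mathbb{A}^1_S}} \circ u \circ \pi = \sigma_C^* \circ h \circ \pi$.

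Finally I would conclude by descent: since $A(C',\Omega^1_{C/S}(D'))$ is a scheme and $\pi$ is an adequate moduli space morphism, the injection above (now taken with $Z = A(C',\Omega^1_{C/S}(D'))$) forces $h_{Hod,0_{\mathbb{A}^1_S}} \circ u = \sigma_C^* \circ h$, which is exactly the asserted commutativity. The main obstacle is the bookkeeping of the compatibilities between the stack-level and scheme-level Hodge-Hitchin and Hitchin morphisms; concretely, the delicate point is to verify carefully that the scheme-level $h_{Hod,0_{\mathbb{A}^1_S}}$ precomposed with $u \circ \pi$ recovers the left-vertical arrow of (\ref{diagram: 0 fiber of hh}), which requires tracking how the universal property used to construct $h_{Hod}$ in \Cref{lemma: gm equi sch} behaves under restriction to the $0_S$-fibre and under the isomorphism (\ref{eqn: higgs uni}). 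Once these identifications are pinned down, the descent step itself is purely formal.
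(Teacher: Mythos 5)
Your proof is correct and follows essentially the same route as the paper: the paper's (very terse) proof likewise reduces the commutativity to the stack-level identity of \Cref{lemma: 0-fiber of hh and frobenius} and then descends along the adequate moduli space morphism using its universal property and \cite[Prop.~5.2.9]{alper_adequate}. Your write-up simply makes explicit the diagram chase and the identification $\pi_{\mathrm{Hod}}|_{0_S} = u \circ \pi$ that the paper leaves implicit.
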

\begin{proof}
Via a diagram chase, we are reduced to showing that the natural morphism $h_{Hod, 0_{\mathbb{A}^1_S}}: M_{\mathrm{Higgs}, G, d}^D\to A(C')$ factors through  the morphism $u.$ The desired factorization follows from \cite[Prop. 5.2.9]{alper_adequate} and the uniqueness of the adequate moduli space, similarly as in the proof of \Cref{lemma: higgs universal homeo}.
\end{proof}

\begin{thm}[Properness of the Hodge-Hitchin morphism] \label{thm: properness of the hodge-hitchin morphism}
Let $G$ be a split reductive group satisfying property (LH).
    The Hodge-Hitchin morphism \eqref{eqn: hodge hitchin scheme} is proper.
\end{thm}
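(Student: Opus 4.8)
The plan is to apply the $\mathbb{G}_m$-equivariant properness criterion \Cref{prop: criterion for properness} to $h_{Hod}$, which reduces properness over all of $\mathbb{A}^1_S$ to properness of the fiber over $0 \in \mathbb{A}^1_S$, and then to identify this $0$-fiber with the Hitchin morphism for Higgs bundles, whose properness I would deduce from the $\GL_N$ case via the closed immersion of Steinberg--Hitchin bases from \Cref{lemma: immersion of steinberg bases}.

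First I would verify the standing hypotheses of the criterion. The morphism $h_{Hod}$ is of finite type and separated, since $\Mhodge$ is quasi-projective over $\mathbb{A}^1_S$ by \Cref{thm: moduli space for hodge general G} and the target is separated and of finite type over $\mathbb{A}^1_S$; it is $\mathbb{G}_m$-equivariant by \Cref{lemma: gm equi sch}. The target $A(C',\Omega^1_{C'/S}(D'))\times_S \mathbb{A}^1_S$ is contracting, as $\mathbb{G}_m$ acts with nonnegative weights on the affine Hitchin base and contracts $\mathbb{A}^1_S$ to $0$, so all limits as $t\to 0$ exist. The analogous condition on the source is supplied, at the level of the stack, by \Cref{prop: g_m action and limits}(b), which extends every orbit morphism to $\mathbb{A}^1$; combined with the semistable reduction theorem \Cref{prop: semistable reduction} and the fact that $\Mhodge$ is an adequate moduli space, these limits descend to, and are unique in, the moduli space. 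I expect this last verification --- that the stack-level limits of \Cref{prop: g_m action and limits} yield the required contraction property of the quasi-projective scheme $\Mhodge$ --- to be the main technical obstacle.

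With the hypotheses in place, \Cref{prop: criterion for properness} reduces the properness of $h_{Hod}$ to that of its restriction $h_{Hod,0}\colon \Mhodge\times_{\mathbb{A}^1_S}0_S \to A(C',\Omega^1_{C'/S}(D'))$. By \Cref{lemma: higgs universal homeo} the universal homeomorphism $u$ identifies the source with $M^D_{\mathrm{Higgs},G,d}$, and by \Cref{lemma: for the third step} together with \Cref{lemma: 0-fiber of hh and frobenius} one has $h_{Hod,0}\circ u = \sigma_C^*\circ h$, where $h$ is the Higgs--Hitchin morphism and $\sigma_C^*$ is the universal homeomorphism induced by relative Frobenius on the Hitchin base. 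Since $u$ and $\sigma_C^*$ are integral, hence proper, and $u$ is surjective, properness of $h_{Hod,0}$ follows from properness of $h$: indeed $\sigma_C^*\circ h$ is then universally closed, so $h_{Hod,0}\circ u$ is, and one descends universal closedness along the surjective proper map $u$ (using that $h_{Hod,0}$ is separated and of finite type). Thus it suffices to prove that $h\colon M^D_{\mathrm{Higgs},G,d}\to A(C,\Omega^1_{C/S}(D))$ is proper.

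Finally I would establish properness of $h$ by reduction to $\GL_N$. Take $\phi_G\colon G\to \prod_i \GL_{N_i}$ as in \Cref{lemma: immersion of steinberg bases}, augmenting the list of representations if necessary so that $\phi_G$ is a closed immersion of groups; this does not affect the conclusions, since adjoining further factors keeps the induced base map a closed immersion. The change-of-group morphism then fits into a commutative square lying over the closed immersion $\Xi_{\mathfrak{g}}\colon A(C,G,L)\hookrightarrow \prod_i A(C,\GL_{N_i},L)$ of \Cref{lemma: cartesian sq hit base}. The product Hitchin morphism $\prod_i h_{\GL_{N_i}}$ is proper by the $\GL_N$ theory \cite{langer-moduli-lie-algebroids}, hence so is its base change along $\Xi_{\mathfrak{g}}$; consequently $h$ factors as the change-of-group map $M^D_{\mathrm{Higgs},G,d}\to A(C,G,L)\times_{\prod_i A(C,\GL_{N_i},L)}\prod_i M^D_{\mathrm{Higgs},\GL_{N_i}}$ followed by a proper morphism. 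The remaining point is that this change-of-group map is finite, which I would obtain from the affineness of change of group along the closed immersion $\phi_G$ (the Higgs analogue of \Cref{lemma: change of group under closed immersion is affine}) together with the valuative criterion, using that a reduction of structure group to the reductive closed subgroup $G\subset \prod_i \GL_{N_i}$ extends uniquely over a discrete valuation ring once the ambient Higgs bundle and its Hitchin image are fixed. Composing this finite map with the proper base change yields properness of $h$, and hence of $h_{Hod}$.
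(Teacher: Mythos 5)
Your argument coincides with the paper's through the first two steps: the same criterion (\Cref{prop: criterion for properness}) is applied, the zero-limit property of the source is obtained exactly as in the paper (\Cref{prop: g_m action and limits} plus semistable reduction \Cref{prop: semistable reduction}, descended through the $\mathbb{G}_m$-equivariant adequate moduli space morphism of \Cref{lemma: gm equi sch}), and the fiber over the fixed locus is identified with the classical Hitchin morphism via \Cref{lemma: higgs universal homeo}, \Cref{lemma: 0-fiber of hh and frobenius} and \Cref{lemma: for the third step}. The divergence is in the last step: the paper simply quotes the properness of $h\colon M^D_{\mathrm{Higgs},G,d}\to A(C,\Omega^1_{C/S}(D))$ from \cite[Cor.~6.21, Rmk.~6.22]{alper2019existence} (where it is deduced from semistable reduction for $G$ directly), whereas you attempt to reprove it by reduction to $\GL_N$ through the Steinberg--Hitchin base immersion of \Cref{lemma: immersion of steinberg bases}. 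That step has a genuine gap.

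Two problems. First, the factorization you propose requires a morphism of moduli spaces $M^D_{\mathrm{Higgs},G,d}\to \prod_i M^D_{\mathrm{Higgs},\GL_{N_i}}$, hence requires the representations $\bigoplus L(\lambda)$, $\lambda\in I$, to carry semistable $G$-Higgs bundles to semistable ones. Preservation of semistability under change of group is only established in this paper for low height representations sending $Z_G^{\circ}$ into the center (\Cref{prop: semistability of low height representatons}); the weights $\tau_l,\theta(l)_j$ produced in \Cref{lemma: immersion of steinberg bases} are chosen to generate $\Sym(\Ft^*)^W$ and come with no height bound, so the map of semistable loci need not exist. (This is exactly the tension the paper avoids: the representation that preserves semistability, $\Ad$, does not give a closed immersion of Steinberg bases, while the representations that do are not controlled in height.) Second, your finiteness claim rests on the assertion that a reduction of structure group to $G\subset\prod_i\GL_{N_i}$ extends uniquely over a DVR ``once the ambient Higgs bundle and its Hitchin image are fixed.'' This is not an elementary extension statement: $\prod_i\GL_{N_i}/G$ is affine, so $\Higgs_G\to\Higgs_{\prod\GL_{N_i}}$ is affine rather than proper (\Cref{lemma: change of group under closed immersion is affine}), and a section of the associated affine bundle over the generic point of $\Spec(R)$ can fail to extend to the special fiber. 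To repair it one must allow the special fiber to be replaced within its $S$-equivalence class, i.e.\ one must invoke the full semistable reduction theorem for $G$-Higgs bundles --- at which point one is rederiving the input of \cite[Cor.~6.21]{alper2019existence} rather than bypassing it. The cleanest fix is to do what the paper does and cite that result for the properness of $h$.
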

\begin{proof}
First, the Hodge-Hitchin morphism exists because of \Cref{prop: hodge-hitchin} and \Cref{rmk: lh implies coxeter}.
    By \Cref{prop: criterion for properness},
    it suffices to show that: (1) the $\bG_m$-action on $(\hodge_{G,d})^{sch}$ has zero limits (\Cref{defn: zero limits and contracting action});
    (2) the $\bG_m$-action on $A(C',\Omega^1_{C'/S}(D'))$ is contracting (\Cref{defn: zero limits and contracting action}); 
    (3) the base change $(\hodge_{G,d})^{sch}|_{o_{A}\times_S 0_{\mathbb{A}^1}}$ is proper over $o_{A}\times_{S}0_{\mathbb{A}^1}.$

    By \Cref{prop: g_m action and limits}, the $\bG_m$-action on the stack $\hodge_{G}$ has zero limits. 
    Therefore, by semistable reduction (\Cref{prop: semistable reduction}), the $\bG_m$-action on the semistable part $(\hodge_G)^{ss}$ has zero limits.
    Since the adequate moduli space morphism $(\hodge_{G,d})^{ss}\to M_{\mathrm{Hod}, G, d}^D$ is $\bG_m$-equivariant by \Cref{lemma: gm equi sch}, it follows that the $\bG_m$-action on $M_{\mathrm{Hod}, G, d}^D$ also has zero limits as in (1). 

    Under the isomorphism $\Fg\git G\cong \mathbb{A}_S^{N}$ as in (\ref{eqn: proto hitchin base}), the natural $\bG_m$-action is contracting since it is of positive weight on each $\mathbb{A}_S^1$-factor. 
    Using trivializations Zariski locally over $C'$, we see that the $\bG_m$-action on the twist $(\Fg\git G)\times^{\bG_m}\Omega_{C'/S}^1(D')$ is also contracting.
    The Hitchin base $A(C',\Omega_{C/S}^1(D'))$ is the pushforward of $(\Fg\git G)\times^{\bG_m}\Omega_{C'/S}^1(D')$ to $S,$ thus the $\bG_m$-action on it is also contracting as in (2).

    Finally, we show (3).
    By \Cref{lemma: for the third step} and \Cref{lemma: 0-fiber of hh and frobenius}, it suffices to show that the Hitchin morphism $h: M_{\mathrm{Higgs}, G, d}^D\to A(C,\Omega_{C/S}^1(D))$ is proper, which is established in \cite[Cor. 6.21, Rmk. 6.22]{alper2019existence}.
\end{proof}

\end{subsection}
\end{section}

 \appendix

\begin{section}{Compatible parabolic reductions and affine schemes with connections} \label{appendix: D-affine schemes}
In this section we provide results needed for the proof of \Cref{lemma: the instability reduction is compatible with the connection}. For simplicity, we assume in this section that $S = \Spec(k)$ for some algebraically closed field $k$.

\begin{defn}
\label{defn: conn on aff sch}
Let $t \in k$, and let $X \to C$ be a relatively affine scheme with coordinate ring $\cO_X$. A meromorphic $t$-connection on $X$ is the data of a $k$-linear morphism of sheaves $\nabla: \cO_X(-D) \to \cO_X \otimes_{\cO_{C}} \Omega^1_{C/k}$ satisfying the $t$-Leibniz rule (as in \Cref{example: meromorphic connection vector bundles}) and the multiplicative Leibniz rule $\nabla(f \cdot h) = h \cdot \nabla(f) + f \cdot \nabla(h)$ for all local sections $f, h \in H^0(U , \cO_X)$ over an open $U \subset C$.

For any two affine $C$-schemes $X, Y$ equipped with meromorphic $t$-connections $\nabla_X, \nabla_Y$, a morphism $f:X \to Y$ of schemes is a morphism of schemes with meromorphic $t$-connections if the following diagram commutes
\[
\begin{tikzcd}
    \cO_X(-D)  \ar[r, "f^*(-D)"] \ar[d,"\nabla_X"] & \cO_Y(-D) \ar[d, "\nabla_Y"] \\
   \cO_X \otimes_{\cO_C} \Omega^1_{C/k} \ar[r, "f^* \otimes \text{id}"] & \cO_Y \otimes_{\cO_C} \Omega^1_{C_k/k}
\end{tikzcd}
\]
\end{defn}

\begin{example}[Product connection]
\label{example: tensor connection}
    If $X,Y$ are two affine $C$-schemes equipped with meromorphic $t$-connections $\nabla_X, \nabla_Y$, then the fiber product $X \times_C Y$ is naturally equipped with a tensor connection $\nabla_X \otimes \nabla_Y$ which is given by $(\nabla_X\otimes \nabla_Y)(f\otimes h)=\nabla_X(f)\otimes h+ f\otimes \nabla_Y(h)$ for any local sections $f\in H^0(U,\cO_X)$ and $h\in H^0(U,\cO_Y)$ over an open $U\subset C.$
\end{example}

\begin{example}[Trival connection] \label{example: trivial connection on affine scheme}
    Let $X$ be an affine $k$-scheme. Choose a basis of $\cO_X$ as a $k$-vector space, inducing a decomposition $\cO_X = \bigoplus_{i \in I} k e_i$. The base change $X_C$ is equipped with a canonical meromorphic $t$-connection defined as follows 
    \[ \cO_{X_C}(-D) \cong \bigoplus_{i \in I} \cO_C(-D) \hookrightarrow \bigoplus_{i \in I} \cO_C \xrightarrow{d^{\oplus I}} \bigoplus_{i \in I} \Omega^1_{C/k} \cong \cO_X \otimes_{\cO_C} \Omega^1_{C/k} \xrightarrow{t \cdot(-)} \cO_X \otimes_{\cO_C} \Omega^1_{C/k} \]
    We call this the trivial meromorphic $t$-connection. It does not depend on the choice of basis.
\end{example}

\begin{example}[Associated connections] \label{example: associated connection on affine schemes}
    Let $X$ be an affine $k$-scheme with an action of $G$. Let $(E, \nabla)$ be a $G$-bundle with a meromorphic $t$-connection. Then the associated fiber bundle $E(X) \to C$ is equipped with a meromorphic $t$-connection as follows. Write $\cO_X$ as a union of finite $G$-submodules $\cO_X = \bigcup_{i \in I} V_i$, so that we have homomorphisms $\rho_i : G \to \GL(V_i)$. By the functoriality of connections (as in Subsection \ref{subsection: change of groups}) and the description of connections for $\GL(V_i)$-bundles (as in \Cref{example: meromorphic connection vector bundles}), each associated bundle $E(V_i)$ is equipped with a meromorphic $t$-connection $\nabla_i: E(V_i)(-D) \to E(V_i) \otimes \Omega^1_{C/k}$. These connections fit together to yield a meromorphic $t$-connection $\nabla_{E(X)}$ on $\cO_{E(X)} = \bigcup_{i \in I} E(V_i)$. Since the multiplication morphism on $\cO_X$ is a morphism of $G$-modules, it follows that $\nabla_{E(X)}$ satisfies the multiplicative Leibniz rule.
    
    We call this the associated meromorphic $t$-connection on $E(X)$. If the action of $G$ on $X$ is trivial, then we recover the trivial meromorphic $t$-connection on $X_C$. This construction is functorial: for any equivariant morphism of affine $k$-schemes $X \to Y$ equipped with $G$-actions, the associated morphism $E(X) \to E(Y)$ is a morphism of affine schemes with meromorphic $t$-connections.
\end{example}

\begin{example} \label{example: induced connection in the total space of G-bundle}
    Let $(E, \nabla)$ be a $G$-bundle equipped with a meromorphic $t$-connection in the sense of \Cref{defn: meromorphic t-conn}. Then the total space of the bundle $E \to C$ is equipped with a meromorphic $t$-connection $\nabla_E$ as in \Cref{defn: conn on aff sch}. Indeed, we can apply \Cref{example: associated connection on affine schemes} to the action of $G$ on itself by multiplication. 
\end{example}

\begin{defn} \label{defn: closed subscheme preserved by connection}
    Let $X \to C$ be a relatively affine $C$-scheme equipped with a meromorphic $t$-connection $\nabla$. We say that a closed subscheme $Y \subset X$ is preserved by $\nabla$ if there is a (unique) meromorphic $t$-connection $\nabla_Y$ such that the inclusion $Y \hookrightarrow X$ is a morphism of schemes with meromorphic $t$-connections.

    Equivalently, if $\cI_Y \subset \cO_X$ is the ideal sheaf of $Y$, then $Y$ is preserved by $\nabla$ if $\nabla(\cI_Y(-D)) \subset \cI_Y \otimes_{\cO_C} \Omega^1_{C/k}$. The connection induced on the quotient $\cO_Y = \cO_X / \cI_Y$ is the unique meromorphic $t$-connection on $Y$ compatible with the inclusion $Y \hookrightarrow X$.
\end{defn}

The following lemma is a direct consequence of \Cref{defn: closed subscheme preserved by connection}.
\begin{lemma} \label{lemma: compatibility of fiber products with closed subschemes preserved by connections}
    Let $f: X \to Y$ be a morphism of relatively affine $C$-schemes equipped with meromorphic $t$-connections $\nabla_X, \nabla_Y$. Let $Z \hookrightarrow Y$ be a closed subscheme that is preserved by the connection $\nabla_Y$. Then the fiber product $Z\times_Y X \hookrightarrow X$ is preserved by the connection $\nabla_X$. \qed
\end{lemma}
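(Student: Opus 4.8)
The plan is to verify the ideal-sheaf criterion of \Cref{defn: closed subscheme preserved by connection} directly. Write $\cI_Z \subset \cO_Y$ for the ideal sheaf of $Z$ and let $f^{\sharp}\colon \cO_Y \to \cO_X$ denote the comorphism of sheaves of $\cO_C$-algebras associated to $f$. The first step is to identify the ideal sheaf $\cI_W \subset \cO_X$ of the fiber product $W := Z \times_Y X$: since the closed immersion $W \hookrightarrow X$ is the base change of $Z \hookrightarrow Y$ along $f$, its ideal sheaf is the extension $\cI_W = f^{\sharp}(\cI_Z)\cdot \cO_X$, i.e. the $\cO_X$-ideal generated by the image of $\cI_Z$. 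Thus the goal becomes the inclusion $\nabla_X\big(\cI_W(-D)\big) \subseteq \cI_W \otimes_{\cO_C}\Omega^1_{C/k}$, which, being local on $C$, may be checked on local sections. (Throughout I will freely pass between the two equivalent descriptions of a meromorphic $t$-connection, $\nabla\colon \cO(-D) \to \cO\otimes_{\cO_C}\Omega^1_{C/k}$ and $\nabla\colon \cO \to \cO\otimes_{\cO_C}\Omega^1_{C/k}(D)$, so that the multiplicative Leibniz rule of \Cref{defn: conn on aff sch} applies to products of untwisted local sections.)

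The second step is to reduce to the generators of $\cI_W$. A local section of $\cI_W$ is an $\cO_X$-linear combination $\sum_i b_i\, f^{\sharp}(a_i)$ with $b_i \in \cO_X$ and $a_i \in \cI_Z$. Applying the multiplicative Leibniz rule gives $\nabla_X\big(b_i f^{\sharp}(a_i)\big) = b_i\,\nabla_X\big(f^{\sharp}(a_i)\big) + f^{\sharp}(a_i)\,\nabla_X(b_i)$; the second summand already lies in $\cI_W \otimes_{\cO_C}\Omega^1_{C/k}(D)$ because $f^{\sharp}(a_i) \in \cI_W$ and $\cI_W$ is an $\cO_X$-ideal. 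Hence it suffices to prove that $\nabla_X$ sends each generator $f^{\sharp}(a)$, $a \in \cI_Z$, into $\cI_W \otimes_{\cO_C}\Omega^1_{C/k}(D)$.

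The final step combines the two hypotheses. The compatibility of $f$ with the connections (the commuting square of \Cref{defn: conn on aff sch}, read through the comorphism $f^{\sharp}\colon \cO_Y \to \cO_X$) yields $\nabla_X\big(f^{\sharp}(a)\big) = (f^{\sharp}\otimes \mathrm{id})\big(\nabla_Y(a)\big)$. Since $Z$ is preserved by $\nabla_Y$, the ideal-sheaf criterion gives $\nabla_Y(a) \in \cI_Z \otimes_{\cO_C}\Omega^1_{C/k}(D)$, and applying $f^{\sharp}\otimes \mathrm{id}$ lands this in $f^{\sharp}(\cI_Z)\otimes_{\cO_C}\Omega^1_{C/k}(D) \subseteq \cI_W \otimes_{\cO_C}\Omega^1_{C/k}(D)$, as desired. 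I do not anticipate a genuine obstacle here—the statement is essentially formal, as the paper indicates—and the only points requiring care are the bookkeeping of the $D$-twist and the correct identification of the ideal of the fiber product as the extended ideal $f^{\sharp}(\cI_Z)\cO_X$; once these are in place the argument is a one-line diagram chase.
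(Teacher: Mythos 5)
Your argument is correct and is precisely the ``direct consequence of \Cref{defn: closed subschemes preserved by connection}'' that the paper asserts without proof: identify the ideal of the fiber product as the extended ideal, reduce to generators by the multiplicative Leibniz rule, and push the hypothesis on $\cI_Z$ through the compatibility square for $f$. The two points you flag (the $D$-twist bookkeeping and the identification $\cI_W = f^{\sharp}(\cI_Z)\cdot\cO_X$) are indeed the only places requiring care, and you handle both correctly.
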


\begin{example} \label{example: inclusion of associated affine schemes is compatible with connections}
Let $X$ be an affine $k$-scheme equipped with a $G$-action. Let $Y$ be a closed subscheme $Y \subset X$ that is preserved by the $G$-action. Let $(E, \nabla)$ be a $G$-bundle on $C$ equipped with a meromorphic $t$-connection. Then the closed subscheme $E(Y) \subset E(X)$ is preserved by the associated meromorphic $t$-connection on $E(X)$.
\end{example}

\begin{lemma} \label{lemma: compatiblity of parabolic reuductions in terms of affine schemes with connection}
    Let $(E, \nabla)$ be a $G$-bundle on $C$ equipped with a meromorphic $t$-connection. Let $P \subset G$ be a parabolic subgroup, and choose a parabolic reduction $E_P$. Then $E_P$ is $\nabla$-compatible if and only if the closed immersion of total spaces $E_P \hookrightarrow E$ is preserved by the induced meromorphic $t$-connection $\nabla_E$ (as in \Cref{example: induced connection in the total space of G-bundle}).
\end{lemma}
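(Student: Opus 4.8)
The plan is to verify the equivalence étale-locally on $C$, where a compatible trivialization turns both conditions into the same statement about the connection form. First I would record two reductions. Since $P \hookrightarrow G$ is a closed immersion, so is $E_P \hookrightarrow E$ (it is the associated-bundle inclusion $E_P \times^P P \hookrightarrow E_P \times^P G$), so the notion of a connection preserving the ideal sheaf $\cI := \cI_{E_P} \subset \cO_E$ makes sense. Both the property of being $\nabla$-compatible (that the splitting $\cO_{C} \to \text{At}^D(E)$ lands in the subsheaf $\text{At}^D(E_P)$) and the property of $\nabla_E$ preserving $\cI$ are local on $C$ in the étale topology, so it suffices to check the equivalence after restricting to an étale cover $U \to C$. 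Choosing $U$ so that the $P$-bundle $E_P$ is trivialized, we obtain compatible trivializations $E|_U \cong G_U$ and $E_P|_U \cong P_U$, with $\cI|_U = \cI_P \otimes_k \cO_U$, where $\cI_P \subset \cO_G$ is the ideal of the closed subgroup $P \subset G$.

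Next I would set up the local model of $\nabla_E$. In the chosen trivialization, the connection $\nabla$ is recorded by a connection form $M \in \mathfrak{g} \otimes_k \Omega^1_{U/k}(D)$, under the identification $\text{Ad}(E)|_U \cong \mathfrak{g} \otimes_k \cO_U$, and the splitting factors through $\text{At}^D(E_P)$ — whose $\cO$-linear part is $\text{Ad}(E_P)(D) \otimes \Omega^1 \cong \mathfrak{p} \otimes_k \Omega^1_{U/k}(D)$ — precisely when $M \in \mathfrak{p} \otimes_k \Omega^1_{U/k}(D)$. By \Cref{example: associated connection on affine schemes} applied to the action of $G$ on itself (\Cref{example: induced connection in the total space of G-bundle}), the total-space connection takes the explicit form $\nabla_E = t \cdot d_U + \theta_M$ on $\cO_E|_U \cong \cO_G \otimes_k \cO_U$, where $t \cdot d_U$ is the trivial meromorphic $t$-connection of \Cref{example: trivial connection on affine scheme} (differentiating the $\cO_U$-factor, with $\cO_G$ as constants) and $\theta_M : \cO_G \to \cO_G \otimes \Omega^1_{U/k}(D)$ is the $\Omega^1_{U/k}(D)$-valued derivation induced by $M$ via the infinitesimal $G$-action on $\cO_G$ by invariant vector fields.

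Then I would carry out the two computations and compare. The trivial part $t \cdot d_U$ maps $\cI_P \otimes \cO_U(-D)$ into $\cI_P \otimes \Omega^1_{U/k}$, because $\cI_P$ is constant in the $U$-direction, so it always preserves $\cI$. Hence $\nabla_E$ preserves $\cI$ if and only if $\theta_M$ does, that is, if and only if every invariant vector field occurring in $M$ is tangent to $P$. Since $P$ is a smooth closed subscheme of $G$, tangency along $P$ is equivalent to preservation of $\cI_P$, and an invariant vector field (left- or right-invariant, the criterion being the same) is tangent to the subgroup $P$ if and only if its generator lies in $\mathfrak{p}$. Therefore $\theta_M(\cI_P) \subseteq \cI_P \otimes \Omega^1$ if and only if $M \in \mathfrak{p} \otimes_k \Omega^1_{U/k}(D)$, using that $\mathfrak{p} \subset \mathfrak{g}$ is a subbundle. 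Combining with the previous paragraph, both ``$E_P$ is $\nabla$-compatible'' and ``$E_P \hookrightarrow E$ is preserved by $\nabla_E$'' are equivalent to the single condition $M \in \mathfrak{p} \otimes_k \Omega^1_{U/k}(D)$, which yields the desired equivalence.

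The main obstacle I anticipate is the careful matching of the two descriptions of a meromorphic $t$-connection — as a splitting of the Atiyah sequence versus as a derivation of $\cO_E$ in the sense of \Cref{defn: conn on aff sch} — that is, justifying that in a compatible trivialization the total-space connection of \Cref{example: induced connection in the total space of G-bundle} is exactly $t \cdot d_U + \theta_M$ with the same form $M$ that records the splitting. Everything else is then the standard identity that invariant vector fields are tangent to a closed subgroup exactly when their Lie-algebra generators lie in the corresponding subalgebra. A minor point to handle cleanly is that this criterion is independent of the left/right convention for the $G$-action on $\cO_E$, since tangency to the subgroup $P$ forces the generator into $\mathfrak{p}$ in either case.
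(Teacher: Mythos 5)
Your proof is correct, but it follows a genuinely different route from the paper's. You work \'etale-locally: after trivializing $E_P$ (hence $E$) over a cover $U$, you identify both conditions with the single statement that the connection form $M$ lies in $\mathfrak{p}\otimes\Omega^1_{U/k}(D)$, using that the total-space connection decomposes as $t\cdot d_U+\theta_M$ and that an invariant vector field is tangent to the closed subgroup $P$ exactly when its generator lies in $\mathfrak{p}$. The paper instead argues globally and representation-theoretically: it writes $P=P_\lambda$, takes the $\lambda$-weight filtration $\cO_E^{\lambda\geq j}$ induced from the filtration of $\cO_G$ by $P$-subrepresentations (whose degree-$\geq 1$ piece is the ideal of $E_P$), propagates preservation of that single piece to all filtration steps via the multiplicative Leibniz rule and an integral-domain characterization of the $\cO_G^{\lambda\geq j}$, and then embeds $G$ into $\GL(V)$ for a faithful subrepresentation $V\subset\cO_G$ to reduce to the vector-bundle case, concluding from $At^D(E_P)=At^D(E)\cap At^D(\rho_*(E_P))$. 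Your approach is more elementary, treats both implications symmetrically in one computation, and isolates the only real issue (matching the Atiyah-splitting and total-space-derivation descriptions of $\nabla$ in a trivialization, which is a direct unwinding of the paper's \Cref{example: associated connection on affine schemes} applied to the regular representation); the paper's approach avoids choosing trivializations and connection forms entirely, staying within the associated-bundle formalism used throughout the appendix. One small point to make fully precise in your write-up: the statement that $\theta_M$ preserves $\cI_P\otimes\cO_U$ iff $M\in\mathfrak{p}\otimes\Omega^1_{U/k}(D)$ is cleanest if you observe that the induced $\cO$-linear map $\cI_P/\cI_P^2\to(\cO_G/\cI_P)\otimes\Omega^1_{U/k}(D)$ is the section of $N_{P/G}\otimes\Omega^1_{U/k}(D)$ determined by the image of $M$ in $(\mathfrak{g}/\mathfrak{p})\otimes\Omega^1_{U/k}(D)$ under the equivariant trivialization $N_{P/G}\cong(\mathfrak{g}/\mathfrak{p})\times P$, so that vanishing can be tested along the identity section; this also makes the left/right-invariance remark at the end unnecessary.
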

\begin{proof}
    Choose a one-parameter subgroup $\lambda: \mathbb{G}_m \to G$ such that $P = P_{\lambda}$ is the associated parabolic. For any representation $V$ of $G$, we get an induced filtration of $P$-representations indexed by the integers
    \[ \ldots V^{\lambda \geq 2} \subset V^{\lambda \geq 1} \subset V^{\lambda \geq 0} \subset V^{\lambda \geq -1} \ldots \subset V\]
    In particular, we have a filtration by $P$-representations for the regular representation $\cO_G$. By taking associated bundles for $E_P$, we get a filtration of $\cO_C$-modules
      \begin{equation} \label{eqn: filration of O_E}
      \ldots \cO_E^{\lambda \geq 2} \subset \cO_E^{\lambda \geq 1} \subset \cO_E^{\lambda \geq 0} \subset \cO_E^{\lambda \geq -1} \ldots \subset \cO_E
      \end{equation}
      The ideal sheaf of $E_P \subset E$ is the submodule $\cO_E^{\lambda \geq 1}$, which by assumption is preserved by the meromorphic $t$-connection $\nabla_E$.
      
      \noindent \textbf{Claim.} For all integers $j$, the subsheaf
      $\cO_E^{\geq j}$ is preserved by $\nabla_E$. 
      
      The one-parameter group $\lambda: \mathbb{G}_m \to G$ induces a grading $\cO_G = \bigoplus_{n \in \mathbb{Z}} \cO_G^{\lambda = n}$. After possibly scaling the weights, we shall assume without loss of generality that $\cO_G^{\lambda = -1} \neq 0$. Using this and the fact that $\cO_G$ is an integral domain, we get the following equalities for all $j \geq 0$
      \[ \cO_G^{\lambda \geq -j} = \left\{ f \in \cO_G \; \mid \; f \cdot h \in \cO_G^{\lambda \geq -j+1} \text{ for all $h \in \cO_G^{\lambda \geq 1}$}\right\}\]
      \[ \cO_G^{\lambda \geq j} = \left\{ f \in \cO_G \; \mid \; f \cdot h \in \cO_G^{\lambda \geq 1} \text{ for all $h \in \cO_G^{\lambda \geq -j+1}$}\right\}\]
      By taking associated bundles for $E_P$, we get the following equalities (to be interpreted in terms of local sections over opens $U \subset C$)
      \begin{equation} \label{eqn: first equality parabolic lemma}
          \cO_E^{\lambda \geq -j} = \left\{ f \in \cO_E \; \mid \; f \cdot h \in \cO_E^{\lambda \geq -j+1} \text{ for all $h \in \cO_E^{\lambda \geq 1}$}\right\}
      \end{equation}
      \begin{equation} \label{eqn: second equation parabolic lemma}
          \cO_E^{\lambda \geq j} = \left\{ f \in \cO_E \; \mid \; f \cdot h \in \cO_E^{\lambda \geq 1} \text{ for all $h \in \cO_E^{\lambda \geq -j+1}$}\right\}
      \end{equation}
      Since $\nabla_E$ is compatible with multiplication, the first equation \eqref{eqn: first equality parabolic lemma} implies that if $\cO_E^{ \lambda \geq -j+1}$ is preserved by $\nabla_E$, then $\cO_E^{ \lambda \geq -j}$ is also preserved. Using induction and $j=0$ for the base case, we conclude that $\cO_E^{\lambda \geq -j}$ is preserved by $\nabla_E$ for all $j \geq -1$. But then using compatibility of multiplication and the second equation \eqref{eqn: second equation parabolic lemma}, we get that $\cO_E^{\lambda \geq j}$ is also preserved for $j \geq -1$, thus concluding the proof of the \textbf{Claim}.

      Now, choose a finite dimensional representation $V \subset \cO_G$ such that the induced morphism $\rho: G \to \GL(V)$ is a closed immersion. Let $Q_{\lambda} \subset \GL(V)$ be the parabolic subgroup associated to the one-parameter subgroup $\mathbb{G}_m \xrightarrow{\lambda} G \xrightarrow{\rho} \GL(V)$. By construction, we have $P = G \cap Q_{\lambda}$. The homomorphism $\rho^P: P \to Q_{\lambda}$ induces a $Q_{\lambda}$-reduction $\rho^P_*(E_P)$ of the associated vector bundle $E(V)$. This parabolic reduction corresponds to the nontrivial steps in the filtration
      \[ 0 \subset \ldots E(V)^{\lambda \geq 2} \subset E(V)^{\lambda \geq 1} \subset E(V)^{\lambda \geq 0} \subset E(V)^{\lambda \geq -1} \ldots \subset E(V) \]
      obtained by intersecting $E(V) \subset \cO_E$ with the corresponding filtration \eqref{eqn: filration of O_E} of $\cO_E$. By the \textbf{Claim}, every subbundle $E(V)^{\lambda \leq j} \subset E(V)$ is preserved by the associated connection $\rho_*(\nabla) = \nabla_E|_{E(V)}$. It follows that the $Q_{\lambda}$-reduction $\rho_*(E_P)$ of $E(V)$ is $\rho_*(\nabla)$-compatible (see \Cref{example: semistability of meromorphic conenctions on vector bundles}). The equality $\text{Lie}(P) = \text{Lie}(G) \cap \text{Lie}(Q_{\lambda})$ implies $At^D(E_P) = At^D(E) \cap At^D(\rho_*(E_P))$ for the meromorphic $t$-Atiyah bundles. Since the reduction $\rho_*(E_P)$ is $\rho_*(\nabla)$-compatible, it follows by \Cref{defn: compatible parabolic reduction} that $E_P$ is $\nabla$-compatible.
\end{proof}

Next, we express compatibility of parabolic reductions in terms of the associated section $s: C \to E(G/P)$. Since $G/P$ is not affine, we consider its affine cone.

\begin{notn}
\label{notn: compatibility of section}
Let $\chi$ denote a $P$-dominant character of the parabolic subgroup $P \subset G$. Let $\cL = \cO(-\chi)$ denote the corresponding ample $G$-equivariant bundle on $G/P$. We denote by $X = \Spec( \bigoplus_{n \geq 0} H^0(\cL^{\otimes n}))$ the affine cone, equipped with its natural $G$-action. The grading on $\cO_X$ induces an action of $\mathbb{G}_m$. We have a distinguished point $0 \in X^P$ and $\mathbb{G}_m$ acts freely on the open complement $X \setminus 0$. By construction $G/P = (X\setminus 0) / \mathbb{G}_m$.

For any line bundle $\cM$ on $C$, we denote by $\cM^{\times} = \Spec_C( \bigoplus_{ n \in \mathbb{Z}} \cM^{\otimes n})$ the total space of the associated $\mathbb{G}_m$-bundle. Let $E$ be a $G$-bundle on $C$. By thinking of $X$ as a $G \times \mathbb{G}_m$-scheme, we can form the associated fiber bundle $E(X)_{\cM} : = \Spec_C( \bigoplus_{ n \geq 0} E(H^0(\cL^{\otimes n})) \otimes \cM^{\otimes n})$. If $E$ and $\cM$ are equipped with $t$-meromorphic connections $\nabla, \nabla_{\cM}$, then $E(X)_{\cM}$ comes equipped with an associated connection (see \Cref{example: associated connection on affine schemes}), which can be described on each graded piece of $\cO_{E(X)_{\cM}}$ by using the tensor connection $\nabla_{E(H^0(\cL^{\otimes n}))} \otimes \nabla_{\cM}^{\otimes n}$.

If the $G$-bundle $E$ is trivial, we use the simpler notation $X_{\cM} : = \Spec_C( \bigoplus_{ n \geq 0} H^0(\cL^{\otimes n}) \otimes_k \cM^{\otimes n})$. If the line bundle $\cM$ is equipped with a meromorphic $t$-connection $\nabla_{\cM}$, then the affine $C$-scheme $X_{\cM}$ comes naturally equipped with a meromorphic $t$-connection obtained by using on each graded component the tensor power connection $\nabla_{\cM}^{\otimes n}$.
\end{notn}

Let $(E, \nabla)$ be a $G$-bundle equipped with a meromorphic $t$-connection. Parabolic reductions of $E$ correspond to sections of $s: C \to E(G/P)$. If we set $\cM := s^*(\cL)$, then we obtain an induced $\mathbb{G}_m$-equivariant morphism of $C$-schemes
\[\widetilde{s}: C \to  \cM^{\times} (E(X \setminus 0)) \hookrightarrow E(X)_{\cM}\]
If $E_P$ is the corresponding parabolic reduction, then by definition $\cM = E_P(-\chi)$. In particular, if $E_P$ is $\nabla$-compatible, then the associated line bundle $\cM$ comes equipped with an associated meromorphic $t$-connection, and hence the target $E(X)_{\cM}$ is equipped with a meromorphic $t$-connection. We equip $C$ with the trivial meromorphic $t$-connection (see \Cref{example: trivial connection on affine scheme}).

\begin{lemma}
    With notation as above, if the parabolic reduction $E_P$ is $\nabla$-compatible, then the associated morphism $\widetilde{s}: C \to E(X)_{\cM}$ is compatible with the meromorphic $t$-connections (i.e., it is a morphism of affine $C$-schemes with meromorphic $t$-connections).
\end{lemma}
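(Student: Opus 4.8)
The plan is to descend the whole statement to the functoriality of the associated-connection construction of \Cref{example: associated connection on affine schemes}, after observing that the $\nabla$-compatibility hypothesis makes every sheaf appearing in $E(X)_{\mathcal{M}}$ an associated bundle of a single $P$-bundle equipped with a meromorphic $t$-connection. First I would record that, by \Cref{defn: compatible parabolic reduction}, the $\nabla$-compatible reduction $E_P$ carries a canonical induced $t$-connection $\nabla_P$, so that $(E_P, \nabla_P)$ is a point of $\hodge_P$ whose image under the change-of-group morphism $\rho_*$ for $\rho\colon P \hookrightarrow G$ (see \Cref{subsection: change of groups}) is $(E, \nabla)$. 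Since $\mathcal{M} = s^*(\mathcal{L}) = E_P(-\chi)$ is the bundle associated to the character $-\chi$ of $P$, the connection $\nabla_{\mathcal{M}}$ used in \Cref{notn: compatibility of section} to equip $E(X)_{\mathcal{M}}$ with its meromorphic $t$-connection is exactly the associated connection of $(E_P, \nabla_P)$ on the one-dimensional $P$-representation $-\chi$.

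Next I would pass to coordinate rings. The section $\widetilde{s}$ is the same datum as a graded homomorphism of $\mathcal{O}_C$-algebras $\widetilde{s}^{\sharp}\colon \bigoplus_{n \geq 0} E(H^0(\mathcal{L}^{\otimes n})) \otimes \mathcal{M}^{\otimes n} \to \mathcal{O}_C$, and compatibility with connections is equivalent to $\widetilde{s}^{\sharp}$ intertwining the connection of \Cref{notn: compatibility of section} with the trivial connection of \Cref{example: trivial connection on affine scheme} on $\mathcal{O}_C$. Because that connection on $\mathcal{O}_{E(X)_{\mathcal{M}}}$ preserves the grading, it suffices to treat each degree $n$ separately. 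The key identification is that the degree-$n$ component $\widetilde{s}^{\sharp}_n$ is precisely the bundle associated, via $E_P$, to the canonical $P$-equivariant evaluation pairing $H^0(\mathcal{L}^{\otimes n}) \otimes (-\chi)^{\otimes n} \to k$ into the trivial $P$-representation $k$ (evaluation at the base point $eP$, using $\mathcal{L} = \mathcal{O}_{G/P}(-\chi)$); this is the pairing whose associated-bundle construction recovers the tautological lift determined by $\mathcal{M} = s^*\mathcal{L}$.

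I would then assemble the connection on the source of $\widetilde{s}^{\sharp}_n$ as an associated connection of $(E_P,\nabla_P)$. By functoriality of the associated-connection construction along $\rho\colon P \hookrightarrow G$, the connection $\nabla_{E(H^0(\mathcal{L}^{\otimes n}))}$ coming from $(E, \nabla)$ agrees with the associated connection of $(E_P, \nabla_P)$ on the restriction to $P$ of the $G$-representation $H^0(\mathcal{L}^{\otimes n})$; combining this with the identification of $\nabla_{\mathcal{M}}$ above and the compatibility of associated connections with tensor products (\Cref{example: tensor connection}), the degree-$n$ tensor connection $\nabla_{E(H^0(\mathcal{L}^{\otimes n}))} \otimes \nabla_{\mathcal{M}}^{\otimes n}$ is the associated connection of $(E_P, \nabla_P)$ on the $P$-representation $H^0(\mathcal{L}^{\otimes n}) \otimes (-\chi)^{\otimes n}$. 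Since $\widetilde{s}^{\sharp}_n$ is the associated-bundle image of the $P$-equivariant linear map described above, applying the functoriality clause of \Cref{example: associated connection on affine schemes} to the $P$-bundle $(E_P,\nabla_P)$ shows that $\widetilde{s}^{\sharp}_n$ intertwines this associated connection with the associated connection of $(E_P, \nabla_P)$ on the target; as the target $P$-representation $k$ is trivial, the latter is the trivial meromorphic $t$-connection on $E_P(k) = \mathcal{O}_C$. Summing over $n$ yields the flatness of $\widetilde{s}$, mirroring on affine cones the total-space statement of \Cref{lemma: compatiblity of parabolic reuductions in terms of affine schemes with connection}.

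The main obstacle I anticipate is the bookkeeping in the second step: pinning down that the tautological lift $\widetilde{s}$ corresponds, degree by degree, to the associated bundle of a $P$-equivariant pairing landing in the trivial representation, with the twist by $\mathcal{M}^{\otimes n}$ matching the sign conventions so that the target is $\mathcal{O}_C$ rather than a nontrivial power of $\mathcal{M}$. Once this identification and the interpretation of $\nabla_{\mathcal{M}}$ within the associated-connection formalism are in place, the compatibility of $\widetilde{s}$ is a formal consequence of functoriality, requiring no further computation.
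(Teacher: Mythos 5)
Your proof is correct and is essentially the paper's argument in dual, coordinate-ring form: the paper realizes the same cancellation $E_P(\chi)\otimes\cM=E_P(\chi)\otimes E_P(-\chi)\cong\cO_C$ geometrically, by factoring $\widetilde{s}$ through the associated bundle of a $P\times\mathbb{G}_m$-equivariant line $L_{\widetilde{p}}\subset X$ over a lift of the base point $eP$ (whose ideal sheaf is exactly the kernel of your degree-by-degree evaluation pairings) and then invoking the unit section of $\mathbb{A}^1_C$ together with \Cref{example: inclusion of associated affine schemes is compatible with connections}, which is the same appeal to functoriality of associated connections for the $P$-bundle $(E_P,\nabla_P)$ that you make. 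The sign bookkeeping you flag is the only point of divergence and it resolves as you expect, since with the paper's conventions the fiber of the cone over $eP$ carries the character $\chi$ dual to $\cL|_{eP}$, which is precisely what the twist by $\cM=E_P(-\chi)$ cancels.
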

\begin{proof}
   The data of $(E, \nabla)$ comes from the parabolic bundle with connection $(E_P, \nabla_P)$. If we view $X$ as a $P \times \mathbb{G}_m$-scheme, the corresponding meromorphic connection on $E(X)_{\cM} = E_P(X)_{\cM}$ is induced by $\nabla_P \times \nabla_{\cM}$. Let $p \in G/P(k)$ denote the identity point. For any lift $\widetilde{p}$ of $p$ to the affine cone $X$, the $\mathbb{G}_m$-orbit $\mathbb{G}_m \cdot \widetilde{p}$ can be completed to a $P \times \mathbb{G}_m$-equivariant line $L_{\widetilde{p}} \cong \mathbb{A}^1_k \subset X$. Here $P$ acts linearly via the character $\chi$, and $\mathbb{G}_m$ acts linearly with weight $1$. Since $\cM = E_P(-\chi)$, it follows that the associated closed subscheme $(E_P\times_C \cM^{\times})(L_{\widetilde{p}}) \subset E(X)_{\cM}$ is isomorphic to $\mathbb{A}^1_C$ equipped with the trivial meromorphic $t$-connection. By the construction of $\widetilde{s}$, we have a factorization
   \[ \widetilde{s}: C \xrightarrow{1_C} \mathbb{A}^1_C \cong (E_P\times_C \cM^{\times})(L_{\widetilde{p}}) \hookrightarrow E(X)_{\cM}\]
   where the first morphism is given by the unit section $1_C$. This inclusion $1_C$ of trivial schemes is compatible with the meromorphic $t$-connections (cf. \Cref{example: trivial connection on affine scheme}), and the inclusion of associated $P \times \mathbb{G}_m$-schemes $(E_P\times_C \cM^{\times})(L_{\widetilde{p}}) \hookrightarrow E(X)_{\cM}$ is also compatible with the corresponding meromorphic $t$-connections (see \Cref{example: inclusion of associated affine schemes is compatible with connections}). It follows that $\widetilde{s}$ is compatible with the connections.
\end{proof}

The action morphism of $G \times \mathbb{G}_m$ on $X$ induces a $G\times \mathbb{G}_m$ equivariant morphism $E \times_C \cM^{\times} \times_C E(X)_{\cM} \to X_C$, where on the left $G \times \mathbb{G}_m$ acts on the torsor $E \times_C \cM^{\times}$. In the discussion above, both $E$ and $\cM$ are equipped with meromorphic $t$-connections, and in that case $E \times_C \cM^{\times} \times_C E(X)_{\cM} \to X_C$ is a morphism of schemes with meromorphic $t$-connections (here $X_C$ is equipped with the trivial connection). Composing with the section $\widetilde{s}: C \to E(X)_{\cM}$, we obtain a $G \times \mathbb{G}_m$-equivariant morphisms of schemes with meromorphic $t$-connections
\[ v: E \times_C \cM^{\times} \xrightarrow{\text{id} \times \widetilde{s}} E \times_C \cM^{\times} \times_C E(X)_{\cM} \to X_C\]
\end{section}

\begin{section}{A criterion for properness of equivariant morphisms}
In this section we prove a criterion for properness between schemes equipped with a $\mathbb{G}_m$-action. We work over a fixed locally Noetherian base scheme $S$. For any ring $R$, we denote by $1_R: \Spec(R) \to \mathbb{A}^1_R$ the closed immersion corresponding to setting $t=1$.

\begin{defn} \label{defn: zero limits and contracting action}
    Let $Y$ be a separated scheme locally of finite type over $S$. We say that an action of $\mathbb{G}_{m,S}$ on $Y$ \underline{has zero limits} if for all algebraically closed field $K$ over $S$, every $S$-morphism $f:\Spec(K) \to Y$ can be extended to a $\mathbb{G}_{m,K}$-equivariant $K$-morphism $\widetilde{f}: \mathbb{A}^1_K \to Y_K$ such that $\widetilde{f} \circ 1_K = f$.

We say that the action is \underline{contracting} if for any discrete valuation ring $R$ over $S$, every $S$-morphism $f:\Spec(R) \to Y$ can be extended to a $\mathbb{G}_{m,R}$-equivariant $R$-morphism $\widetilde{f}: \mathbb{A}^1_R \to Y_R$ such that $\widetilde{f} \circ 1_R =f$.
\end{defn}
If $Y$ has a contracting action of $\mathbb{G}_{m,S}$, then it has zero limits. This can be seen by considering the power series ring $R = K \bseries{x}$.

\begin{example}
The scheme $\mathbb{A}^1_S$ admits a standard linear contracting action of $\mathbb{G}_{m,S}$. We equip $\mathbb{A}^1_S$ with this action unless otherwise stated.
\end{example}

\begin{example}
For any field $k$, the standard nontrivial action of $\mathbb{G}_{m,k}$ on $\mathbb{P}^1_k$ has zero limits, but it is not contracting.
\end{example}

\begin{notn}
For any separated scheme $Y$ locally of finite type over $S$, we denote by $Y^{\mathbb{G}_m} \hookrightarrow Y$ the closed subscheme of $\mathbb{G}_{m,S}$-fixed points (cf. \cite[Prop. 1.4.1]{halpernleistner2018structure}).
\end{notn}

\begin{prop}
\label{prop: criterion for properness}
    Let $X$ and $Y$ be schemes locally of finite type over $S$ equipped with $\mathbb{G}_{m,S}$-actions. Let $f: X \to Y$ be a $\mathbb{G}_{m,S}$-equivariant separated morphism of finite type. Suppose that
    \begin{enumerate}[(1)]
        \item The action on $X$ has zero limits, and the action on $Y$ is contracting. 
        \item The base change $X \times_{Y} Y^{\mathbb{G}_m} \to Y^{\mathbb{G}_m}$ is proper.
    \end{enumerate}
    Then the morphism $f$ is proper.
\end{prop}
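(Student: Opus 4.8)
The plan is to verify the valuative criterion of properness for $f$. Since $f$ is assumed separated and of finite type, uniqueness of a lift is automatic, so it remains to produce, for every discrete valuation ring $R$ over $S$ with fraction field $K$, a lift in a square given by $x_K \in X(K)$ and $y_R \in Y(R)$ with $f(x_K) = y_R|_K$; that is, an $x_R \in X(R)$ with $f(x_R) = y_R$ and $x_R|_K = x_K$. The whole argument is a flow-down/flow-up across the fixed locus: first degenerate the data into the $\bG_m$-fixed locus, where hypothesis (2) applies, then transport the resulting filling back.

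For the flow-down, because the action on $Y$ is contracting I would extend $y_R$ to a $\bG_m$-equivariant $\widetilde y \colon \bA^1_R \to Y_R$ with $\widetilde y \circ 1_R = y_R$, and set $y_0 := \widetilde y|_{0_R} \in Y^{\bG_m}(R)$. Because the action on $X$ has zero limits I would extend $x_K$ to a $\bG_m$-equivariant $\widetilde x \colon \bA^1_K \to X_K$ with $\widetilde x \circ 1_K = x_K$. The two equivariant morphisms $f\circ\widetilde x$ and $\widetilde y|_K$ from $\bA^1_K$ to $Y_K$ agree at $t=1$, hence agree on the dense orbit $\bG_{m,K}$ by equivariance, hence everywhere since $Y$ is separated and $\bA^1_K$ is reduced; in particular $x_0 := \widetilde x|_{0_K} \in X^{\bG_m}(K)$ lies over $y_0|_K$. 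Thus $(x_0, y_0|_K)$ is a $K$-point of $X\times_Y Y^{\bG_m}$ over the $R$-point $y_0$ of $Y^{\bG_m}$, and hypothesis (2) together with base change makes $X\times_Y Y^{\bG_m} \to Y^{\bG_m}$ proper, so $(x_0,y_0|_K)$ extends to an $R$-point; its projection gives $\bar x_0 \in X(R)$ with $f(\bar x_0) = y_0$ and $\bar x_0|_K = x_0$.

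It remains to flow this filling of the limit back to a filling over $y_R$, which is the crux. I would form the pullback $P := X \times_{Y,\widetilde y} \bA^1_R$, a $\bG_m$-equivariant separated finite-type scheme over $\bA^1_R$ whose fibre over $1_R$ is $f^{-1}(y_R)$ and whose fibre over $0_R$ is $(X\times_Y Y^{\bG_m})\times_{Y^{\bG_m}} \Spec R$, the latter proper over $\Spec R$ by hypothesis (2). The pair $(\widetilde x,\mathrm{id})$ defines a $\bG_m$-equivariant section $\widetilde z\colon \bA^1_K \to P_K$ with $\widetilde z(1_K)=(x_K,1_K)$, whose limit $\widetilde z(0_K)$ extends, via properness of the central fibre, to an $R$-point $\bar\zeta_0=(\bar x_0,0_R)$ of $P$ over $0_R$. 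Everything then reduces to showing $P \to \bA^1_R$ is proper: granting this, the fibre $f^{-1}(y_R) \to \Spec R$ is proper, and the $K$-point $x_K$ of it extends to the desired $x_R$.

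The properness of $P \to \bA^1_R$ is the heart of the matter and the step I expect to be the main obstacle. Equivariance alone only propagates a filling along a single $\bG_m$-orbit and never crosses the fixed value $t=0$, so the only bridge between the proper central fibre and the remaining fibres is the completeness of limits. First I note that $P$ again has zero limits, inherited from $X$: for a field-valued point $(x,a)$ one computes $\lim_{s\to0}s\cdot(x,a)=\bigl(\lim_{s\to0}s\cdot x,\,0\bigr)$, which lies in $P$ because $f(\lim_{s\to0}s\cdot x)=\widetilde y(0)$ by equivariance of $f$ and $\widetilde y$. It then suffices to prove the general assertion that a $\bG_m$-equivariant, separated, finite-type morphism $g$ to $\bA^1_R$ with zero limits and proper central fibre is proper. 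I would approach this through the closure $\Gamma := \overline{\widetilde z(\bA^1_K)}\subset P$, which is $\bG_m$-invariant, integral, maps isomorphically to $\bA^1_R$ over the dense open $\bA^1_K$, and whose intersection $\Gamma\cap P_0$ is closed in the proper $P_0$, hence proper over $\Spec R$ and containing $\bar\zeta_0$. Since the image of the invariant set $\Gamma$ in $\bA^1_R$ contains $\bA^1_K$ and the origin, $\bG_m$-invariance leaves only the dichotomy that either $g(\Gamma)=\bA^1_R$ or the special fibre of $\Gamma$ collapses entirely onto $t=0$; ruling out this collapse is exactly where completeness of limits must be used, after which a Zariski's-main-theorem argument for the birational morphism $\Gamma\to\bA^1_R$, proper over the central fibre, produces a section over $1_R$ and hence $x_R$. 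This collapse-exclusion is the delicate point of the whole proof; once $x_R$ is obtained, separatedness of $f$ furnishes uniqueness and the valuative criterion is complete.
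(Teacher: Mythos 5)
Your reduction is sound and matches the paper's strategy up to the decisive point: you pass to $Y=\mathbb{A}^1_R$ using the contracting action, you take the $\mathbb{G}_m$-invariant integral closure $\Gamma$ of the generic orbit, and you correctly isolate the dichotomy that either $\Gamma$ surjects onto $\mathbb{A}^1_R$ or its special fibre collapses into $t=0$. But you then declare that ``ruling out this collapse is exactly where completeness of limits must be used'' and call it ``the delicate point of the whole proof'' without supplying an argument. That step is the entire nontrivial content of the proposition, so as written the proposal has a genuine gap. Moreover the attribution is slightly off: the zero-limits hypothesis only guarantees that $\Gamma$ meets the central fibre at all (over the generic point $0_\eta$, hence after closure over $0_s$); by itself it cannot exclude the collapse, since a nonempty special fibre sitting entirely over $0_s$ is perfectly consistent with $\mathbb{G}_m$-invariance and with having all limits.

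The paper closes this gap with a completion/Henselian-pair argument that your sketch does not anticipate. One completes $\mathcal{O}_{\mathbb{A}^1_R}=R[t]$ along the fixed locus $0_R$ to get the Henselian pair $(\Spec R\bseries{t},\,0_R)$; the base change $X'$ of the integral scheme $\Gamma$ along the flat completion morphism remains integral (scheme-theoretic closure commutes with flat base change). Its closed fibre over $0_R$ is nonempty (this is where zero limits enters) and proper (this is where hypothesis (2) enters). By the structure theory of separated finite-type schemes over a Henselian pair with a proper closed fibre (\cite[\href{https://stacks.math.columbia.edu/tag/0CT9}{0CT9}]{stacks-project}), there is a nonempty open-and-closed subscheme of $X'$ proper over $\Spec R\bseries{t}$; integrality forces it to be all of $X'$, so $X'$ is proper and dominant, hence surjective, over $\Spec R\bseries{t}$. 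Since the completion morphism is flat and its image contains $0_s$ together with its generalizations, the image of $\Gamma$ in $\mathbb{A}^1_R$ meets the special orbit $\mathbb{A}^1_s\setminus 0_s$, which is exactly the excluded collapse. If you can reproduce this (or an equivalent) argument, the rest of your outline -- transporting the conclusion back to the fibre over $1_R$ via the free-quotient identification $X_{\mathbb{G}_{m,R}}/\mathbb{G}_{m,R}\cong X_{1_R}$, or your variant producing an actual section -- goes through.
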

\begin{proof}
    It suffices to show that $f$ is universally closed. Let $R$ be a discrete valuation ring equipped with a morphism $\Spec(R) \to Y$. We denote by $s$ (resp. $\eta$) the special (resp. generic) point of $\Spec(R)$. For any lift $g: \eta \to X\times_{Y} \Spec(R)$ of the inclusion $\eta \hookrightarrow \Spec(R) \to Y$, we want to show that the closure $\overline{g(\eta)} \subset X \times_{Y} \Spec(R)$ intersects nontrivially the special fiber $X\times_{Y} s$. By the assumption that the action on $Y$ is contracting, we can extend $\Spec(R) \to Y$ to a $\mathbb{G}_{m,R}$-equivariant morphism $\mathbb{A}^1_R \to Y_R$. Since the base change $X \times_Y \mathbb{A}^1_R \to \mathbb{A}^1_R$ equipped with the diagonal $\mathbb{G}_{m,R}$ action satisfies the hypotheses (1) and (2), we can assume without loss of generality that $S = \Spec(R)$ and $Y = \mathbb{A}^1_R$. We have a morphism $1_R: \Spec(R) \to \mathbb{A}^1_R$ and a lift $g: \eta \to X$ of $1_{\eta}$, and we want to show that the closure $\overline{g(\eta)} \subset X$ intersects the fiber $X_{1_s}$. 

Let $Or(1_{\eta}) = \mathbb{A}^1_{\eta} \setminus 0_{\eta} \to \mathbb{A}^1_{R}$ be the $\mathbb{G}_{m,\eta}$-orbit of $1_{\eta}$ in $\mathbb{A}^1_{\eta}$. We use similar notation for the orbits $Or(1_s) = \mathbb{A}^1_s \setminus 0_s \to \mathbb{A}^1_R$ and $Or(g(\eta)) \subset X_{\mathbb{A}^1_{\eta}}$. 

    \noindent \textbf{Claim:} The closure $\overline{Or(g(\eta))} \subset X$ intersects $X_{\mathbb{A}^1_{s} \setminus 0_s}$ nontrivially.

    Replacing $X$ with $\overline{Or(g(\eta))}$, we can assume that $X$ is integral and $f: X \to \mathbb{A}^1_R$ is an isomorphism over $Or(1_{\eta})$. 
    We want to show that the image of $f$ intersects the orbit $Or(1_s) = \mathbb{A}^1_s \setminus 0_s \subset \mathbb{A}^1_R$ of $1_s$. 
    Let $R\bseries{t}$ denote the completion of the ring $R[t] = \mathcal{O}_{\mathbb{A}^1_R}$ at the ideal $I = (t)$ of the closed subscheme $0_R = (\mathbb{A}^1_R)^{\mathbb{G}_m} \subset \mathbb{A}^1_R$. 
    The completion morphism $c: \Spec(R\bseries{t}) \to \mathbb{A}^1_R$ is flat. 
    Since the set-theoretic image of $c$ contains the special point $0_s$, it also contains the generic point of the special orbit $Or(1_s)$, which is a generalization of $0_s$. 
    In particular, in order to see that the image of $f: X \to \mathbb{A}^1_R$ intersects $Or(1_s)$, it suffices to show that the base change $X':= X \times_{\mathbb{A}^1_R} \Spec(R\bseries{t}) \to \Spec(R\bseries{t})$ is surjective. 
    Taking the scheme-theoretic closure of a quasi-compact morphism commutes with flat base change, and so $X'$ remains the scheme-theoretic closure of the base change of the generic point of $\mathbb{A}^1_R$, which is just the field of fractions of $R\bseries{t}$. In particular, the base change $X'$ remains integral. The pair $\Spec(R\bseries{t})$ and $0_R \subset \Spec(R\bseries{t})$ is Henselian. 
    By the hypothesis (2), we know that the base change $X'_{0_R} \to 0_R$ is proper. Moreover, by the hypothesis (1) that the action on $X$ zero limits, the closure $\overline{Or(g(\eta)})$ contains a point in $X_{0_{\eta}}$, and so $X'_{0_R}$ is nonempty. 
    By \cite[\href{https://stacks.math.columbia.edu/tag/0CT9}{0CT9}]{stacks-project}, there is a nonempty open and closed subscheme $U \subset X'$ that is proper over $\Spec(R\bseries{t})$. Since $X'$ is integral, we must have $U = X'$, so $X'$ is proper over $\Spec(R\bseries{t})$. 
    Since we already know that the image of $X' \to \Spec(R\bseries{t})$ contains the generic point of $\Spec(R\bseries{t})$, it follows that $X' \to \Spec(R\bseries{t})$ is surjective, thus concluding the proof of the \textbf{Claim}.
    
     The fiber $F := \overline{Or(g(\eta))} \cap X_{\mathbb{A}^1_R \setminus 0_R}$ is a $\mathbb{G}_{m,R}$-stable integral closed subscheme of $X_{\mathbb{A}^1_R \setminus 0_R} = X_{\mathbb{G}_{m,R}}$ with generic fiber $Or(g(\eta))$ over $Or(1_{\eta})$. Notice that $\mathbb{G}_{m,R}$ acts freely on $X_{\mathbb{G}_{m,R}}$. The morphism $f: X_{\mathbb{G}_{m,R}} \to \mathbb{G}_{m,R}$, induces a $\mathbb{G}_{m,R}$-invariant morphism $X_{\mathbb{G}_{m,R}} \to X_{1_R}$ given, at the level of scheme-valued points, by $x \mapsto f(x)^{-1} \cdot x$. This induces a canonical identification $X_{\mathbb{G}_{m,R}}/\mathbb{G}_{m,R} \cong X_{1_R}$. Taking the quotient of the inclusion $F \subset X_{\mathbb{G}_{m,R}}$ we get a closed integral subscheme $F/\bG_{m,R}\subset X_{\bG_{m,R}}/\bG_{m,R} \cong X_{1_R}$ with generic fiber $g(\eta) \subset X_{1_\eta}$. Hence $F/\mathbb{G}_{m,R}$ must be the closure $\overline{Or(g(\eta))} \subset X_{1_{R}}$. By the \textbf{Claim} above, $F/\mathbb{G}_{m,R}$ intersects the special fiber $X_{1_s}$ nontrivially, as desired.
\end{proof}

\end{section}

\footnotesize{\bibliography{hodge_moduli_poles.bib}}
\bibliographystyle{alpha}

  \textsc{Department of Mathematics, University of Pennsylvania,
209 South 33rd Street,
Philadelphia, PA 19104, USA}\par\nopagebreak
  \textit{E-mail address}: \texttt{andresfh@sas.upenn.edu}

    \textsc{School of Mathematics, Institute for Advanced Study, Princeton, NJ 08540,
USA}\par\nopagebreak
  \textit{E-mail address}: \texttt{szhang@ias.edu}
\end{document}